\documentclass[a4paper, 12pt]{article}

\newif\ifsubmit
\newif\ifarxiv
\submittrue
\arxivtrue

\usepackage{amsmath,amsthm,amssymb}
\usepackage{times}
\usepackage{dsfont}
\usepackage{mathrsfs} 
\usepackage[all,cmtip]{xy}
\usepackage{patchcmd}
\usepackage{url}
\usepackage{bm}
\usepackage{mathtools}
\usepackage{enumerate}
\usepackage{pbox}
\usepackage[usenames,dvipsnames,svgnames,table]{xcolor}
\usepackage[normalem]{ulem}
\usepackage[top=2.4cm, bottom=2.4cm, left=2cm, right=2cm]{geometry}
\usepackage{array,multirow}
\ifsubmit\else
\usepackage{showkeys}
\fi
\usepackage{tikz}


\pagestyle{myheadings}
\def\titlerunning#1{\gdef\titrun{#1}}
\makeatletter
\def\author#1{\gdef\autrun{\def\and{\unskip, }#1}\gdef\@author{#1}}
\def\address#1{{\def\and{\\\hspace*{18pt}}\renewcommand{\thefootnote}{}%
\footnote {#1}}%
\markboth{\autrun}{\titrun}}
\makeatother
\def\email#1{e-mail: #1}
\def\subjclass#1{{\renewcommand{\thefootnote}{}%
\footnote{\emph{Mathematics Subject Classification (2010):} #1}}}
\def\keywords#1{\par\medskip
\noindent\textbf{Keywords.} #1}

\ifarxiv\else
\frenchspacing

\textwidth=15cm
\textheight=23cm
\parindent=16pt
\oddsidemargin=-0.5cm
\evensidemargin=-0.5cm
\topmargin=-0.5cm
\fi


\usetikzlibrary{arrows}
\usetikzlibrary{calc}

\tikzset{
auto,x=1cm,y=1cm,
arrow/.style={-stealth'},
axis arrow/.style={-stealth',thick},
axis noarrow/.style={thick},
S0text/.style={color=black!30!blue},
S0set/.style={fill=black!5!white!85!blue},
S0setT/.style={fill=black!6!white!82!blue,opacity=.8},
S0set1/.style={color=black!20!white!60!blue,line width=2.5},
S1text/.style={color=black!50!green},
S1set/.style={ultra thick,black!30!green},
S2set/.style={red!80!black},
S2text/.style={red!80!black},
dash3d/.style={dashed,preaction={draw,ultra thick,white}},
}

\def\tangentplane(#1)(#2)(#3){ %
($(#1)+(#2)$) -- ($(#1)+(#3)$) --
($(#1)-(#2)$) -- ($(#1)-(#3)$) -- cycle}

\def\tangentline(#1)(#2){($(#1)+(#2)$) -- ($(#1)-(#2)$)}

\newenvironment{ctikzpicture}
  {\leavevmode\hspace*{\fill}\begin{tikzpicture}}
  {\end{tikzpicture}\hspace*{\fill}\par}





\newcommand{\valring}[1][\bmdl]{\mathcal{O}_{#1}} 

\newbox\removebox
\long\def\bigsout#1{%
\par\setbox\removebox=\vbox{#1}%
\vbox{%
\vbox to0pt{\hbox{\tikz\draw[thick] (0,0) -- (\wd\removebox,-\ht\removebox)  (\wd\removebox,0) -- (0,-\ht\removebox);}}
\box\removebox
}
}


\ifsubmit\else

\newcount\revision
\def\parserevision$Revision: #1${\revision#1}
\parserevision$Revision: 235 $
\newcount\outfadePercentage

\makeatletter
\long\def\colorAndFade#1#2{\@ifnextchar[{\colorAndFade@Yes{#1}{#2}}{\colorAndFade@No{#1}{#2}}}
\long\def\colorAndFade@No#1#2#3{\strut{\color{#1}#3}}
\long\def\colorAndFade@Yes#1#2[#3]{%
  \outfadePercentage-#3%
  \advance\outfadePercentage by \revision%
  \advance\outfadePercentage by -1\relax%
  \multiply\outfadePercentage by 6\relax
  \ifnum\outfadePercentage>70\relax\outfadePercentage70\fi
  \ifnum\outfadePercentage<0\relax\outfadePercentage0\fi
  \colorAndFade@No{#2!\the\outfadePercentage!#1}{#2}%
}
\makeatother

\definecolor{private color}{rgb}{0,.6,.1}
\definecolor{immi's favourite color}{rgb}{1,.3,0}
\definecolor{a change by immi}{rgb}{.7,0,.7}
\definecolor{a comment by immi}{rgb}{.9,.6,0}

\definecolor{armygreen}{rgb}{0.29, 0.33, 0.13}
\definecolor{antiquefuchsia}{rgb}{0.57, 0.36, 0.51}
\definecolor{slightly darker green}{rgb}{0, .8, 0}

\fi

\ifsubmit
\newcommand{\private}[1]{}
\else
\newcommand{\private}[1]{\par{\color{private color}\scriptsize
\noindent \emph{Private comment (not to be published):}
#1\par}}
\fi


 \theoremstyle{plain}
 \newtheorem{thm}{Theorem}[subsection]
 \newtheorem{cor}[thm]{Corollary}
 \newtheorem{lem}[thm]{Lemma}
 
 \newtheorem{prop}[thm]{Proposition}

\theoremstyle{definition}
 \newtheorem{defn}[thm]{Definition}
 \newtheorem{notn}[thm]{Notation}
 \newtheorem{ass}[thm]{Assumption}

\theoremstyle{remark}
 \newtheorem{rem}[thm]{Remark}

 \newtheorem{exam}[thm]{Example}

 \numberwithin{equation}{section}

 \numberwithin{figure}{section}

\theoremstyle{plain}
\newtheorem{theo}{Theorem}[]


\DeclareMathOperator{\CC}{\mathcal{C}}
\DeclareMathOperator{\UU}{\mathcal{U}}
\DeclareMathOperator{\rk}{rk}

 \DeclareMathOperator{\id}{id}

 \DeclareMathOperator{\Hom}{Hom}
 \DeclareMathOperator{\im}{im}

 \DeclareMathOperator{\pr}{pr}

\DeclareMathOperator{\Jac}{Jac}
\DeclareMathOperator{\GL}{GL}
\DeclareMathOperator{\Or}{O}


\def\XXint#1#2#3{{\setbox0=\hbox{$#1{#2#3}{\int}$}
\vcenter{\hbox{$#2#3$}}\kern-.5\wd0}}


\newcommand{\Q}{\mathds{Q}}

\newcommand{\R}{\mathds{R}}

\newcommand{\omin}{$o$\nobreakdash}

\newcommand{\T}{$T$\nobreakdash}



\DeclareMathAlphabet{\mathpzc}{OT1}{pzc}{m}{it}

 

 \DeclarePairedDelimiter\norm{\lVert}{\rVert}

 \newcommand{\lan}[3]{\mathcal{L}_{#1 \textup{#2} #3}}

\newcommand{\mdl}[1]{\mathcal{#1}}  





\newcommand{\rest}{\upharpoonright}

\newcommand{\fun}{\longrightarrow}
\newcommand{\efun}{\longmapsto}
\newcommand{\sub}{\subseteq}

\newcommand{\mi}{\smallsetminus}






\DeclareMathOperator{\rv}{rv}

\DeclareMathOperator{\vv}{val}

\DeclareMathOperator{\rad}{rad}

\DeclareMathOperator{\ito}{int}
\DeclareMathOperator{\cl}{cl}


\DeclareMathOperator{\dist}{dist}
\DeclareMathOperator{\valdist}{valdist}

\newcommand{\langua}[1]{\ifmmode #1\else $#1$\nobreakdash\fi}
\newcommand{\LT}{\langua{\lan{}{}{}}}
\newcommand{\LTv}{\langua{\lan{\vv}{}{}}}
\newcommand{\LTd}{\langua{\lan{}{}{}(d)}}
\newcommand{\LTA}{\langua{\lan{}{}{}(A)}}

\newcommand{\LTq}{\langua{\lan{}{}{}(q)}}
\newcommand{\LTy}{\langua{\lan{}{}{}(y)}}

\DeclareMathOperator{\gra}{Gr}

\newcommand{\grao}{\gra^\circ}

\newcommand{\aligner}{\kappa} 

\newcommand{\forb}{\mathit{Fo}}



\newcommand{\U}[1]{_{\underline{#1}}} 

\newcommand{\prep}[1]{\ensuremath{(\mathrm{#1})}}

\makeatletter
\def\udots{\mathinner{%
    \mkern1mu%
    \raise\p@\vbox{\kern7\p@\hbox{$\m@th.$}}%
    \mkern2mu%
    \raise4\p@\hbox{$\m@th.$}%
    \mkern2mu%
    \raise7\p@\hbox{$\m@th.$}%
    \mkern1mu}%
}

\newcommand\hsmash[1]{\protect\mathpalette{\protect\hsmash@}{#1}}
\def\hsmash@#1#2{\hbox to 0pt{$#1#2$}}

\makeatother


\newcommand{\bbar}[1]{\bar{\bar{#1}}}
\newcommand{\bbbar}[1]{\bar{\bbar{#1}}}

\newcommand{\smdl}{\mdl{R}_0}  
\newcommand{\bmdl}{{\mdl R}}      

\hyphenation{Lip-schitz}

\begin{document}

\ifarxiv\else
\baselineskip=17pt
\fi


\ifsubmit
\titlerunning{Lipschitz stratifications in power-bounded o-minimal fields}
\title{Lipschitz stratifications in power-bounded o-minimal fields}
\else
\titlerunning{Lipschitz stratifications in power-bounded o-minimal fields, version \the\revision}
\title{Lipschitz stratifications in power-bounded o-minimal fields, version \the\revision}
\fi

\author{Immanuel Halupczok
\and 
Yimu Yin}

\ifarxiv
\date{9th Feb.\ 2016}
\else
\date{9th Feb.\ 2016}
\fi

\maketitle

\address{I. Halupczok%
\ifarxiv\else{} (corresponding author)\fi%
: School of Mathematics, University of Leeds, Leeds, LS2~9JT, UK; \email{math@karimmi.de}
\and
Y. Yin: 607 Xi Chang Hall, Department of Philosophy,
Sun Yat-sen University,
135 Xingang Road West,
Guangzhou, China,
510275; \email{yimu.yin@hotmail.com}}

\subjclass{03C64; 03C60, 32C07, 32B20, 32S60, 58A35}


\begin{abstract}
We propose to grok Lipschitz stratifications from a non-archimedean point of view and thereby show that they exist for closed definable sets in any power-bounded \omin-minimal structure on a real closed field. Unlike the previous approaches in the literature, our method bypasses resolution of singularities and Weierstra\ss{} preparation altogether; it transfers the situation to a non-archimedean model, where the quantitative estimates appearing in Lipschitz stratifications are sharpened into valuation-theoretic inequalities.
Applied to a uniform family of sets, this approach automatically yields a family of stratifications which satisfy the Lipschitz conditions in a uniform way.

\keywords{Lipschitz stratifications, polynomially bounded fields, power-bounded fields}
\end{abstract}

\ifsubmit\else
\tableofcontents
\fi

In this paper we prove the existence of Lipschitz stratifications for any closed definable set in a polynomially bounded \omin-minimal structure on $\R$, and, in fact, even more generally, in a power-bounded \omin-minimal structure on a real closed field $\bmdl$.
The notion of a Lipschitz stratification was introduced by Mostowski in his dissertation \cite{most:phd:lip}. It is much stronger than Whitney's conditions or Verdier's condition (w) formulated in \cite{verd:stra:w}; it imposes a global condition and
ensures that the Lipschitz type of the stratified set is locally constant along each stratum.

Throughout this paper, $\bmdl$ is a power-bounded real closed field.
A classical example of such a structure is $\R_{\mathrm{an}}$: the reals with restricted analytic functions
as described in \cite{DMM94}; beyond this (subanalytic) level, there is e.g.\ the class of quasianalytic structures; see \cite{Rolin:quasi}.
If the field $\bmdl$ is just $\R$, then power-bounded is equivalent to polynomially bounded. In other real closed fields, power-boundedness is more general and more natural;
we recall that notion in Definition~\ref{defn.powBd}.

Here is a first version of our main result.

\begin{theo}[Lipschitz stratifications]\label{main}
Let $X \sub \bmdl^n$ be a closed definable subset in a power-bounded real closed field $\bmdl$. Then there exists a definable Lipschitz stratification of $X$.
\end{theo}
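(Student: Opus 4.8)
The plan is to sidestep the archimedean metric on $\bmdl^{n}$ and move the whole problem into a non-archimedean valued field. Fix a $T$-convex valued field $\K$ (a model of $\usub{T}{convex}$) whose residue field is $\bmdl$ and whose value group $\Gamma$ is nontrivial; concretely one may take a sufficiently saturated elementary extension $\bmdl \esub \K$ together with the convex hull of $\bmdl$ in $\K$ as valuation ring $\mathcal{O}$, or a Hahn-type field $\bmdl \dlbr t^{\Gamma} \drbr$. Power-boundedness of $\bmdl$ is precisely what licenses this: it is what makes the valuation $T$-convex, so that $\K$ carries a cell decomposition and a tame dimension theory built on the o-minimal structure of $\bmdl$. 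Re-interpreting the formula that defines $X$ over $\K$ yields a closed definable set $X_{\K} \sub \K^{n}$ with $X_{\K} \cap \bmdl^{n} = X$. The crux is that every inequality appearing in Mostowski's $L$-condition has the shape $\norm{u} \le C\,\norm{v}$ for a positive \emph{real} constant $C$, i.e.\ for a unit $C \in \mathcal{O}^{\times}$; over $\K$ such an inequality sharpens to the clean valuation inequality $\vv(u) \ge \vv(v)$, the unit $C$ having simply been absorbed. This suggests the notion of a \emph{valuative Lipschitz stratification} of $X_{\K}$: a stratification whose strata are (ordinary, o-minimally) definable over $\bmdl$ and whose tangent spaces, normal projections and distance-to-stratum functions satisfy the corresponding list of exact inequalities among valuations. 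Theorem~\ref{main} then breaks into (a) existence of such a valuative Lipschitz stratification of $X_{\K}$, and (b) the assertion that its trace on $\bmdl^{n}$ is a genuine definable Lipschitz stratification of $X$.

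\emph{Step (b): descent.} The strata of the valuative stratification, being definable over $\bmdl$, intersect $\bmdl^{n}$ in a definable stratification $X = \bigsqcup_{j} \mdl S_{j}$, and the task is to exhibit one constant $C$ for which all of Mostowski's inequalities hold. By definable choice, the curve selection lemma and standard o-minimal compactness, it is enough to bound, uniformly over each relevant definable family of configurations (a point $q$, a chain of strata $\mdl S_{j_{0}} < \dots < \mdl S_{j_{k}}$ approaching it, and the finitely many tangent spaces and projections that enter the $L$-condition), the ratio $\norm{u}/\norm{v}$ of the two sides of each inequality. If such a ratio were unbounded, a definable curve witnessing this, read inside $\K$, would produce a point of $X_{\K}$ at which $\vv(u) < \vv(v)$, contradicting (a). So step (b) amounts to a translation dictionary between ``$\vv(u) \ge \vv(v)$ holds on $X_{\K}$'' and ``$\norm{u} \le C\,\norm{v}$ holds uniformly on $X$'', together with routine bookkeeping over the various chains.

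\emph{Step (a): construction over $\K$.} Build the valuative stratification by induction on $\dim X$. The inductive step requires three ingredients: (i) the locus where the smooth-part estimates can fail — a valuative thickening of the singular locus, consisting of points near which the tangent spaces swing too far apart measured valuatively — is definable of strictly smaller dimension, which follows from definability of tangent spaces and normal projections plus the dimension theory available in $\K$; (ii) a local valuative Lipschitz estimate near the top-dimensional smooth stratum, in which the ultrametric inequality $\vv(a+b) \ge \min\{\vv(a),\vv(b)\}$ does the work carried in the classical proofs by delicate archimedean Taylor comparisons; and (iii) a gluing step assembling the local estimates along chains of strata of arbitrary length — the genuinely global content of the $L$-condition — into the global valuative condition, after which the lower-dimensional bad locus is stratified compatibly by the inductive hypothesis. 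A bonus of the non-archimedean viewpoint is that ``the estimate holds at every scale $\lambda \in \Gamma$'' is one first-order statement over $\K$, so the uniformity over infinitely many scales that plagues the archimedean approach is automatic.

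I expect the main obstacle to lie in (a)(ii)--(iii): proving the valuative analogue of Mostowski's local Lipschitz lemma, and arranging the chain estimates of length $> 2$ to hold \emph{simultaneously}, as exact valuation inequalities, at every level of the induction. Exactness is what makes the gluing rigid enough to work, but it is also the pressure point — the constants chosen tacitly in the archimedean argument become honest order relations in $\Gamma$ that must be mutually consistent across strata, and one must check the valuative estimates are robust enough to survive the descent of step (b) with a single constant. Everything above is carried out in definable families, so running the same construction over a definable parameter space delivers at once a definable family of Lipschitz stratifications of the fibres $X_{s} \sub \bmdl^{n}$ obeying the $L$-condition with a common Lipschitz constant — the uniform version announced in the abstract.
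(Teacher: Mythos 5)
Your overall architecture -- pass to a saturated elementary extension with the convex-hull valuation ($T$-convexity), replace Mostowski's inequalities $\norm{u}\le C\norm{v}$ by valuative inequalities $\vv(u)\ge\vv(v)$, construct a ``valuative Lipschitz stratification'' there, and descend by a compactness/transfer argument -- is exactly the strategy of the paper, and your Step (b) is essentially the paper's Lemma~\ref{lem.nsa} together with Proposition~\ref{prop.new-lip-na}. One caveat even there: the translation is not as automatic as you claim, because Mostowski's definition of a chain couples two constants ($c'=C'=2c^2$ in Definition~\ref{defn.most}(2)), so the condition is not monotone in the constant and does not directly ``sharpen'' to a valuative statement; one must first replace the definition by two monotone variants (one a priori weaker, one a priori stronger), translate each, and prove the valuative weak version implies the valuative strong one (the implication (2')$\Rightarrow$(3') of Proposition~\ref{prop.new-lip-na}).

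The genuine gap is in Step (a), and it starts with a misattribution: power-boundedness is \emph{not} what makes the valuation $T$-convex -- van den Dries--Lewenberg's theory works for arbitrary o-minimal $T$, exponential or not. Since Lipschitz stratifications are known to fail beyond the polynomially bounded setting, any correct proof must use power-boundedness at a specific analytic point, and your sketch never identifies one. In the paper that point is a single preparation-type estimate (Proposition~\ref{prop.jac}, hence Corollary~\ref{cor.jac}): for a definable $f$ one can remove a lower-dimensional set $Z$ so that $\vv(\partial_i f(y))\ge \vv(f(y))-\valdist(y,Z)$ everywhere else -- an estimate that is false for the exponential. This is the engine driving your ingredients (ii)--(iii), which you correctly flag as the obstacle but leave as a black box. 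What is actually needed on top of it is: (1) a decomposition into pieces that are graphs of functions with Jacobian of valuation $\ge 0$, with the nontrivial requirement that any $n+2$ pieces admit a \emph{common} aligning linear map chosen from a fixed finite subset of $\GL_n(\Q)$ (otherwise the estimates for different chains through the same stratum are incompatible); (2) iterated application of the gradient estimate (``sedation'') to control the partial derivatives of the graph functions in the directions of each lower-dimensional skeleton, after a rectilinearization along the chain, with the removed bad loci fed back into the inductive shrinking of the lower skeletons; and (3) a reformulation of the valuative Mostowski conditions in terms of nested flags of subspaces of tangent spaces, which is what lets the length-$2$ estimates assemble into estimates for chains of arbitrary length. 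Without some substitute for the gradient estimate and the simultaneous-alignment device, the induction you outline cannot be closed.
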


The notion of Lipschitz stratification is recalled in Definition~\ref{defn.lip}, and Subsection~\ref{sect.notn.smdl} clarifies how the terminology should be adapted in the case $\bmdl \ne \R$.
For compact sets $X$ and in the case $\bmdl = \R$, the semi-analytic case of this theorem was established in \cite{Paru1988} and the subanalytic case in \cite{Parusinski1994}.
Recently, Nguyen and Valette \cite{NV.lip} generalized Parusinski's proof to polynomially bounded structures on $\R$. (In \cite{NV.lip} the result is stated for compact $X$, but their proof also goes through for arbitrary closed $X$; see \cite{Ngu.thes}.)

A main motivation for Lipschitz stratifications is that one has local bilipschitz triviality along strata, which in turn implies that any two points within the same stratum have neighborhoods which are in bilipschitz bijection. The proof of this result is rather easy in $\R$, but the argument uses integration along vector fields; this is highly non-definable, and it does not generalize to other real closed fields. We believe that local bilipschitz triviality (along strata of a Lipschitz stratification) can also be obtained in $\bmdl \ne \R$, but the argument might be much more involved. More precisely, a proof of the existence of \emph{definable} local bilipschitz trivializations within $\R$ would probably directly generalize to $\bmdl$. Some results in that direction exist. For example, Valette \cite{Val.lipTriang} proved the existence of definable bilipschitz trivializations in polynomially bounded \omin-minimal structures, but using certain triangulations instead of Lipschitz stratifications.

Using the existence of Skolem functions and the Compactness Theorem, one easily deduces that
Theorem~\ref{main} also works uniformly in families, in the sense that given a uniformly definable family of sets, one finds a uniformly definable family of Lipschitz stratifications.
However, the notion of Lipschitz stratifications involves a constant $C$ (a stratification is Lipschitz if some conditions hold for sufficiently big $C$), and a natural question is whether that $C$ can be chosen to be the same for an entire family. In this paper, we obtain uniform Lipschitz stratifications in families in this strong sense; the precise statement is Theorem~\ref{thm.main}.

Our approach to the construction of Lipschitz stratifications is quite different from all previous ones.
The main difference is that we use the technique from non-standard analysis of replacing $\bmdl$ by a bigger real closed field $\bmdl'$ (an elementary extension). The 
infinite and infinitesimal elements in $\bmdl'$ make it possible to simplify the formulation of statements involving limits. In particular, we obtain simpler characterizations of Lipschitz stratifications: Whereas the original definition of a Lipschitz stratification uses subtle inequalities depending on two different constants $c$ and $C$, we obtain an equivalent definition, formulated using $\bmdl'$, which needs neither $c$ nor $C$ (see Definition~\ref{defn.nalip} and Proposition~\ref{prop.new-lip-na}).
The aforementioned strong uniformity in families is obtained as a side effect of using that approach: We prove that 
Lipschitz stratifications exist in families within $\bmdl'$. The fact that the parameters of the family are allowed to run over the bigger field $\bmdl'$ allows us to deduce the strong uniformity result within $\bmdl$.

On our way, we also obtain various other equivalent characterizations of Lipschitz stratifications: Proposition~\ref{prop.new-lip} provides some characterizations
purely within the standard model, where $c$ and $C$ are used in a less subtle way, and Proposition~\ref{prop.flag-lip} provides a new characterization of Lipschitz stratifications in terms of partial flags, which is invariant under $\GL_n$. (To our knowledge, the only previously known $\GL_n$-invariant characterization was the one terms of vector bundles given e.g. in \cite[Proposition~1.5]{Paru1988}.)

Typically, proofs carried out using non-standard analysis in an elementary extension $\bmdl'$ can be translated back to ``classical'' proofs within $\bmdl$ (at the cost of making them much less readable). However, for one key ingredient to our proof -- a precise estimate of the gradient of functions near a singular locus;
cf.\ Corollary~\ref{cor.jac} and Remark~\ref{rem.jac} -- we use some deeper model theoretic results. More precisely, $\bmdl'$ naturally carries a valuation, which specifies the order of magnitude of elements. The proof of our estimate builds on model theory of 
$\bmdl'$ as a \emph{valued} field, i.e., we consider definable sets in a language including the valuation. This setting has been studied by van den Dries and Lewenberg \cite{DriesLew95,Dries:tcon:97} under the name of ``$T$-convex fields''. In that setting, the second author of the present paper obtained a result which is somewhat related to Weierstra\ss{} Preparation in valued fields (Proposition~\ref{prop.jac}) and that in turn implies the above-mentioned Corollary~\ref{cor.jac}.

In Section~\ref{sect.lip-defs} we recall the notion of Lipschitz stratifications and prove the equivalence of its various characterizations. We also give an overview of the proof of existence of Lipschitz stratifications (in Subsection~\ref{sect.overview}).
The entire remainder of the paper is devoted to the details of that proof.
Section~\ref{sect.ingred} discusses the various ingredients
and Section~\ref{sect.proof} contains the proof itself.

\subsection{Acknowledgement}

The research reported in this paper has received financial support from the ERC Advanced Grant 246903 NMNAG, from the IHES, from the European Research Council under the European Community's Seventh Framework Programme (FP7/2007-2013) with ERC Grant Agreement no.\ 615722 MOTMELSUM, from the Labex CEMPI (ANR-11-LABX-0007-01), from the Fund for Scientific Research of Flanders, Belgium (grant G.0415.10) and from the laboratoire de math\'ematiques de l'universit\'e
de Savoie Mont Blanc.

\section{Characterizations of Lipschitz Stratifications}
\label{sect.lip-defs}

In this section, we recall the definition of Lipschitz stratifications, we formulate several alternative
definitions and we prove that all those definitions are equivalent. This does not yet use any deep model theory; the only model theoretic ingredient we use is the notion of elementary extensions (and their existence).

\subsection{Basic Notation}
\label{sect.notn.smdl}

We fix some notation which will be used throughout this paper.

Recall that an \omin-minimal structure on $\R$ is polynomially bounded if every definable function $\R \to \R$ is ultimately bounded by a polynomial. One essential aspect of this notion is the dichotomy obtained by Miller \cite{Mil.growth-dichotomy}: In any structure that is not polynomially bounded, one can already define exponentiation. To obtain a similar dichotomy for other real closed fields $\bmdl$, one needs a generalization of polynomially bounded \cite{Mil.powBd}: a definable function only needs to be bounded by a kind of generalized power function. Here is the precise definition.

\begin{defn}[Power bounded]\label{defn.powBd}
Suppose that $\bmdl$ is an \omin-minimal real closed field. A \emph{power function} in $\bmdl$ is a definable endomorphism of the multiplicative group $\bmdl^\times$.
We call $\bmdl$ \emph{power bounded} if for every definable function $f\colon \bmdl \fun \bmdl$, there exists a power function $g$ such that $|f(x)| \le g(x)$ for all sufficiently big $x$.
\end{defn}

There is a precise sense in which a power function is of the form $x \efun x^\lambda$, where $\lambda$
is an element of a certain subfield of $\bmdl$. Since we will use power-boundedness only indirectly, we do not elaborate on this; see \cite{Mil.powBd} for details.

\begin{notn}[Structures and language]
Throughout this paper, we fix a power-bounded \omin-minimal
real closed field $\bmdl$ in a language $\LT$ expanding the ring language. (At some point, we will impose that $\bmdl$ is, without loss, sufficiently big).

By \emph{definable} we mean definable with arbitrary parameters; in contrast, \emph{\LT-definable} means definable without parameters (apart from those which are constants in the language).
\end{notn}

\begin{rem}
It is somewhat customary, in \omin-minimal geometry, to not specify a language and only work with the notion of definable sets. However, specifying a language allows us to keep track of the parameters needed to define sets, and this will be needed for some model theoretic arguments.
For the moment, the reader unfamiliar with our approach may assume that $\LT$ contains a constant for each element of $\bmdl$, so that \LT-definable means the same as definable.
\end{rem}

\begin{notn}[Coordinate projections]
Given $d \le n$, we write
$\pr_{d}\colon \bmdl^n \fun \bmdl$ for the projection to the $d$-th coordinate,
$\pr_{\le d}\colon \bmdl^n \fun \bmdl^d$ for the projection to the first $d$ coordinates
and $\pr_{> d}\colon \bmdl^n \fun \bmdl^{n-d}$ the for projection
to the last $n-d$ coordinates.
\end{notn}

We use the usual notation and conventions for \omin-minimal expansions of real closed fields; see e.g.\ \cite{dries:1998}. We quickly recall the most important ones.

\begin{notn}[Infima and suprema]
By \omin-minimality, any definable subset $X \subseteq \bmdl$ has an infimum and a supremum (which may be $\pm \infty$); we denote them by $\inf(X)$ and $\sup(X)$.
\end{notn}

\begin{notn}[Norms and distances]
We write $|\cdot|$ for the absolute value on $\bmdl$,
$\norm{a}$ for the Euclidean Norm of $a \in \bmdl^n$
($\norm{a}$ is an element of $\bmdl_{\ge 0}$)
and $\norm{M}$ for the operator norm of a matrix $M$,
i.e., $\norm{M} = \sup\{\norm{Ma} : \norm{a} = 1\}$.
Given a point $a \in \bmdl^n$ and a definable set $X \subseteq \bmdl^n$, we write $\dist(a, X) \coloneqq \inf \{\norm{a - x} : x \in X\}$
for the distance from $a$ to $X$; we define that distance to be $\infty$ if $X$ is empty.
\end{notn}

\begin{notn}[Topology]
The real closed field $\bmdl$ comes with a natural topology induced by the order on $\bmdl$; this also induces a topology
on $\bmdl^n$. Given a definable set $X \subseteq \bmdl^n$, we write $\cl(X)$ for its topological closure, $\ito(X)$ for its interior,
and $\partial X \coloneqq \cl(X) \setminus X$ for its frontier (not to be mixed up with the boundary, which is also sometimes denoted by $\partial X$).
We call $X$ \emph{definably connected} if $X$ is not the disjoint union of two relatively closed (in $X$) definable subsets. The \emph{definable connected components} of $X$ are defined accordingly. (Any definable set 
in an \omin-minimal structure has finitely many definable connected components.)
\end{notn}

The topology on $\bmdl$ might be totally disconnected, so the usual notion of connectedness does not behave as desired. However, in the case $\bmdl = \R$, definably connected is the same as connected.

\begin{notn}[Derivatives]
For an open set $X \subseteq \bmdl^n$, derivatives of functions $f \colon X \fun \bmdl^m$ are defined as the usual limits. By \omin-minimality, derivatives exist almost everywhere.
For functions $f\colon X \fun \bmdl$, we write $\partial_i f$ for the derivative with respect to the $i$-th variable ($1 \le i \le n$), and
for $f = (f_1, \dots, f_m)\colon X \fun \bmdl^m$ and $a \in X$, we write
\[
\Jac_a f \coloneqq \begin{pmatrix}
\partial_1 f_1(a) & \cdots & \partial_n f_1(a)\\
\vdots &  & \vdots \\
\partial_1 f_m(a) & \cdots & \partial_n f_m(a)
\end{pmatrix}
\]
for the Jacobian matrix of $f$ at the point $a$. In the case $m = 1$, we also write $\nabla f(a)$ instead of $\Jac_a f$.
We define the class $C^p$ of $p$-fold continuously differentiable functions in the usual way.
\end{notn}

The notion of manifolds makes sense over (\omin-minimal) fields $\bmdl \ne \R$ only if one restricts to definable manifolds. All manifolds we will encounter will moreover be embedded.

\begin{notn}[Manifolds and tangent spaces]
A \emph{$d$-dimensional definable $C^p$ submanifold} of $\bmdl^n$ (for $d \le n$ and $p \ge 1$) is a
definable set $X \subseteq \bmdl^n$ such that there exists
a finite definable open cover of $X$ by sets $U_i$, each of which is in definable $C^p$-bijection with an open set $V_i \subseteq \bmdl^d$. The tangent space of $X$ at some $a \in X$ is denoted by $\bm T_a(X)$. (We consider $\bm T_a(X)$ as a subspace of $\bmdl^n$.)
\end{notn}

Note that $\bm T_a(X)$ is definable uniformly in $a$.

\subsection{Various definitions of Lipschitz stratifications}

We use the following notation and conventions for stratifications:

\begin{defn}[Stratifications]\label{defn.strat}
Let $X \sub \bmdl^n$ be a definable subset of dimension $d$. A \emph{definable stratification} of $X$ is a family
$\mdl X = (X^0 \subseteq X^1 \subseteq \dots \subseteq X^d = X)$ of closed definable subsets of $X$ satisfying the properties below. We set $X^{-1} \coloneqq \emptyset$. For $0 \le i \le d$, the set $\mathring X^i \coloneqq X^i \setminus X^{i-1}$ is called the \emph{$i$-th skeleton}, and each definably connected component
of each skeleton is called a \emph{stratum}. We call $\mdl X$ a stratification if the following conditions hold.
\begin{itemize}
  \item
   For each $i$, $\dim X^i \le i$;
  \item
   for each $i$, $\mathring X^i$ is either empty or a definable $C^1$ submanifold of $\bmdl^n$
   of dimension $i$ (not necessarily connected);
  \item
    for each stratum $S$, the topological closure $\cl(S)$ is a union of strata.
 \end{itemize}
\end{defn}

(Note that in the generality of power-bounded \omin-minimal structures, one cannot expect to obtain smooth strata.)

Mostowski's original definition of when a stratification is a Lipschitz stratification uses the notion of a chain: a sequence of points $(a^\ell)_{0 \le \ell \le m}$ that starts with an arbitrary point $a^0 \in X$, and where the remaining points lie in lower dimensional skeletons, but ``not too far from $a^0$'', and only in ``those skeletons $\mathring X^i$ which are much closer to $a^0$ than $X^{i-1}$''.
The precise inequalities specifying these distances are quite subtle.
There exists an equivalent definition involving Lipschitz vector fields \cite[Proposition~1.5]{Paru1988}, which avoids the subtleties of bounding the aforementioned distances. However, that definition quantifies over vector fields, which makes it less suitable for our model theoretic approach. Therefore, in this paper, we use the original definition in terms of chains. (More precisely, we use the simplified variant of that original definition given in \cite{Paru1988}.)

As already mentioned in the introduction, we will use methods from non-standard analysis to simplify the definition of Lipschitz stratifications: After having replaced $\bmdl$ by an elementary extension, we will define a valuation on $\bmdl$, which will allow us to replace the subtle bounds on distances by simple valuative inequalities.
However, that valuative definition is \emph{not} a straight-forward translation of Mostowski's definition in the usual non-standard analysis way. To make such a translation possible, one needs to
first modify Mostowski's definition in such a way that certain quantifiers become simpler.

To prove that our new definition is equivalent to the old one,
our strategy is as follows. We introduce two new variants of Mostowski's definition: one of them \emph{a priori} weaker and one of them \emph{a priori} stronger.
Both variants have simpler quantifiers, so that they can be translated to valuative versions. For those valuative versions, it will not be very hard to prove that the weak one implies the strong one, hence implying that all definitions are equivalent.

In the following, we start by giving all those definitions of Lipschitz stratifications which do not use the valuation.
The valuative versions are stated in Subsection~\ref{sect.lip-defs-na}, and the proofs of the equivalences are given in Subsection~\ref{sect.transl}.

Lipschitz stratifications are defined in terms of projections to the tangent spaces of the skeletons $\mathring X^i$; we first fix notation for those projection maps.

\begin{defn}
Given a definable stratification $\mdl X$ of a definable subset $X \sub \bmdl^n$ and
a point $a \in \mathring X^i$, let
\[
P_a : \bmdl^n \fun \bm T_a \mathring X^i
\]
be the orthogonal projection onto the tangent space of $\mathring X^i$ at $a$, considered as a map $\bmdl^n \fun \bmdl^n$.
\end{defn}

The various definitions of Lipschitz stratifications only differ in the way that certain constants are treated. To avoid writing almost the same definition three times (and to make it clear how exactly the definitions differ), we
introduce a general notion of a stratification $\mdl X$ ``satisfying the Mostowski Conditions for given constants''.
For readers who just want to understand one single definition of Lipschitz stratifications, one possible definition is encoded in the notation used for the constants: Increasing lowercase constants and decreasing uppercase constants both makes the Mostowski Conditions more restrictive; and $\mdl X$ is a Lipschitz stratification if no matter how big the lowercase constants are chosen, one can find values for the uppercase constants such that the Mostowski Conditions are satisfied (see Proposition~\ref{prop.new-lip} (2)).

Note: The Mostowski Conditions impose conditions on all chains, so a more restrictive notion of chains yields a less restrictive notion of Mostowski Conditions.

\begin{defn}[Chains and Mostowski Conditions]\label{defn.most}
	Let $\mdl X = (X^i)_i$ be a definable stratification (of a definable set $X \sub \bmdl^n$), and
	let $c, c', C', C'', C'''  \in \bmdl$ be given.

    A \emph{plain chain} (in $\mdl X$)
	is a sequence of points $a^0, a^1, \dots, a^m$ ($m \ge 0$) with
	$a^\ell \in \mathring X^{e_\ell}$, $e_0 > e_1 > \dots > e_m$
	satisfying the following conditions.
	\begin{enumerate}
		\item
		For $\ell = 1, \dots, m$, we have:
		\[
		\norm{a^0 - a^\ell} < c\cdot \dist(a^0, X^{e_\ell}).
		\]
		\item
		For each $i$ with $e_m \le i < e_0$, we have one of two different conditions (which should be considered as specifying which $i$ should be among the $e_\ell$ and which should not):
		\[
		\begin{cases}
		\dist(a^0, X^{i - 1}) \ge C'\cdot \dist(a^0, X^{i}) &
		\text{if } i \in \{e_1, \dots, e_m\}\\
		\dist(a^0, X^{i - 1}) < c'\cdot \dist(a^0, X^{i}) &
		\text{if } i \notin \{e_1, \dots, e_m\}.
		\end{cases}
		\]
	\end{enumerate}
	An \emph{augmented chain} (in $\mdl X$) consists of a plain chain $a^0, a^2, a^3, \dots, a^m$ ($m \ge 1$, $a^\ell \in \mathring X^{e_\ell}$) together with an additional point $a^1 \in \mathring X^{e_1}$, where $e_1 := e_0$, satisfying 
	\begin{equation}\label{eq.aug-cond}
	\norm{a^0 - a^1} \le \frac{\dist(a^0, X^{e_1 - 1})}{C''}.
	\end{equation}
	We say that $\mdl X$ \emph{satisfies the Mostowski Conditions for $(c, c', C', C'', C''')$}
    if the following two conditions hold:

    For every plain chain $(a^i)_{0 \le i \le m}$ with $m \ge 1$, we have
    \begin{equation}\tag{m1}
    \norm{(1 - P_{a^0})P_{a^1}P_{a^2} \ldots P_{a^m}} < \frac{C''' \norm{a^0 - a^1}}{\dist(a^0, X^{e_m - 1})},
    \end{equation}
    For every augmented chain $(a^i)_{0 \le i \le m}$ (with $m \ge 1$), we have
    \begin{equation}\tag{m2}
    \norm{(P_{a^0} - P_{a^1})P_{a^2}P_{a^3} \ldots P_{a^m}} < \frac{C''' \norm{a^0 - a^1}}{\dist(a^0, X^{e_m - 1})}.
    \end{equation}
We use the convention that if $X^{e_m-1}$ is empty, then in (m1) and (m2), we require the left hand side to be equal to $0$.
\end{defn}

Concerning nomenclature, note that what Parusinski calls a $c$-chain in \cite{Paru1988} is what we would call a plain chain of maximal length, using the same constant $c$ and $c' \coloneqq C' \coloneqq 2c^2$.
Also, we use different conventions regarding
the case when $X^{e_m-1}$ is empty.
(Our convention seems more natural to us, though it almost implies $X^0 \ne \emptyset$.)

Parusinskis's version of the definition of a Lipschitz stratification is the following:

\begin{defn}[Lipschitz stratifications]\label{defn.lip}
	Let $c > 1$ ($c \in \bmdl$) be given. A definable stratification $\mdl X$ is a \emph{Lipschitz stratification} if there exists a $C \in \bmdl$ such that $\mdl X$ satisfies the Mostowski Conditions for
	$(c, 2c^2, 2c^2, 2c, C)$.
\end{defn}

A priori, this notion seems to depend on the choice of $c$. However, it turns out that different choices of $c$ yield equivalent notions; this follows e.g.\ from \cite[Proposition~1.5]{Paru1988}. 

That we have $c' = C'$ in Definition~\ref{defn.lip}
means that Condition (2) in Definition~\ref{defn.most} uniquely specifies the set $\{e_1, e_2, \dots, e_m\}$ in terms of the initial point $a^0$ and the length $m$ of the chain.
However, a side effect of identifying $c'$ with $C'$ is
that the strength of the condition is not a monotone function in the value of the constant: a chain for some $c' = C'$ might neither stay a chain when making $c' = C'$ bigger, nor when making them smaller.
This has various disadvantages, the main one for us being that only monotone conditions can nicely be simplified by reformulating them in an elementary extension. Another consequence is that one has to be quite precise about the relations between the various constants: $c$ vs.\ $2c^2$ vs.\ $2c$.

In contrast, the following two equivalent characterizations are monotone in $c$ and $C$ in the above sense, and they are much more robust with respect to small modifications of Definition~\ref{defn.most}.

\begin{prop}[Characterizations of Lipschitz stratifications]\label{prop.new-lip}
	The following conditions on an \LT-definable stratification $\mdl X$ are equivalent:
	\begin{enumerate}
	\item $\mdl X$ is a Lipschitz stratification (in the sense of Definition~\ref{defn.lip}).
	\item For every $c \in \bmdl$, there exists a $C \in \bmdl$ such that $\mdl X$ satisfies the Mostowski Conditions for
	$(c, c, C, C, C)$.
	\item For every $c \in \bmdl$, there exists a $C \in \bmdl$ such that $\mdl X$ satisfies the Mostowski Conditions for
	$(c, c, 1, \frac1{c}, C)$.
	\end{enumerate}
\end{prop}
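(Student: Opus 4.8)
\textbf{Overall strategy.} The plan is to prove the cycle of implications $(1)\Rightarrow(2)\Rightarrow(3)\Rightarrow(1)$, exploiting the monotonicity of the Mostowski Conditions in the constants (increasing lowercase, decreasing uppercase makes things more restrictive) so that most of the work reduces to comparing which sequences of points are ``plain chains'' or ``augmented chains'' for one tuple of constants versus another. The key structural observation is the one already flagged in the text: in Definition~\ref{defn.lip} the fact that $c'=C'$ pins down the index set $\{e_1,\dots,e_m\}$ from $(a^0,m)$, whereas when $c'$ is small and $C'$ is large there is a ``gap'' of indices $i$ that may legitimately be either included or omitted; this makes (2) and (3) genuinely monotone and is what we must bridge.

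\textbf{$(2)\Rightarrow(3)$.} This is the easy direction: given $c$, apply (2) with the \emph{same} $c$ to obtain some $C_0\ge 1$. Since $1\le C_0$ and $\frac1c \le C_0$ (enlarging $c$ if necessary so $c\ge 1$, which is harmless), every plain (resp.\ augmented) chain for the tuple $(c,c,1,\frac1c,C)$ is also a plain (resp.\ augmented) chain for $(c,c,C_0,C_0,C_0)$ — one checks directly that loosening $C'$ from $C_0$ down to $1$ and $C''$ from $C_0$ down to $\frac1c$ only \emph{enlarges} the class of chains, and then (m1), (m2) for the larger constant $C_0$ on the right-hand side are weaker. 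Hence $\mdl X$ satisfies the Mostowski Conditions for $(c,c,1,\frac1c,C_0)$, and we take $C\coloneqq C_0$.

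\textbf{$(3)\Rightarrow(1)$.} Fix the constant $c>1$ appearing in Definition~\ref{defn.lip}; we must produce $C$ with the Mostowski Conditions for $(c,2c^2,2c^2,2c,C)$. Apply (3) with constant $c_1\coloneqq 2c^2$ to get $C_1$ such that the Mostowski Conditions hold for $(2c^2, 2c^2, 1, \tfrac1{2c^2}, C_1)$. Now observe that a plain chain in the sense of Definition~\ref{defn.lip}, i.e.\ for $(c, 2c^2, 2c^2, \ast, \ast)$, \emph{is} a plain chain for $(2c^2, 2c^2, 1, \ast, \ast)$: condition~(1) only gets easier since $c\le 2c^2$; and condition~(2) with $C'=2c^2$ in the ``$i\in\{e_\ell\}$'' case is more restrictive than with $C'=1$, while the ``$i\notin\{e_\ell\}$'' case uses $c'=2c^2$ in both. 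Similarly an augmented chain for $(c,2c^2,2c^2,2c,\ast)$, whose extra point satisfies $\norm{a^0-a^1}\le \dist(a^0,X^{e_1-1})/2c$, also satisfies $\norm{a^0-a^1}\le \dist(a^0,X^{e_1-1})/\tfrac1{2c^2}$ trivially (the bound is weaker), so it is an augmented chain for $(2c^2,2c^2,1,\tfrac1{2c^2},\ast)$. Therefore (m1) and (m2) hold with $C'''=C_1$, and $C\coloneqq C_1$ works. (One must double-check the empty-$X^{e_m-1}$ convention is preserved throughout, but it is handled identically in all the definitions.)

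\textbf{$(1)\Rightarrow(2)$ — the main obstacle.} This is the substantive direction and is where the asymmetry of the $c'=C'$ convention bites: the class of chains for $(c,2c^2,2c^2,2c,C)$ is \emph{neither} contained in nor contains the class for $(c,c,C,C,C)$, so we cannot merely compare constants. The plan is to mimic Parusiński's argument showing that the Lipschitz-stratification notion is independent of $c$ (this is exactly the content of \cite[Proposition~1.5]{Paru1988} and the cited remark that it gives such independence). Concretely: given $c$ for (2), we want to show that a plain chain $(a^\ell)_{0\le\ell\le m}$ for $(c,c,C,C,C)$ can, after inserting finitely many intermediate points into lower skeletons, be ``refined'' to a chain of the Definition~\ref{defn.lip} type with a controlled constant $\tilde c$ (depending only on $c$), so that the estimate (m1)/(m2) for the refined chain — available by hypothesis~(1) with a suitable $\tilde c$ and resulting $\tilde C$ — implies the estimate for the original chain by a telescoping/triangle-inequality argument on the products $P_{a^0}P_{a^1}\cdots P_{a^m}$, using that $(1-P_{a^0})P_{a^1}\cdots P_{a^m}$ and the differences $(P_{a^0}-P_{a^1})P_{a^2}\cdots$ can be split along the inserted points. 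The number of inserted points per step is bounded in terms of the depth $n$ of the stratification (there are at most $n$ skeletons), so the accumulated constants stay uniform. The delicate bookkeeping is: (i) choosing the intermediate points so that all the requisite distance inequalities (conditions (1) and (2) of Definition~\ref{defn.most}) hold with the target constants, which is where one uses the ``much closer'' gaps quantitatively; and (ii) controlling the telescoping sum of operator-norm errors so that the final bound has the shape $C'''\norm{a^0-a^1}/\dist(a^0,X^{e_m-1})$ rather than a sum of such terms — here one uses that $\dist(a^0,X^{e_\ell-1})$ decays geometrically along a chain (by condition (2) with a large $C'$), so the geometric series converges and is dominated by its first term up to a constant. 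I expect step~(ii), the geometric-decay estimate absorbing the telescoping error, to be the technical heart, and the fact that it is already essentially carried out in \cite{Paru1988} in the $\R$-case (and goes through verbatim over a real closed field, since it uses only ordered-field arithmetic and the operator-norm triangle inequality) is what makes the proposition true at this level of generality.
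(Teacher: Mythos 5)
Your cycle breaks at (2) $\Rightarrow$ (3), which you dismiss as ``the easy direction'' via a containment that runs the wrong way. You correctly observe that lowering $C'$ from $C_0$ to $1$ and $C''$ from $C_0$ to $\frac1c$ \emph{enlarges} the class of chains, but you then conclude that every chain for $(c,c,1,\frac1c,\cdot)$ is a chain for $(c,c,C_0,C_0,\cdot)$ --- that is the reverse inclusion. Since $X^{i-1}\subseteq X^{i}$, putting $C'=1$ makes the first alternative of Definition~\ref{defn.most}~(2) vacuous, so (3) quantifies over essentially \emph{all} decreasing sequences satisfying the $c$-inequalities, whereas (2) only constrains the few chains whose ratios $\dist(a^0,X^{i-1})/\dist(a^0,X^{i})$ jump by a factor $\ge C_0$. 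Hypothesis (2) therefore says nothing about the many extra chains appearing in (3), and your argument collapses. In the paper, (2) $\Rightarrow$ (3) is precisely the hard implication: it is proved by translating both conditions into valuative statements in an elementary extension (Lemma~\ref{lem.nsa}, Proposition~\ref{prop.new-lip-na}), where it becomes ``valuative Mostowski Conditions at all strict val-chains imply them at all weak val-chains'', established by induction on the length of the chain, deleting a point $a^\ell$ with $\lambda_\ell=\lambda_{\ell+1}$ and telescoping products of projections.

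Conversely, you locate the ``substantive'' work at (1) $\Rightarrow$ (2), but that implication is a two-line containment: with $C'\coloneqq\max\{C,2c^2\}$, every chain for $(c,c,C',C',\cdot)$ is a chain for $(c,2c^2,2c^2,2c,\cdot)$ (the ``$\ge C'$'' clause implies ``$\ge 2c^2$'', the ``$<c$'' clause implies ``$<2c^2$'', and $1/C'\le 1/(2c)$), so the Mostowski Conditions transfer directly; no chain refinement or geometric-decay estimate is needed, because one is free to choose $C'$ as large as one likes. Your instinct that a refinement/telescoping argument is needed somewhere is sound, but it belongs to (2) $\Rightarrow$ (3), not to (1) $\Rightarrow$ (2). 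Your (3) $\Rightarrow$ (1) step does match the paper's proof.
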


\private{I think there is an even weaker equivalent statement, which has the property that it follows from Definition~\ref{defn.lip} for any single $c$. But we don't need that weaker statement, and it's a bit less natural than (2) and (3). Is it worth mentioning that statement nevertheless? (The advantage would be that it provides a new proof that the $c$ in Mostowksi's definition doesn't matter.)}

\private{Newest thoughts about possible other equivalent defs:

very strong def: entirely remove the condition (2) from the defn of chains in 1.2.3. Thus allow even more chains.

On the other hand: very weak def: in cond (1) from the defn of chains, require $\norm{a - a^\ell} < (1 + 1/C) \dist (a, X^{e_\ell})$

Combining both new thoughts yields a defn completely without $c$, so less quantifiers.
}

The monotonicity in $c$ and $C$ means that both (2) and (3) in the proposition can be considered as statements about big $c$ and $C$, namely: ``No matter how big $c$ is, the Mostowski Conditions hold for all sufficiently big $C$.''

Characterization (2) imposes conditions only on very few chains: since $C$ can be assumed to be big compared to $c$, for most points $a^0 \in X$, neither of the two inequalities in Definition~\ref{defn.most} (2) holds, hence forbidding those $a^0$ as starting points of chains.
In contrast, \emph{every} sequence of points in decreasing skeletons is relevant in (3) for some $c$.
(Note that putting $C' = 1$ makes the first condition of Definition~\ref{defn.most} (2) trivially true.)
For these reasons, the implications (3) $\Rightarrow$ (1) $\Rightarrow$ (2) are very easy to prove,
assuming that we read Definition~\ref{defn.lip} as ``for every $c > 1$ there exists $C$'' (which we can, using the result that it is independent of $c$). The proof (2) $\Rightarrow$ (3) is harder; this will follow from Proposition~\ref{prop.new-lip-na}. In fact, this is a good example of a proof which becomes much easier after translating the statements to valuative ones in an elementary extension.

\begin{proof}[Proof of Proposition~\ref{prop.new-lip}
(3) $\Rightarrow$ (1)]
Let $c > 1$ be given (from Definition~\ref{defn.lip}). Then (3) yields a $C$ such that the Mostowski
Conditions hold for $(2c^2, 2c^2, 1, \frac1{2c^2}, C)$. Thus they also hold for $(c, 2c^2, 2c^2, 2c, C)$.
\end{proof}

\begin{proof}[Proof of Proposition~\ref{prop.new-lip}
(1) $\Rightarrow$ (2)]
Let $c$ be given (from (2)); without loss, $c > 1$. By Definition~\ref{defn.lip}, there exists $C$ such that the Mostowski Conditions hold for $(c, 2c^2, 2c^2, 2c, C)$.
Hence they also hold for $(c, c, C', C', C')$, where $C' \coloneqq \max\{C, 2c^2\}$.
\end{proof}

\begin{rem}
It is possible to translate the valuative proof of (2) $\Rightarrow$ (3)
given in Subsection~\ref{sect.transl} into a ``conventional'' proof within the original field $\bmdl$; we leave the details of this to the interested reader as an exercise. Such a translation in particular yields how, given a function $f_{(2)} \colon c \efun C$  witnessing (2), one obtains a function $f_{(3)} \colon c \efun C$ witnessing (3). Roughly,
$f_{(3)} = \underbrace{(f_{(2)} \circ g)\circ \dots \circ (f_{(2)} \circ g)}_{\dim X \text{ times}}$ for some simple function $g$.
\end{rem}

\subsection{Uniform families of Lipschitz stratifications}

As mentioned in the introduction, we will obtain Lipschitz stratifications uniformly in families, in a very strong sense. We now make this precise.

\begin{notn}[Definable families]
For the whole subsection, we fix a definable set $Q$ (say, a subset of $\bmdl^N$); all definable families are parametrized by $Q$:
A \emph{definable family} of subsets of $\bmdl^n$ is simply a definable subset $X \sub \bmdl^n \times Q$,
where we write
\[
X_q := \{x \in \bmdl^n : (x, q) \in X\}
\]
for the fiber at $q \in Q$.
\end{notn}

We also define families of stratifications in the obvious way:

\begin{defn}
Suppose that $X$ is a definable family of $d$-dimensional subsets of $\bmdl^n$ (for some fixed $d \le n$).
A \emph{definable family of stratifications} of $X$ is a tuple $\mdl X = (X^i)_{0 \le i \le d}$ of families of definable sets such that for each $q \in Q$,
$\mdl X_q \coloneqq (X^i_q)_{0 \le i \le d}$ is a stratification of $X_q$; $\mdl X$ is a \emph{definable family of Lipschitz stratifications} if each $\mdl X_q$ is a Lipschitz stratification.
\end{defn}

The more interesting concept is that of a family of stratifications that are \emph{uniformly} Lipschitz; this says that the constant $C$ appearing in the definition of Lipschitz stratifications can be chosen uniformly for the entire family. Here is the precise definition.

\begin{defn}[Uniformly Lipschitz stratifications]\label{defn.lip-unif}
A definable family $\mdl X$ of stratifications (of a definable family $X$ of sets) is a family of \emph{uniformly Lipschitz stratifications} if one of the following equivalent conditions holds:
\begin{enumerate}
\item For every $c \in \bmdl$ there exists a $C \in \bmdl$ such that for every $q \in Q$, $\mdl X_q$ satisfies the Mostowski Conditions for $(c, 2c^2, 2c^2, 2c, C)$.
\item For every $c \in \bmdl$ there exists a $C \in \bmdl$ such that for every $q \in Q$, $\mdl X_q$ satisfies the Mostowski Conditions for $(c, c, C, C, C)$.
\item For every $c \in \bmdl$ there exists a $C \in \bmdl$ such that for every $q \in Q$, $\mdl X_q$ satisfies the Mostowski Conditions for $(c, c, 1, \frac1c, C)$.
\end{enumerate}
\end{defn}

The above proofs of the non-uniform implications (3) $\Rightarrow$ (1) $\Rightarrow$ (2) (of Proposition~\ref{prop.new-lip}) also work without modification in the uniform case. The implication (2) $\Rightarrow$ (3)
is re-stated as (a part of) Proposition~\ref{prop.new-lip-na} and will be proved in Subsection~\ref{sect.transl}.

\begin{rem}
The reader may have noticed that in Definition~\ref{defn.lip-unif}~(1), we wrote ``for every $c$'', instead of fixing a $c > 1$, as in Definition~\ref{defn.lip}.
We believe that also the \emph{a priori} weaker versions with fixed $c$ are equivalent, but we didn't check that carefully.
\end{rem}

Now we can finally state the full version of the main result of this paper.

\begin{thm}[Uniformly Lipschitz stratifications]\label{thm.main}
Fix a power-bounded real closed field $\bmdl$ in a language $\LT$.
Suppose that $X$ is an \LT-definable family of closed, $d$-dimensional subsets of $\bmdl^n$ (i.e., $X$ is an \LT-definable subset of $\bmdl^{n} \times Q$, whose fibers $X_q \sub \bmdl^n$ are closed and $d$-dimensional, for $q \in Q$).
Then there exists an \LT-definable family $\mdl X = (X^i)_{0 \le i \le d}$ of uniformly Lipschitz stratifications of $X$
(in the sense of Definition~\ref{defn.lip-unif}).
\end{thm}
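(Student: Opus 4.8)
The plan is to reduce the uniform family statement to the existence of a single Lipschitz stratification in a suitable elementary extension, using the non-archimedean (valuative) reformulations announced in the introduction. More precisely, I would proceed as follows. First, replace $\bmdl$ by a $\kappa$-saturated elementary extension $\bmdl'$ for $\kappa$ large; by the Compactness Theorem it suffices to produce, for each $q$ running over $Q(\bmdl')$, a Lipschitz stratification of $X_q$, provided the stratification is obtained \emph{uniformly} as an $\LT$-definable family over all parameters in $\bmdl'$. The key point, to be supplied by the body of the paper, is the valuative characterization of Lipschitz stratifications (Definition~\ref{defn.nalip} and Proposition~\ref{prop.new-lip-na}): a stratification of $X_q$ is Lipschitz if and only if a certain valuative condition, involving neither $c$ nor $C$, holds in the valued field $(\bmdl',v)$, where $v$ is the natural valuation on $\bmdl'$ whose valuation ring is the convex hull of $\bmdl$. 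One then proves that this valuative condition can be satisfied by an $\LT$-definable family of stratifications, i.e.\ one constructs $\mdl X = (X^i)_i$ as an $\LT$-definable subset of $\bmdl^{n}\times Q$ such that for every $q$ (in $\bmdl'$, hence a fortiori in $\bmdl$) the fiber $\mdl X_q$ satisfies the valuation-theoretic inequalities.

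The second half of the argument is the transfer back from $\bmdl'$ to $\bmdl$. Once we have an $\LT$-definable $\mdl X$ with the property that every fiber $\mdl X_q$, for $q\in Q(\bmdl')$, is a Lipschitz stratification, we apply Proposition~\ref{prop.new-lip-na} again in the other direction: the valuative condition is equivalent to the Mostowski Conditions in the form of Proposition~\ref{prop.new-lip}(2) (``for every $c$ there is $C$''). Now comes the crucial use of saturation: fix $c\in\bmdl$. For each $q\in Q(\bmdl')$ there is some $C\in\bmdl'$ making the Mostowski Conditions hold for $(c,c,C,C,C)$; by $\aleph_1$-saturation (or already by having chosen $\bmdl'$ sufficiently saturated relative to $|Q|$, which one can do by compactness without fixing $\bmdl'$ in advance), one extracts a \emph{single} $C\in\bmdl'$ that works for all $q\in Q(\bmdl')$ simultaneously. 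The statement ``there exists $C$ such that for all $q\in Q$, $\mdl X_q$ satisfies the Mostowski Conditions for $(c,c,C,C,C)$'' is first-order (for fixed $c$, with the $\LT$-definable $\mdl X$), so it descends from $\bmdl'$ to $\bmdl$. Since $c\in\bmdl$ was arbitrary, $\mdl X$ is a uniformly Lipschitz family over $\bmdl$ in the sense of Definition~\ref{defn.lip-unif}(2), hence — by the equivalences already proved (Proposition~\ref{prop.new-lip-na} contains (2)$\Rightarrow$(3), and (3)$\Rightarrow$(1)$\Rightarrow$(2) were proved above) — in the sense of Definition~\ref{defn.lip-unif}(1) as well.

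There is one subtlety in the reduction that needs care: the family over $\bmdl'$ is parametrized by $Q(\bmdl')$, which is strictly larger than $Q(\bmdl)$, and it is precisely this ``extra room'' that forces the constant $C$ to exist uniformly — a stratification that were Lipschitz only for $q\in Q(\bmdl)$ could have $C$ blowing up as $q$ approaches the boundary of $Q(\bmdl)$, but a stratification Lipschitz for \emph{all} $q\in Q(\bmdl')$ cannot, by the saturation/compactness argument. So the real content is to ensure that the construction carried out inside $\bmdl'$ is genuinely $\LT$-definable and works for \emph{every} parameter in $\bmdl'$, not just the standard ones; this is what lets us close the loop.

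\textbf{The main obstacle.} The genuinely hard part is not any of the transfer steps above — those are routine model theory given the valuative reformulation — but rather the construction, inside $\bmdl'$, of an $\LT$-definable family of stratifications satisfying the valuative conditions for all parameters; this is exactly the work of Sections~\ref{sect.ingred} and~\ref{sect.proof}, and it rests on the gradient estimate near the singular locus (Corollary~\ref{cor.jac}, Remark~\ref{rem.jac}), which in turn uses the $T$-convex-field analogue of Weierstra\ss{} preparation (Proposition~\ref{prop.jac}). In other words, the uniformity in families is, as the introduction states, essentially free once the absolute existence result has been established via the non-archimedean method; the difficulty is concentrated entirely in that absolute existence result, and specifically in obtaining the valuative gradient estimate in the $T$-convex setting.
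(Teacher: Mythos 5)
Your plan is correct and follows essentially the same route as the paper: pass to a saturated model carrying the valuation, prove the valuative (constant-free) version fibrewise via Theorem~\ref{thm.main-na}, glue the fibrewise stratifications into a single \LT-definable family by a compactness argument, and recover the uniform constants via Proposition~\ref{prop.new-lip-na}. One small correction: the uniform $C$ is not extracted ``by saturation'' from the pointwise constants $C_q$ (saturation alone cannot convert pointwise bounds into a uniform one); rather, the valuative condition says the relevant \LT-definable quantities have non-negative valuation everywhere, so any $C$ of negative valuation works for all $q$ simultaneously (this is Lemma~\ref{lem.nsa}), and the resulting first-order sentence then descends to the original field.
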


\subsection{Enlarging the model}
\label{sect.enlarge}

The conditions in Definition~\ref{defn.lip-unif} are clearly first order properties. Therefore,
when proving the implication (2) $\Rightarrow$ (3) and the existence of uniformly Lipschitz stratifications, we may work in an elementary extension.
More precisely, we will take the point of view that without loss, $\bmdl$ itself is already large, so that in particular, it possesses an elementary substructure $\smdl \precneqq \bmdl$.
It is not difficult to check that the convex closure of $\smdl$ within $\bmdl$ is a (non-trivial) valuation ring of $\bmdl$; we denote it by $\valring$.
Intuitively, elements of $\bmdl \setminus \valring$ may be regarded as ``infinite'' and elements in the maximal ideal of $\valring$ as ``infinitesimal''; more generally, bigger valuation means smaller order of magnitude, where two elements are considered as having the same order of magnitude if they differ at most by a factor from $\smdl^\times$. (Note that even if $\smdl$ is non-archimedean, we consider all its elements as having the same order of magnitude.)

It is a standard technique to study $\R$ by passing to an elementary extension. This implicitly uses the above valuation, but one usually considers definability only in the original language $\LT$. In contrast, in this paper, we will explicitly consider $\bmdl$ as a structure in the language expanded by a predicate for $\valring$. The model theory of such structures has been studied by van den Dries and Lewenberg \cite{DriesLew95,Dries:tcon:97}, and a
key ingredient to our proof of existence of Lipschitz stratifications builds on those results.

\begin{notn}[Valuation]\label{notn.val}
For the remainder of Section~\ref{sect.lip-defs}, we suppose that we have two \LT-structures $\smdl \precneqq \bmdl$.
We write $\valring \subseteq \bmdl$ for the valuation ring obtained as the convex closure of $\smdl$ in $\bmdl$, i.e.,
\[
\valring = \{a \in \bmdl : -b < a < b \text{ for some } b \in \smdl\}.
\]
We write $\Gamma \coloneqq \bmdl^\times / \valring^\times$ for the value group and
$\vv\colon \bmdl \fun \Gamma \cup \{\infty\}$ for the valuation.
Let $\LTv$ be the expansion of the language $\LT$ by a predicate for $\valring$.
\end{notn}

In \cite{DriesLew95,Dries:tcon:97}, the language $\LTv$ is denoted by $\mdl L_{\mathrm{convex}}$ and an \LTv-structure obtained from \omin-minimal structures $\smdl \precneqq \bmdl$ as in Notation~\ref{notn.val} is called ``$T$-convex'',
where $T$ is the theory of $\bmdl$ as an \LT-structure.
It has been proved in \cite{DriesLew95} that being $T$-convex is an elementary property, i.e., that for any \LTv-structure 
$\bmdl'$ which is elementarily equivalent to $\bmdl$,
the valuation ring $\valring[\bmdl'] \sub \bmdl'$ is also the convex closure of an \LT-elementary substructure $\smdl' \precneqq \bmdl'$. In particular, we can assume that $\bmdl$ is sufficiently saturated as an $\LTv$-structure (by possibly further enlarging both, $\smdl$ and $\bmdl$); this will be useful for (model theoretic) compactness arguments.

\begin{ass}\label{ass.sat}
For the remainder of the paper,
we assume that $\bmdl$ is sufficiently saturated, as a structure in the language $\LTv$.
\end{ass}

(To be precise, we will need $\bmdl$ to be $|\LTv|^+$-saturated.)

\begin{rem}
The result that being $T$-convex is an elementary property is only used for convenience, to be able to fix $\bmdl$ once and for all. In reality,
in those parts of the paper where we do need to consider elementary extensions of $\bmdl$ as an \LTv-structure (namely Theorem~\ref{thm.main-na} and its proof), we do not need $\valring$ to be the convex closure of an elementary substructure.
\end{rem}

\subsection{Valuative Notation}
\label{sect.notn.bmdl}

We fix some notation related to the newly introduced valuation. First of all, note that even when working with the language $\LTv$, all stratifications
we consider are $\LT$-definable (instead of $\LTv$-definable), and the notions of definable connectedness and definable manifolds still refer to the language $\LT$.

Now that we have a valuation, it is useful to also have valuative versions
of norms and distances; we use the following notation.
Note that by \cite[Proposition~4.3]{Dries:tcon:97}, the value group $\Gamma$ (with the induced structure) is \omin-minimal. In particular, suprema and infima of definable subsets of $\Gamma$ exist.

\begin{notn}[Valuative norms and distances]
For $a = (a_1, \dots, a_n) \in \bmdl^n$, we set
$\vv(a) \coloneqq \min_i \vv(a_i) = \vv(\norm{a})$.
If in addition, we have a definable set $X \sub \bmdl^n$, we set $\valdist(a, X) \coloneqq \sup_{x \in X}\vv(a - x) = \vv(\dist(a, X))$, where $\valdist(a, \emptyset) \coloneqq -\infty$.
For a matrix $M = (m_{ij})_{ij}$, we set $\vv(M) \coloneqq \min_{i,j} \vv(m_{ij})$.
\end{notn}

We recall some facts about those definitions.

\begin{lem}\label{lem.vvM}
Let $M$ and $N$ be matrices with coefficients on $\bmdl$. Then we have the following (where some statements implicitly impose conditions on the numbers of rows/columns of $M$ and $N$):
\begin{enumerate}
\item We have $\vv(MN) \ge \vv(M) + \vv(N)$ (and in particular $\vv(Ma) \ge \vv(M) + \vv(a)$ for $a \in \bmdl^n$).
\item The matrix $M$ lies in $\GL_n(\valring)$ iff $M \in \GL_n(\bmdl)$ and we have both $\vv(M) \ge 0$ and $\vv(M^{-1}) \ge 0$.
\item If $M \in \GL_n(\valring)$, then
$\vv(MN) = \vv(N)$ (and in particular $\vv(Ma) = \vv(a)$ for $a \in \bmdl^n$).
\item We have $\vv(M)=\vv(\norm{M})$, where $\norm{M}$ is the operator norm of $M$ (or, in fact, any other of the usual norms).
\end{enumerate}
\end{lem}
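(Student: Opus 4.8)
The plan is to verify the four items by unwinding the definitions together with the ultrametric inequality; none of this uses the \omin-minimality of $\bmdl$, only that $\valring$ is a valuation ring of $\bmdl$ (convexity will only matter, implicitly, through the ordering being compatible with $\vv$).

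For (1), I would argue entrywise: the $(i,k)$-entry of $MN$ is $\sum_j m_{ij} n_{jk}$, each summand has valuation $\vv(m_{ij}) + \vv(n_{jk}) \ge \vv(M) + \vv(N)$, and the ultrametric inequality $\vv\bigl(\sum_j x_j\bigr) \ge \min_j \vv(x_j)$ gives $\vv\bigl((MN)_{ik}\bigr) \ge \vv(M) + \vv(N)$; taking the minimum over $i,k$ finishes it. Viewing a vector $a \in \bmdl^n$ as an $n \times 1$ matrix (so that the two meanings of $\vv(a)$ agree) yields the parenthetical statement. For (2), $M$ lies in $M_n(\valring)$ iff all its entries lie in $\valring$ iff $\vv(M) \ge 0$, and $M \in \GL_n(\valring)$ means precisely that $M$ is a unit of the ring $M_n(\valring)$, i.e.\ $M \in \GL_n(\bmdl)$ and $M^{-1} \in M_n(\valring)$, i.e.\ additionally $\vv(M^{-1}) \ge 0$. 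For (3), given $M \in \GL_n(\valring)$ we have $\vv(M), \vv(M^{-1}) \ge 0$ by (2); applying (1) to $MN$ gives $\vv(MN) \ge \vv(M) + \vv(N) \ge \vv(N)$, and applying (1) to $M^{-1}(MN) = N$ gives $\vv(N) \ge \vv(M^{-1}) + \vv(MN) \ge \vv(MN)$, hence equality; the vector case follows as before.

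The only step needing slightly more care is (4), precisely because the operator norm is defined by a supremum rather than by an explicit formula. I would first reduce to the maximum norm $\norm{\cdot}_\infty$ on $\bmdl^k$ and the associated operator norm $\norm{M}_\infty = \max_i \sum_j |m_{ij}|$: any two of the usual norms on the finite-dimensional spaces $\bmdl^n$, $\bmdl^m$ differ by factors which are (ratios of) square roots of positive integers, hence of valuation $0$, and likewise for the induced operator norms, so all these norms have the same valuation. Thus it suffices to show $\vv(\norm{M}_\infty) = \vv(M)$, and this follows from the elementary fact that for finitely many $x_1, \dots, x_r \in \bmdl_{\ge 0}$ one has $\vv(x_1 + \dots + x_r) = \min_j \vv(x_j) = \vv(\max_j x_j)$ (there is no cancellation, since $\max_j x_j \le \sum_j x_j \le r \max_j x_j$ and $\vv(r) = 0$). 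Applying this twice, $\vv(\norm{M}_\infty) = \min_i \min_j \vv(m_{ij}) = \vv(M)$; the same fact applied to $\norm{a}_\infty = \max_i |a_i|$ gives $\vv(\norm{a}_\infty) = \vv(a)$, which is the identity $\vv(a) = \vv(\norm{a})$ used implicitly in the Notation above. I do not anticipate any real obstacle here; the passage to $\norm{\cdot}_\infty$ is exactly what sidesteps the supremum in the definition of the Euclidean operator norm.
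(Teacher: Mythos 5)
Your proposal is correct. Items (1)--(3) match the paper, which simply records them as ``easy computation'', ``clear'', and ``follows from (1) and (2)'' -- your entrywise ultrametric argument, the units-of-$M_n(\valring)$ observation, and the two applications of (1) to $MN$ and $M^{-1}(MN)$ are exactly the intended content. For item (4) you take a genuinely different route. The paper works directly with the Euclidean operator norm: it sandwiches $\vv(\norm{Ma})$ between $\vv(\norm{M})+\vv(\norm{a})$ and $\vv(M)+\vv(\norm{a})$, then picks one vector $a$ attaining the operator norm (to force equality on one side) and one standard basis vector hitting a minimal-valuation entry (to force equality on the other). This is shorter but implicitly uses that the supremum defining $\norm{M}$ is attained -- true for the Euclidean operator norm over any real closed field, but a point one has to notice. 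Your reduction to $\norm{\cdot}_\infty$ via norm-equivalence constants of valuation zero, followed by the no-cancellation identity $\vv(x_1+\dots+x_r)=\min_j\vv(x_j)$ for nonnegative $x_j$ (which does use convexity of $\valring$, as you say), replaces the supremum by an explicit formula and so sidesteps the attainment issue entirely; it also yields $\vv(a)=\vv(\norm{a})$ along the way, which the paper's notation uses without comment. Both arguments are valid; yours is slightly longer but more self-contained.
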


\begin{proof}
(1) Easy computation.

(2) Clear.

(3) Follows from (1) and (2).

(4) We have
\begin{equation}\label{eq.operator}
\vv(\norm{M}) + \vv(\norm{a}) \overset{(\star)}{\le} \vv(\norm{Ma})\overset{(\star\star)}{\ge}
\vv(M) + \vv(\norm{a})
\end{equation}
(using the definition of the operator norm to get $(\star)$, and using (1) to get $(\star\star)$).
By choosing $a$ such that $\norm{Ma} = \norm{M}\cdot \norm{a}$, we obtain an equality at $(\star)$ and hence
$\vv(\norm{M}) \ge \vv(M)$. To obtain $\vv(\norm{M}) \le \vv(M)$, we choose an $a$ which yields an equality at $(\star\star)$: if the $j$-th column of $M$ has an entry $m_{ij}$ satisfying $\vv(M) = \vv(m_{ij})$, then we can take $a$ to be the $j$-th standard basis vector.
\end{proof}

All balls we consider in this paper are valuative balls. We use the following notation.

\begin{notn}[Balls]
Given $a \in \bmdl^n$ and $\lambda \in \Gamma$, we write
\begin{align*}
B_{>\lambda}(a) &\coloneqq \{x \in \bmdl^n : \vv(x - a) > \lambda\}
\qquad \text{and}\\
B_{\ge\lambda}(a) &\coloneqq \{x \in \bmdl^n : \vv(x - a) \ge \lambda\}
\end{align*}
for the open and closed ball of valuative radius $\lambda$.
\end{notn}

\subsection{Valuative Lipschitz Stratifications}
\label{sect.lip-defs-na}

The valuation allows us to simplify Conditions~(2) and (3) of
Definition~\ref{defn.lip-unif} in the ``usual non-standard analysis way''. This leads to a valuative version of chains and Lipschitz stratifications, which we now introduce.

\begin{defn}[val-chains]\label{defn.na}
Fix a definable stratification $\mdl X = (X^i)_i$ of a definable set $X \subseteq \bmdl^n$.
A \emph{plain val-chain} (in $\mdl X$) is a sequence of points $a^0, \dots, a^m$
($m \ge 0$) with $a^\ell \in \mathring X^{e_\ell}$,
$e_0 > e_1 > \dots > e_m$ such that for all $1 \leq \ell \leq m$, we have
\begin{align}
\lambda_\ell := \vv(a^0 - a^{\ell})
 &> \valdist(a^0, X^{e_\ell - 1})
 \qquad \text{and}
 \label{eq.nai}
 \\
\vv(a^0 - a^{\ell}) &= \valdist(a^0, X^{e_{\ell-1}-1}).
 \label{eq.nae}
\end{align}
An \emph{augmented val-chain} (in $\mdl X$) is a sequence of points $a^0, \dots, a^m$ ($m \ge 1$) with $a^\ell \in \mathring X^{e_\ell}$,
$e_0 = e_1 > \dots > e_m$ such that
(\ref{eq.nai}) holds for $1 \leq \ell \leq m$ and
(\ref{eq.nae}) holds for $2 \leq \ell \leq m$.
By a \emph{val-chain}, we mean either a plain or an augmented one.

The numbers $e_\ell$ (for $0 \le \ell \le m$) are the \emph{dimensions} of the val-chain, and its
\emph{distances} are the valuations $\lambda_\ell$ ($1 \le \ell \le m$) together with $\lambda_{m+1} \coloneqq \valdist(a^0, X^{e_m - 1})$
(which might be $-\infty$).
\end{defn}

\begin{rem}\label{rem.val-chain-dim}
An equivalent way of characterizing a plain val-chain is the following. Choose any point $a^0$ in any skeleton $\mathring X^{e_0}$. Then choose the remaining points $a^\ell$ ($1 \le \ell \le m$) in skeletons $\mathring X^{e_\ell}$ as close as possible to $a^0$ in the valuative sense, where
$\{e_1, \dots, e_m\}$ consists of the $m$ biggest elements of
the set
$\{j \le e_0 : \valdist(a^0, X^{j}) > \valdist(a^0, X^{j - 1})\}$.
\end{rem}

\begin{rem}\label{rem.sub-val-chain}
By (\ref{eq.nai}), we have $\lambda_1 > \dots > \lambda_{m+1}$. This implies
\begin{alignat*}{2}
  \vv(a^k - a^{\ell}) &= \vv(a^0 - a^{\ell}) \qquad&& \text{for $0 \le k < \ell \le m$ and}\\
  \valdist(a^k, X^{j}) &= \valdist(a^0, X^{j}) \qquad&& \text{for } 0 \le k \le m \text{ and } j < e_k.
\end{alignat*}
In particular,
if $(a^\ell)_{0 \le \ell \le m}$ is a val-chain, then
any sub-sequence of the form $(a^\ell)_{k \le \ell \le m'}$ for $0 \le k \le m' \le m$ is also a val-chain
(which is always plain if $k \ge 1$). Moreover, if $(a^\ell)_{0 \le \ell \le m}$ is an augmented val-chain,
then $a^0, a^2, a^3, \dots, a^{m'}$ is a plain val-chain (for $1 \le m' \le m$).
\end{rem}

\begin{defn}[valuative Mostowski Conditions]
Let $\mdl X$ be a definable stratification and $(a^\ell)_{0 \le \ell \le m}$ a val-chain with distances $\lambda_\ell$. By the
\emph{valuative Mostowski Condition at $(a^\ell)_{\ell}$}, we mean one of the following two properties of $\mdl X$.
If $(a^\ell)_{\ell}$ is a plain val-chain, the condition is
  \begin{equation*}\tag{vm1}
  \vv((1 - P_{a^0})P_{a^1} \cdots P_{a^m}) \ge \lambda_1 - \lambda_{m+1};
  \end{equation*}
if $(a^\ell)_{\ell}$ is an augmented val-chain, the condition is
  \begin{equation*}\tag{vm2}
  \vv((P_{a^0} - P_{a^1})P_{a^2} \cdots P_{a^m}) \ge \lambda_1 - \lambda_{m+1}.
  \end{equation*}
\end{defn}

In the case $\lambda_{m+1} = -\infty$, the conditions are supposed to be read as ``$\vv(\ldots) =  \infty$'', i.e., the composition of the maps is $0$. If $X^0 \ne \emptyset$, then
$\lambda_{m+1} = -\infty$ implies $a^m \in X^0$, and we anyway have $P_{a^m} = 0$. However, if $X^0 = \emptyset$, then this is a very strong condition, so as for classical Lipschitz stratifications, one can almost never have $X^0 = \emptyset$.

\begin{defn}[valuative Lipschitz stratifications]\label{defn.nalip}
A definable stratification $\mdl X = (X^i)_i$ (of a definable set $X \sub \bmdl^n$) is a \emph{valuative Lipschitz stratification} if it satisfies the valuative Mostowski conditions at every val-chain.
\end{defn}

\begin{rem}\label{rem.0-trivial}
Whether or not in Definition~\ref{defn.nalip} one considers val-chains consisting of a single point (i.e., with $m = 0$) does not make a difference, since in that case, (vm1) is trivially true (since the right hand side is $0$).
\end{rem}

This is the notion of stratification we will use in the main proof in this paper, i.e., we will prove the existence of valuative Lipschitz stratifications. We will do this not only for \LT-definable sets $X$, but also for sets definable with additional parameters from $\bmdl$. By usual compactness arguments, this implies a family version of the result, and that in turn implies Theorem~\ref{thm.main} about the existence of uniformly Lipschitz stratifications. The details of these implications are given at the end of this subsection.

\begin{thm}[valuative Lipschitz stratifications]\label{thm.main-na}
Suppose that $\bmdl$ is a real closed field which is \omin-minimal and power-bounded as a structure in a language $\LT$, and suppose that $\LTv$ is an expansion of $\LT$ by
a predicate for the convex closure of an elementary substructure $\smdl \precneqq \bmdl$ (so $\bmdl$ is $T$-convex
in the sense of \cite{DriesLew95}).
Suppose that $X \subseteq \bmdl^n$ is a closed, \LTA-definable set for some parameter set $A \sub \bmdl$. Then there exists an \LTA-definable valuative Lipschitz stratification of $X$.
\end{thm}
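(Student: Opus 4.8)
The plan is to construct the stratification by induction on $d = \dim X$, keeping everything $\LTA$-definable throughout (all operations used — o-minimal cell decomposition, taking non-manifold loci, and the bad loci produced by Corollary~\ref{cor.jac} — introduce no new parameters). For $d = 0$ the set $X$ is finite, $X^0 := X$ is the only stratification, every val-chain has $m = 0$, and the valuative Mostowski conditions hold trivially (Remark~\ref{rem.0-trivial}). For the inductive step, let $Z \subsetneq X$ be a closed $\LTA$-definable set of dimension $< d$ containing the non-manifold locus of $X$ together with an additional ``bad set'' to be specified below, chosen so that $\mathring X^d := X \setminus Z$ is a $C^1$-submanifold of $\bmdl^n$ of dimension $d$ and so that the frontier conditions of a stratification can be met after a routine further refinement. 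Applying the induction hypothesis to $Z$ gives an $\LTA$-definable valuative Lipschitz stratification $(X^0 \subseteq \dots \subseteq X^{d-1})$ of $Z$, and we set $\mdl X := (X^0 \subseteq \dots \subseteq X^{d-1} \subseteq X^d := X)$. By Remark~\ref{rem.sub-val-chain}, any val-chain whose top point lies in some $\mathring X^i$ with $i < d$ is already a val-chain in the stratification of $Z$, so the only conditions left to verify are (vm1) and (vm2) for val-chains $a^0, \dots, a^m$ with $a^0 \in \mathring X^d$.

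The first step is to reduce these to val-chains of length $m = 1$. Using that every sub-sequence $a^0, a^\ell, \dots, a^m$ of a val-chain is again a val-chain (Remark~\ref{rem.sub-val-chain}), that $\norm{P_{a^i}} \le 1$, and that the distances of a val-chain are \emph{strictly} decreasing, $\lambda_1 > \lambda_2 > \dots > \lambda_{m+1}$, a standard telescoping manipulation expresses $(1 - P_{a^0})P_{a^1}\cdots P_{a^m}$, resp.\ $(P_{a^0} - P_{a^1})P_{a^2}\cdots P_{a^m}$, as a sum of products each governed by (vm1) at a shorter plain sub-val-chain or by (vm2) at an augmented sub-val-chain, the dominant term reproducing the bound $\lambda_1 - \lambda_{m+1}$. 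Hence it suffices to establish two length-$1$ estimates. First, for a plain val-chain $a^0, a^1$ (so $a^0 \in \mathring X^{e_0}$, $a^1 \in \mathring X^{e_1}$, $e_1 < e_0$ and $\vv(a^0 - a^1) = \valdist(a^0, X^{e_0-1})$):
\[
\vv\bigl((1 - P_{a^0})P_{a^1}\bigr) \ge \valdist(a^0, X^{e_0 - 1}) - \valdist(a^0, X^{e_1 - 1}).
\]
Second, for an augmented val-chain $a^0, a^1$ (so $a^0, a^1 \in \mathring X^{e_0}$ and $\vv(a^0 - a^1) > \valdist(a^0, X^{e_0-1})$):
\[
\vv\bigl(P_{a^0} - P_{a^1}\bigr) \ge \vv(a^0 - a^1) - \valdist(a^0, X^{e_0 - 1}).
\]

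Both are statements about a single stratum $\mathring X^{e_0}$: the second says the Gauss map $a \mapsto \bm T_a \mathring X^{e_0}$ is valuatively Lipschitz, with modulus improving as $a$ approaches $X^{e_0-1}$; the first compares the tangent space of $\mathring X^{e_0}$ at $a^0$ with that of the lower-dimensional stratum $\mathring X^{e_1}$ in its closure. To prove them, fix a base point and, using that the valuative quantities occurring are invariant under coordinate changes in $\GL_n(\valring)$ (Lemma~\ref{lem.vvM}(3); this is also the point of the $\GL_n$-invariant flag characterization, Proposition~\ref{prop.flag-lip}), apply such a change so that $\mathring X^{e_0}$ is locally the graph of a $C^1$-map $\varphi$ over an open subset of $\bmdl^{e_0}$ with the subspace directions adapted to the skeleton $X^{e_0-1}$. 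In these coordinates $P_a$ is a fixed rational function of the entries of $\Jac_a\varphi$, so the two inequalities reduce, via Lemma~\ref{lem.vvM}, to a lower bound on $\vv(\Jac_a\varphi - \Jac_{a'}\varphi)$ in terms of $\vv(a - a')$ and the valuative distance to the singular locus of the situation — which is exactly the gradient estimate of Corollary~\ref{cor.jac} and Remark~\ref{rem.jac}, resting in turn on the Weierstra\ss{}-type Proposition~\ref{prop.jac} for $T$-convex fields. The lower-dimensional $\LTA$-definable locus where the hypotheses of Corollary~\ref{cor.jac} fail — taken over all pairs of skeletons — is precisely the ``bad set'' that went into $Z$.

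The main obstacle is this last point: the length-$1$ estimate at the top level refers to the deeper skeletons $X^{e_1-1}$, so $Z = X^{d-1}$ cannot be chosen before knowing how $Z$ will later be stratified, while the bad sets added when stratifying $Z$ must not spoil the top-level estimates. The remedy is to run, at each dimension, a closure procedure: start with $Z$ equal to the non-manifold locus, stratify it by induction, then add to $Z$ the bad loci produced by Corollary~\ref{cor.jac} relative to the current lower skeletons, re-stratify, and iterate; since each round strictly lowers the dimension of the portion being modified, the procedure is an $\LTA$-definable descent and terminates after finitely many rounds. The remaining delicate issue is purely quantitative — extracting from Corollary~\ref{cor.jac} the sharp valuative exponent $\lambda_1 - \lambda_{m+1}$ (not merely a qualitative bound) and checking it survives the telescoping and the passage to $\GL_n(\valring)$-normalized coordinates; here Lemma~\ref{lem.vvM} reduces everything to routine ultrametric bookkeeping.
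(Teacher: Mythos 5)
The central step of your argument --- the reduction of (vm1) and (vm2) for arbitrary val-chains to the two length-one estimates --- does not work, and this is not a repairable detail but the heart of the matter. First, your appeal to Remark~\ref{rem.sub-val-chain} is a misreading: that remark only produces \emph{contiguous} sub-sequences $(a^\ell)_{k \le \ell \le m'}$. A sequence $a^0, a^\ell, \dots, a^m$ that skips intermediate points is in general \emph{not} a val-chain, since $\vv(a^0 - a^\ell) = \lambda_\ell$, whereas a val-chain starting at $a^0$ and jumping directly into $\mathring X^{e_\ell}$ would need $\vv(a^0 - a^\ell) = \valdist(a^0, X^{e_0 - 1}) = \lambda_1 > \lambda_\ell$. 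Second, and more fundamentally, no telescoping can produce the bound $\lambda_1 - \lambda_{m+1}$ from the length-one bounds: the target is the \emph{sum} $\sum_{k=1}^{m}(\lambda_k - \lambda_{k+1})$ of the consecutive gaps, while any ultrametric estimate on a sum of products of the pairwise-controlled quantities yields only a \emph{minimum} of such gaps. Concretely, for $m = 2$ the available length-one inputs are $\vv((1-P_{a^0})P_{a^1}) \ge \lambda_1 - \lambda_2$ and $\vv((1-P_{a^1})P_{a^2}) \ge \lambda_2 - \lambda_3$; their product vanishes identically (as $P_{a^1}(1-P_{a^1}) = 0$), and the decomposition $(1-P_{a^0})P_{a^1}P_{a^2} = -(1-P_{a^0})(1-P_{a^1})P_{a^2} + (1-P_{a^0})P_{a^2}$ leaves a first term controlled only to order $\lambda_2 - \lambda_3$ and a second term governed by no val-chain condition at all; both fall short of $\lambda_1 - \lambda_3$. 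The multi-point Mostowski conditions carry genuine information beyond the pairwise ones --- this is precisely why Mostowski's definition involves chains in the first place.

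Consequently the single-stratum estimates you propose to extract from Corollary~\ref{cor.jac} are too weak to conclude. The paper never reduces to $m = 1$: it verifies, for each full val-chain, the existence of a triangular array of subspaces $V_{k,\ell}$ of the tangent spaces (Proposition~\ref{prop.flag-lip} and Lemma~\ref{lem.flags}), and the key analytic input, Lemma~\ref{lem.key}, is the bound $\vv(\partial_i \rho^{0\flat}(\bar a^{\flat}) - \partial_i \rho^{1\flat}\U{\star}(\bar b^{\flat})) \ge \lambda_1 - \lambda_{m+1}$ for $i \le e_m$ only, whose right-hand side sees the distance to the \emph{lowest} skeleton $X^{e_m-1}$ occurring in the chain. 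Obtaining this requires the sedation machinery of Proposition~\ref{prop.sedate}: bounds on the partial derivatives in the first $e_m$ directions, measured against the distance to a removed set inside the projection $\pr_{\le e_m}$, with correction factors $\sigma_\ell$ recording all intermediate dimensions $e_1 > \dots > e_m$, proved by an induction on the length of the chain that has no counterpart in your proposal. Corollary~\ref{cor.jac} by itself only bounds the full gradient by the distance to a bad set in the ambient domain, and that bound degenerates exactly at the points near $X^{e_m - 1}$ where the stronger estimate is needed.
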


The notion of a valuative Lipschitz stratification is just a reformulation of Proposition~\ref{prop.new-lip} (2) using the valuation, as we shall see below.
To provide a
similar reformulation of
Proposition~\ref{prop.new-lip} (3), we introduce ``weak val-chains''.
(Those are only used here and in the next subsection.)
Roughly, a weak val-chain is the same as a val-chain, except that additional intermediate points in skeletons of intermediate dimensions are allowed.

\begin{defn}
A \emph{weak val-chain} (plain or augmented) is the same as a val-chain,
except that the (strict) inequality (\ref{eq.nai}) is replaced by a weak one:
\begin{equation}
\vv(a^0 - a^{\ell})
\ge \valdist(a^0, X^{e_\ell - 1}).
 \label{eq.nawi}
\end{equation}
The val-chains from Definition~\ref{defn.na} will sometimes be called \emph{strict val-chains}, to emphasize the difference. The dimensions $e_\ell$, the distances $\lambda_\ell$ and the valuative Mostowski Conditions are defined in the same way as for strict val-chains.
\end{defn}

\begin{rem}\label{rem.wna}
In fact, imposing (\ref{eq.nawi}) is necessary only for $\ell = 1$ in augmented val-chains; in all other cases, (\ref{eq.nawi}) follows from
(\ref{eq.nae}) and $X^{e_{\ell-1}-1} \supseteq X^{e_\ell - 1}$.
\end{rem}

\begin{rem}
For weak val-chains, we only have weak inequalities
$\lambda_1 \ge \dots \ge \lambda_{m+1}$, and a weak val-chain is
strict iff all those inequalities between the $\lambda_i$ are strict.
\end{rem}

\begin{prop}\label{prop.new-lip-na}
Suppose that $\mdl X$ is an \LT-definable family of stratifications (of an \LT-definable family $X$ of subsets of $\bmdl^n$), parametrized by $q \in Q$ for some \LT-definable
$Q \sub \bmdl^N$.
Then following conditions are equivalent:
\begin{enumerate}
\item[(2)] Condition (2) of Definition~\ref{defn.lip-unif}.
\item[(3)] Condition (3) of Definition~\ref{defn.lip-unif}.
\item[(2')] For each $q$, $\mdl X_q$ is a valuative Lipschitz stratification (in the sense of Definition~\ref{defn.nalip}).
\item[(3')]
For each $q$, $\mdl X_q$
satisfies the valuative Mostowski Condition at every weak val-chain.
\end{enumerate}
\end{prop}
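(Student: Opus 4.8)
The four conditions will be linked by $(2)\Leftrightarrow(2')$, $(3)\Leftrightarrow(3')$, $(3')\Rightarrow(2')$ and $(2')\Rightarrow(3')$. The implication $(3')\Rightarrow(2')$ is immediate: a strict val-chain is a special case of a weak one (namely one in which the inequalities $\lambda_1\ge\dots\ge\lambda_{m+1}$ all happen to be strict), and the valuative Mostowski condition is literally the same statement in both cases; so there is nothing to prove. The two ``translations'' $(2)\Leftrightarrow(2')$ and $(3)\Leftrightarrow(3')$ are the standard non-standard-analysis passage. They use three ingredients: the monotonicity of Conditions~(2),(3) of Definition~\ref{defn.lip-unif} in $c$ and $C$ (which lets us read each of them as ``the Mostowski Conditions hold in the limit of large $c$ and large $C$''); the fact that Conditions~(2),(3) are first-order in \LT, hence equivalent to themselves over the elementary substructure $\smdl$; and the saturation of $\bmdl$ (Assumption~\ref{ass.sat}), which converts a family of bad chains ranging over $\smdl$ and $Q(\smdl)$ into a single bad val-chain in one fibre $\mdl X_q$, $q\in Q(\bmdl)$, and conversely.

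The only real content of the translations is the following bookkeeping. For a \emph{fixed} standard $c$, a sequence $a^0,\dots,a^m$ with $a^\ell\in\mathring X^{e_\ell}_q$, $e_0>\dots>e_m$, is a plain chain for $(c,c,C,C,C)$ \emph{for every} standard $C$ if and only if it is a strict plain val-chain in the sense of Definition~\ref{defn.na}: Condition~(1) of Definition~\ref{defn.most} with $c$ standard forces the ratio $\norm{a^0-a^\ell}/\dist(a^0,X^{e_\ell})$ to be a unit, i.e. $\vv(a^0-a^\ell)=\valdist(a^0,X^{e_\ell})$, while Condition~(2) with $C'$ running over all standard values forces the strict valuative jumps $\valdist(a^0,X^{e_\ell-1})<\valdist(a^0,X^{e_\ell})$ at the indices $e_\ell$ and the absence of such jumps at the remaining $e_m\le i<e_0$ -- which is exactly the characterization of a strict val-chain in Remark~\ref{rem.val-chain-dim}. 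Via $\vv(M)=\vv(\norm M)$ (Lemma~\ref{lem.vvM}(4)), ``(m1) fails for every standard $C$'' becomes ``$\vv\big((1-P_{a^0})P_{a^1}\cdots P_{a^m}\big)<\lambda_1-\lambda_{m+1}$'', i.e. the negation of (vm1); augmented chains and (m2)/(vm2) are handled the same way. Using monotonicity in $C$ and a saturation/compactness argument one then shuffles the quantifiers and obtains $(2)\Leftrightarrow(2')$. For Condition~(3) the slot $C'=1$ makes the first alternative of Definition~\ref{defn.most}(2) vacuous, so the identical computation with $(c,c,1,\tfrac1c,C)$ in place of $(c,c,C,C,C)$ yields \emph{weak} val-chains (no jump requirement) rather than strict ones, giving $(3)\Leftrightarrow(3')$.

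The substance of the proposition is $(2')\Rightarrow(3')$: granting that $\mdl X_q$ satisfies the valuative Mostowski condition at every strict val-chain, verify it at every weak one. I would argue by induction on the length $m$ of the weak val-chain $(a^\ell)_{0\le\ell\le m}$. If its distances satisfy $\lambda_1>\dots>\lambda_{m+1}$ the chain is strict and we are done. (Also note: since each $P_{a^\ell}$ and $1-P_{a^0}$ has operator norm $\le 1$, Lemma~\ref{lem.vvM} gives $\vv$ of any product of them $\ge 0$; so when $\lambda_1=\lambda_{m+1}$ both sides are already in order.) Otherwise pick $k$ with $\lambda_k=\lambda_{k+1}$, largest with this property. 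Using $(\ref{eq.nae})$ one checks that deleting $a^k$ yields a weak val-chain $(a^\ell)_{\ell\ne k}$ of length $m-1$ with first distance $\lambda_1$ and last distance $\lambda_{m+1}$, so by induction $\vv\big((1-P_{a^0})P_{a^1}\cdots P_{a^{k-1}}P_{a^{k+1}}\cdots P_{a^m}\big)\ge\lambda_1-\lambda_{m+1}$. Inserting $\mathbf 1=P_{a^k}+(\mathbf 1-P_{a^k})$ between $P_{a^{k-1}}$ and $P_{a^{k+1}}$ writes the desired product $(1-P_{a^0})P_{a^1}\cdots P_{a^m}$ as the difference of this just-estimated product and the ``error'' $(1-P_{a^0})P_{a^1}\cdots P_{a^{k-1}}(\mathbf 1-P_{a^k})P_{a^{k+1}}\cdots P_{a^m}$, so by the ultrametric inequality it suffices to bound $\vv(\text{error})\ge\lambda_1-\lambda_{m+1}$. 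Now $\text{error}$ factors as a head $(1-P_{a^0})P_{a^1}\cdots P_{a^{k-1}}$ -- a (vm1)-quantity for the weak val-chain $a^0,\dots,a^{k-1}$, whose last distance is $\valdist(a^0,X^{e_{k-1}-1})=\lambda_k$ by $(\ref{eq.nae})$, hence of valuation $\ge\lambda_1-\lambda_k$ by induction -- times a tail $(\mathbf 1-P_{a^k})P_{a^{k+1}}\cdots P_{a^m}$. If $\lambda_k=\lambda_{m+1}$ the tail has valuation $\ge 0=\lambda_k-\lambda_{m+1}$ and we are done. If $\lambda_k>\lambda_{m+1}$, then $a^k,a^{k+1},\dots,a^m$ is (after re-basing at $a^k$, using $\vv(a^0-a^k)=\lambda_k$ and the base-change identities of Remark~\ref{rem.sub-val-chain}) a weak val-chain with first distance $\vv(a^k-a^{k+1})\ge\lambda_k$ and last distance $\valdist(a^k,X^{e_m-1})=\lambda_{m+1}$, so the tail has valuation $\ge\lambda_k-\lambda_{m+1}$ by induction. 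Adding, $\vv(\text{error})\ge(\lambda_1-\lambda_k)+(\lambda_k-\lambda_{m+1})=\lambda_1-\lambda_{m+1}$, as needed; augmented weak val-chains are treated identically.

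\textbf{Main obstacle.}
The delicate point is making the last step of $(2')\Rightarrow(3')$ rigorous: one must ensure that all sub-configurations produced by deleting $a^k$ -- in particular the tail $a^k,a^{k+1},\dots,a^m$ re-based at $a^k$ -- are genuinely weak val-chains with the distances claimed. For the inner steps this follows from Remark~\ref{rem.sub-val-chain} because $\lambda_k>\lambda_\ell$ there; but the first step requires $a^{k+1}$ to be valuatively a closest point of $X^{e_k-1}$ \emph{to $a^k$}, whereas the definition of a val-chain only gives that it is closest to $a^0$, so one may first have to replace the tail by a nearby honest val-chain (the coincidence $\lambda_k=\lambda_{k+1}$ pins down the shape of $j\mapsto\valdist(a^0,X^j)$ near $e_k$ -- it is locally constant -- which is what makes such a replacement possible). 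A secondary subtlety, in the translation $(2)\Leftrightarrow(2')$, is that the $C$ witnessing Condition~(2) is allowed to be ``infinite'', so the equivalence genuinely needs saturation; the right way to think about it is that $(2')$ is precisely the uniform-in-$q$ reformulation of $(2)$ that renders the quantifier order in $(2)$ harmless.
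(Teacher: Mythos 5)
Your argument is essentially the paper's own proof: the trivial implication $(3')\Rightarrow(2')$, the two translations via the standard ``for all $c$ there exists $C$'' versus ``valuation $\ge 0$'' dictionary (which the paper isolates as Lemma~\ref{lem.nsa} and applies to explicit functions $f,g$ encoding the chain conditions and the failure of (m1)/(m2)), and for $(2')\Rightarrow(3')$ the identical induction on $m$ with the decomposition $QP_{a^k}Q'=QQ'-Q(1-P_{a^k})Q'$ and the same three inductively obtained bounds. The ``main obstacle'' you identify---that the re-based tail $a^k,\dots,a^m$ must itself be a weak val-chain, which needs $a^{k+1}$ to be valuatively closest to $a^k$ in $X^{e_k-1}$ rather than to $a^0$ (Remark~\ref{rem.sub-val-chain} only covers the strict case)---is genuine, but the paper's proof invokes ``the inductive hypothesis for $a^\ell,\dots,a^m$'' at exactly the same spot without further comment, so on this point your write-up is no less complete than the published one.
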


Note that for the implications ($x$) $\Rightarrow$ ($x$') to hold ($x = 2,3$), it is essential that $\mdl X$ is \LT-definable without parameters outside of $\smdl$; cf.\ Remark~\ref{rem.only-R0} below.
However, the implications ($x$') $\Rightarrow$ ($x$) seem to hold even for \LTA-definable $\mdl X$, where $A \sub \bmdl$. (We did not check the details.)

\private{
Here is a sketch:

First, prove that one use even weaker val-chains, which are allowed to skip skeletons. This seems to be easiest to prove using the characterization from \ref{prop.flag-lip}.

Moreover, prove that it suffices to consider chains satisfying $\vv(\norm{a^0 - a^\ell} - \dist(a^0, X^{e_\ell})) > \vv(\norm{a^0 - a^\ell}$. I'm not so sure how to do that.

Translating the notion of valuative chains obtained in this way into a non-valuative notion yields a condition using only $C$ (and no $c$ at all).
Then the implication ($x$') $\Rightarrow$ ($x$) becomes trivial.
}

\begin{exam}\label{exam.cone}
If $X \subseteq \bmdl^3$ is the cone defined by $r^2x^2 = y^2 + z^2$ for some $r \in \bmdl$ of strictly positive valuation, then $X^0 = X^1 = \{(0,0,0)\}$ defines a Lipschitz stratification of $X$, but not a valuative Lipschitz stratification; see Figure~\ref{fig.cone}.
\end{exam}

\begin{figure}
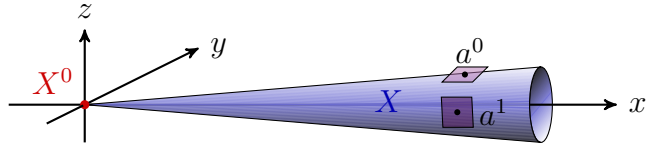

\begingroup
\def\co{white!70!black!60!blue}
\begin{ctikzpicture}
  \draw[preaction={bottom color=white,top color=black,middle color=\co,opacity=.7,shading angle=20}] (6,0) ellipse (0.15 and 0.5);

  \draw[axis arrow] (-1,0) -- (7,0) node[right]{$x$};
  \draw[axis arrow] (0,-.5) -- (0,1) node[above]{$z$};
  \draw[axis arrow] (0,0) -- (1.5,.75) node[right]{$y$};

  \def\conepath{(0,0) -- (6,.5) arc (90:270: 0.15 and 0.5) -- cycle}
  \fill[color=white,opacity=.4] \conepath;

  \begin{scope}
    \clip (0,0) rectangle (7,.6);
    \shade[bottom color=black,top color=white,middle color=\co,opacity=.7, shading angle=5] \conepath;
  \end{scope}
  \begin{scope}
    \clip (0,0) rectangle (7,-.6);
    \shade[bottom color=black,top color=white,middle color=\co,opacity=.7, shading angle=-5] \conepath;
  \end{scope}
  \draw[line join=round] \conepath;

  \node[S0text] at (4,.05) {$X$};

  \draw[preaction={fill,color=white!10!black!50!violet,opacity=.5}]
    (4.71,-.3) -- (5.11,-.3) -- (5.09,.1) -- (4.69,.1) -- cycle;
  \fill(4.9,-.1) circle (.04) node[right] {$\,\,a^1$};

  \draw[preaction={fill,color=white!90!black!50!violet,opacity=.5}]
    (4.7,.3) -- (5.1,.3) -- (5.3,.5) -- (4.9,.5) -- cycle;
  \fill(5,.4) circle (.04) node[above] {$\,\,a^0$};

  \fill[S2set] (0, 0) circle (.06) node[S2text,anchor=south east] {$X^0$};
  \draw[axis noarrow] (-.5,-.25) -- (-.02,-.01);

\end{ctikzpicture}
\endgroup
\caption{This is a Lipschitz stratification but not a valuative Lipschitz stratification,
since the two tangent spaces of the augmented val-chain $a^0 = (1, 0, r), a^1 = (1,r,0)$ are too far apart from each other; see Example~\ref{exam.cone}.}
\label{fig.cone}
\end{figure}

As promised, here is the precise argument on how to deduce Theorem~\ref{thm.main} from Theorem~\ref{thm.main-na} and Proposition~\ref{prop.new-lip-na}.

\begin{proof}[Proof of Theorem~\ref{thm.main}]
Let an \LT-definable family $X$ of closed $d$-dimensional subsets of $\bmdl^n$ be given (parametrized by $q \in Q$);
we would like to find a family $\mdl X$ of uniformly Lipschitz stratifications (Defintion~\ref{defn.lip-unif}) of $X$. By Proposition~\ref{prop.new-lip-na}, this is equivalent to $\mdl X_q$ being a valuative Lipschitz stratification for each $q \in Q$.

For each $q \in Q$, Theorem~\ref{thm.main-na} provides an
\LTq-definable valuative Lipschitz stratification $\mdl X_q$ of
$X_q$. By a standard compactness argument, we may assume that those $\mdl X_q$ are definable uniformly in $q$, i.e., that they are the fibers of an \LT-definable family $\mdl X$ of stratifications, as desired.

The details of the compactness argument are as follows.
For each $\hat q\in Q$, there exist \LT-formulas $\phi^i_{\hat q}(x, y)$ ($0 \le i \le d$) such that 
the $\phi^i_{\hat q}(x, \hat q)$ define a valuative Lipschitz stratification of $X_{\hat q}$. Fix one $\hat q$ and consider the set $U_{\hat q}$
of those $q \in Q$ such that $(\phi^i_{\hat q}(x, q))_i$ defines a valuative Lipschitz stratification of $X_{q}$. Since being a valuative Lipschitz stratification is a first order property, 
$U_{\hat q}$ is \LT-definable. Finitely many sets $U_{\hat q_1}, \dots, U_{\hat q_\ell}$ suffice to cover $Q$, since otherwise, the complements $Q \setminus U_{\hat q}$ would form a partial type, which is satisfied by some $q_0 \in Q$ (since $\bmdl$ is sufficiently saturated by Assumption~\ref{ass.sat}), contradicting $q_0 \in U_{q_0}$.
Now use the formulas $\phi^i_{\hat q_1}(x, y), \dots, \phi^i_{\hat q_\ell}(x, y)$ to define $\mdl X$; more precisely, given $q \in Q$, let $X^i_q$ be defined by $\phi^i_{\hat q_j}(x, q)$, where $j$ is minimal with $q \in U_{\hat q_j}$.
\end{proof}

\subsection{Equivalence of various definitions}
\label{sect.transl}

We will now prove Proposition~\ref{prop.new-lip-na}.
More precisely, we will prove the following implications:
\begin{equation}\label{eq.new-lip-na}
\begin{array}{ccc}
(2) & \iff & (2')\\
 & & \Downarrow\\
(3) & \iff & (3')
\end{array}
\end{equation}
Note that the right hand $\Uparrow$ is trivial, and anyway,
we already proved $\Uparrow$ on the left hand side.
Both horizontal $\iff$ are simple applications of a standard method from non-standard analysis which we recall now:

\begin{lem}[Translating: with/without valuation]\label{lem.nsa}
	Suppose that $Z$ is \LT-definable and that $f, g\colon Z \fun {\bmdl}_{\ge 0}$ are two \LT-definable functions.
	Then the following are equivalent:
	\begin{enumerate}
		\item For every $c \in \bmdl_{\ge 0}$, there exists $C \in \bmdl_{\ge 0}$
		such that for every $z \in Z$, $f(z) \le c$ implies $g(z) \le C$.
		\item
		For every $z \in Z$, $\vv(f(z)) \ge 0$ implies
		$\vv(g(z)) \ge 0$.
	\end{enumerate}
\end{lem}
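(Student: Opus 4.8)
The plan is to prove both directions by a direct translation between the ``$\forall c\,\exists C$'' formulation and the valuative one, using saturation (Assumption~\ref{ass.sat}) together with the fact that $\valring$ is the convex closure of the elementary substructure $\smdl$. Recall that for $a\in\bmdl_{\ge 0}$ we have $\vv(a)\ge 0$ precisely when $a\in\valring$, i.e.\ when $a\le b$ for some $b\in\smdl$; equivalently, $\vv(a)<0$ means $a>b$ for all $b\in\smdl$, i.e.\ $a$ is ``infinite''. So condition (2) says: whenever $f(z)$ is finite (bounded by some element of $\smdl$), then $g(z)$ is finite as well.

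For the direction (1)~$\Rightarrow$~(2): suppose (1) holds and fix $z\in Z$ with $\vv(f(z))\ge 0$, so $f(z)\le c_0$ for some $c_0\in\smdl_{\ge 0}$. Since $\smdl\precneqq\bmdl$ is an \LT-elementary substructure and the statement ``$\exists C\,\forall z\in Z\,(f(z)\le c_0\to g(z)\le C)$'' is an \LT-formula with parameter $c_0\in\smdl$, it already holds in $\smdl$; hence there is a witness $C_0\in\smdl_{\ge 0}$, and then $g(z)\le C_0$ gives $\vv(g(z))\ge 0$. (Strictly, one should note that $f,g,Z$ are \LT-definable without parameters outside $\smdl$, which is exactly the hypothesis; this is the only place that hypothesis is used.)

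For the direction (2)~$\Rightarrow$~(1): suppose (1) fails, so there is a fixed $c\in\bmdl_{\ge 0}$ such that for every $C\in\bmdl_{\ge 0}$ there is some $z=z_C\in Z$ with $f(z_C)\le c$ but $g(z_C)>C$. I would enlarge $c$ if necessary so that $c\in\smdl$; this is harmless, since if no such $c\in\smdl$ works then no $c\in\bmdl$ works for that same reason, but in fact it is cleaner to argue as follows. Consider the partial type $p(z)$ over $\smdl\cup\{c\}$ asserting $z\in Z$, $f(z)\le c$, and $g(z)\ge b$ for every $b\in\smdl$. Each finite subset of $p$ is realized (using finitely many $b$'s, take $C=\max b$ and the corresponding $z_C$, noting $f(z_C)\le c$), so by saturation of $\bmdl$ there is $z^\ast\in\bmdl$ realizing $p$. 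Then $\vv(f(z^\ast))\ge 0$ since $f(z^\ast)\le c\in\smdl$, while $\vv(g(z^\ast))<0$ since $g(z^\ast)$ exceeds every element of $\smdl$; this contradicts (2). To handle the ``$c\in\smdl$'' point: replace $c$ by any $c'\in\smdl$ with $c'\ge c$, which exists only if $c$ is finite — and if $c$ is infinite then the hypothesis of (1) is vacuous for that $c$, so (1) would trivially hold, contradiction; hence $c$ is finite and such $c'$ exists, and $f(z)\le c$ implies $f(z)\le c'$, so we may assume $c\in\smdl$ from the start.

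The main obstacle is purely bookkeeping rather than conceptual: one must be careful that the parameters $c$ (resp.\ the threshold elements $b$) used in the compactness/elementarity arguments genuinely lie in $\smdl$, and that $f$, $g$, $Z$ carry no parameters outside $\smdl$, since otherwise neither the elementary-substructure step nor the saturation step applies cleanly. Everything else is a routine unwinding of the definition of $\valring$ as the convex hull of $\smdl$.
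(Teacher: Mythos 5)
Your direction (1)~$\Rightarrow$~(2) is correct and is essentially the paper's argument. The problem is in (2)~$\Rightarrow$~(1), specifically in your reduction to $c\in\smdl$. You claim that if $c$ is infinite (i.e.\ $\vv(c)<0$) then ``the hypothesis of (1) is vacuous for that $c$, so (1) would trivially hold.'' This is backwards: for larger $c$ the hypothesis $f(z)\le c$ is satisfied by \emph{more} points $z$, so the statement ``$\exists C\,\forall z\,(f(z)\le c\to g(z)\le C)$'' becomes \emph{harder}, not trivial. Since (1) quantifies over all $c\in\bmdl_{\ge 0}$, including infinite ones, your argument simply does not cover the case where (1) fails only at some infinite $c$; condition (2) gives no information about points $z$ with $f(z)\le c$ when $\vv(c)<0$, so neither your type argument nor any direct argument of this shape can close that case. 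The paper handles it by a different move: statement (1) as a whole is an $\LT$-sentence, hence holds in $\bmdl$ iff it holds in $\smdl$, where $c$ ranges only over $\smdl$; so it suffices to produce, for each $c\in\smdl_{\ge 0}$, a witness $C$ in $\bmdl$ (any $C$ of negative valuation works by (2)), and elementarity does the rest. Some such transfer of the full $\forall c\,\exists C$ sentence is needed; your proof is missing it.

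Two secondary remarks on the finite-$c$ case. First, the compactness detour is unnecessary: if (1) fails at a finite $c$, just take a single $C$ with $\vv(C)<0$ and the corresponding witness $z_C$; then $f(z_C)\le c$ gives $\vv(f(z_C))\ge 0$ while $g(z_C)>C$ gives $\vv(g(z_C))<0$, contradicting (2) directly. Second, as written your type $p(z)$ contains one formula $g(z)\ge b$ for each $b\in\smdl$, so realizing it needs $(|\smdl|+|\LTv|)^+$-saturation, whereas the paper only assumes $|\LTv|^+$-saturation; this is another reason to avoid the saturation argument here.
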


\begin{proof}
Statement (1) is an \LT-sentence in $\bmdl$, so it is equivalent to the same sentence in $\smdl$; we will use this version of (1). For the proof of this lemma, we assume without loss that all elements of $\smdl$ are constants of $\LT$.

(1) $\Rightarrow$ (2): Let $z_0 \in Z$ be given such that
$\vv(f(z_0)) \ge 0$. Then $f(z_0) \le c$ for some $c \in \smdl$
(by definition of the valuation). By (1) in $\smdl$, there exists a $C \in (\smdl)_{\ge 0}$ such that
$\smdl \models \forall z \in Z: (f(z) \le c \rightarrow g(z) \le C)$. This sentence also holds in $\bmdl$ (where $c, C$ are considered as constants from $\LT$), hence $f(z_0) \le c$ implies
$g(z_0) \le C$. This in turn implies $\vv(g(z_0)) \ge 0$.

(2) $\Rightarrow$ (1): Let
$c \in (\smdl)_{\ge 0}$ be given. We consider
``$\exists C: \forall z \in Z: (f(z) \le c \rightarrow g(z) \le C)$''
as a sentence where $c$ is a constant from $\LT$; it suffices
to prove that this sentence holds in $\bmdl$. But indeed:
since $f(z) \le c$ implies $\vv(f(z)) \ge 0$,
we have $\vv(g(z)) \ge 0$, so we can take any $C \in \bmdl_{\ge 0}$ of negative valuation.
\end{proof}

\begin{rem}\label{rem.only-R0}
For this lemma to be true, it is important that $Z$, $f$ and $g$ are definable using parameters only from $\smdl$.
\end{rem}

\begin{rem}\label{rem.nsa0}
An easy special case of Lemma~\ref{lem.nsa} is the one with $f = 0$: An \LT-definable function $g\colon Z \fun \bmdl$ is bounded iff it satisfies $\vv(g(z)) \ge 0$ for all $z \in Z$.
\end{rem}

\begin{proof}[Proof of Proposition~\ref{prop.new-lip-na}, (2) $\iff$ (2')]
This is just a straight-forward application of Lemma~\ref{lem.nsa}. The details are as follows.

Let the family $\mdl X = (X^i)_i$ of stratifications be fixed
(parametrized by $q \in Q$), and let $Z$ be the
set of all tuples
$z$ of the form $(q, (a^\ell)_{0 \le \ell \le m})$, with $q \in Q$, $a^\ell \in \mathring X_q^{e_\ell}$, $e_0 \ge e_1 > e_2 > \dots > e_m$,
and $m \ge 1$. (We consider $Z$ as an \LT-definable set.)
Given $c, C \in \bmdl$,
such a $z \in Z$ witnesses that our family $\mdl X$ violates the Mostowski conditions for $(c, c, C, C, C)$ if
\begin{itemize}
  \item
  $(a^\ell)_{0 \le \ell \le m}$ is a chain in $\mdl X_q$ (either plain or augmented), i.e.:
  \begin{align}
  \frac{\norm{a^0 - a^\ell}}{\dist(a^0, X_q^{e_\ell})} &< c
  \qquad\text{for } \ell =
  \begin{cases}
   1, \dots, m  &\text{if } e_0 > e_1\\
   2, \dots, m  &\text{if } e_0 = e_1\\
  \end{cases}
  \label{eq.c1}
  \\
  \frac{\dist(a^0, X_q^{i - 1})}{\dist(a^0, X_q^{i})} &\ge C
  \qquad\text{for } i \in
  \begin{cases}
   \{e_1, \dots, e_m\}  &\text{if } e_0 > e_1\\
   \{e_2, \dots, e_m\}  &\text{if } e_0 = e_1\\
  \end{cases}
  \label{eq.C1}
	  \\
  \frac{\dist(a^0, X_q^{i - 1})}{\dist(a^0, X_q^{i})} &< c
  \qquad\text{for }
 e_m \le i \le e_0,
  i \notin \{e_0, \dots, e_m\}
  \label{eq.c2}
  \\
  \frac{\dist(a^0, X_q^{e_1 - 1})}{\norm{a^0 - a^1}} &\ge C
  \qquad \text{in the case $e_0 = e_1$}
  \label{eq.C2}
  \end{align}
  \item and either (m1) or (m2) is violated:
  \begin{align}
  \frac{\norm{(1 - P_{a^0})P_{a^1} \ldots P_{a^m}}\dist(a^0, X_q^{e_m - 1})}{\norm{a^0 - a^1}} &\ge C
  \qquad \text{in the case $e_0 > e_1$}
  \label{eq.C3}
  \\
  \frac{\norm{(P_{a^0} - P_{a^1})P_{a^2} \ldots P_{a^m}}\dist(a^0, X_q^{e_m - 1})}{\norm{a^0 - a^1}} &\ge C
  \qquad \text{in the case $e_0 = e_1$}.
  \label{eq.C4}
  \end{align}
\end{itemize}
Define $f(z)$ to be the maximum of all the left hand sides of (\ref{eq.c1}) and (\ref{eq.c2}) (for all $\ell$ and $i$) and
$g(z)$ to be the minimum of all the (relevant) left hand sides of
(\ref{eq.C1}), (\ref{eq.C2}), (\ref{eq.C3}), (\ref{eq.C4}).
Then Condition~(2) of Definition \ref{defn.lip-unif} is exactly (1) of Lemma~\ref{lem.nsa}, and (2) of Lemma~\ref{lem.nsa} says that there is no $z \in Z$ satisfying the following modification of
(\ref{eq.c1}) -- (\ref{eq.C4}): replace ``$\star < c$'' by
``$\vv(\star) \ge 0$'' and ``$\star \ge C$'' by
``$\vv(\star) < 0$''.

In this modified version, (\ref{eq.c1}) -- (\ref{eq.C2}) state that $(a^i)_i$ is a val-chain and (\ref{eq.C3}), (\ref{eq.C4}) state that the corresponding valuative Mostowksi Condition is violated. Thus Lemma~\ref{lem.nsa}~(2) expresses that $\mdl X$ is a valuative Lipschitz stratification.
\end{proof}

\begin{proof}[Proof of Proposition~\ref{prop.new-lip-na}, (3) $\iff$ (3')]
The proof is almost the same as for for (2) $\iff$ (2'). The only differences are that (\ref{eq.C1}) disappears and that
(\ref{eq.C2}) is replaced by
\begin{equation}\label{eq.cw}
\frac{\norm{a^0 - a^1}}{\dist(a^0, X_q^{e_1 - 1})} \le c.
\end{equation}
Lemma~\ref{lem.nsa} turns (\ref{eq.cw}) into (\ref{eq.nawi}) for $\ell = 1$, so we obtain exactly weak val-chains (see also Remark~\ref{rem.wna}).
\end{proof}

\begin{proof}[Proof of Proposition~\ref{prop.new-lip-na}, (2') $\Rightarrow$ (3')]
We assume that every strict val-chain satisfies the valuative Mostowski Conditions, and we have to prove the same for weak val-chains.
Let $a^0, \dots, a^m$ be a weak val-chain with dimensions $e_i$ and distances $\lambda_i$. We do an induction over $m$.
If this is already a strict val-chain, there is nothing to prove.
Otherwise, choose any $\ell$ such that $\lambda_\ell = \lambda_{\ell + 1}$ ($1 \le \ell \le m$). Let us first suppose that $(a^i)_i$ is a plain (weak) val-chain. Set
\begin{align*}
Q &\coloneqq
(1 - P_{a^0})P_{a^1}\cdots P_{a^{\ell-1}} \qquad \text{and}\\
Q' &\coloneqq P_{a^{\ell+1}}\cdots P_{a^{m}};
\end{align*}
we need to show that
\begin{equation}\label{eq.weak-val-chain-show}
\vv(Q P_{a^\ell}Q') \ge \lambda_1 - \lambda_{m+1}.
\end{equation}
The sub-sequence $a^0, \dots, a^{\ell - 1}, a^{\ell+1},\dots, a^m$
is still a weak val-chain, and by induction, it satisfies the Mostowski Conditions, i.e.:
\begin{equation*}
\vv(Q Q') \ge \lambda_1 - \lambda_{m+1}.
\end{equation*}
Moreover, we have $\vv(Q) \ge \lambda_1 - \lambda_{\ell}$
(by the inductive hypothesis for $a^0, \dots, a^{\ell - 1}$)
and $\vv((1 - P_{a^\ell})Q') \ge \lambda_{\ell+1} - \lambda_{m+1}$ (by the inductive hypothesis for
$a^{\ell},\dots, a^m$).
Combining these three inequalities (and using $\lambda_\ell = \lambda_{\ell + 1}$) yields
(\ref{eq.weak-val-chain-show}), since
$Q P_{a^\ell}Q' = Q (1 - P_{a^\ell})Q' - Q Q'$.

Now suppose that $a^0, \dots, a^m$ is an augmented val-chain.
If $\ell \ge 2$, then the argument is exactly the same as for plain val-chains, with
\[
Q = (P_{a^0}-P_{a^1})P_{a^2}\cdots P_{a^{\ell-1}}.
\]
In the case $\ell = 1$, define $Q'\coloneqq P_{a^{2}}\cdots P_{a^{m}}$ (as before).
The Mostowski conditions for $a^0,a^2,\dots, a^m$ and $a^1,a^2,\dots, a^m$ imply
$\vv((1-P_{a^0})Q') \ge \lambda_1 - \lambda_{m+1}$
and
$\vv((1-P_{a^1})Q') \ge \lambda_1 - \lambda_{m+1}$;
this implies
\begin{equation*}
\vv((P_{a^0} - P_{a^1}) Q') \ge \lambda_1 - \lambda_{m+1},
\end{equation*}
which is what we had to show.
\end{proof}

\subsection{A $\GL_n$-invariant definition}

To prove the existence of valuative Lipschitz stratifications, we will use yet another (equivalent) definition, which is more natural in the sense that it is clearly invariant under $\GL_n(\valring)$. Note that Definition~\ref{defn.nalip} (the definition of valuative Lipschitz stratifications) is already pretty close to being $\GL_n(\valring)$-invariant, since $\GL_n(\valring)$ preserves valuations (by Lemma~\ref{lem.vvM}). To make it entirely $\GL_n(\valring)$-invariant, one only needs to get rid of the orthogonal projections used to express that certain
tangent spaces are close to each other; this is what we will do now.

That valuative Lipschitz stratifications
are $\GL_n(\valring)$-invariant directly implies that classical Lipschitz stratifications are $\GL_n(\bmdl)$-invariant; even though this is not a new result, we formulate it as Corollary~\ref{cor.GLR-invar}.

There exists a natural valuative metric on the Grassmannians. It can be defined in many equivalent ways, some of which use orthogonal projections, and others being clearly $\GL_n(\valring)$-invariant. We leave the proof of the equivalences to the reader.

\begin{defn}\label{defn.Delta}
For subspaces $W_1, W_2 \subseteq \bmdl^n$ of the same dimension, set $\Delta(W_1, W_2) \coloneqq \vv(P_1 - P_2)$, where $P_i$ is the orthogonal projection onto $W_i$.
\end{defn}

\begin{lem}\label{lem.Delta}
For subspaces $W_1, W_2 \subseteq \bmdl^n$, both of dimension $d$ and for any $\lambda \in \Gamma$, the following are equivalent:
\begin{enumerate}
\item $\Delta(W_1, W_2) \ge \lambda$
\item There exist $\phi_1, \phi_2 \in \Hom(\bmdl^d, \bmdl^n)$
   with $\vv(\phi_1 - \phi_2) \ge \lambda$ and $\im \phi_i = W_i$.
\item For every $w_1 \in W_1$ there exists $w_2 \in W_2$
such that $\vv(w_2 - w_1) \ge \vv(w_1) + \lambda$.
\end{enumerate}
\end{lem}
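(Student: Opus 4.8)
The plan is to establish the cycle $(1)\Rightarrow(3)\Rightarrow(2)\Rightarrow(1)$, after two reductions. If $d=0$ or $d=n$ then $W_1=W_2$ and all three statements are trivial, so assume $1\le d<n$. If $\lambda\le 0$ then all three hold automatically: for $(1)$ because $\vv(P_i)=\vv(\norm{P_i})=0$ by Lemma~\ref{lem.vvM}(4) (a nonzero orthogonal projection has operator norm $1$), whence $\vv(P_1-P_2)\ge 0$; for $(3)$ by taking $w_2=0$; for $(2)$ by taking any two unimodular parametrizations of $W_1$, $W_2$, for which $\vv(\phi_1-\phi_2)\ge\min_i\vv(\phi_i)=0$. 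So assume $\lambda>0$. One point must be addressed about $(2)$: read completely literally it is vacuous, since scaling $\phi_1$ and $\phi_2$ by a common factor of large valuation makes $\vv(\phi_1-\phi_2)$ as large as desired without changing the images. The intended reading is that $\phi_1,\phi_2$ are \emph{unimodular}, i.e.\ restrict to split $\valring$-linear injections $\valring^d\hookrightarrow\valring^n$ (equivalently $\vv(\phi_ix)=\vv(x)$ for all $x\in\bmdl^d$, equivalently some $d\times d$ minor of the matrix of $\phi_i$ is a unit of $\valring$). Such parametrizations of a given $d$-dimensional $W$ exist because $W\cap\valring^n$ is a saturated submodule of $\valring^n$, hence a free direct summand of rank $d$; any $\valring$-basis of it furnishes one.

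$(1)\Rightarrow(3)$: given $w_1\in W_1$, put $w_2:=P_2w_1\in W_2$; since $P_1w_1=w_1$ we get $w_2-w_1=(P_2-P_1)w_1$, so Lemma~\ref{lem.vvM}(1) gives $\vv(w_2-w_1)\ge\vv(P_2-P_1)+\vv(w_1)\ge\lambda+\vv(w_1)$. $(3)\Rightarrow(2)$: fix a unimodular $\phi_1$ with $\im\phi_1=W_1$; each of its columns $v_j$ lies in $W_1$ and has $\vv(v_j)=0$ (a column with all entries in $\gm$ would force every $d\times d$ minor into $\gm$), so $(3)$ yields $w_j\in W_2$ with $\vv(w_j-v_j)\ge\lambda$; let $\phi_2$ be the map with columns $w_1,\dots,w_d$. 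Then $\vv(\phi_1-\phi_2)\ge\lambda>0$, so $\phi_1$ and $\phi_2$ have the same reduction modulo $\gm$; in particular $\phi_2$ is again unimodular, hence injective, so $\im\phi_2=W_2$.

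$(2)\Rightarrow(1)$: write $P_i=\phi_i(\phi_i^{\mathsf T}\phi_i)^{-1}\phi_i^{\mathsf T}$, which is legitimate since $\phi_i$ is injective and the standard form on $\bmdl^n$ is anisotropic. The crucial observation is that $\phi_i^{\mathsf T}\phi_i\in\GL_d(\valring)$: its entries lie in $\valring$ since $\vv(\phi_i)=0$, and by the Cauchy--Binet formula $\det(\phi_i^{\mathsf T}\phi_i)$ is the sum of the squares of the $d\times d$ minors of $\phi_i$, which (as $\bmdl$ is ordered) is $\ge$ the square of a unit and is therefore itself a unit of $\valring$. Consequently the retraction $\pi_i:=(\phi_i^{\mathsf T}\phi_i)^{-1}\phi_i^{\mathsf T}$ satisfies $\vv(\pi_i)\ge 0$ and $P_i=\phi_i\pi_i$. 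Now write $\phi_2=\phi_1+E$ with $\vv(E)\ge\lambda$; then $\phi_2^{\mathsf T}\phi_2=\phi_1^{\mathsf T}\phi_1+F$ with $\vv(F)\ge\lambda$, both factors lie in $\GL_d(\valring)$, and the identity $M^{-1}-M'^{-1}=M^{-1}(M'-M)M'^{-1}$ gives $\vv\big((\phi_1^{\mathsf T}\phi_1)^{-1}-(\phi_2^{\mathsf T}\phi_2)^{-1}\big)\ge\lambda$; multiplying by the bounded matrices $\phi_i^{\mathsf T}$ yields $\vv(\pi_1-\pi_2)\ge\lambda$. Finally $P_1-P_2=\phi_1(\pi_1-\pi_2)+(\phi_1-\phi_2)\pi_2$, and both summands have valuation $\ge\lambda$ by Lemma~\ref{lem.vvM}(1), so $\vv(P_1-P_2)\ge\lambda$.

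The main obstacle is not any single hard step but correctly pinning down the normalization in $(2)$ and then handling the accumulation of routine valuative bookkeeping. The one genuinely non-formal ingredient is that a unimodular $\phi_i$ has $\phi_i^{\mathsf T}\phi_i$ invertible over $\valring$, which really uses that $\bmdl$ is \emph{ordered} (sums of squares of minors). A possible shortcut is to first move $W_1$, and then $W_2$, into coordinate position by an element of $\Or_n(\valring)$ — applying Gram--Schmidt to a unimodular basis stays inside $\valring$ — which trims some of the computations but still leaves the discussion of $(2)$ in place.
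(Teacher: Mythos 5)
Your proof is correct. The paper itself gives no proof of this lemma (the text explicitly leaves the equivalences to the reader), so there is nothing to compare step by step; your cycle $(1)\Rightarrow(3)\Rightarrow(2)\Rightarrow(1)$, with the explicit Gram-matrix formula $P_i=\phi_i(\phi_i^{\mathsf T}\phi_i)^{-1}\phi_i^{\mathsf T}$ and the Cauchy--Binet/sum-of-squares argument for $\phi_i^{\mathsf T}\phi_i\in\GL_d(\valring)$, is a perfectly good way to fill that gap, and all the valuative estimates check out against Lemma~\ref{lem.vvM}.

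The most valuable part of your write-up is the observation that condition $(2)$, read literally, is vacuous (rescale both $\phi_i$ by a common scalar of large valuation), so that $(2)\Rightarrow(1)$ fails as stated; even the weaker normalization $\vv(\phi_i)=0$ is not enough, as your reasoning implicitly shows (e.g.\ $\phi_1(s,t)=(s,\epsilon t,0)$, $\phi_2(s,t)=(s,\epsilon t,\epsilon t)$ in $\bmdl^3$ gives $\vv(\phi_1-\phi_2)=\vv(\epsilon)$ while the images are at distance $0$). Your unimodularity requirement ($\vv(\phi_i x)=\vv(x)$ for all $x$) is the right fix, and it is consistent with the only place the paper invokes $(2)\Rightarrow(1)$, namely in the proof of Proposition~\ref{prop.flag-lip}: there the maps $Q_{k,\ell}$ restrict to $V_{\ell,\ell}$ as $\mathrm{id}$ plus a perturbation of positive valuation, hence are unimodular in your sense. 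So state the normalization explicitly as part of the lemma rather than as an aside, and the argument is complete. (Two cosmetic points: the columns of $\phi_2$ in your proof of $(3)\Rightarrow(2)$ clash notationally with the $w_1,w_2$ of statement $(3)$, and the displayed identity for $\pi_1-\pi_2$ is worth writing out, but neither affects correctness.)
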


\private{
Idea of proof: (1) $\Rightarrow$ (2) is easy, and (2) $\Rightarrow$ (3) is not difficult either. For (3) $\Rightarrow$ (1), I had to get my hands dirty: assume without loss that $W_1 = \bmdl^d \times \{0\}^{n-d}$; check that $\Delta(W_1^\perp, W_2^\perp) \ge \lambda$; do some computations using the standard basis, etc.
}

The Mostowski Condition bounding $\vv((1 - P_{a^0})P_{a^1})$
can be considered as the statement that
$\bm T_{a^0}\mathring X^0$ contains a subspace which is a good approximation of $\bm T_{a^1}\mathring X^1$.
The following characterization of valuative Lipschitz stratifications
is a generalization of this point of view to arbitrary val-chains; see
Figure~\ref{fig.flags} for an overview over all sub-spaces.

\begin{prop}[Valuative Lipschitz stratifications using flags]\label{prop.flag-lip}
The following conditions on a definable stratification $\mdl X = (X^i)_i$ are equivalent:
\begin{enumerate}
\item $\mdl X$ is a valuative Lipschitz stratification (in the sense of Definition~\ref{defn.nalip}).
\item For every val-chain $(a^i)_{i \le m}$ (plain or augmented) with dimensions $e_i$ and distances $\lambda_i$,
there exist vector spaces $V_{k,\ell}$ for
$0 \le k \le \ell \le m$ with the following properties:
\begin{alignat}{4}
V_{k,m} \subseteq V_{k,m-1} \subseteq \dots &\subseteq V_{k,k+1} \subseteq V_{k,k} = \bm T_{a^k}\mathring X^{e_k}
\quad
&&\text{for }0 \le k \le m
\label{eq.p-flags-inc}
\\
\dim V_{k,\ell} &= e_\ell
&& \text{for } 0 \le k \le \ell \le m
\label{eq.p-flags-dim}
\\
\Delta(V_{k,\ell}, V_{k+1,\ell}) &\ge \lambda_{k+1} - \lambda_{\ell+1}
&& \text{for } 0 \le k < \ell \le m ,
\label{eq.p-flags-dist}
\end{alignat}
\end{enumerate}
\end{prop}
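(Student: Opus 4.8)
The plan is to prove the equivalence (1) $\iff$ (2) of Proposition~\ref{prop.flag-lip} by relating the family of subspaces $V_{k,\ell}$ to the products of orthogonal projections appearing in the valuative Mostowski Conditions. The key dictionary is Lemma~\ref{lem.Delta}: condition (2) there says that $\Delta(W_1, W_2) \ge \lambda$ iff $W_1$ and $W_2$ admit parametrizations $\phi_1, \phi_2$ that are valuatively $\lambda$-close, and this is exactly the right language for comparing the ``same'' subspace seen from two consecutive points of a val-chain.

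\textbf{Proof of (1) $\Rightarrow$ (2).} Assume $\mdl X$ is a valuative Lipschitz stratification, and fix a val-chain $(a^i)_{i\le m}$ with dimensions $e_i$ and distances $\lambda_i$. The plan is to \emph{define} $V_{k,\ell}$ directly from projections: for $k < \ell$, set
\[
V_{k,\ell} \coloneqq P_{a^k} P_{a^{k+1}} \cdots P_{a^{\ell-1}}\bigl(\bm T_{a^\ell}\mathring X^{e_\ell}\bigr) \subseteq \bm T_{a^k}\mathring X^{e_k},
\]
and $V_{k,k} \coloneqq \bm T_{a^k}\mathring X^{e_k}$. The inclusions (\ref{eq.p-flags-inc}) are then built in, essentially by the tower structure of these compositions once one checks $V_{k,\ell}\subseteq V_{k,\ell-1}$, which amounts to $P_{a^k}\cdots P_{a^{\ell-2}}$ applied to $V_{\ell-1,\ell}\subseteq \bm T_{a^{\ell-1}}\mathring X^{e_{\ell-1}}$. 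For the dimension claim (\ref{eq.p-flags-dim}) one must show that the relevant products of projections are close to isometries on the subspaces in question, so that $\dim V_{k,\ell}=\dim \bm T_{a^\ell}\mathring X^{e_\ell}=e_\ell$; here the estimate $\Delta(V_{\ell,\ell}, \text{span of } P_{a^{\ell-1}}V_{\ell,\ell})$ being large (coming from (vm2)/(vm1) on short sub-chains, since consecutive $\lambda$'s differ) forces $P_{a^{\ell-1}}$ to be valuatively invertible on $\bm T_{a^\ell}\mathring X^{e_\ell}$, by Lemma~\ref{lem.vvM}. The main estimate (\ref{eq.p-flags-dist}), $\Delta(V_{k,\ell}, V_{k+1,\ell}) \ge \lambda_{k+1}-\lambda_{\ell+1}$, is where the valuative Mostowski Conditions are used in full: $V_{k,\ell}$ and $V_{k+1,\ell}$ both ``lift'' the subspace $V_{\ell,\ell}=\bm T_{a^\ell}\mathring X^{e_\ell}$ through $P_{a^k}$ resp.\ through the identity, and their difference is governed by $(P_{a^k}-P_{a^{k+1}})P_{a^{k+1}}\cdots P_{a^{\ell-1}}$ (using Remark~\ref{rem.sub-val-chain} to see the relevant sub-sequence is an augmented val-chain with the right distances, so (vm2) gives $\vv((P_{a^k}-P_{a^{k+1}})P_{a^{k+1}}\cdots P_{a^{\ell-1}})\ge \lambda_{k+1}-\lambda_\ell$, and one loses a controlled amount passing from $\lambda_\ell$ to $\lambda_{\ell+1}$ on the tail). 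Verifying that this operator-norm estimate translates into the Grassmannian estimate $\Delta(V_{k,\ell},V_{k+1,\ell})\ge\lambda_{k+1}-\lambda_{\ell+1}$ via Lemma~\ref{lem.Delta}(2) is the technical heart.

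\textbf{Proof of (2) $\Rightarrow$ (1).} Conversely, given the flag data $V_{k,\ell}$, we must recover the valuative Mostowski Condition at $(a^i)_{i\le m}$. For a plain val-chain, the point is that $P_{a^1}\cdots P_{a^m}$ maps $\bmdl^n$ into (something close to) $V_{0,m}\subseteq \bm T_{a^0}\mathring X^{e_0}$, and iterating the estimates $\Delta(V_{k,m},V_{k+1,m})\ge\lambda_{k+1}-\lambda_{m+1}$ for $k=0,\dots,m-1$ and summing valuatively yields $\vv\bigl((1-P_{a^0})P_{a^1}\cdots P_{a^m}\bigr)\ge \lambda_1-\lambda_{m+1}$ — here the telescoping uses $P_{a^m}(\bmdl^n)=\bm T_{a^m}\mathring X^{e_m}=V_{m,m}$, and then each step moves from $V_{k+1,\ell}$ back to $V_{k,\ell}$ picking up at least $\lambda_{k+1}-\lambda_{m+1}\ge\lambda_1-\lambda_{m+1}$ (monotonicity of the $\lambda$'s), with Lemma~\ref{lem.vvM}(1) absorbing the compositions. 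The augmented case is the same with $(1-P_{a^0})$ replaced by $(P_{a^0}-P_{a^1})$, using the $V_{0,\ell},V_{1,\ell}$ comparison for the first index.

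\textbf{Main obstacle.} I expect the hardest part to be the bookkeeping in (1) $\Rightarrow$ (2): one must simultaneously establish that the defining products of projections do not drop dimension (so that the $V_{k,\ell}$ genuinely have dimension $e_\ell$ and the inclusions are strict-by-one in the flag), and that all the pairwise $\Delta$-estimates hold with the sharp exponents $\lambda_{k+1}-\lambda_{\ell+1}$ rather than something weaker. Keeping track of exactly which sub-sequence of the val-chain is being used at each step (plain vs.\ augmented, and its distances, via Remark~\ref{rem.sub-val-chain}) and matching the resulting valuative inequality to the claimed bound is the delicate accounting; the conceptual content, by contrast, is entirely encoded in Lemma~\ref{lem.Delta} and Lemma~\ref{lem.vvM}, which is why the proposition is labelled a routine-if-tedious equivalence whose details are (as the paper says) ``left to the reader''.
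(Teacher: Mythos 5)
Your definition of $V_{k,\ell}$ agrees with the paper's ($V_{k,\ell}=\im(P_{a^k}\cdots P_{a^\ell})$, i.e.\ $P_{a^k}\cdots P_{a^{\ell-1}}$ applied to $\bm T_{a^\ell}\mathring X^{e_\ell}$), and the inclusion and dimension arguments are essentially right. But the key estimate in (1)~$\Rightarrow$~(2) is misattributed. The difference of the two parametrizations of $V_{k,\ell}$ and $V_{k+1,\ell}$ is $P_{a^k}\cdots P_{a^\ell}-P_{a^{k+1}}\cdots P_{a^\ell}=-(1-P_{a^k})P_{a^{k+1}}\cdots P_{a^\ell}$, which is bounded by \emph{(vm1) for the plain sub-val-chain} $a^k,\dots,a^\ell$ (a plain val-chain by Remark~\ref{rem.sub-val-chain} whenever $k\ge1$), giving \emph{exactly} $\lambda_{k+1}-\lambda_{\ell+1}$. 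Your route via (vm2) does not work: $a^k$ and $a^{k+1}$ lie in skeletons of different dimensions (except when $k=0$ and the chain is augmented), so there is no augmented val-chain to apply (vm2) to, and the bound $\lambda_{k+1}-\lambda_\ell$ you extract is \emph{weaker} than the required $\lambda_{k+1}-\lambda_{\ell+1}$ (since $\lambda_\ell\ge\lambda_{\ell+1}$); "losing a controlled amount on the tail" therefore goes in the wrong direction and cannot be repaired.

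The more serious gap is in (2)~$\Rightarrow$~(1), which you dismiss as routine telescoping but which is the genuinely hard direction (the paper devotes Lemma~\ref{lem.flags} to it). Your inequality "$\lambda_{k+1}-\lambda_{m+1}\ge\lambda_1-\lambda_{m+1}$" is false for $k\ge1$ since the $\lambda$'s decrease, and valuations of sums obey only the ultrametric inequality, so one cannot accumulate the per-step gains. The real obstruction is that $P_{a^{k+1}}$ projects onto the \emph{whole} of $V_{k+1,k+1}$, and the part of $V_{k+1,k+1}$ transverse to $V_{k+1,k+2}$ is only $(\lambda_{k+1}-\lambda_{k+2})$-approximated inside $V_{k,k}$ --- far too weak by itself. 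The paper's proof decomposes $P_{k+1,k+1}=\sum_j(P_{k+1,j}-P_{k+1,j+1})+P_{k+1,m}$ and runs a downward induction on $k$ with the auxiliary estimate $\vv((P_{k,i}-P_{k,i+1})P_{k+1,k+1}\cdots P_{m,m})\ge\lambda_{i+1}-\lambda_{m+1}$, showing that the poorly-approximated blocks are sufficiently contracted by the remaining tail of projections. Without an argument of this kind your sketch does not close; as written, it would only yield $\vv((1-P_{a^0})P_{a^1}\cdots P_{a^m})\ge\min_k(\lambda_{k+1}-\lambda_{m+1})=\lambda_m-\lambda_{m+1}$, not $\lambda_1-\lambda_{m+1}$.
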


\begin{figure}
\begingroup
\newcommand{\di}[2]{\bigg|\rlap{{\scriptsize $\,\lambda_{#1} - \lambda_{#2}$}}}
\begin{ctikzpicture}
\node (all) at (0,0) {%
$\begin{array}{ccccccccc}
V_{0,m} & \subseteq &  V_{0,m-1} & \subseteq &  \dots & \subseteq & V_{0,1} & \subseteq & V_{0,0}\\[1ex]
 \di1{m+1}  &         &    \di1{m} &       &        &         &  \di12  &         &        \\[1ex]
V_{1,m} & \subseteq &  V_{1,m-1} & \subseteq &  \dots & \subseteq & V_{1,1} &         &        \\[1ex]
 \di2{m+1}  &         &     \di2{m}      \\
  \vdots     &         &   \vdots     &         &          \udots     \\
\di{m-1}{m+1}     &         &    \di{m-1}{m}   \\[1ex]
V_{m-1,m}& \subseteq &  V_{m-1,m-1}  \\[1ex]
\di{m}{m+1}    \\[1ex]
V_{m,m}
\end{array}$};
\fill (all.north east) circle (.05);
\draw[axis arrow] (all.north east) -- (all.north west) node[left] {$\ell$};
\draw[axis arrow] (all.north east) -- ($(all.south east)+(0,1cm)$) node[below] {$k$};
\end{ctikzpicture}
\endgroup

\caption{Diagramatic representation of the vector spaces appearing in Proposition~\ref{prop.flag-lip} (and Lemma~\ref{lem.flags}); the labels of the vertical lines indicate the distance between the two corresponding spaces.
}\label{fig.flags}
\end{figure}

\begin{rem}
As in Definition~\ref{defn.nalip}, the above Condition~(2) is trivial for val-chains consisting of a single point.
\end{rem}

The proof of (1) $\Rightarrow$ (2) is easy:

\begin{proof}[Proof of Proposition~\ref{prop.flag-lip}, (1) $\Rightarrow$ (2)]
Given a val-chain $(a^i)_i$, we set $V_{k,\ell} \coloneqq \im(Q_{k.\ell})$, where
\begin{equation}\label{eq.Qdef}
Q_{k,\ell} \coloneqq
\begin{cases}
P_{a^k}P_{a^{k+1}}\cdots P_{a^\ell}
  & \text{if } (a^i)_{k \le i \le \ell} \text{ is a plain val-chain}
\\
P_{a^k}P_{a^{k+2}}P_{a^{k+3}}\cdots P_{a^\ell}
  & \text{if } (a^i)_{k \le i \le \ell} \text{ is an augmented val-chain}.
\end{cases}
\end{equation}
Note that $(a^i)_{k \le i \le \ell}$ is an augmented val-chain iff the entire sequence $(a^i)_{0 \le i \le m}$ is augmented, $k = 0$ and $\ell \ge 1$. In particular, if $(a^i)_{0 \le i \le m}$ is augmented, then $Q_{0,1} = P_{a^0}$.

Condition (\ref{eq.p-flags-inc}) follows directly from
this definition of $V_{k,\ell}$.
Now fix $0 \le k < \ell \le m$.
Then the valuative Mostowski Conditions for the subchain $(a^i)_{k \le i \le \ell}$ imply
\begin{equation}\label{eq.most-imp}
\vv(Q_{k,\ell} - Q_{k+1,\ell}) \ge \lambda_{k+1} - \lambda_{\ell+1};
\end{equation}
indeed, we have
\[
Q_{k,\ell} - Q_{k+1,\ell}
=
\begin{cases}
 -(1 - P_{a^k})P_{a^{k+1}}\cdots P_{a^\ell}
& \text{if $(a^i)_{k \le i \le \ell}$ is plain}
\\
(P_{a^{k}} - P_{a^{k+1}})P_{a^{k+2}}\cdots P_{a^\ell}
& \text{if $(a^i)_{k \le i \le \ell}$ is augmented.}
\end{cases}
\]

From (\ref{eq.most-imp}), we first deduce (\ref{eq.p-flags-dim}): On the one hand, (\ref{eq.Qdef}) directly implies $\dim V_{k,\ell} \le e_\ell$.
(Note that in the above case where $Q_{0,1} = P_{a^0}$, we have $e_0 = e_1$.) On the other hand, repeatedly using (\ref{eq.most-imp}) yields $\vv(Q_{k,\ell} - Q_{\ell,\ell}) \ge \min_{k \le i < \ell} (\lambda_{i+1} - \lambda_{\ell+1}) > 0$, so since $Q_{\ell,\ell} = P_{a^\ell}$ is the identity on
$V_{\ell,\ell} = \bm T_{a^\ell}\mathring X^\ell$,
we have $\ker Q_{k,\ell} \cap V_{\ell,\ell} = 0$
and hence $\dim V_{k,\ell} = \rk Q_{k,\ell} \ge e_\ell$.

Finally, (\ref{eq.most-imp}) implies (\ref{eq.p-flags-dist})
using Lemma~\ref{lem.Delta} (2) $\Rightarrow$ (1).
\end{proof}

We formulate the main part of the proof of the other direction as a general lemma about flags.

\begin{lem}\label{lem.flags}
Fix $m \ge 1$ and $\lambda_1 \ge \dots \ge \lambda_{m+1} \in \Gamma$.
Suppose that for each $0 \le k \le m$, we have a (partial) flag
\begin{equation}
 V_{k,m} \subseteq V_{k,m-1} \subseteq \dots \subseteq V_{k,k+1} \subseteq V_{k,k} \subseteq \bmdl^n
\end{equation}
satisfying
\begin{equation}\label{eq.flags-dist-cond}
\Delta(V_{k,\ell}, V_{k+1,\ell}) \ge \lambda_{k+1} - \lambda_{\ell+1}  \quad \text{for } 0 \le k < \ell \le m.
\end{equation}
(In particular, we assume $\dim V_{k,\ell} = \dim V_{k+1,\ell}$.)
Let $P_{k,\ell}\colon \bmdl^n \fun \bmdl^n$ denote the orthogonal projection onto $V_{k,\ell}$.
Under those assumptions, we have
 \begin{equation}\label{eq.flags1}
 \vv((1 - P_{0,0})P_{1,1}P_{2,2} \cdots P_{m,m}) \ge \lambda_1 - \lambda_{m+1}.
\end{equation}
If moreover $\dim V_{1,1} = \dim V_{0,0}$ (which in particular implies $\dim V_{0,1} = \dim V_{0,0}$ and hence $V_{0,1} =  V_{0,0}$),
then we moreover have
\begin{equation}\label{eq.flags2}
\vv((P_{0,0} - P_{1,1})P_{2,2}P_{3,3} \cdots P_{m,m}) \ge \lambda_1 - \lambda_{m+1}.
\end{equation}
\end{lem}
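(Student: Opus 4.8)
The plan is to prove \eqref{eq.flags1} and \eqref{eq.flags2} by a double telescoping argument that peels off one index at a time, induction on $m$. For \eqref{eq.flags1}, the key observation is that the chain of spaces $V_{0,1}\supseteq V_{0,2}\supseteq\dots$ together with the distance hypotheses \eqref{eq.flags-dist-cond} allows us to compare $P_{0,0}$ with the "diagonal" projections $P_{k,k}$. Concretely, I would first establish the auxiliary estimate
\[
\vv\bigl((1 - P_{k,k})\,P_{k+1,k+1}P_{k+2,k+2}\cdots P_{m,m}\bigr) \ge \lambda_{k+1} - \lambda_{m+1}
\]
for every $0\le k\le m$, by downward induction on $k$: the base case $k=m$ is trivial since $1-P_{m,m}$ times nothing contributes, and for the inductive step one writes
\[
(1-P_{k,k})P_{k+1,k+1}\cdots P_{m,m} = (1-P_{k,k})(1-P_{k+1,k+1})P_{k+2,k+2}\cdots P_{m,m} + (1-P_{k,k})P_{k+1,k+1}(\text{rest}),
\]
wait---more carefully, the right device is to compare $P_{k+1,k+1}$ with $P_{k,k+1}$: by \eqref{eq.flags-dist-cond} (with $\ell=k+1$) we have $\vv(P_{k,k+1}-P_{k+1,k+1})\ge\lambda_{k+1}-\lambda_{k+2}$, and since $V_{k,k+1}\subseteq V_{k,k}$ we have $(1-P_{k,k})P_{k,k+1}=0$. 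Hence $(1-P_{k,k})P_{k+1,k+1} = (1-P_{k,k})(P_{k+1,k+1}-P_{k,k+1})$ has valuation $\ge\lambda_{k+1}-\lambda_{k+2}$. Feeding this into the product and using Lemma~\ref{lem.vvM}(1) to split off the tail $P_{k+2,k+2}\cdots P_{m,m}$, combined with the inductive hypothesis at level $k+1$ (applied after noting $P_{k,k+1}$ itself projects into $V_{k,k+1}\subseteq\bmdl^n$ and the flag $V_{k+1,\cdot}$ sits appropriately)---this is where I expect to juggle indices. The point $k=0$ gives exactly \eqref{eq.flags1}.

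For \eqref{eq.flags2}, under the extra hypothesis $\dim V_{1,1}=\dim V_{0,0}$ we get $V_{0,1}=V_{0,0}$ (since $V_{0,1}\subseteq V_{0,0}$ with equal dimension), so $P_{0,0}=P_{0,1}$. Then
\[
(P_{0,0}-P_{1,1})P_{2,2}\cdots P_{m,m} = (P_{0,1}-P_{1,1})P_{2,2}\cdots P_{m,m},
\]
and $\vv(P_{0,1}-P_{1,1})\ge\lambda_1-\lambda_2$ by \eqref{eq.flags-dist-cond}. This factor alone does not suffice; one still needs $\vv(P_{1,1}P_{2,2}\cdots P_{m,m})$ controlled appropriately, which is not automatic. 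The fix is to run the same telescoping as above but starting the comparison from $V_{0,1}$ rather than $V_{0,0}$: write $(P_{0,1}-P_{1,1})P_{2,2}\cdots P_{m,m}$ and insert $P_{1,1}$ versus $P_{0,1}$... actually the cleanest route is to observe $(P_{0,1}-P_{1,1})(1-P_{1,1}) = P_{0,1}(1-P_{1,1}) = P_{0,1}-P_{0,1}P_{1,1}$, and $P_{0,1}(1-P_{1,1})=(P_{0,1}-P_{1,1})(1-P_{1,1})$ has valuation $\ge\lambda_1-\lambda_2$; meanwhile $(P_{0,1}-P_{1,1})P_{1,1}P_{2,2}\cdots$ is handled by combining $\vv(P_{0,1}-P_{1,1})\ge\lambda_1-\lambda_2$ with the estimate $\vv(P_{1,1}P_{2,2}\cdots P_{m,m} - (\text{something landing in }V_{m,m}))$... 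I would instead directly split
\[
(P_{0,1}-P_{1,1})P_{2,2}\cdots P_{m,m} = (P_{0,1}-P_{1,1})(1-P_{2,2})P_{3,3}\cdots + (P_{0,1}-P_{1,1})P_{2,2}P_{3,3}\cdots
\]
and iterate, using at each stage $\vv(P_{0,1}-P_{1,1})\ge\lambda_1-\lambda_2\ge\lambda_1-\lambda_{j+1}$ together with the auxiliary estimate from the first part applied to the flags $(V_{k,\cdot})_{k\ge 1}$ starting at $k=1$, which gives $\vv((1-P_{j,j})P_{j+1,j+1}\cdots P_{m,m})\ge\lambda_{j+1}-\lambda_{m+1}$, so each term has valuation $\ge(\lambda_1-\lambda_2)+(\lambda_{j+1}-\lambda_{m+1})\ge\lambda_1-\lambda_{m+1}$.

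The main obstacle I anticipate is purely bookkeeping: keeping straight which flag $(V_{k,\cdot})$ is being used at which stage, and making sure the telescoping identity $Q_{k,\ell}-Q_{\ell,\ell}=\sum(\text{differences})$ is written so that each summand genuinely has the claimed valuation---in particular that the "tail" projections $P_{j+1,j+1}\cdots P_{m,m}$ can always be split off with Lemma~\ref{lem.vvM}(1) without losing anything, which works because those are orthogonal projections and hence have $\vv\ge 0$. No deep input beyond Lemma~\ref{lem.vvM}(1) and the hypothesis \eqref{eq.flags-dist-cond} is needed; the argument is entirely elementary linear algebra over the valued field, and the induction on $m$ (or equivalently the downward induction on $k$) organizes it.
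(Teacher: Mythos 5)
There is a genuine gap in both halves, and it is the same gap: you try to extract the full amount $\lambda_{k+1}-\lambda_{m+1}$ from a \emph{single} factorization, but the pieces you isolate only ever account for one ``step'' of the telescoping sum of $\lambda$'s. For \eqref{eq.flags1}, your identity $(1-P_{k,k})P_{k+1,k+1}=(1-P_{k,k})(P_{k+1,k+1}-P_{k,k+1})$ is correct and gives valuation $\ge\lambda_{k+1}-\lambda_{k+2}$, but once you split off the tail $P_{k+2,k+2}\cdots P_{m,m}$ all you can add is $\vv(\text{tail})\ge 0$, so you land at $\lambda_{k+1}-\lambda_{k+2}$, which is strictly weaker than the required $\lambda_{k+1}-\lambda_{m+1}$. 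You cannot then ``combine with the inductive hypothesis at level $k+1$'': that hypothesis bounds $(1-P_{k+1,k+1})\cdot(\text{tail})$, which is not a factor of your expression, and valuations of two different decompositions of the same product do not add. The missing ingredient (which is how the paper proceeds) is a \emph{second} family of estimates to be proved simultaneously by downward induction on $k$, namely $\vv\bigl((P_{k,i}-P_{k,i+1})P_{k+1,k+1}\cdots P_{m,m}\bigr)\ge\lambda_{i+1}-\lambda_{m+1}$ for $k\le i<m$. One then telescopes $P_{k+1,k+1}=P_{k+1,m}+\sum_{j}(P_{k+1,j}-P_{k+1,j+1})$ and, for each summand $Q=P_{k+1,j}-P_{k+1,j+1}$, exploits that $Q$ is idempotent: $Q\cdot Q\approx(P_{k,j}-P_{k,j+1})\cdot Q$ up to an error of valuation $\ge\lambda_{k+1}-\lambda_{j+1}$ by \eqref{eq.flags-dist-cond}, the main term dies against $(1-P_{k,k})$, and the error multiplied by $Q\cdot(\text{tail})$ contributes $(\lambda_{k+1}-\lambda_{j+1})+(\lambda_{j+1}-\lambda_{m+1})$, which is exactly what is needed. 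Your plan never produces the compensating summand $\lambda_{k+1}-\lambda_{j+1}$.

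For \eqref{eq.flags2} the arithmetic you rely on is simply false: $(\lambda_1-\lambda_2)+(\lambda_{j+1}-\lambda_{m+1})\ge\lambda_1-\lambda_{m+1}$ is equivalent to $\lambda_{j+1}\ge\lambda_2$, which fails for $j\ge 2$ since the $\lambda$'s are decreasing (your intermediate inequality $\lambda_1-\lambda_2\ge\lambda_1-\lambda_{j+1}$ is also reversed). The bound $\vv(P_{0,1}-P_{1,1})\ge\lambda_1-\lambda_2$ alone is too weak to be distributed over a telescoping of the tail. A correct route, again using idempotency: write $P_{0,0}-P_{1,1}=P_{0,0}(1-P_{1,1})+(P_{0,0}-1)P_{1,1}$; the second summand times the tail is exactly \eqref{eq.flags1}. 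In the first summand duplicate the idempotent factor $1-P_{1,1}$; then $P_{0,0}(1-P_{1,1})=P_{0,0}(P_{0,1}-P_{1,1})$ has valuation $\ge\lambda_1-\lambda_2$ (this is where $V_{0,1}=V_{0,0}$ enters), while the second copy gives $(1-P_{1,1})P_{2,2}\cdots P_{m,m}$, of valuation $\ge\lambda_2-\lambda_{m+1}$ by the $k=1$ case of the auxiliary estimate; the two contributions now add to precisely $\lambda_1-\lambda_{m+1}$.
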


Before proving that lemma, we quickly check that it indeed implies the other direction of the proposition.

\begin{proof}[Proof of Proposition~\ref{prop.flag-lip}, (2) $\Rightarrow$ (1)]
Let $(a^i)_i$ be a val-chain. By (2) of the proposition,
we have vector spaces $V_{k,\ell}$ for $0 \le k \le \ell \le m$ satisfying the prerequisites of Lemma~\ref{lem.flags}. If $(a^i)_i$ is plain, then
the Mostowski Condition (vm1) is (\ref{eq.flags1});
if $(a^i)_i$ is augmented, then $\dim V_{1,1} = \dim V_{0,0}$ and the Mostowski Condition (vm2) is (\ref{eq.flags2}).
\end{proof}

\begin{proof}[Proof of Lemma~\ref{lem.flags}]
We will prove the following two inequalities by downwards induction on $k$:
\begin{align}
 \vv((1 - P_{k,k})\cdot P_{k+1,k+1} \cdots P_{m,m}) &\ge \lambda_{k+1} - \lambda_{m+1}
\qquad\text{for } 0 \le k \le m \text{ and}
\label{eq.flags-no-ind}
\\
 \vv((P_{k,i} - P_{k,i+1})\cdot P_{k+1,k+1} \cdots P_{m,m}) &\ge \lambda_{i+1} - \lambda_{m+1}
\qquad\text{for } 0 \le k \le i < m
\label{eq.flags-ind}
.
\end{align}
Note that (\ref{eq.flags-ind}) will be needed in the inductive proof of (\ref{eq.flags-no-ind}).
Before we carry out this induction, let us already check that (\ref{eq.flags-no-ind}) implies the lemma:
(\ref{eq.flags1}) is just (\ref{eq.flags-no-ind}) for $k = 0$. To get (\ref{eq.flags2}), we plug in
\begin{equation}
P_{0,0} - P_{1,1} = P_{0,0}\cdot(1 - P_{1,1}) + (P_{0,0} - 1)\cdot P_{1,1}
.
\end{equation}
The second summand obtained in this way is just (\ref{eq.flags1}) (up to sign) and hence has valuation as required.
In the first summand, we repeat the factor $(1 - P_{1,1})$ twice (which we may, since it is a projection), so it is equal to
\begin{equation}
\underbrace{P_{0,0}\cdot(1 - P_{1,1})}_{\text{(a)}}\cdot\underbrace{(1 - P_{1,1})P_{2,2}P_{3,3} \cdots P_{m,m}}_{\text{(b)}}.
\end{equation}
By (\ref{eq.flags-no-ind}), (b) has valuation at least $\lambda_2 - \lambda_{m+1}$
so it suffices to show that (a) has valuation at least $\lambda_1 - \lambda_2$.
But indeed, $\vv(P_{1,1} - P_{0,1}) \ge \lambda_1 - \lambda_2$ and
$P_{0,0}\cdot(1 - P_{0,1}) = 0$ since $V_{0,1} = V_{0,0}$
(by the assumption $\dim V_{1,1} = \dim V_{0,0}$).
Thus it remains to prove (\ref{eq.flags-no-ind}) and (\ref{eq.flags-ind}).

For $k = m$, (\ref{eq.flags-no-ind}) is trivial
(since $\lambda_{m+1} - \lambda_{m+1} = 0$) and (\ref{eq.flags-ind}) is void,
so suppose $k < m$. We give the details for (\ref{eq.flags-ind});
the proof of (\ref{eq.flags-no-ind}) works analogously; see below.

We will prove
\begin{equation}\label{eq.flags-Q}
 \vv((P_{k,i} - P_{k,i+1})\cdot Q\cdot P_{k+2,k+2} \cdots P_{m,m}) \ge \lambda_{i+1} - \lambda_{m+1}
\end{equation}
for $Q = P_{k+1,j} - P_{k+1,j+1}$ ($j = k+1, \dots, m-1$) and for $Q = P_{k+1,m}$. The sum of all those $Q$ is equal to $P_{k+1,k+1}$, so
taking the sum of (\ref{eq.flags-Q}) for all those $Q$ then yields (\ref{eq.flags-ind}).

Case $Q = P_{k+1,m}$:
Since $\vv(P_{k,m} - P_{k+1,m}) \ge \lambda_{k+1} - \lambda_{m+1}\ge \lambda_{i+1} - \lambda_{m+1}$,
we can replace $Q$ by $P_{k,m}$ in (\ref{eq.flags-Q}). Now (\ref{eq.flags-Q}) follows, since
$(P_{k,i} - P_{k,i+1})P_{k,m} = P_{k,m} - P_{k,m} = 0$.

Case $Q = P_{k+1,j} - P_{k+1,j+1}$:
By induction, we have
\begin{equation}\label{eq.flags-Q-ind}
 \vv(Q \cdot P_{k+2,k+2} \cdots P_{m,m}) \ge \lambda_{j+1} - \lambda_{m+1}
.
\end{equation}
If $j \le i$, we are done, since $\lambda_{j+1} - \lambda_m \ge \lambda_{i+1} - \lambda_{m+1}$,
so suppose now $j > i$.
In that case, we have the following (``$\approx$'' explained below):
\begin{align*}
  &(P_{k,i} - P_{k,i+1})\cdot Q\cdot P_{k+2,k+2} \cdots P_{m,m}\\
  = \, &(P_{k,i} - P_{k,i+1})\cdot Q\cdot Q\cdot P_{k+2,k+2} \cdots P_{m,m}\\
  \approx
   \, &(P_{k,i} - P_{k,i+1})\cdot  (P_{k,j} - P_{k,j+1})\cdot Q\cdot P_{k+2,k+2} \cdots P_{m,m}
\end{align*}
Since $i \ne j$, we have $(P_{k,i} - P_{k,i+1})(P_{k,j} - P_{k,j+1}) = 0$, so to obtain
(\ref{eq.flags-Q}), it remains to verify that the difference between the two sides of ``$\approx$''
has valuation at least $\lambda_{i+1} - \lambda_{m+1}$.
This follows from (\ref{eq.flags-Q-ind}) and the following:
\begin{align*}
 \vv(Q - (P_{k,j} - P_{k,j+1}))
&\ge \min\{\vv(P_{k+1,j} - P_{k,j}), \vv(P_{k+1,j+1} - P_{k,j+1})\}\\
&\ge \min\{\lambda_{k+1} -\lambda_{j+1}, \lambda_{k+1} -\lambda_{j+2}\} \ge \lambda_{i+1} -\lambda_{j+1}.
\end{align*}

This finishes the proof of (\ref{eq.flags-ind}). The proof of
(\ref{eq.flags-no-ind}) is exactly the same: just replace $(P_{k,i} - P_{k,i+1})$
by $(1 - P_{k,k})$ everywhere in the proof and then plug in $k$ for the remaining $i$'s in the proof.
(Concerning the case $Q = P_{k+1,j} - P_{k+1,j+1}$, note that one then automatically has $j > i = k$.)
\end{proof}

From Proposition~\ref{prop.flag-lip}, one can easily deduce that the notion of Lipschitz stratifications is invariant under $\GL_n$. More precisely, we obtain the following.

\begin{cor}[$\GL_n(\valring)$-invariance]\label{cor.GLO-invar}
If $\mdl X = (X^i)_i$ is a valuative Lipschitz stratification of a definable set $X \subseteq \bmdl^n$ and $M \in \GL_n(\valring)$,
then $M(\mdl X) \coloneqq (M(X^i))_i$ is a valuative Lipschitz stratification
of $M(X)$.
\end{cor}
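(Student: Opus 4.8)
The plan is to deduce this from the $\GL_n(\valring)$-invariant characterization of valuative Lipschitz stratifications provided by Proposition~\ref{prop.flag-lip}. Working directly from Definition~\ref{defn.nalip} is awkward, because the orthogonal projection $P_{Ma}$ onto $M(\bm T_a\mathring X^e)$ is not simply related to $P_a$ when $M$ is not orthogonal; this is precisely the reason for having set up the flag characterization in the first place.

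First I would record the easy geometric facts. Since $M$ is a definable linear automorphism of $\bmdl^n$, hence a definable $C^\infty$-diffeomorphism, $M(\mdl X) = (M(X^i))_i$ is a definable stratification of $M(X)$: it preserves dimensions, closedness, definable connectedness (hence strata), topological closures, and the class of definable $C^1$ submanifolds, so all conditions of Definition~\ref{defn.strat} transfer; moreover $M(\mathring X^i) = \mathring{M(X)}^i$ and, since the derivative of $M$ is $M$ itself, $\bm T_{Ma}(M(Y)) = M(\bm T_a Y)$ for every definable $C^1$ submanifold $Y$ and every $a\in Y$. Next I would observe that $M$ induces a bijection on val-chains that preserves the type (plain resp.\ augmented), the dimensions, and the distances: if $(a^\ell)_{0\le\ell\le m}$ is a val-chain in $\mdl X$ with dimensions $e_\ell$ and distances $\lambda_\ell$, then by Lemma~\ref{lem.vvM}~(3) one has $\vv(Ma^0 - Ma^\ell) = \vv(a^0 - a^\ell)$ and, for every $j$,
\[
\valdist(Ma^0, M(X^j)) = \sup_{x\in X^j}\vv(M(a^0 - x)) = \sup_{x\in X^j}\vv(a^0 - x) = \valdist(a^0, X^j),
\]
so that (using $Ma^\ell\in M(\mathring X^{e_\ell}) = \mathring{M(X)}^{e_\ell}$) conditions (\ref{eq.nai}) and (\ref{eq.nae}) are preserved; applying the same reasoning to $M^{-1}\in\GL_n(\valring)$ gives bijectivity.

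Then I would use that the Grassmannian metric $\Delta$ is $\GL_n(\valring)$-invariant. By Lemma~\ref{lem.Delta}~(3) together with Lemma~\ref{lem.vvM}~(3): for equal-dimensional subspaces $W_1, W_2\subseteq\bmdl^n$, the bijection $w\mapsto Mw$ satisfies $\vv(Mw)=\vv(w)$ and $\vv(Mw_2-Mw_1)=\vv(w_2-w_1)$, so condition (3) of Lemma~\ref{lem.Delta} holds for $(W_1,W_2)$ and a given $\lambda$ iff it holds for $(MW_1,MW_2)$ and $\lambda$; hence $\Delta(MW_1,MW_2)=\Delta(W_1,W_2)$. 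Now, given a val-chain $(b^\ell)_\ell$ of $M(\mdl X)$, write $b^\ell=Ma^\ell$ where $(a^\ell)_\ell$ is the corresponding val-chain of $\mdl X$; apply Proposition~\ref{prop.flag-lip}~(2) to $\mdl X$ and $(a^\ell)_\ell$ to obtain spaces $V_{k,\ell}$, and take $M(V_{k,\ell})$ as the spaces required by Proposition~\ref{prop.flag-lip}~(2) for $M(\mdl X)$ and $(b^\ell)_\ell$. Property (\ref{eq.p-flags-inc}) transfers using $V_{k,k}=\bm T_{a^k}\mathring X^{e_k}$ and the tangent-space identity; (\ref{eq.p-flags-dim}) transfers because $M$ is invertible; and (\ref{eq.p-flags-dist}) transfers by the invariance of $\Delta$ and the equality of the $\lambda_i$. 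Thus $M(\mdl X)$ satisfies Proposition~\ref{prop.flag-lip}~(2), so it is a valuative Lipschitz stratification of $M(X)$.

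I do not expect a genuine obstacle: the real content sits in Proposition~\ref{prop.flag-lip} and Lemma~\ref{lem.Delta}, which are already available. The only points needing a little care are the equivariance of $M$ on the skeleton filtration and on tangent spaces, and the identity $\Delta(MW_1,MW_2)=\Delta(W_1,W_2)$ — which is why the argument is routed through Lemma~\ref{lem.Delta}~(3) rather than through the definition of $\Delta$ via orthogonal projections, the latter transforming badly under a non-orthogonal $M$.
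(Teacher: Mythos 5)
Your proposal is correct and follows essentially the same route as the paper: the paper's proof also invokes Proposition~\ref{prop.flag-lip}~(2), uses Lemma~\ref{lem.vvM} to see that $M$ sends val-chains to val-chains, and takes $M(V_{k,\ell})$ as the required flags. You merely spell out the invariance of $\Delta$ via Lemma~\ref{lem.Delta}~(3) and the transfer of the stratification axioms, which the paper leaves implicit.
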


\begin{proof}
We use the characterization of valuative Lipschitz stratification from Proposition~\ref{prop.flag-lip} (2).
By Lemma~\ref{lem.vvM}, $M$ preserves valuations, so applying $M$ to a val-chain $(a^i)_i$ for $\mdl X$ yields a val-chain for $M(\mdl X)$. Moreover, if $V_{k,\ell}$ are vector spaces satisfying the conditions (\ref{eq.p-flags-inc}) -- (\ref{eq.p-flags-dist})
with $V_{k,k} = \bm T_{a^k}\mathring X^{e_k}$,
then the spaces
$M(V_{k,\ell})$ satisfy the same conditions with $M(V_{k,k}) = \bm T_{M(a^k)}M(\mathring X^{e_k})$.
\end{proof}

\begin{cor}\label{cor.GLR-invar}
If $\mdl X = (X^i)_i$ is a Lipschitz stratification of a definable set $X \subseteq \bmdl^n$ and $M \in \GL_n(\bmdl)$,
then $M(\mdl X) \coloneqq (M(X^i))_i$ is a Lipschitz stratification
of $M(X)$.
\end{cor}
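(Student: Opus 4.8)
The plan is to reduce the statement to the $\GL_n(\valring)$-invariance of \emph{valuative} Lipschitz stratifications (Corollary~\ref{cor.GLO-invar}), using the equivalence between classical and valuative Lipschitz stratifications recorded in Propositions~\ref{prop.new-lip} and~\ref{prop.new-lip-na}. The two things to watch are that $M$ lies in $\GL_n(\bmdl)$ rather than in $\GL_n(\valring)$, and that $\mdl X$ may be definable over a non-empty parameter set; both are handled by a suitable choice of $\smdl$.

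Fix a parameter set $A \sub \bmdl$ over which $X$ and $\mdl X$ are defined. Since the conclusion --- that $M(\mdl X)$ is a Lipschitz stratification in the sense of Definition~\ref{defn.lip} --- does not refer to the valuation at all, we are free to choose the elementary substructure $\smdl \precneqq \bmdl$ (and hence the valuation ring $\valring$) conveniently. Using that $\bmdl$ is sufficiently saturated (Assumption~\ref{ass.sat}), I would take $\smdl$ to contain $A$ together with the finitely many entries of $M$; then $\valring$ (the convex closure of $\smdl$) is still a non-trivial valuation ring, and by construction $M \in \GL_n(\smdl) \sub \GL_n(\valring)$. Moreover every object occurring below --- $X$, $\mdl X$, $M(X)$, $M(\mdl X)$ --- is then definable over $\smdl$, so that Lemma~\ref{lem.nsa} (the only model-theoretic ingredient behind Proposition~\ref{prop.new-lip-na}) is applicable to it; cf.\ Remark~\ref{rem.only-R0}.

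Now I would run the equivalences. By Proposition~\ref{prop.new-lip}, $\mdl X$ being a Lipschitz stratification is equivalent to its satisfying Condition~(2) there; by Proposition~\ref{prop.new-lip-na} (with $Q$ a single point), that condition is in turn equivalent to $\mdl X$ being a valuative Lipschitz stratification in the sense of Definition~\ref{defn.nalip}. Since $M \in \GL_n(\valring)$, Corollary~\ref{cor.GLO-invar} gives that $M(\mdl X)$ is a valuative Lipschitz stratification of $M(X)$. Reading the same two equivalences in the other direction for the stratification $M(\mdl X)$ of $M(X)$ (which is again definable over $\smdl$), we conclude that $M(\mdl X)$ is a Lipschitz stratification of $M(X)$.

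The proof carries essentially no new content beyond Corollary~\ref{cor.GLO-invar} and the translation between the classical and valuative formulations; the one delicate point --- and hence the ``main obstacle'' --- is simply the bookkeeping of the second paragraph, namely checking that one may re-select $\smdl$ so as to simultaneously absorb the parameters of $\mdl X$ and place $M$ into $\GL_n(\valring)$. (Alternatively, should one prefer not to invoke saturation at this stage, one may first pass to a sufficiently saturated elementary extension of $\bmdl$; this is harmless because ``being a Lipschitz stratification'' is a first-order property, so the corollary for the extension implies it for $\bmdl$.)
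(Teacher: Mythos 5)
Your proof is correct and follows essentially the same route as the paper's: absorb the parameters of $\mdl X$ and the entries of $M$ into the data over which everything is \LT-definable (the paper does this by adding constants to the language and then choosing $\smdl \precneqq \bmdl$, which is the same maneuver as your re-selection of $\smdl$), so that $M \in \GL_n(\valring)$ and Proposition~\ref{prop.new-lip-na} applies in both directions around Corollary~\ref{cor.GLO-invar}.
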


\begin{proof}
We may assume that $\mdl X$ and $M$ are \LT-definable. (This works in the same way as in the proof of Theorem~\ref{main} from Theorem~\ref{thm.main-na}:
We first choose a language containing all the constants we need, and then we choose the models $\smdl \precneqq \bmdl$.)

By Proposition~\ref{prop.new-lip-na}, $\mdl X$ is a valuative Lipschitz stratification; by Corollary~\ref{cor.GLO-invar}, $M(\mdl X)$ is a
valuative Lipschitz stratification
(which is still \LT-definable, since $M$ is),
and finally, using Proposition~\ref{prop.new-lip-na} again,
we deduce that $M(\mdl X)$ is a Lipschitz stratification.
\end{proof}

\subsection{Overview of the main proof}
\label{sect.overview}

Here is an overview of the proof of Theorem~\ref{thm.main-na}, describing the main ideas in an informal way. Several technicalities are omitted.

We can easily stratify the given set $X \subseteq \bmdl^n$ in such a way that each stratum is the graph of a function.
More precisely, given a $d$-dimensional stratum $S$, after a suitable coordinate transformation, $S$ is the graph of some function
$\rho\colon \bar S \fun \bmdl^{n-d}$, where $\bar S \coloneqq \pr_{\le d}(S) \subseteq \bmdl^d$. 
Our final goal is to obtain bounds on valuative distances
$\Delta(V_1, V_2)$ (see Definition~\ref{defn.Delta}) between certain subspaces $V_i$ of tangent spaces. To be able to easily express those distances in terms of the functions $\rho$, we need that $\rho$ satisfies
\begin{equation}\label{eq.int.jac}
\vv(\Jac_{\bar a} \rho) \ge 0
\qquad\text{for every }\bar a \in \bar S.
\end{equation}
Indeed, for instance, under this assumption, we have, for $a_1, a_2 \in S$:
\[
\Delta(\bm T_{a_1} S, \bm T_{a_2} S) =
\vv(\Jac_{\bar a_1} \rho - \Jac_{\bar a_2} \rho),
\]
where $\bar a_i = \pr_{\le d}(a_i)$.

Most of the bounds of the form $\Delta(V_1, V_2)$ we aim for involve more than one stratum: Given a val-chain
$a^0, \dots, a^m$ with $a^\ell \in S^\ell$, we need to relate the tangent spaces
of all those strata $S^0, \dots, S^m$.
To be able to apply our above approach (of considering strata as graphs of functions and expressing distances of spaces in terms of Jacobians), we need to find a single coordinate transformation such that afterwards, each $S^\ell$ is the graph of a function $\rho^\ell$ satisfying (\ref{eq.int.jac}). (We call such a coordinate transformation an ``aligner'' of $S^0, \dots S^m$.) In Subsection~\ref{sect.brady} (Proposition~\ref{prop.brady-dec}), we obtain stratifications admitting aligners for any choice of $n+2$ strata. This is enough, since a val-chain consists of at most $n+2$ points.

To illustrate the remainder of the proof, we start by considering a plain val-chain consisting of only two points $a^0 \in S^0$, $a^1 \in S^1$. We suppose that we have already applied an aligner, so that $S^\ell$, $\ell = 1, 2$, is the graph of some function $\rho^\ell$ on $\bar S^\ell \coloneqq \pr_{\le e_\ell} S^\ell$, where $e_\ell \coloneqq \dim S^\ell$.

The next step in the proof consists in reducing the case of arbitrary plain val-chains (of length 2) to plain val-chains satisfying $\pr_{\le e_1}(a^0) = \pr_{\le e_1}(a^1)$; in the following, we assume this. In particular, this means that $a^0$ determines $a^1$ (assuming that $S^0$ and $S^1$ are fixed).

\begin{figure}
\begingroup
\def\wid{5}
\def\piece(#1,#2)(#3)(#4)(#5,#6){
  plot[smooth,xshift=#1cm,yshift=#2cm] coordinates {\cright}
  --
  (\wid+#1,#2+\yf) .. controls (\wid+#3+\yf) and (\wid+#4+\yyf) .. (\wid+#5,#6+\yyf)
  --
  plot[smooth,xshift=#5cm,yshift=#6cm] coordinates {\cleft}
  --
  (#5,#6+\yya) .. controls (#4+\yya) and (#3+\ya) .. (#1,#2+\ya)
  -- cycle
}
\def\sety(#1/#2/#3/#4/#5/#6){
\def\ya{#1}\def\yb{#2}\def\yc{#3}\def\yd{#4}\def\ye{#5}\def\yf{#6}
}
\def\setyy(#1/#2/#3/#4/#5/#6){
\def\yya{#1}\def\yyb{#2}\def\yyc{#3}\def\yyd{#4}\def\yye{#5}\def\yyf{#6}
}

\def\everything{
\def\cright{(0,\ya) (1,\yb) (2+2*\deform,\deform+\yc) (3+2*\deform,\deform+\yd) (4,\ye) (5,\yf)}
\def\cleft{(5,\yyf) (4,\yye) (3+2*\deform,\deform+\yyd) (2+2*\deform,\deform+\yyc) (1,\yyb) (0,\yya)}


\coordinate(a0pr) at (1.9+\deform,-1.3+0.5*\deform);
\coordinate(a1pr) at (1.5+\deform,-1.5+0.5*\deform);
\coordinate(a0) at (1.9+\deform,.75+0.5*\deform);
\coordinate(a1) at (1.5+\deform,.075+0.5*\deform);


\sety(0/0/0/0/0/0)
\setyy(0/0/0/0/0/0)
\draw
  [preaction={fill,S0set}]
  \piece(0,-1.5)(0,-1.5)(2,-.5)(2,-.5);

\draw[S1set] plot[smooth,yshift=-1.5cm] coordinates {\cright};


\begin{scope}[xshift=2.5cm,yshift=-1.5cm]
  \draw[axis arrow] (-.5,-.25) -- (3,1.5);
  \draw[axis arrow] (-2,.5) -- (4,.5);
  \fill(1,.5) circle (.06);
\end{scope}


\draw[dashed]
  (2,-.5) -- (2,1)
  (2+\wid,-.5) -- (2+\wid,1);

\draw[dash3d]
  (0,-1.5) -- (0,-.2)
  (0+\wid,-1.5) -- (0+\wid,.3);

\node[S0text] at (5.5,-.75+.5*\deform) {$\bar S^0$};

\begin{scope}[xshift=-1cm,yshift=-2cm]
  \draw[axis arrow] (-.5,0) -- (\wid+.5,0);
  \draw[S1set] (0,0) -- (\wid,0);
  \fill(2.5,0) circle (.06);
  
  \coordinate(a1prpr) at (1.5,0);
  
  \fill(a1prpr) circle (.06) node[below] {$\bar a^1$};
  
  \draw[dashed]
    (0,0) -- (1,.5)
    (\wid,0) -- (1+\wid,.5);

  \draw[dashed] (a1prpr) -- (a0pr);
  
  \node[below,S1text] at (3,0) {$\bar S^1$};
\end{scope}

\sety(0/0/0/0/0/0)
\setyy(0/0/0/0/0/0)
\draw
  [preaction={fill,color=white,opacity=.4}]
  [preaction={fill,color=white!70!black!60!blue,opacity=.7}]
  \piece(1,.5)(1.4,.5)(1.8,.9)(2,1);
\sety(0.4/0.1/0/0/0/0)
\setyy(0/0/0/0/0/0)
\draw
  [preaction={fill,color=white,opacity=.4}]
  [preaction={fill,color=white!40!black!70!blue,opacity=.7}]
  \piece(.5,.8)(.6,.8)(.7,.5)(1,.5);
\draw[dash3d]
  (a0pr) -- (a0)
  (a1pr) -- (a1);
\sety(-.2/0/.15/.25/0.3/.3)
\setyy(0.4/0.1/0/0/0/0)
\draw
  [preaction={fill,color=white,opacity=.4}]
  [preaction={fill,color=white!90!black!75!blue,opacity=.7}]
  \piece(0,0)(.1,.1)(.3,.8)(.5,.8);
\node[S0text] at (.5,.5) {$S^0$};

\draw[S1set] plot[smooth] coordinates {\cright};
\node[S1text,left] at (0,-.3) {$S^1$};

\fill(a0pr) circle (.06) node[anchor=south west] {$\strut\bar a^0$};
\fill(a1pr) circle (.06);


\draw[preaction={fill,color=white!90!black!50!violet,opacity=.7}]
 \tangentplane(a0)(-.4,-.1-.2*\deform)(.2,-.2+.2*\deform);
\draw[ultra thick,color=white!50!black!50!violet]
 \tangentline(a1)(-.3,-.05-.2*\deform);
\fill(a0) circle (.06) node[above] {$\strut a^0$};
\fill(a1) circle (.06) node[above] {$a^1$};

} 
\begin{ctikzpicture}
\begin{scope}
  \def\deform{.25}
  \everything
  \node[right] at (-1.5,1.2) {(a)};
\end{scope}
\begin{scope}[yshift=-4.5cm] 
  \def\deform{0}
  \everything
  \node[right] at (-1.5,1.2) {(b)};
\end{scope}
\end{ctikzpicture}
\endgroup
\caption{%
In this example, we consider a plain val-chain $a^0$, $a^1$ with dimensions $e_0 = 2$ and $e_1 = 1$. We assume that $\pr_{\le e_1}(a^0) = \pr_{\le e_1}(a^1)$.
(a) We need to find a subspace of the tangent plane $\bm T_{a^0}(S^0)$ which is close to the tangent line $T_{a^1}(S^1)$.
(b) To simplify this, we first deform the whole picture in such a way that the projection $\pr_{\le e_0}(S^1)$ becomes a straight line.
}
\label{fig.a0a1}
\end{figure}

To establish the conditions from Proposition~\ref{prop.flag-lip} concerning the val-chain $a^0, a^1$, we need to find a subspace $V \subseteq \bm T_{a^0} S^0$ that is sufficiently close to $\bm T_{a^1} S^1$; see Figure~\ref{fig.a0a1}~(a). We choose $V$ to be the subspace of $\bm T_{a^0} S^0$
satisfying $\pr_{\le e_0}(V) = \pr_{\le e_0}(\bm T_{a^1} S^1)$. (From (\ref{eq.int.jac}), one can deduce that this is a best possible approximation to $\bm T_{a^1} S^1$.)
The distance $\Delta(V, \bm T_{a^1} S^1)$ can directly be expressed in terms of Jacobians of the functions $\rho^0$ and $\rho^1$, but this becomes simpler if we first apply a ``rectilinearization'': a transformation which translates the coordinates $e_1 + 1, \dots, e_0$ in such a way that
$\pr_{\le e_0}(S^1)$ is sent to a subset of $\bmdl^{e_1} \times \{0\}^{e_0 - e_1}$ and which preserves all the other coordinates; see Figure~\ref{fig.a0a1}~(b).
After the rectilinearization has been applied, $V$ is determined by the first $e_1$ derivatives of $\rho^0$,
and we obtain
\[
\Delta(V, \bm T_{a^1} S^1)
= \vv((\Jac_{\bar a_0} \delta)\rest \bmdl^{e_1} \times \{0\}^{e_0-e_1})
= \min_{1 \le i \le e_1}\vv(
\partial_i\delta(\bar a_0)),
\]
where $\bar a^0 \coloneqq \pr_{\le e_0}(a^0)$ and $\delta \colon \bar S^0 \to \bmdl^{n - e_0}$ is the difference of $\rho^0$ and the last $n - e_0$ coordinates of $\rho^1$.

The desired bound on $\Delta(V, \bm T_{a^1} S^1)$ depends on the valuative distance of $a^1$ to a lower-dimensional stratum. We ensure that this bound holds by removing a lower-dimensional subset from $\bar S^1$. In terms of the function $\delta$ defined above, this means that we need to find a 
set $Z \subseteq \bmdl^{e_1}$ of dimension less than $e_1$ such that the first $e_1$ partial derivatives of $\delta$ at $x \in \bar S^0$ are bounded in terms of the distance
of $\pr_{\le e_1}(x)$ to $Z$. More precisely, the bound we end up needing is
\begin{equation}\label{eq.end-up}
\vv(\partial_i\delta(x)) \ge
\underbrace{\min\{\vv(\pr_{> e_1}(x)), \vv(\delta(x))\}}_{=\vv(a^0 - a^1)} - \valdist(\pr_{\le e_1}(x), Z)
\qquad\text{for }1 \le i \le e_1.
\end{equation}
The heart of the construction of valuative Lipschitz stratifications is Proposition~\ref{prop.sedate}, which provides such a lower-dimensional set $Z$ for arbitrary functions $\delta$.

For longer val-chains, the arguments are similar: Given a plain val-chain $a^0 \in S^0, \dots, a^m \in S^m$ (with $\dim S^\ell = e_\ell$), we rectilinearize with respect to some of the coordinates of $S^\ell$ for $\ell = 1, \dots, m$ and we obtain a function $\delta$ on (a certain subset of) $\bar S^0$ whose first $e_m$ derivatives need to be bounded by removing a lower-dimensional subset $Z$ from $\bar S^m$. Together with an inductive assumption that everything already works well for the sub-chain $a^0, \dots, a^{m-1}$, we obtain the subspaces $V_{k,\ell}$ needed by Proposition~\ref{prop.flag-lip}.

For augmented val-chains, the outline of the argument is the same; the biggest differences arise when the two first points $a^0,a^1$ lie in the same stratum $S^0$,
which, say, is the graph of $\rho^0$. In that case, instead of bounding first derivatives, we need to bound the second derivatives of $\rho^0$ to obtain a bound 
$\Delta(\bm T_{a^0}S^0, \bm T_{a^1}S^0)$.
Those bounds are obtained in essentially the same way as
(\ref{eq.end-up}), namely by applying Proposition~\ref{prop.sedate} to all first derivatives of $\rho$.

\medskip

We end this overview by mentioning an issue related to aligners (i.e., the coordinate transformation ensuring (\ref{eq.int.jac})).
Given a sequence $S^0, \dots, S^m$ of strata, the set $Z$ to be removed from $S^m$ according to the above procedure may depend on the chosen aligner.
When inductively assuming that this has already been done for
$S^0, \dots, S^{m-1}$, we need that it has been done using the same aligner as the one we use for $S^0, \dots, S^m$. However, an aligner for $S^0, \dots, S^{m-1}$ might not be suitable for $S^m$.
The solution is that Proposition~\ref{prop.brady-dec} states that all aligners can be found in a \emph{finite} set $\CC_n$ of coordinate transformations (depending only on the ambient dimension $n$).
By applying the above procedure to every possible aligner of $S^0, \dots, S^{m-1}$ in $\CC_n$, we in particular ensure that we included aligners working for $S^0, \dots, S^m$.

\section{Ingredients to the main proof}
\label{sect.ingred}

The entire remainder of the article is devoted to the proof of Theorem~\ref{thm.main-na}.
We continue to use the notation introduced in
Subsections~\ref{sect.notn.smdl}, \ref{sect.enlarge} and \ref{sect.notn.bmdl}, though we make a slight change concerning the language: to avoid having to mention the parameters $A$ from Theorem~\ref{thm.main-na} everywhere, we now allow $\LT$ to contain additional constants from $\bmdl$. Thus the general assumptions for the remainder of the paper are the following.

\begin{ass}\label{ass.sat2}
For the remainder of the paper,
we assume that $\bmdl$ is a real closed field which is power-bounded and \omin-minimal as an $\LT^0$-structure and
$T$-convex as an $\LTv^0$-structure. Moreover, we set
$\LT := \LT^0(A)$ and $\LTv := \LTv^0(A)$ for some finite set of parameters $A \sub \bmdl$, and we assume (without loss) that $\bmdl$ is sufficiently saturated.
\end{ass}

Note that there is a hidden quantifier here: We will prove everything for \emph{every} finite set $A$ of parameters. This in particular means that we can use previously proved results for different $A$.

\subsection{Alignable Bradycell Decompositions}
\label{sect.brady}

The first step in the construction of a valuative
Lipschitz stratification of a set $X$ consists in partitioning $X$ into  pieces that can be ``aligned'': After a suitable transformation of the coordinate system, they are graphs of functions whose derivatives have non-negative valuation.

\begin{defn}[Aligned sets]\label{defn.aligned}
Let $S$ be an \LT-definable subset of $\bmdl^n$. We say that $S$ is \emph{aligned} if, for $d \coloneqq \dim S$, the set $\bar S := \pr_{\le d}(S)$ is open in $\bmdl^d$ and $S$ is the graph of an \LT-definable $C^1$  function $f\colon \bar S \fun \bmdl^{n-d}$ satisfying
\begin{equation}\label{eq.brady}
\vv(\Jac_a f) \ge 0 \qquad \text{for all } a \in \bar S.
\end{equation}
The open set $\bar S$ is referred to as the \emph{base} of $S$.
We say that 
$\aligner \in \GL_n(\valring)$ is an \emph{aligner} of
an \LT-definable set $S \subseteq \bmdl^n$ if $\kappa(S)$ is aligned.
\end{defn}

\begin{rem}\label{rem.clopen-infty}
If such an aligned set $S$ is a lowest dimensional stratum of a stratification of a closed definable set $X \sub \bmdl^n$, then $\bar S$ is both, open and closed and hence $\bar S = \bmdl^{\dim S}$. This fits together with the fact that Lipschitz stratifications almost never have $X^0 = \emptyset$ and it will also fit together with the convention that in val-chains, we set $\lambda_{m+1} = -\infty$ if $X^{e_m - 1} = \emptyset$.
\end{rem}

\begin{rem}
These aligned sets are somewhat related to
the $L$-regular cells of
\cite[\S 1]{kurd:paru:quasi},
to the regular $M$-cells of
\cite[\S 1]{paw:lip}, and
to the $\Lambda^m$-regular cells of
\cite[Definition~1.2]{fisch:reg:strat},
though the latter are more sophisticated and
control more derivatives, and all of these notions impose additional conditions on the base.
\end{rem}

It will not be enough to partition our given set $X$ into sets which can be aligned using some arbitrary $\aligner \in \GL_n(\valring)$; we will also need some good control
of these $\aligner$:
\begin{enumerate}
\item We need to find a \emph{finite} set $\CC_n \subseteq \GL_n(\valring)$, depending only on $n$, such that all aligners $\kappa$ can be taken from $\CC_n$. The precise set $\CC_n$ does not matter, so we postpone choosing it to Definition~\ref{defn.Cn}.
\item We need all $\kappa$ to be \LT-definable without additional parameters. To ensure this, we will choose
$\CC_n \subseteq \GL_n(\Q)$.
\item We need that a single $\kappa$ works for several (given) pieces of the partition at once. More precisely, any $n+2$ pieces should have a common aligner in $\CC_n$.
(This number $n+2$ is what we need for the proof of existence of Lipschitz stratifications. The proofs in Subsection~\ref{sect.brady} would work equally well for any other fixed number.)
\end{enumerate}

It is problematic that Item (3) above is not a condition on individual pieces, but on the partition as a whole: This makes it unclear whether, given a partition $\mdl S = (S_i)_i$ satisfying (3), one can refine parts of $\mdl S$ in a way that (3) is preserved without modifying the remainder of $\mdl S$. To solve this problem,
we will introduce the notion of ``bradycells'' (Definition~\ref{defn.brady}). Bradycells will have the property that $n+2$ of them always have a common aligner. Using that notion, we can state the main result of this subsection, which provides the desired partitions. Since the precise notion of bradycells is irrelevant for the remainder of the article, we postpone it.

\begin{defn}\label{defn.brady-dec}
A \emph{bradycell decomposition} is a partition of $\bmdl^n$ into bradycells (see Definition~\ref{defn.brady}).
\end{defn}

\begin{prop}[Bradycell decompositions]\label{prop.brady-dec}
  \begin{enumerate}
  \item
  Every finite partition of $\bmdl^n$ into \LT-definable sets can be refined to a bradycell decomposition.
  \item
  For any set of at most $n+2$ bradycells
  $S_1, \dots, S_k \subseteq \bmdl^n$, there exists a common aligner $\aligner \in \CC_n$.
  \end{enumerate}
\end{prop}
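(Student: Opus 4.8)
The plan is to reduce Proposition~\ref{prop.brady-dec} to the yet-to-be-stated combinatorial structure of bradycells. Part (1) should follow from standard cell-decomposition-type arguments in the $T$-convex setting: one first takes a cell decomposition of $\bmdl^n$ refining the given partition (using the o-minimality of $\bmdl$ in the language $\LTv$, or rather of the relevant reduct), and then one argues that each cell can be further partitioned so that on each piece the relevant Jacobians have controlled valuation. The key tool here is Proposition~\ref{prop.jac} (the Weierstra\ss-type result in valued fields) and its Corollary~\ref{cor.jac}: roughly, for a definable $C^1$ function $f$, the locus where $\vv(\Jac_a f)$ drops below a prescribed bound, after a coordinate change from a finite list, is contained in a lower-dimensional definable set, so one stratifies by dimension and proceeds by induction on $\dim$. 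The definition of bradycell (Definition~\ref{defn.brady}) will be cooked up precisely so that such a decomposition exists and is stable under the kind of refinements we need; so Part (1) amounts to verifying that the inductive construction terminates in bradycells.

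For Part (2), the plan is that it should be essentially built into the definition of bradycells: a bradecell will come with a distinguished finite set of ``candidate aligners'' drawn from $\CC_n \subseteq \GL_n(\Q)$, and the combinatorial content of the definition will be arranged so that the candidate sets of any $n+2$ bradecells have a common element. Concretely, I expect the definition to fix, for each bradecell $S$ of dimension $d$, not a single aligner but the data of which $d$-subset of coordinates to project onto (there are $\binom{n}{d}$ of them) together with a bounded amount of extra shearing data, all living in a fixed finite $\CC_n$; and the constraint imposed on a bradecell decomposition (Item (3) of the informal discussion) will be exactly that these candidate sets have the finite-intersection property for any $n+2$ members. Then (2) is immediate: pick any common candidate $\kappa \in \CC_n$ for $S_1, \dots, S_k$ and observe that $\kappa$ aligns each $S_i$ because being a candidate aligner for $S_i$ means, by definition, that $\kappa(S_i)$ is aligned.

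The step I expect to be the real obstacle is making Part (1) and Part (2) compatible: one must decompose $\bmdl^n$ into bradecells while simultaneously guaranteeing the global $(n+2)$-fold common-aligner property. This is exactly the difficulty flagged in Subsection~\ref{sect.overview}: a refinement needed to align one tuple of pieces may destroy the alignability of another tuple. The solution (already previewed in the overview) is to not commit to a single aligner per piece but to record, for each bradecell, \emph{all} the elements of $\CC_n$ that align it, and to define bradecells so that this set is never too small --- more precisely, so that its complement in $\CC_n$ is controlled well enough that any $n+2$ of these sets still intersect. Verifying this requires a careful bookkeeping argument: one shows that the ``bad'' aligners for a piece $S$ (those $\kappa \in \CC_n$ failing to align $S$) are few, using the dimension bounds from Corollary~\ref{cor.jac}, and that the finitely many refinement steps only ever discard finitely many aligners per piece, so that choosing $\CC_n$ large enough (but still finite and depending only on $n$) leaves enough common aligners. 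The bulk of Subsection~\ref{sect.brady} will be devoted to this bookkeeping, and the proof of Proposition~\ref{prop.brady-dec} itself will then be short, quoting the definition of bradecell and the inductive decomposition procedure.
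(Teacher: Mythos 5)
Your overall architecture --- attach a set of candidate aligners to each piece, make $\CC_n$ finite and large enough that any $n+2$ candidate sets meet, and prove (1) by downward induction on dimension --- is the same as the paper's. But the one genuinely hard step is exactly the one you leave as ``careful bookkeeping,'' and the mechanism you sketch for it would not close. The ``bad'' aligners for a piece are not controlled by dimension bounds from Corollary~\ref{cor.jac} (which plays no role in this subsection), and ``few'' here cannot be a dimension count: you need a quantitative statement strong enough that $n+2$ forbidden sets never exhaust $\CC_n$. The paper's key lemma (Lemma~\ref{lem.gra-cover}) attaches the candidate-aligner data not to the cell $S$ itself but to a small open subset $\Theta_\nu^d$ of the Grassmannian chosen in advance to contain all tangent spaces of $S$; since the $\Theta_\nu^d$ are definable in the pure ring language, the whole question reduces to $\R$, and the existence of a common $\kappa\in\Or_n(\Q)$ for any $n+2$ of the $\Theta$'s is proved by a Haar-measure argument on the compact group $\Or_n(\R)$: each sufficiently small $\Theta$ forbids a set of transformations of measure $<1/(n+2)$, so no $n+2$ forbidden sets cover $\Or_n(\R)$, and the complement is open, hence meets $\Or_n(\Q)$. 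This choice also dissolves the compatibility worry you flag at the end: because the candidate set depends only on the fixed $\Theta_\nu^d$ and not on the cell, refining a bradycell never discards aligners, so no bookkeeping across refinement steps is needed.

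Secondly, your use of Proposition~\ref{prop.jac}/Corollary~\ref{cor.jac} in Part (1) is a misattribution. The Jacobian bound $\vv(\Jac f)\ge 0$ for the graph function of $\kappa(S)$ is automatic once all tangent spaces of $S$ lie in $\Theta_\nu^d$ and $\vv(J(\kappa\Theta_\nu^d))\ge 0$; and this last valuative condition reduces, via the elementary translation principle of Remark~\ref{rem.nsa0}, to boundedness of $V\mapsto J(\kappa V)$ on $\cl(\kappa\Theta_\nu^d)\subseteq\grao_{n,d}(\R)$, i.e.\ to a compactness statement over $\R$ --- no valued-field input is required. The refinement in (1) then only has to arrange the tangent-space condition and the graph/openness/$C^1$ conditions, which is standard o-minimal cell decomposition as you say.
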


\begin{rem}
This entire subsection would become much simpler if all $S_1, \dots, S_k$ in Proposition~\ref{prop.brady-dec}~(2)
could be assumed to have different dimension; in particular, one could then choose $\CC_n$ to consist only of the coordinate
permutations. In applications of the proposition, this will almost be the case: at most two of the bradycells will have the same dimension. It would probably be possible to also get rid of this (an approach like this has been used in \cite{i.whit}), but this would require considerably more work.
\end{rem}

For the remainder of this subsection, we fix the following notation.

\begin{notn}
For $d \le n$, we write $\gra_{n, d}(\bmdl)$ for the Grassmannian variety, i.e., for the space of $d$-dimensional sub-vector spaces of $\bmdl^n$.
\end{notn}

\begin{defn}
For $d \le n$, let $\grao_{n, d}(\bmdl) \subseteq \gra_{n, d}(\bmdl)$ be the open subset of those $V \subseteq \bmdl^n$
such that $\pr_{\le d}(V) = \bmdl^d$, i.e., which project surjectively onto the first $d$ coordinates.
Such a $V \in \grao_{n, d}(\bmdl)$ can be considered as the graph of a linear map $M_V\colon \bmdl^d \fun \bmdl^{n-d}$;
we set $J(V) \coloneqq \norm{M_V}$ (the operator norm of the matrix); for $V \in \gra_{n, d}(\bmdl) \setminus \grao_{n, d}(\bmdl)$, we set $J(V) \coloneqq \infty$.
\end{defn}

The following lemma is the main tool to find the finitely many transformations $\kappa \in \GL_n(\Q)$. It is a purely geometrical-combinatorial result, closely related to \cite[Lemma~1.8]{kurd:paru:quasi}. Even though formulated in $\bmdl$, it is just a statement about $\R$ (as will become visible in the proof). Note also that the $\kappa$ provided by the lemma are even elements of $\Or_n(\Q)$ (and not just in $\GL_n(\Q)$).

\begin{lem}\label{lem.gra-cover}
Fix arbitrary natural numbers $n$ and $\ell$. Then we can find, for each $d \le n$, a finite open covering of $\gra_{n, d}(\bmdl)$
by \LT-definable sets $\Theta_{\nu}^{d}$
such that for any choice of $\ell$ many of these sets $\Theta_{\nu_1}^{d_1}$, \dots, $\Theta_{\nu_\ell}^{d_\ell}$,
there exists an orthogonal transformation $\aligner \in \Or_n(\Q)$ such that for every $i \le \ell$, we have
$\vv(J(\kappa\Theta_{\nu_i}^{d_i})) \ge 0$.
\end{lem}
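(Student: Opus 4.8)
The plan is to reduce everything to a compactness argument on the real Grassmannian and to choose the covering sets $\Theta^d_\nu$ fine enough that being in $\Theta^d_\nu$ forces the ``slope'' $J$ to be controlled by a fixed real constant; then any transformation in $\Or_n(\R)$ that makes all the relevant slopes finite can be perturbed into one in $\Or_n(\Q)$, and the valuative inequality $\vv(J(\kappa\Theta^{d_i}_{\nu_i}))\ge 0$ is automatic because all quantities involved lie in $\valring$. Since $\gra_{n,d}(\bmdl)$ is, as a definable set, interpretable over $\Q$ (it is a projective variety with $\Q$-structure), all the sets we produce will be \LT-definable without parameters.

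First I would fix, for each $d\le n$, a real number $r$ (to be chosen at the end, depending only on $n$ and $\ell$) and cover $\gra_{n,d}(\R)$ by the finitely many open sets $\Theta^d_{\kappa_0}:=\{V:\ J(\kappa_0 V)<r\}$, where $\kappa_0$ ranges over a finite subset $F\subseteq\Or_n(\Q)$ chosen large enough that these sets cover $\gra_{n,d}(\R)$. Such a finite $F$ exists: $\gra_{n,d}(\R)$ is compact, for every $V$ there is some $\kappa_0\in\Or_n(\R)$ with $\kappa_0V\in\grao_{n,d}$ (so $J(\kappa_0V)<\infty$), the map $(\kappa_0,V)\mapsto J(\kappa_0 V)$ is continuous on the open locus where it is finite, $\Or_n(\Q)$ is dense in $\Or_n(\R)$, and for $r$ large enough the sets $\{J(\kappa_0\,\cdot)<r\}$ with $\kappa_0\in F$ still cover. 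By transfer (this is a statement about $\R$, or rather about the real closure of $\Q$, hence holds in $\bmdl$ too), the same finite family of \LT-definable sets $\Theta^d_{\kappa_0}$ covers $\gra_{n,d}(\bmdl)$; index them by $\nu=\kappa_0\in F$.

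Next, given a choice of $\ell$ of these sets $\Theta^{d_1}_{\kappa_1},\dots,\Theta^{d_\ell}_{\kappa_\ell}$, I would look for a single $\kappa\in\Or_n(\Q)$ such that $J(\kappa\,V)$ is bounded by some fixed real constant $r'$ for every $V\in\Theta^{d_i}_{\kappa_i}$ and every $i$. This is the combinatorial heart, and it is exactly the content (up to the passage between $\R$ and $\bmdl$) of \cite[Lemma~1.8]{kurd:paru:quasi}: the sets $\Theta^{d_i}_{\kappa_i}$ are ``$r$-large'' neighbourhoods of the form $\kappa_i^{-1}\{J<r\}$, and one argues by compactness again that there is a uniform rotation $\kappa$ — which can then be taken rational by density, at the cost of replacing $r'$ by $2r'$, say — moving all of them into the locus $\{J<r'\}$. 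One has to unwind that the relevant set of ``good'' $\kappa$ is open and non-empty in $\Or_n(\R)$, which follows from the openness of $\{J<r'\}$ and a transversality/general-position count: the condition $\kappa V\in\grao_{n,d}$ for all $V$ in a compact set $K_i\subseteq\gra_{n,d_i}$ of a fixed shape fails only on a lower-dimensional subset of $\Or_n(\R)$, and there are only finitely many $i$. Choosing $r$ (hence the covering) at the start so that the resulting $r'$ works is what pins down the finitely many $\Theta^d_\nu$; the dependence is only on $n$ and $\ell$.

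Finally I would check the valuative conclusion. Once $\kappa\in\Or_n(\Q)\subseteq\Or_n(\valring)$ is fixed and $r'\in\Q_{>0}$ is a fixed bound with $J(\kappa V)<r'$ for all $V\in\Theta^{d_i}_{\kappa_i}$, we get $J(\kappa\,\Theta^{d_i}_{\kappa_i})\le r'\in\smdl^{\times}$, so $\vv(J(\kappa\,\Theta^{d_i}_{\kappa_i}))\ge\vv(r')=0$; here I am using that $r'$, being a standard rational, has valuation $0$, and the convention that the supremum defining $J$ of a set of spaces is taken inside $\bmdl\cup\{\infty\}$. This is the whole point of working with $\Q$-rational $\kappa$ and $\Q$-rational bounds: the ``quantitative'' real statement becomes a valuative inequality for free, with no $c,C$ bookkeeping. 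The main obstacle I anticipate is purely the combinatorial lemma in the middle — producing one rotation $\kappa$ simultaneously good for all $\ell$ chosen neighbourhoods — which is where I would lean directly on \cite[Lemma~1.8]{kurd:paru:quasi}, adapting its open-dense-orbit argument; the rest is transfer from $\R$ to $\bmdl$ and the trivial observation that standard constants have valuation $0$.
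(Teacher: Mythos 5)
Your overall architecture matches the paper's: take the $\Theta^d_\nu$ to be small, ring-language-definable neighbourhoods obtained by translating one fixed neighbourhood by finitely many rational rotations, reduce the valuative conclusion to boundedness of $J$ over $\R$ by transfer (standard rational bounds have valuation $0$), and produce the common $\kappa\in\Or_n(\Q)$ by showing that the rotations ``forbidden'' by the $\ell$ chosen sets cannot exhaust $\Or_n(\R)$. The gap is in your justification of that last non-exhaustion. You claim that for a compact $K_i\subseteq\gra_{n,d_i}(\R)$ the set of $\kappa$ with $\kappa K_i\not\subseteq\grao_{n,d_i}(\R)$ is lower-dimensional, and conclude by a general-position count over the finitely many $i$. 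This is false whenever $K_i$ has nonempty interior, which your $\Theta$'s must have since they form an open cover: the complement of $\grao_{n,d}$ in $\gra_{n,d}$ is a codimension-one subvariety, so the forbidden set of a \emph{single} space $V$ has codimension one in $\Or_n(\R)$, but the union of these over all $V$ in an open set is full-dimensional. For $n=2$, $d=1$, the rotations sending some line of a fixed open arc of lines to the vertical line form an open arc of $\Or_2(\R)$, and finitely many such arcs can easily cover the whole group; so no dimension or transversality count can rule out that the $\ell$ forbidden sets cover $\Or_n(\R)$.

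The paper's fix is quantitative rather than dimensional: normalize the Haar measure $\mu$ on $\Or_n(\R)$ and arrange $\mu(\forb(\Theta^d_\nu))<1/\ell$ for every $\nu$ and $d$. This is done by taking $\Xi$ to be a ball of small rational radius around a rational point $V_0$ --- since $\forb(\{V_0\})$ is compact of measure zero, one first chooses an open $\UU\supseteq\forb(\{V_0\})$ with $\mu(\UU)<1/\ell$ and then shrinks the radius so that $\forb(\Xi)\subseteq\UU$ --- and then translating $\Xi$ by finitely many $\kappa_\nu\in\Or_n(\Q)$ to cover $\gra_{n,d}(\R)$; Haar-invariance preserves the measure bound. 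Then any $\ell$ forbidden sets have total measure less than $1$, so their complement is a nonempty open subset of $\Or_n(\R)$ and contains a rational $\kappa$. If you replace your transversality count by this measure estimate (or some other quantitative bound on the size of each forbidden set in terms of the size of $\Xi$), the remainder of your argument --- the transfer to $\R$ and the observation that boundedness by a standard constant yields $\vv(J(\kappa\Theta^{d_i}_{\nu_i}))\ge 0$ --- goes through exactly as in the paper.
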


(Here, $\vv(\infty) \coloneqq -\infty$ and $\vv(J(\kappa\Theta_{\nu_i}^{d_i})) \ge 0$ is a short hand
notation for: $\vv(J(\kappa V)) \ge 0$ for every $V \in \Theta_{\nu_i}^{d_i}$.)

\begin{proof}[Proof of Lemma~\ref{lem.gra-cover}]
We will prove the stronger claim that the sets $\Theta_{\nu}^{d}$ can be taken definable in the pure ring language. In that case, to get $\vv(J(\kappa\Theta_{\nu_i}^{d_i})) \ge 0$, it suffices to prove that the map $V \efun J(V)$ is bounded on $\kappa\Theta_{\nu_i}^{d_i}$
(by Remark~\ref{rem.nsa0}). This boundedness is also a statement in the ring language, so we may as well assume $\bmdl = \R$.
In particular, any closed subset of $\gra_{n, d_i}(\R)$ is compact, so we can obtain boundedness of $V \efun J(V)$ by proving
\[
 \cl(\aligner\Theta_{\nu_i}^{d_i}) \subseteq \grao_{n, d_i}(\R).
\]

Given a subset $\Xi \subseteq \gra_{n, d}(\R)$ (for any $d \le n$), we write $\forb(\Xi)$ for the set of orthogonal transformations $\kappa$ ``forbidden by a space in $\cl(\Xi)$'', i.e.:
\[
\forb(\Xi) \coloneqq \{\aligner \in \Or_n(\R) : \cl(\aligner \Xi) \not\subseteq \grao_{n, d}(\R)\}.
\]
Intuitively, we just need to choose the sets $\Theta_{\nu}^{d}$ so small that no $\ell$ of the
sets $\forb_{\nu}^d \coloneqq \forb(\Theta_{\nu}^{d})$ cover all of $\Or_n(\Q)$.
To make this argument precise, let $\mu$ be the Haar measure on the compact group $\Or_n(\R)$, normalized such that
$\mu(\Or_n(\R)) = 1$.
It is enough to ensure that $\mu(\forb_{\nu}^d) < 1/\ell$ for each $\nu$ and $d$.
(Then $\Or_n(\R) \setminus \bigcup_{i=1}^\ell \forb_{\nu_i}^{d_i}$
is non-empty and open, and hence contains a $\kappa \in \Or_n(\Q)$, as desired.)

To find finitely many sets $\Theta_{\nu}^d$ with that property
covering $\gra_{n,d}(\R)$, we fix any definable metric on $\gra_{n, d}(\R)$
inducing the usual topology.
Moreover, we fix any element $V_0 \in \gra_{n, d}(\Q)$. Since the set $\forb(\{V_0\}) \subseteq \Or_n(\R)$ is a compact subset of lower dimension,
we can find an open ball $\Xi \subseteq \gra_{n, d}(\R)$ around $V_0$ such that $\mu(\forb(\Xi)) < 1/\ell$. (First choose any open set $\UU \supseteq \forb(\{V_0\})$ with $\mu(\UU) < 1/\ell$, and then, using compactness of
$\forb(\{V_0\})$, choose the radius of $\Xi$ small enough to ensure $\forb(\Xi) \subseteq \UU$.) We may moreover assume that $\Xi$ has rational radius.

Now choose finitely many $\aligner_\nu \in \Or_n(\Q)$ such that
the sets $\Theta_{\nu}^{d} \coloneqq \aligner_\nu(\Xi)$ cover
$\gra_{n, d}(\R)$. Then indeed, $\mu(\forb_\nu^d)
= \mu(\forb(\aligner_\nu(\Xi))) < 1/\ell$.
\end{proof}

Using Lemma~\ref{lem.gra-cover}, we can now choose our finite set $\CC_n \subseteq \GL_n(\Q)$ and introduce
the notion of bradycells.

\begin{defn}[The set $\CC_n$]\label{defn.Cn}
For the remainder of this subsection, fix subsets $\Theta_{\nu}^{d} \subseteq \gra_{n, d}(\R)$ as provided by Lemma~\ref{lem.gra-cover}
using $\ell = n + 2$. Moreover, let $\CC_n \subseteq \GL_n(\Q)$
be a finite subset containing, for each choice of $n+2$ many sets $\Theta_{\nu_1}^{d_1}, \dots, \Theta_{\nu_{n+2}}^{d_{n+2}}$, an element $\aligner$ satisfying
$\vv(J(\kappa\Theta_{\nu_i}^{d_i})) \ge 0$ ($i = 1, \dots, n+2$).
(For any $n+2$ of the sets, the existence of such $\aligner \in  \GL_n(\Q)$ is asserted by the lemma, and there are only finitely many choices of $n+2$ sets.)
\end{defn}

\begin{defn}[Bradycells]\label{defn.brady}
A \emph{bradycell} is an \LT-definable set $S \subseteq \bmdl^n$ such that for (at least) one of the sets $\Theta_{\nu}^{d}$
chosen in Definition~\ref{defn.Cn} (where $d = \dim S$),
we have the following:
\begin{enumerate}
\item For every $x \in S$, the tangent space $\bm T_x S$ is an element of $\Theta_{\nu}^{d}$.
\item For every $\aligner \in \CC_n$ satisfying $\vv(J(\kappa\Theta_{\nu}^{d})) \ge 0$, 
$\aligner(S)$ is aligned.
\end{enumerate}
\end{defn}
The content of Condition (2) is just that the projection $\pr_{\le d}(\kappa(S))$ is open and that $\kappa(S)$ is the graph of a function on that projection; the bound on the derivatives of the function is automatic by $\vv(J(\kappa\Theta_{\nu}^{d})) \ge 0$ and Condition (1).

Now that $\CC_n$ and bradycells are defined, we can finally prove the main result of this subsection.

\begin{proof}[Proof of Proposition~\ref{prop.brady-dec}]
(1) We repeatedly refine the partition,
ensuring that each piece of dimension $d$ becomes a bradycell, proceeding
from $d = n$ downwards to $d = 0$. Thus fix $d \le n$, and fix any $d$-dimensional piece $S$.
It suffices to check that we can subdivide $S$ into (finitely many) $d$-dimensional bradycells and an arbitrary lower-dimensional set.
	
After a first partitioning, we may assume that $S$ is a definable $C^1$ manifold and satisfies
Condition~(1) from Definition~\ref{defn.brady} for some set $\Theta_{\nu}^{d}$.
To obtain Condition~(2), we further partition $S$ for each of those $\aligner \in \CC_n$
for which $\vv(J(\kappa\Theta_{\nu}^{d})) \ge 0$:
By a first partition, we ensure that $\kappa(S)$ is the graph of a function $f \colon \pr_{\le d}(\kappa(S)) \to \bmdl^{n-d}$.
Then we remove a lower-dimensional set to ensure that $\pr_{\le d}(\kappa(S))$ is open and that $f$ is $C^1$.

\medskip

(2)
Consider bradycells $S_1, \dots, S_{k}$ for some $k \le n+2$; for each $i \le k$,
let $\Theta_{\nu_i}^{d_i}$ be a corresponding set provided by Definition~\ref{defn.brady}.
By our choice of $\CC_n$ (Definition~\ref{defn.Cn}), there exists a $\aligner \in \CC_n$ such that for each $i$,
we have $\vv(J(\kappa\Theta_{\nu_i}^{d_i})) \ge 0$.
By Definition~\ref{defn.brady} (2), $\aligner(S_i)$ is aligned.
\end{proof}

We end this subsection by proving a useful property of aligned sets.

\begin{lem}\label{lem.frontier}
Let $S \sub \bmdl^n$ be a $d$-dimensional aligned set,
and suppose that $B \subseteq \bmdl^n$ is a valuative ball (open or closed) with
$B \cap S \ne \emptyset$ but $B \cap \partial S = \emptyset$.
Then $\bar B \coloneqq \pr_{\le d}(B)$ is a subset of the base $\bar S = \pr_{\le d}(S)$ of $S$.
\end{lem}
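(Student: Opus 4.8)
The plan is to argue that the "bad" set over which $\bar B$ could fail to be contained in $\bar S$ is exactly the projection of a piece of the frontier $\partial S$, and that the ball $B$ must reach that frontier before it can leave. Recall $S$ is the graph of a $C^1$ function $f\colon \bar S \fun \bmdl^{n-d}$ with $\bar S$ open in $\bmdl^d$, so $\pr_{\le d}$ restricts to a definable homeomorphism $S \to \bar S$. First I would pick a point $x \in B \cap S$ and set $\bar x := \pr_{\le d}(x) \in \bar S \cap \bar B$. Suppose for contradiction that $\bar B \not\subseteq \bar S$, and pick $\bar y \in \bar B \setminus \bar S$.

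Since $\bar B$ is (the projection of) a valuative ball, it is itself a valuative ball in $\bmdl^d$, in particular it is definably connected, so by o-minimality there is a definable path — indeed one can just take the "line segment" inside the ball, or more robustly use definable connectedness of $\bar B$ — joining $\bar x$ to $\bar y$ inside $\bar B$. Along this path, starting in the open set $\bar S$ and ending outside it, there is a first point $\bar z$ lying in the frontier $\partial\bar S = \cl(\bar S)\setminus\bar S$; more precisely, $\bar z \in \cl(\bar S)$ but $\bar z \notin \bar S$, and $\bar z \in \bar B$ (using that $\bar B$, being a closed or open valuative ball, is closed under the relevant limits, or simply that $\bar z$ lies on the segment). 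Now lift: the points of $\bar S$ near $\bar z$ lift via the graph of $f$ to points of $S$. The key geometric input is that, because $\vv(\Jac f) \ge 0$, the function $f$ is "valuatively Lipschitz" — for $a, a' \in \bar S$ we get $\vv(f(a) - f(a')) \ge \vv(a - a')$ (by the mean value inequality / Lemma~\ref{lem.vvM}) — so $f$ extends continuously to $\cl(\bar S)$, at least locally near points whose preimage stays in a ball avoiding $\partial S$; in any case the graph over a sequence in $\bar S$ converging to $\bar z$ converges in $\bmdl^n$ to a point $z := (\bar z, w)$ with $z \in \cl(S)$ and $z \notin S$ (since $\bar z \notin \bar S$), i.e.\ $z \in \partial S$.

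It then remains to check $z \in B$, which contradicts $B \cap \partial S = \emptyset$. This is where I would be most careful, and I expect it to be the main obstacle: one has $\bar z = \pr_{\le d}(z) \in \bar B = \pr_{\le d}(B)$, but that only gives a point of $B$ with the same first $d$ coordinates as $z$, not $z$ itself. The resolution uses that $B$ is a valuative ball and that the approximating points on the graph $S$ already lie in $B$: choose $a_\nu \in \bar S$ with $a_\nu \to \bar z$ and with $(a_\nu, f(a_\nu))$ eventually in $B$ — possible since $\bar x \in \bar B$ means the graph point over $\bar x$ is in $B$, and one can run the segment from $\bar x$ to just-before-$\bar z$ entirely inside $\bar B \cap \bar S$, whose graph lies in $S$, and these graph points have $\pr_{\le d}$ in $\bar B$; but to conclude the graph points themselves lie in $B$ one uses that $B$ is "cylindrical enough": $\vv(u - u') \ge \min(\vv(\pr_{\le d}(u-u')), \vv(\pr_{>d}(u - u')))$, and for graph points $\vv(\pr_{>d})$ is controlled by $\vv(\pr_{\le d})$ via the valuative-Lipschitz bound above, so staying valuatively close in the base keeps the graph points inside the ball. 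Then $(a_\nu, f(a_\nu)) \in B$ and $(a_\nu, f(a_\nu)) \to z$; since a valuative ball is closed under such limits (a closed ball obviously, and for an open ball the limit of points all at distance $> \lambda$ from the center is still at distance $> \lambda$, as the value group is discretely approached here — or one shrinks slightly), we get $z \in B \cap \partial S$, the desired contradiction. Hence $\bar B \subseteq \bar S$.
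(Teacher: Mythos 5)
Your proof is correct and follows essentially the same route as the paper's: assume $\bar B \not\subseteq \bar S$, move along a line segment in $\bar B$ from a base point of $B\cap S$ towards the first point of $\partial\bar S$, use $\vv(\Jac f)\ge 0$ and the Mean Value Theorem to see that the graph points converge to a point of $\partial S$ lying in $B$, contradicting $B\cap\partial S=\emptyset$. The only cosmetic difference is at the very end: rather than invoking closedness of valuative balls under limits, the paper simply records the uniform bound $\vv(b-a)\ge\vv(\bar b-\bar a)$, which already meets (and, in the open case, strictly exceeds) the radius of $B$ because $\bar a,\bar b\in\bar B$.
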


\begin{proof}
Suppose that $\bar B \not\subseteq \bar S$. Choose $a \in B \cap S$, set $\bar a \coloneqq \pr_{\le d}(a) \in \bar B \cap \bar S$ and choose $\bar b \in \bar B \setminus \bar S$. 
Let $L \coloneqq \{(1-t) \bar a + t\bar b \mid 0 < t < 1\}$ be the open line segment connecting $\bar a$ and $\bar b$.
We may assume $L \subseteq \bar S$; otherwise, replace
$\bar b$ by the point of $L \cap \partial \bar S$ which is closest to $\bar a$. (Such a point exists by \omin-minimality, and using that $\partial \bar S$ is \LT-definable.)

Let $f$ be the function whose graph is $S$, and consider the function $g\colon [0,1) \to S$ sending $t$ to $f((1-t) \bar a + t\bar b)$. Using $\vv(\Jac f) \ge 0$, we obtain $\vv(g'(t)) \ge \vv(\bar b - \bar a)$, so using the Mean Value Theorem, we deduce, for any $t_1, t_2 \in [0,1)$:
\[
\vv(g(t_2) - g(t_1)) \ge \vv(t_2 - t_1) + \vv(\bar b - \bar a)
\geq \rad(B),
\]
where the last inequality is strict if $B$ is an open ball.
This implies that $b' \coloneqq \lim_{t \to 1} g(t)$ exists and that $b := (\bar b, b')$ satisfies
$\vv(b - a) \ge \vv(\bar b - \bar a)$. In particular, $b \in B \cap \partial S$, contradicting the assumption that this intersection is empty.
\end{proof}

\begin{rem}\label{rem.int-val}
Given an \LT-definable $C^1$ function $f\colon X \fun \bmdl^{n-d}$ on an \LT-definable set $X \subseteq \bmdl^d$,
a similar kind of Mean Value Theorem argument on a line segment allows us to bound $\vv(f(a_1) - f(a_2))$ by $\vv(a_1 - a_2) + \vv(\Jac f)$ under suitable assumptions:
If $a_1$ and $a_2$ both lie in a valuative ball $B$ that is entirely contained in $X$, and $\vv(\Jac_a f) \ge \lambda$ for all $a \in B$, then
\[
\vv(f(a_1) - f(a_2)) \ge \vv(a_1 - a_2) + \lambda.
\]
In particular, for $S$ and $B$ as in Lemma~\ref{lem.frontier},
the entire preimage $\pr_{\le d}^{-1}(\bar B) \cap S$ is contained in $B$.
\end{rem}

\subsection{Bounding derivatives using power-boundedness}

A key ingredient to our proof of the existence of Lipschitz stratifications is the following proposition, which has been proved in \cite{Yin.tcon}. This is the only (but crucial) place in the present paper where power-boundedness is used.

\begin{prop}[{\cite[Corollary~2.17]{Yin.tcon}}]\label{prop.jac}
Suppose that $f\colon \bmdl^n \fun \bmdl$ is an \LT-definable function.
Then there exists a finite \LT-definable partition of $\bmdl^n$ into sets $Y_\nu$ such that if $B$ is an open valuative ball entirely contained in one of the sets $Y_\nu$,
then either $f(B) = \{0\}$ or $f(B)$ is an open valuative ball not containing $0$.
\end{prop}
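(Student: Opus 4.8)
The statement concerns only the interplay of the $\LT$-definable function $f$ with the valuation, so I would work in the model theory of $\bmdl$ as a $T$-convex valued field and argue by induction on $n$, reducing to a one-variable statement in which power-boundedness is the decisive ingredient -- the only place where it is used.

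Consider first $n = 1$. Using a cell decomposition of $\bmdl$ adapted to $f$ and to $f^{-1}(0)$, split $\bmdl$ into finitely many points (which contain no open valuative ball) and finitely many open intervals on each of which $f$ is $C^1$ and is either identically $0$ -- where the conclusion is immediate -- or strictly monotone with both $f'$ and $f$ nowhere vanishing. On such an interval $I$, apply power-boundedness to $f'$ and to $1/f'$ and refine $I$ so that $f'$ acquires a power-function normal form near the endpoints; the point to extract is that, after this refinement, $\vv(f')$ is constant on every open valuative ball $B \subseteq I$. (A valuative ball cannot contain an endpoint $c$ of $I$, so $\vv(x-c)$ is already constant on $B$; power-boundedness is what promotes this to constancy of $\vv(f'(x))$, power functions being compatible with the valuation -- this is exactly the step that fails once an exponential is definable.) Granting this, the valuative Mean Value Theorem of Remark~\ref{rem.int-val} yields
\[
\vv(f(x)-f(y)) = \vv(f'|_B) + \vv(x-y) \qquad \text{for all } x,y \in B,
\]
so that $f(B)$ is precisely the open valuative ball of radius $\vv(f'|_B) + \rad(B)$ about $f(x_0)$ for any $x_0 \in B$. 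Finally, $0 \notin f(B)$: valuative balls are order-convex and $f$ is monotone on $I$, so $0 \in f(B)$ would force $f$ to have a zero in $B \subseteq I$, which we have excluded. This settles $n = 1$, and the argument goes through uniformly in parameters.

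For $n > 1$, I would combine a cell decomposition of $\bmdl^n$ fibred over $\pr_{\le n-1}$ with the parametrized one-variable case applied in the last coordinate and the inductive hypothesis applied to the base, using that an open valuative ball in $\bmdl^n$ is a product $\prod_{i \le n} B_{>\lambda}(a_i)$ of equal-radius one-dimensional balls. I expect the real work to be here: one has to check that as the first $n-1$ coordinates range over an $(n-1)$-dimensional valuative ball, the resulting one-dimensional image balls fit together into a single $n$-dimensional valuative ball with the predicted centre and radius, uniformly, and carrying this out cleanly appears to need a Weierstra\ss/Jacobian-type cell decomposition for power-bounded $T$-convex fields with good control of leading terms -- presumably the technical core of \cite{Yin.tcon}. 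The o-minimality of the value group $\Gamma$ (\cite[Prop.~4.3]{Dries:tcon:97}) is what keeps the iterated refinements finite.
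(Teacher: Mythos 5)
You should first note what the paper actually does here: it does not prove this proposition at all. It is imported wholesale from \cite[Corollary~2.17]{Yin.tcon}, and the only text accompanying the statement is a dictionary translating the $\rv$-formulation of that corollary into the language of valuative balls. So your proposal is not competing with an argument in this paper but with the technical core of \cite{Yin.tcon}. Your one-variable case is essentially right and is consistent with what the authors themselves say in Remark~\ref{rem.jac} (``for $n=1$ \dots it is not too difficult to deduce it from power-boundedness''): Miller's asymptotic normal form $f(x)\sim a(x-c)^\lambda$ near cell endpoints makes $\vv(f')$ constant on valuative balls avoiding the endpoints, and the Mean Value Theorem then identifies $f(B)$ as a ball. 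Two points to tighten there: you need the \emph{exact} equality $\vv(f(x)-f(y))=\vv(f'|_B)+\vv(x-y)$, which requires $\vv(f')$ to be literally constant on $B$ (Remark~\ref{rem.int-val} only gives a lower bound), and surjectivity of $f$ onto the target ball needs the order-convexity argument you gesture at to be written out.

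The genuine gap is the induction step from $n-1$ to $n$, and you have in effect conceded it rather than closed it. Concretely, two things go wrong with the fibration strategy as sketched. First, the finite partition required by the one-variable case for the fibre function $f(y,\cdot)$ depends on the base point $y$; compactness makes the family of partitions definable, but the resulting pieces of $\bmdl^n$ are then arbitrary definable sets, not unions of products of balls, so an open ball $B'\times B''$ inside a cell need not have each fibre $\{y\}\times B''$ contained in a single piece of the $y$-partition, uniformly in $y\in B'$. Second, even granting fibrewise control, $f(B'\times B'')=\bigcup_{y\in B'}C_y$ is a union of one-dimensional balls whose centres \emph{and radii} vary with $y$; to see that this union is a single ball you need that $y\mapsto\rv(f(y,b''))$ is constant on $B'$ and that the radii $\vv(\partial_n f(y,b''))+\rad(B'')$ are compatible across $B'$ --- which is the $n$-variable statement again, applied simultaneously to $f(\cdot,b'')$ and to $\partial_n f(\cdot,b'')$ in a coherent way. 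Breaking this circularity is exactly what the Weierstra\ss-type preparation developed in \cite{Yin.tcon} is for; it is not supplied by o-minimal cell decomposition plus the one-variable case. The authors flag this explicitly in Remark~\ref{rem.jac}: they ``do not know how to prove the general case more directly.'' So your proposal correctly isolates where power-boundedness enters, but as a proof it reduces the proposition to an unproved multivariable fitting-together lemma that is of essentially the same depth as the proposition itself.
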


Note that if $f(B)$ is an open valuative ball not containing $0$, then for any $y_1, y_2 \in B$, we have $\vv(f(y_1)) = \vv(f(y_2))$,
and even $\vv(f(y_1) - f(y_2)) > \vv(f(y_1))$.

Here, we have rewritten Proposition~\ref{prop.jac} in the language of the present paper; the map $\rv$ appearing in \cite{Yin.tcon} is defined in such a way that $\rv(a) = \rv(a')$ iff either $a = a' = 0$ or $\vv(a - a') > \vv(a)$ for $a, a' \in \bmdl$ (and a valuative polydisc is just a product of valuative balls of possibly different radii).
Note that the language used in \cite{Yin.tcon}
is, up to interdefinability, the same as ours;
see \cite[Definition~1.2 and Convention~1.11]{Yin.tcon}.

Instead of using Proposition~\ref{prop.jac} directly, we will use the following corollary:

\begin{cor}\label{cor.jac}
Suppose that $f\colon \bmdl^n \fun \bmdl$ is an \LT-definable function. Then there exists an
\LT-definable set $Z \subseteq \bmdl^n$ of dimension less than $n$ such that for every $y \in \bmdl^n \setminus Z$,
$\partial_i f(y)$ exists and we have
\begin{equation}\label{eq.jac}
\vv(\partial_i f(y)) \ge \vv(f(y)) - \valdist(y, Z)
\end{equation}
for $i = 1, \dots, n$.
\end{cor}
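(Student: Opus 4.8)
The plan is to deduce (\ref{eq.jac}) from Proposition~\ref{prop.jac}, applied not only to $f$ but also to each partial derivative $\partial_i f$, the two being bridged by the Mean Value Theorem. First I construct the bad set $Z$. By o-minimality there is a closed \LT-definable $Z_1 \subseteq \bmdl^n$ of dimension $<n$ off which $f$ is $C^1$; extend $\partial_i f$ to a total \LT-definable function $g_i$ (say $g_i \coloneqq 0$ on $Z_1$). Applying Proposition~\ref{prop.jac} to $f$ and to each $g_i$ yields $n+1$ finite \LT-definable partitions of $\bmdl^n$; refine their common refinement to an \LT-definable cell decomposition $\mathcal Y$ that is compatible with $Z_1$, and let $Z \subseteq \bmdl^n$ be the union of the cells of $\mathcal Y$ of dimension $<n$. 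Then $Z$ is \LT-definable of dimension $<n$; it is closed, since its complement is the union of the open ($n$-dimensional) cells; and it contains $Z_1$ together with the frontier of every open cell (a frontier point of an open cell cannot lie in another cell of full dimension, the open cells being pairwise disjoint open sets). Enlarging $Z$ by a single point if need be, we may assume $Z \neq \emptyset$ (if $f$ is constant, (\ref{eq.jac}) is trivial).

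Now fix $y \in \bmdl^n \setminus Z$, and let $Y_\nu$ be the (open) cell containing $y$; since $f$ is $C^1$ on $Y_\nu$, the derivative $\partial_i f(y)$ exists. Put $\lambda \coloneqq \valdist(y, Z) = \vv(\dist(y,Z)) \in \Gamma$, which is finite because $Z$ is closed and nonempty and $y \notin Z$, and set $B \coloneqq B_{>\lambda}(y)$. Every $z \in Z$ satisfies $\vv(y-z) \le \lambda$, so $B \cap Z = \emptyset$; the ball $B$ is a finite product of intervals of $\bmdl$, hence definably connected; and since $Z \supseteq \partial Y_\nu$ and $B$ meets $Y_\nu$, this forces $B \subseteq Y_\nu$. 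In particular $B$ lies inside a single piece of each of the $n+1$ partitions furnished by Proposition~\ref{prop.jac} and inside $\bmdl^n \setminus Z_1$, so that proposition applies to $B$ both for $f$ and for each $g_i$, which on $B$ coincides with $\partial_i f$.

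Fix $i$. If $\partial_i f(B) = \{0\}$, then $\partial_i f(y) = 0$ and (\ref{eq.jac}) holds trivially. Otherwise, by Proposition~\ref{prop.jac}, $\partial_i f(B)$ is an open valuative ball not containing $0$; hence $\vv$ is constant on $\partial_i f(B)$, equal to $\vv(\partial_i f(y))$, and $\partial_i f$ vanishes nowhere on $B$. Consequently $f$ is non-constant on $B$, so $f(B)$ is an open valuative ball $B_{>\mu}(c)$ not containing $0$, which gives $\vv(f(y)) = \vv(c) \le \mu$ and $\vv(f(z) - f(y)) > \mu$ for every $z \in B$. Now for any $t \in \bmdl^\times$ with $\vv(t) > \lambda$, the segment from $y$ to $y + t e_i$ (with $e_i$ the $i$-th standard basis vector) lies in $B \subseteq Y_\nu$, so the Mean Value Theorem applied to $f$ along it produces $\xi \in B$ with $f(y + te_i) - f(y) = t\,\partial_i f(\xi)$, whence $\vv(f(y+te_i) - f(y)) = \vv(\partial_i f(y)) + \vv(t)$. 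Comparing with $\vv(f(y+te_i)-f(y)) > \mu$ and using that $\vv\colon \bmdl^\times \fun \Gamma$ is onto, we get $\vv(\partial_i f(y)) + \gamma > \mu$ for every $\gamma \in \Gamma$ with $\gamma > \lambda$, hence $\vv(\partial_i f(y)) + \lambda \ge \mu \ge \vv(f(y))$, which is exactly $\vv(\partial_i f(y)) \ge \vv(f(y)) - \valdist(y, Z)$.

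I expect the crux to be the first step — specifically, recognising that Proposition~\ref{prop.jac} must be fed the derivatives $\partial_i f$ and not merely $f$ itself; this is what guarantees that $\partial_i f$ has constant valuation on the ball $B$, which in turn is what makes the Mean Value Theorem comparison yield the desired bound. The rest is routine o-minimal bookkeeping: closedness and \LT-definability of $Z$, definable connectedness of valuative balls, and the Mean Value Theorem for definable $C^1$ functions over a real closed field.
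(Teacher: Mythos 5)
Your proof is correct and follows essentially the same route as the paper's: apply Proposition~\ref{prop.jac} to $f$ and to each extended partial derivative $\partial_i f$, put the non-differentiability locus and the frontiers of the resulting partition pieces into $Z$, observe that the ball $B_{>\valdist(y,Z)}(y)$ then lies in a single piece of every partition, and compare $\vv(f(y_1)-f(y_2))$ with $\vv(t\,\partial_i f(\xi))$ via the Mean Value Theorem. The only differences are cosmetic (a cell decomposition instead of just taking frontiers, and a direct estimate along a segment in the $e_i$-direction instead of the paper's argument by contradiction), and you correctly identified the key point that Proposition~\ref{prop.jac} must also be applied to the derivatives so that $\vv(\partial_i f)$ is constant on $B$.
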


\begin{proof}
Apply Proposition~\ref{prop.jac} to $f, \partial_1 f, \dots,\partial_n f$,
where the partial derivatives are extended by $0$ to those points of $\bmdl^n$ where they do not exist; then set
$Z \coloneqq Z_0 \cup \bigcup_{i,\nu} \partial Y_{i,\nu}$, where
$(Y_{i,\nu})_\nu$ is the partition obtained for the $i$th of the above functions ($i = 1, \dots, n+1$) and $Z_0$ is the set of points where $f$ is not differentiable; we claim that this set $Z$ works.

Fix a $y \in \bmdl^n \setminus Z$ and set $\zeta \coloneqq \valdist(y, Z)$ and
$B \coloneqq B_{>\zeta}(y)$. Then for each $i$, there exists a $\nu$ such that $B \subseteq Y_{i,\nu}$ for some $\nu$;
in particular, $\vv(f(B))$ and $\vv(\partial_i f(B))$ are
singletons.

To prove (\ref{eq.jac}), we use an Mean Value Theorem argument similar to the one in Remark~\ref{rem.int-val}, but in the opposite direction:
Suppose for contradiction that $y$ is a witness to the failure of (\ref{eq.jac}), i.e.,
$\vv(f(B)) - \vv(\partial_i f(B)) > \zeta$ for some $i$. We
choose $y_1, y_2 \in B$ differing only in the $i$-th coordinate
with $\vv(y_1 - y_2) = \vv(f(B)) - \vv(\partial_i f(B))$.
The Mean Value Theorem yields a $y_3 \in B$ such that
\[
f(y_1) - f(y_2) = (y_1 - y_2)\cdot \partial_i f(y_3).
\]
This leads to a contradiction: On the one hand, we have
$\vv(f(y_1) - f(y_2)) > \vv(f(B))$ (by our application of Proposition~\ref{prop.jac} to $f$);
on the other hand,
\[
\vv((y_1 - y_2)\cdot \partial_i f(y_3))
=  \vv(f(B)) - \vv(\partial_i f(B)) + \vv(\partial_i f(B)) = \vv(f(B)).\qedhere
\]
\end{proof}

\begin{rem}\label{rem.jac}
Using Remark~\ref{rem.nsa0},
Corollary~\ref{cor.jac} may be reformulated without making reference to the valuation. Since Remark~\ref{rem.nsa0} only applies to functions defined without parameters outside of $\smdl$ (but we have made the change at the beginning of this section so that $\LT$ now might contain such parameters), one first needs to formulate the corollary for families of functions. In this way, one obtains that Corollary~\ref{cor.jac} is equivalent to the following statement: For any
\LT-definable family of functions $f_q:\bmdl^n \to \bmdl$ (where $q$ runs over some \LT-definable set $Q$), there exists a
constant $c \in \bmdl$ (not depending on $q$) and an \LT-definable family of sets $Z_q \subseteq \bmdl^n$ of dimension less than $n$ such that
\begin{equation}\label{eq.fisch}
|\partial_i f_q(y))| \le \frac{c|f_q(y)|}{\dist(y, Z_q)}
\qquad \text{for all $i \le n$, all $q \in Q$ and all
$y \in \bmdl^n \setminus Z_q$}.
\end{equation}
Note that this bears some similarities to the $\Lambda_L^1$-regular functions in \cite[Definition~1.1]{fisch:reg:strat} (though (\ref{eq.fisch}) is false in, e.g., structures with exponential function).
One has the feeling that there should be a more direct proof
of (\ref{eq.fisch}), avoiding the machinery of \T-convexity. For $n = 1$ and when $Q$ is a singleton, it is not too difficult to deduce it from power-boundedness. However, we do not know how to prove the general case more directly.
\end{rem}

Here is another lemma, which does not really have anything to do with the previous results of this subsection, but which will be useful in conjunction with them.

\begin{lem}\label{lem.choose-max}
  Suppose that $X \subseteq \bmdl^n$  is a non-empty
  \LT-definable set and $f\colon X \fun \bmdl$ is an \LT-definable function such that $|f|$ is bounded
  (by an element of $\bmdl$). Then there exists an \LT-definable element $x_0 \in X$ such that
  $\vv(f(x_0)) = \min \{\vv(f(x)) : x \in X\}$. In particular, that minimum exists.
\end{lem}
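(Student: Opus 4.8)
The plan is to avoid the valuation almost entirely and to argue inside the $o$-minimal field $\bmdl$ in the language $\LT$, using only that $\vv$ is order-reversing on $\bmdl_{>0}$ and that $\bmdl$ has definable Skolem functions. First I would pass to the image $Y \coloneqq |f|(X) = \{|f(x)| : x \in X\}$, which is a nonempty $\LT$-definable subset of $\bmdl_{\ge 0}$ and, by hypothesis, is bounded above by some element of $\bmdl$. By $o$-minimality, $s \coloneqq \sup Y$ exists in $\bmdl$; since $Y$ is $\LT$-definable, $s$ is the unique element satisfying an $\LT$-formula, hence is itself $\LT$-definable.

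If $s = 0$, then $Y = \{0\}$, so $f$ vanishes identically on $X$, the asserted minimum is $\infty$, and any $\LT$-definable point of $X$ (which exists since $X$ is nonempty and $\LT$-definable, using definable Skolem functions) works as $x_0$. So suppose $s > 0$, and consider the $\LT$-definable set $W \coloneqq \{x \in X : 2|f(x)| \ge s\}$. It is nonempty: if the supremum $s$ is attained in $Y$, the realizing point lies in $W$; and if $s$ is not attained, then since $s/2 < s = \sup Y$ the value $s/2$ is not an upper bound of $Y$, so there is some $x \in X$ with $|f(x)| > s/2$, i.e.\ $x \in W$. Using definable Skolem functions once more, choose an $\LT$-definable point $x_0 \in W$.

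Finally I would check that $x_0$ does the job. Since $2, 1/2 \in \smdl^\times \subseteq \valring^\times$, we have $\vv(s/2) = \vv(s)$, and since $0 < a \le b$ forces $\vv(a) \ge \vv(b)$ (because $a/b$ then lies in the convex closure $\valring$ of $\smdl$), the inequalities $s/2 \le |f(x_0)| \le s$ yield $\vv(f(x_0)) = \vv(s)$. For arbitrary $x \in X$ we have $|f(x)| \le s$, hence $\vv(f(x)) \ge \vv(s) = \vv(f(x_0))$. Therefore $\vv(f(x_0)) = \min\{\vv(f(x)) : x \in X\}$, and in particular this minimum exists. I do not anticipate a genuine difficulty here; the one point deserving care is to phrase everything in terms of $|f|(X) \subseteq \bmdl$ rather than the set of valuations inside $\Gamma$, precisely so that the produced witness $x_0$ is $\LT$-definable and not merely $\LTv$-definable (so that Lemma~\ref{lem.nsa}-style translations do not apply and we genuinely need the $o$-minimal supremum in $\bmdl$).
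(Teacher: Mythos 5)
Your proof is correct and follows essentially the same route as the paper's: take $s=\sup_{x}|f(x)|$, restrict to the $\LT$-definable nonempty set where $|f(x)|\ge s/2$ (on which the valuation is constantly $\vv(s)$, the minimum), and apply definable choice in $\LT$. Your treatment is just a slightly more careful write-up, adding the harmless $s=0$ case and spelling out the valuation inequalities that the paper leaves implicit.
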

\begin{proof}
  Set $s \coloneqq \sup_{x}|f(x)|$.
  The set $X' \coloneqq \{x \in X : |f(x)| \ge \frac12 s\}$
  is \LT-definable and non-empty, and every $x \in X'$ satisfies $\vv(x) = \vv(s)$.
  Using definable choice (in the \omin-minimal language $\LT$),
  we find an \LT-definable $x_0 \in X'$.
\end{proof}

\subsection{Sedating functions}
\label{sect.sedate}

To construct Lipschitz stratifications, we will need precise bounds on the valuations of the first derivatives of certain functions. The goal of this subsection it to prove the key tool for this: Proposition~\ref{prop.sedate}, which will allow us to obtain the desired bounds for any definable function after refining our stratification.
We will also need bounds on second derivatives; those will be obtained in Corollary~\ref{cor.second-sedate}, by applying
Proposition~\ref{prop.sedate} to the first derivatives. Functions satisfying the desired bounds will be called ``sedated''.

Before going into the details, here is an informal explanation.
Given an \LT-definable function $f\colon X \to \bmdl$ on an \LT-definable set $X \subseteq \bmdl^n$, we can remove a lower-dimensional subset from $X$ using Corollary~\ref{cor.jac} to obtain a bound on $\vv(\nabla f(x))$ which is good whenever $x$ is not too close to the boundary of $X$:
\begin{equation}\label{eq.int.sed}
\vv(\nabla f(x)) \ge \vv(f(x)) - \valdist(x, \bmdl^n \setminus X).
\end{equation}
As a bound on $\nabla f(x)$, this is in some sense optimal, but it is often possible to get better bounds on individual partial derivatives: Very roughly, even near the boundary of $X$, one should be able to obtain good bounds on the partial derivatives in those directions which do not point towards the boundary; see Figure~\ref{fig.border} (a).
To construct stratifications, we will need such better bounds.

It is not so clear how to make this precise in general. Instead, the result in this subsection will provide the better bounds only in the rather specific situation we are in after 
the rectilinearization explained in Subsection~\ref{sect.overview}:
We only need a bound on the partial derivatives parallel to $W \coloneqq \bmdl^{n'} \times \{0\}^{n - n'}$, and that bound should not be affected by $\dist(x, W)$ being small even if $W$ contains a boundary segment of $X$.
(Such a bound makes most sense if $X$ indeed has a boundary segment in $W$; however, we will also prove and use the result when it doesn't.)
The precise statement is that after removing a lower-dimensional subset from $X$, we obtain the estimate
\begin{equation}\label{eq.int.sed2}
\vv(\partial_i f(x)) \ge \vv(f(x)) - \valdist(\pr_{\le n'}(x), \bmdl^{n'} \setminus \pr_{\le n'}(X))
\qquad\text{for } i =1, \dots,  n'
\end{equation}
for points $x \in X$ satisfying
\begin{equation}\label{eq.int.sigma}
\valdist(x, \bmdl^n \setminus X) \le \vv(\pr_{> n'}(x)).
\end{equation}
Condition~(\ref{eq.int.sigma}) ensures that $x$ it not too close to a border of $X$ different from $W$; indeed,
(\ref{eq.int.sed2}) cannot be expected for points close to a ``diagonal border'' like $x_2$ in Figure~\ref{fig.border}.

\begin{figure}
\begingroup
\def\Xpath{(0,0) --
              (4.7,0) .. controls (5,1) and (5,1.6) ..
              (3,1.8) .. controls (1,2) and (2,2) ..
              (0,0) -- cycle}
\def\everything#1#2{
  \draw[S0set] \Xpath;
  \node[S0text] at (3.5,.75) {$X$};
  \fill (2,.2) circle (.06) node[above] (x1) {$x_1$};
  \fill (1.25,1.25) circle (.06) node[right] {$x_2$};
  \begin{scope}
    \clip \Xpath;
    #1
  \end{scope}
      
  \draw[axis arrow] (-.5,0) -- (5.5,0);
  \draw[axis arrow] (3,-.5) -- (3,2.5);  
  \fill (3,0) circle (.06);
  
  \begin{scope}[yshift=-10mm]
    \draw[axis arrow] (-.5,0) -- (5.5,0);
    \draw[S0set1] (0,0) -- node[S0text,swap,pos=.8] {$\pr_{\le 1}(X)$} (4.85,0);
    \fill (3,0) circle (.06);
  
    \fill (2,0) circle (.06) node[above] (x1pr) {};
    
    \draw[dashed] (0,0) -- (0,1);
    \draw[dashed] (4.85,0) -- (4.85,1.7);
    \draw[dashed] (x1pr) -- (x1);
    
    #2

  \end{scope}
}
\begin{ctikzpicture}
\begin{scope}[xshift=0cm]
  \node at (0,2) {(a)};
  \everything{
    \draw[dashed] (0,.2) -- (5,.2); 
    \draw[dashed] (.2,0) -- (2.3,2.5); 
  }{
    \draw[|-|] (0,-.2) -- node[swap] {$\zeta$} (2,-.2);
  }
\end{scope}
\begin{scope}[xshift=6.5cm]
  \node at (0,2) {(b)};
  \everything{
    \draw[S2set,very thick] (2.5,0) -- (2.5,2)
                 (4,0) -- (4,2);
  }{
   \fill[S2set] (2.5,0) circle (.06);
   \fill[S2set] (4,0) circle (.06);
   \draw[|-|] (2,-.2) -- node[swap] {$\zeta$} (2.5,-.2);
   \node[S2text] (Z) at (3.4,.6) {$Z$};
   \draw[arrow,S2set] (Z) -- (3.9,.1);
   \draw[arrow,S2set] (Z) -- (2.6,.1);
  }
  \node[S2text] (lZ) at (3.9,2.4) {$\pr_{\le 1}^{-1}(Z)$};
  \draw[arrow,S2set] (lZ) -- (3.95,1.7);
  \draw[arrow,S2set] (lZ) -- (2.6,1.5);
\end{scope}
\end{ctikzpicture}
\endgroup
\caption{(a)
At $x_1$ and $x_2$, one can expect good bounds on the partial derivatives of $f$ in the dashed directions, but not in the directions perpendicular to that.
Proposition~\ref{prop.sedate} provides good bounds on horizontal derivatives at points close to the $x$-axis: the bound on $\partial_1 f(x_1)$ is computed using the distance $\zeta$ in the projection $\pr_{\le 1}(X)$.
Such a bound cannot be expected for $\partial_1(x_2)$, since $x_2$ is close to a border of $X$ different from the $x$-axis.
(b)
To obtain the bounds, it might be necessary to remove a lower-dimensional subset $Z$ from $\pr_{\le 1}(X)$. This
in effect weakens the condition on $\partial_1(x_1)$, since $\zeta$ becomes smaller.
}
\label{fig.border}
\end{figure}

Since it is $\pr_{\le n'}(X)$ which appears in (\ref{eq.int.sed2}) and not $X$ itself, the only lower-dimensional sets it makes sense to remove from $X$ are sets of the form
$\pr_{\le n'}^{-1}(Z)$ for some $Z \subseteq \pr_{\le n'}(X)$ (see Figure~\ref{fig.border} (b)). This is how Proposition~\ref{prop.sedate} is stated, and it is this set $Z$ which will be used in the strategy outlined in Subsection~\ref{sect.overview} to shrink the $n'$-dimensional stratum.

The bound (\ref{eq.int.sed2}) is the one we will need to treat those augmented val-chains whose first two points $a^0$, $a^1$ lie in two different strata; functions satisfying this bound will be called \prep{a}-sedated. Proposition~\ref{prop.sedate} also provides two variants of this, which are needed for other kinds of val-chains:
to treat plain val-chains, we will need
\prep{b}-sedated functions, which satisfy a bound like (\ref{eq.end-up}), and to treat
augmented val-chains whose first two points lie in the same stratum,
we will need functions whose derivatives are \prep{c}-sedated (see below).

Everything described so far is what we need for short val-chains.
For longer val-chains, say, living in strata of dimensions
$e_1 > \dots > e_m$, we still need to bound the partial derivatives
$\partial_1 f(x),\dots, \partial_{e_m} f(x)$ of a function $f$ with domain $X \subseteq \bmdl^{e_1}$, but all the intermediate dimensions $e_\ell$ also play a role, namely for the conditions specifying to which boundaries of $X$ the point $x$ is allowed to be close. To make this precise, we start by fixing some notation.
In the whole subsection, we assume the following.

\begin{ass}\label{ass.meX}
Let the following be given:
\begin{itemize}
	\item
	an integer $m \ge 1$;
	\item
	integers $0 < e_m < \dots < e_1$;
	\item
	an open \LT-definable set $X \subseteq \bmdl^{e_1}$.
\end{itemize}
\end{ass}

\begin{notn}\label{notn.zeta-xi-sigma}
We set $Y \coloneqq \pr_{\le e_m}(X)$.
For $x \in X$, we define:
\begin{itemize}
	\item $\zeta_\ell \coloneqq \zeta_\ell(x) \coloneqq \dist(\pr_{\le e_\ell}(x), \bmdl^{e_\ell} \setminus \pr_{\le e_\ell}(X))\quad$ for $1 \le \ell \le m$
	\item $\sigma_\ell \coloneqq \sigma_\ell(x) \coloneqq \max\{1, \norm{\pr_{>e_{\ell}}(x)} \cdot \zeta_{\ell-1}(x)^{-1}\}\quad$ for $2 \le \ell \le m$.
\end{itemize}
The shorter notation $\zeta_\ell$, $\sigma_\ell$ will implicitly refer to a given point $x \in X$ in context. Note
that $\zeta_\ell$ and $\sigma_\ell$ implicitly also depend on $X$.
\end{notn}

\begin{figure}
\begin{ctikzpicture}[x=2cm,y=2cm]
\begin{scope}[yshift=3.4cm]   
  \coordinate(x) at (-.8,.7);
\end{scope}   
  
\begin{scope}   
  \draw
    [preaction={S0set}]
       (-1.5,.1)
        .. controls (-1.5,-.1) and (1.3,-.1) .. (1.3,.2)
        .. controls (1.3,.4) and (1,.5) ..  (0,.5)
        .. controls (-1,.5) and (-1.5,.3) .. (-1.5,.1) -- cycle;

  \fill(0,0) circle (.03);
  \draw[axis arrow] (-2,0) -- (2.1,0) node[right]{$x_1$};
  \draw[axis arrow] (-.5,-.25) -- (1.5,.75) node[right]{$x_2$};

  \fill (-.93, .43) circle (.02);   

  \draw (-1.3,0) -- (-.8,.25); 
  \fill(-.8,.25) circle (.04);
  \draw (-.8,.25) -- (-.93, .43);

  \draw[arrow] (-.65,.7) node[above] {$\zeta_{2}$} .. controls (-.65,.45) .. (-.8,.35);

  \node at (-.8,.4) [anchor=north west] {$\pr_{\le e_2}(x)$};

  \node[S0text] at (1.3,.3) [anchor=west] {$\pr_{\le e_2}(X)$};

  \draw[dashed] (-1.5,.1) -- +(0,2.1);
  \draw[dashed] (1.3,.2) -- +(0,2);
  \draw[dashed] (-.8,.25) -- (x);
  \draw[arrow] (0,1.2) -- node {$\pr_{\le e_2}$} (0,.6);
\end{scope}   

\begin{scope}[yshift=3.4cm]   
  \def\egg#1{ %
    \draw #1 (-1.5,.5)
          .. controls (-1.5,0) and (1.3,0) .. (1.3, .5)
          .. controls (1.3,1) and (1,1.1) ..  (0,1.1)
          .. controls (-1,1.1) and (-1.5,.8) .. (-1.5,.5) -- cycle;}

  \ifarxiv
    \egg{[bottom color=black,top color=white,
        middle color=white!65!black!60!blue,
        shading angle=20]}
  \fi

  \draw (-1.3,0) -- (-.8,.25) -- (.5, .25);

  \ifarxiv
    \draw[axis noarrow] (0,-.3) -- (0,1);
  \else
    \draw[preaction={draw,ultra thick,white},axis noarrow] (0,-.3) -- (0,1);
  \fi
  \fill(0,0) circle (.03);
  \draw[axis arrow] (-2,0) -- (2.1,0) node[right]{$x_1$};
  \draw[axis arrow] (-.5,-.25) -- (1.5,.75) node[right]{$x_2$};

  \coordinate(x) at (-.8,.7);
  \draw (x) -- (-.9,.85);
  \fill(x) circle (.04);

  \ifarxiv
    \egg{[preaction={fill,color=white!90!blue,opacity=.5}]}
  \else
    \egg{[preaction={fill,color=white,opacity=.4}]
    [preaction={bottom color=black,top color=white,middle color=white!70!black!60!blue,opacity=.7,
         shading angle=20}]}
  \fi

  \fill (-.9,.85) circle (.02);   

  \node[S0text] at (.6,.8) {$X$};

  \node at (-.65,.7) {$x$};

  \draw[arrow] (-.65,1.2) node[above] {$\zeta_{1}$} .. controls (-.65,.95) .. (-.82,.82);

  \draw[axis arrow] (0,.8) -- (0,1.3) node[above]{$x_3$};
\end{scope}   

\begin{scope}[yshift=-2cm,xshift=-4cm]   

  \draw[axis arrow] (-2,0) -- (2.1,0) node[right]{$x_1$};

  \draw[S0set1] (-1.9,0) -- (1,0);

  \fill(0,0) circle (.03);

  \fill(-1.3,0) circle (.04);

  \node at (0,-.1) [anchor=north,S0text] {$Y = \pr_{\le e_3}(X)$};  \node at (-1.3,0) [anchor=north] {$\pr_{\le e_3}(x)$};

  \draw[|-|] (-1.9, .12) -- node {$\zeta_3$} (-1.3, .12);

  \draw[dashed] (-1.9,0) -- +(2.5,1.25);
  \draw[dashed] (1,0) -- +(2.3,1.15);
  \draw[dashed] (-1.3,0) -- +(2,1);
  \draw[arrow] (1.2,.6) -- node {$\pr_{\le e_3}$} +(-.6,-.3);
\end{scope}   

\end{ctikzpicture}
\caption{A picture illustrating some of Notation~\ref{notn.zeta-xi-sigma},
in the case $m = 3$, $e_1 = 3$, $e_2 = 2$, $e_3 = 1$.
}
\label{fig.zeta-xi-sigma}
\end{figure}

Some of this notation is illustrated in Figure~\ref{fig.zeta-xi-sigma}.
The purpose of $\sigma_\ell$ is the following.
One can only expect to obtain the best bounds on $\partial_1 f(x), \dots, \partial_{e_m} f(x)$ at those $x$ satisfying $\vv(\sigma_\ell) = 0$ for all $\ell$. 
(Note that in the case $m = 2$, the condition $\vv(\sigma_2) = 0$
is exactly equivalent to (\ref{eq.int.sigma}).)
However, even for $x \in X$ not satisfying those conditions, it is possible to obtain a weakened bound, where the weakening is expressed in terms of the valuations of the $\sigma_\ell$. This leads to the following definition of sedated functions.

\begin{defn}[Sedated functions]\label{defn.sedated}
	Suppose that $m$, $e_\ell$ and $X$ are given as in
	Assumption~\ref{ass.meX},
	and suppose that $f\colon X \fun \bmdl$ is an \LT-definable function. We consider three different versions: $\prep v \in \{\prep a, \prep b, \prep c\}$. In Version \prep b, we additionally assume $m \ge 2$.
	We call $f$ \emph{$e_{[1,m]}$-\prep v-sedated} (on $X$)
	if it is $C^1$ and if, for every $x \in X$
	and every $1 \le i \le e_{m}$, we have
	\begin{eqnarray}
		\vv(\partial_{i}f(x)) \ge \vv(u_{\prep v}(x)) - \vv(\zeta_{m}(x)) + \sum_{\ell=2}^{m} \vv(\sigma_\ell(x)),\qquad\text{where}\label{eq.sedated}\\
		u_{\prep a}(x) = f(x),
		\qquad
		u_{\prep b}(x) = \max\{|f(x)|, \norm{\pr_{>e_{2}}(x)}\},
		\qquad
		u_{\prep c}(x) = 1.
\end{eqnarray}

We call an \LT-definable function $X \fun \bmdl^n$ $e_{[1,m]}$-\prep v-sedated if each of its coordinate functions is $e_{[1,m]}$-\prep v-sedated.
\end{defn}

In this notation, ``$e_{[1,m]}$'' is supposed to be considered as a short hand notation for the tuple $(e_1, \dots, e_m)$. In particular, for $1 \le k \le \ell \le m$ and $f$ a function on a subset of $\bmdl^{e_k}$, we also have a notion of being
$e_{[k,\ell]}$-\prep v-sedated.

\begin{rem}\label{rem.shrink-X}
	Equation (\ref{eq.sedated}) depends on the domain $X$, since $\zeta_\ell$ does.
	Nevertheless, if $f$ is $e_{[1,m]}$-\prep v-sedated, then so
	is the restriction of $f$ to any subset of $X$. Indeed, by shrinking $X$,
	$\zeta_\ell$ can only become smaller and $\sigma_\ell$ can only become bigger,
	both of which make (\ref{eq.sedated}) easier to be satisfied.
\end{rem}

\begin{rem}
If, in \prep b-sedation, one allows $e_1 = e_2$, then \prep a can be considered as a special case of \prep b via some renumbering. However, for clarity, we wrote down the two cases separately.
\end{rem}

\begin{prop}[Sedating functions]\label{prop.sedate}
	Fix $\prep v \in \{\prep a, \prep b, \prep c\}$.
	Let $m$, $e_\ell$, $X$, $Y$ be as in Assumption~\ref{ass.meX} and Notation~\ref{notn.zeta-xi-sigma}
	(with $m \ge 2$ in Version \prep b)
	and suppose that $f\colon X \fun \bmdl$ is an
	\LT-definable function which is $e_{[1,m']}$-\prep v-sedated for $1 \le m' < m$ (or $2 \le m' < m$, in Version \prep b). Suppose moreover that
\begin{equation}\label{eq.addit-cond}
\begin{cases}
\prep a &\text{(no additional condition)} \\
\prep b &f \text{ is } C^1 \text{ and } \vv(\nabla f(x)) \ge 0 \\
\prep c &\vv(f(x)) \ge 0.
\end{cases}
\end{equation}
	Then there exists an \LT-definable set $Z \subseteq Y$ of dimension less than $e_m$ such that
	the restriction of $f$ to $X \setminus \pr_{\le e_m}^{-1}(Z)$ is $e_{[1,m]}$-\prep v-sedated.
\end{prop}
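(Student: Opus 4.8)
The plan is to reduce the statement to an application of Corollary~\ref{cor.jac}, carried out parametrically over a suitable partition of $X$, with the parameters encoding the ``order of magnitude'' of the various quantities $\zeta_\ell$, $\sigma_\ell$ that appear in the sedation inequality~(\ref{eq.sedated}). The key point is that, for a fixed point $x$, the right-hand side of~(\ref{eq.sedated}) is — up to the term $\vv(u_{\prep v}(x))$ and the term $\vv(\zeta_m(x))$ — determined by the lower-dimensional projections of $x$, so after we have controlled it on a ``slice'', Corollary~\ref{cor.jac} applied to the restriction of $f$ to that slice gives exactly the term $-\vv(\zeta_m(x))$ we are missing, with $\zeta_m(x)$ playing the role of the distance $\valdist(\,\cdot\,, Z)$ to the set to be removed.

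Concretely, first I would fix the point $\bar y := \pr_{\le e_m}(x) \in Y$ and consider, for each such $\bar y$, the fiber $X_{\bar y} := \pr_{\le e_m}^{-1}(\bar y) \cap X \subseteq \{\bar y\} \times \bmdl^{e_1 - e_m}$ together with the function $f$ restricted to it. Viewing $f$ as a function of the last $e_1 - e_m$ coordinates (with $\bar y$ as a parameter) does \emph{not} directly help, because we need to bound $\partial_i f$ for $i \le e_m$, i.e.\ derivatives \emph{transverse} to these fibers. So instead the right move is: apply Corollary~\ref{cor.jac} to $f$ as a function on $X \subseteq \bmdl^{e_1}$ itself (extending by $0$ where needed), obtaining an \LT-definable $Z_0 \subseteq \bmdl^{e_1}$ of dimension $< e_1$ with $\vv(\partial_i f(x)) \ge \vv(f(x)) - \valdist(x, Z_0)$ for all $i \le e_1$ and $x \notin Z_0$; but this only controls $\valdist(x, \bmdl^{e_1}\setminus X)$-type quantities, not $\valdist(\pr_{\le e_m}(x), \bmdl^{e_m}\setminus Y)$. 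The genuinely correct approach, and the one I would pursue, is to \emph{first} apply the inductive sedation hypothesis (for $m' < m$) to see that $f$ is already $e_{[1,m']}$-\prep v-sedated, and then to run Corollary~\ref{cor.jac} \emph{fiberwise over $Y$}: for each $\bar y \in Y$ let $g_{\bar y}$ be the function $z \mapsto f(\bar y, z)$ on the open set $X_{\bar y} \subseteq \bmdl^{e_1 - e_m}$, but this time we are interested in a different auxiliary function $h$, namely one obtained by taking the restriction $f\rest(\,\text{the } e_m\text{-dimensional slice } \pr_{>e_m}(x) = \text{const})$, so that its domain is an open subset of $\bmdl^{e_m}$ and its distance-to-complement is exactly (comparable to) $\zeta_m$. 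Apply Corollary~\ref{cor.jac} to this family of functions (parametrized by $c := \pr_{>e_m}(x)$), extract a family $Z_c \subseteq \bmdl^{e_m}$ of dimension $< e_m$, and let $Z := \bigcup_c Z_c$ — which by \omin-minimality and the fact that the parameter $c$ ranges over a fixed-dimensional set still has dimension $< e_m$ after one more removal, or more cleanly, take $Z$ to be a lower-dimensional set absorbing all the $\partial(\text{pieces})$ uniformly, exactly as in the proof of Corollary~\ref{cor.jac} itself.

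Once $Z$ is in hand, the verification of~(\ref{eq.sedated}) on $X \setminus \pr_{\le e_m}^{-1}(Z)$ is a bookkeeping computation: fix $x$ in this set, set $\zeta := \valdist(\pr_{\le e_m}(x), Z) = \zeta_m(x)$ (after arranging $Z \supseteq \bmdl^{e_m}\setminus Y$, which we may), and on the ball $B_{>\zeta}(\pr_{\le e_m}(x))$ Corollary~\ref{cor.jac} gives $\vv(\partial_i f) \ge \vv(f) - \zeta$ in the $e_m$ transverse directions; the remaining terms $\sum_{\ell=2}^{m}\vv(\sigma_\ell(x))$ and the distinction between $u_{\prep a}, u_{\prep b}, u_{\prep c}$ are handled by splitting $X$ in advance according to whether each $\vv(\sigma_\ell)$ is $0$ or positive (using that $\sigma_\ell \ge 1$ so $\vv(\sigma_\ell) \le 0$ always, hence these terms only \emph{help}), and by invoking the inductive $e_{[1,m']}$-sedation to relate $f$ on the slice to $f$ at $x$, together with the Mean-Value-Theorem bound of Remark~\ref{rem.int-val} for the additional $\vv(\pr_{>e_2}(x))$ term in Versions \prep b and \prep c and the additional-condition hypothesis~(\ref{eq.addit-cond}). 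The main obstacle I anticipate is precisely this last step — keeping track of how the ``partial'' sedation at dimension $m'$ combines with the fresh Corollary~\ref{cor.jac} estimate at dimension $m$ to yield the full inequality with the correct coefficients in front of every $\sigma_\ell$ — together with the care needed so that $Z$, defined fiberwise over a parameter running through an $(e_1 - e_m)$-dimensional set, genuinely has dimension $< e_m$ inside $Y \subseteq \bmdl^{e_m}$; the cleanest route to the latter is to not parametrize at all but to apply Corollary~\ref{cor.jac} once to $f$ together with all its first partials on $X$, and then cut $X$ by finitely many \LT-definable conditions comparing $\vv(\zeta_\ell)$, $\vv(\sigma_\ell)$, $\vv(\pr_{>e_j}(x))$ to each other, so that on each piece the right-hand side of~(\ref{eq.sedated}) is a single explicit valuative expression dominated by $\vv(f(x)) - \zeta_m(x)$.
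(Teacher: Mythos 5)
There is a genuine gap, and it sits exactly where you flag your own worry. Your plan is to apply Corollary~\ref{cor.jac} to the restrictions of $f$ to the slices $\pr_{>e_m}(x)=c$ and set $Z=\bigcup_c Z_c$; but $c$ ranges over an $(e_1-e_m)$-dimensional set, so this union will in general have dimension $e_m$ (indeed it can easily be all of $Y$), and nothing in the proposal repairs this. Your fallback --- apply Corollary~\ref{cor.jac} once to $f$ on $X\subseteq\bmdl^{e_1}$ and then cut $X$ into pieces --- does not work either, for two reasons you partly identify yourself: that application only yields $\vv(\partial_i f(x))\ge\vv(f(x))-\valdist(x,Z_0)$ with $Z_0\subseteq\bmdl^{e_1}$ of dimension $<e_1$, which is a bound in terms of the distance in $\bmdl^{e_1}$ rather than the required $\valdist(\pr_{\le e_m}(x),Z)$ with $Z\subseteq\bmdl^{e_m}$ of dimension $<e_m$; and the ``finitely many cuts comparing $\vv(\zeta_\ell)$, $\vv(\sigma_\ell)$, \dots'' are valuative, hence only \LTv-definable, whereas Corollary~\ref{cor.jac} (and the whole sedation bookkeeping) must be run on \LT-definable sets.

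The missing idea in the paper's proof is a definable \emph{worst-point selection}: using Lemma~\ref{lem.choose-max} (definable choice applied to $g_i(x)=\partial_i f(x)\cdot u_{\prep v}(x)^{-1}\cdot\prod_\ell\sigma_\ell^{-1}$ on each fiber $X_y$), one produces an \LT-definable section $\tau\colon Y'\to X$ picking out the point where the sedation inequality is tightest. This converts the quantification over the whole fiber into a statement about the single function $h=f\circ\tau$ on an $e_m$-dimensional domain, to which Corollary~\ref{cor.jac} applies directly and yields a set $Z\subseteq\bmdl^{e_m}$ of the right dimension; the chain rule $\partial_i h=\partial_i f\circ\tau+\sum_k(\partial_k f\circ\tau)\cdot\partial_i\tau_k$ together with Corollary~\ref{cor.jac} applied to the components $\tau_k$ and the inductive $e_{[1,m']}$-sedation hypothesis (which controls $\partial_k f$ for $k>e_m$) then recovers the bound on $\partial_i f(\tau(y))$ itself. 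A further wrinkle you would also need: $|g_i|$ must be bounded on each fiber before Lemma~\ref{lem.choose-max} applies, and the paper handles the points with small $\norm{\pr_{>e_m}(x)}$ by a separate limiting argument --- taking the fiber over $d=0$ of the closure of $\bigcup_d(Z_d\times\{d\})$ rather than the union $\bigcup_d Z_d$, precisely because the naive union is too big. Without the section $\tau$ your argument cannot be completed as written.
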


\begin{rem}\label{rem.sedate-high-dim}
	The proposition direcly implies the corresponding result for functions with range $\bmdl^n$, by applying it to each of the coordinate functions.
\end{rem}

\begin{rem}
In our application of this proposition, the bound (\ref{eq.sedated}) will only be needed on the subset
$X' \coloneqq \{x \in X :\vv(\sigma_2) = \dots = \vv(\sigma_m) = 0 \}$,
i.e., where the sum disappears and the bound is ``best possible''. Nevertheless, we need to work with a notion of sedated functions imposing a bound on all of $X$ for the following somewhat strange reason.
The proof of Proposition~\ref{prop.sedate} only works if $X$ and $f$ both are \LT-definable; in particular, the ``induction hypothesis'' (that $f$ is $e_{[1,m']}$-\prep v-sedated for $m' < m$) is needed on an \LT-definable set, so we need a formulation of that hypothesis which we can prove on all of $X$, and not just on $X'$.
\end{rem}

The strategy of the proof of Proposition~\ref{prop.sedate} is as follows. We will use Lemma~\ref{lem.choose-max} to choose,
for each $y \in Y$, an element $x = \tau(y) \in X_y \coloneqq \{x \in X : \pr_{\le e_m}(x) = y\}$ where the difference between the two sides of
(\ref{eq.sedated}) is worst, i.e., where the left hand side minus the right hand side is minimal.
In particular, it suffices to prove that (\ref{eq.sedated}) holds for those $x$. Corollary~\ref{cor.jac} allows us to shrink $Y$ in such a way that we obtain good bounds on the derivatives
of $f(\tau(y))$ in terms of $\valdist(y, \bmdl^{e_m} \setminus Y) = \vv(\zeta_m)$. We then obtain (\ref{eq.sedated}) by combining
these bounds with the assumption about $e_{[1,m']}$-\prep v-sedation for $m' < m$.

To be able to apply Lemma~\ref{lem.choose-max} as described above, we need the difference of the two sides of (\ref{eq.sedated}) to be bounded on each fiber $X_y$.
Such a bound can be obtained from Equation (\ref{eq.sedated}) for $e_{[1,m-1]}$-sedation, provided that we fix a lower bound on $\norm{\pr_{>e_{m}}(x)}$.
Thus, before applying the above strategy, we will treat points $x$ with small $\norm{\pr_{>e_{m}}(x)}$ separately.
The idea for this is that for each fixed (small) $d \ge 0$, we can apply the same strategy as before to the subset $\{x \in X : \norm{\pr_{>e_{m}}(x)} = d\}$. Different $d$ yield different sets $Z_{d}$ to be removed from $Y$ for (\ref{eq.sedated}) to hold. Instead of removing all of them from $Y$ (which would be too much), we remove the limit (in a suitable sense) of $Z_{d}$ for $d \rightarrow 0$; this does not imply (\ref{eq.sedated}) b itself, but it does allow us to bound by how much it fails, and that is enough for applying the above strategy to the remainder of $X$.

Here are the details.

\begin{proof}[Proof of Proposition~\ref{prop.sedate}]
During the proof, we will construct a set $Z$ of dimension less than $e_m$ which we will successively enlarge until
the proposition is satisfied. More precisely, we will obtain something slightly stronger: We will find a $Z \subseteq \bmdl^{e_m}$ of dimension less than $e_m$ such that
\begin{equation}\label{eq.sedated-hat}
\vv(\partial_{i}f(x)) \ge \vv(u_{\prep v}(x)) - \valdist(\pr_{\le e_m}(x), Z) + \sum_{\ell=2}^{m} \vv(\sigma_\ell(x))
\end{equation}
holds for every $x \in \hat X \coloneqq X \setminus \pr_{\le e_m}^{-1}(Z)$. This then implies that $f\rest \hat X$ is $\prep v$-sedated
(using Remark~\ref{rem.shrink-X} concerning the $\sigma_\ell$).

In a very first step, we ensure that $f$ is $C^1$: In Version \prep b, this is an assumption; in the other versions, if $m \ge 2$, it follows from the assumption that $f$ is (say) $e_{[1,1]}$-\prep v-sedated, and if $m = 1$, this can be achieved by removing a suitable subset from $X = Y$.

\medskip

Fix $i \le e_m$. Equation (\ref{eq.sedated-hat}) can be rewritten as
\begin{equation}\label{eq.g-goal}
	\vv(g_i(x)) \ge - \valdist(\pr_{\le e_m}(x), Z),
\end{equation}
where
\begin{equation}
	g_i(x) \coloneqq  \partial_{i}f(x) \cdot u_{\prep v}(x)^{-1} \cdot \prod_{\ell=2}^{m} \sigma_\ell^{-1}.
	\label{eq.def-gi}
\end{equation}

As in the above sketch of proof, given $y \in Y$, we write $X_y$ for the fiber over $X$ above $y$, and similarly, if $X' \subseteq X$ is a subset, we set $X'_y \coloneqq X' \cap X_y$.

We will prove the following.

\medskip

\textbf{Claim~1}:
Suppose that $Z \subseteq \bmdl^{e_m}$ is an \LT-definable set of dimension less than $e_m$ and that $X' \subseteq X$ is an \LT-definable subset
such that for every $y \in Y \setminus Z$ and every $i \le e_m$,
$|g_i|$ is bounded on the fiber $X'_{y}$. Then there
exists an \LT-definable set $\hat Z \supseteq Z$ of dimension less than $e_m$ such that we have
\begin{equation}\label{eq.claim1}
\vv(g_i(x)) \ge -\valdist(\pr_{\le e_m}(x), \hat Z) \qquad \text{for every }
x \in X'.
\end{equation}

\medskip

Before proving Claim~1, we show how it implies the proposition.
It suffices to prove that the set $Z$ of $y \in Y$ such that
$|g_i(x)|$ is unbounded on the fiber $X_y$ has dimension less than $m$. Indeed, then we obtain (\ref{eq.g-goal}) by applying the claim
to $X' \coloneqq X$.

If $m = 1$, then $|g_i(x)|$ is bounded on each $X_y$ for the trivial reason that $X_y$ is a singleton; thus assume $m \ge 2$.

To bound $|g_i(x)|$, we first check that for every $x \in X$, we have
\begin{equation}\label{eq.fiber-bound}
	\vv(g_i(x)) \ge -\vv(\pr_{>e_{m}}(x)).
\end{equation}
We have
\begin{equation*}
	\vv(g_i(x)) = \underbrace{\vv(\partial_{i}f(x)) - \vv(u_{\prep v}(x)) - \sum_{\ell=2}^{m-1} \vv(\sigma_\ell)}_{(*)} - \vv(\sigma_m).
\end{equation*}
In Version \prep b, if $m = 2$ then the last term in $(*)$ disappears, and hence (\ref{eq.fiber-bound}) follows from the following three items: the assumption (\ref{eq.addit-cond}), $\vv(u_{\prep b}(x)) \le \vv(\pr_{>e_{2}}(x))$, and $\vv(\sigma_2) \le 0$. In all other cases,
the assumption that $f$ is $e_{[1,m-1]}$-\prep v-sedated
implies $(*) \ge -\vv(\zeta_{m-1})$, which, together with
$\vv(\sigma_m) \le \vv(\pr_{>e_{m}}(x))-\vv(\zeta_{m-1})$ (by the definition of $\sigma_m$), implies (\ref{eq.fiber-bound}).
	
For $d \in \bmdl_{\ge 0}$, set
\begin{align*}
	X'_{d} &\coloneqq \{x \in X : \norm{\pr_{>e_{m}}(x)} = d\};
\end{align*}
by (\ref{eq.fiber-bound}), $|g_i|$ is bounded on $X'_d$ for each fixed $d$, so Claim~1 (used in the language $\LTd$) yields an \LTd-definable set $Z_d \subseteq \bmdl^{e_m}$ of dimension less than $m$ and such that we have
\begin{equation}\label{eq.claim1-d}
\vv(g_i(x))\ge -\valdist(y, Z_d)
\end{equation}
for $x \in X'_d$ and $y \coloneqq \pr_{\le e_m}(x)$.
By the Compactness Theorem,
we may assume that the sets $Z_d$ are defined uniformly in $d$,
so that the following sets are \LT-definable:
\begin{align*}
Z_\bullet &\coloneqq \bigcup_{d \ge 0} (Z_d \times \{d\}) \subseteq \bmdl^{e_m} \times \bmdl_{\ge 0}
\qquad\text{and}
\\
Z &\coloneqq \{y \in \bmdl^{e_m} : (y, 0) \in \cl(Z_\bullet)\},
\end{align*}
(where $\cl(Z_\bullet)$ denotes the topological closure of $Z_\bullet$).
Since $Z_d$ has dimension less than $e_m$ for every $d$, we have $\dim \partial Z_\bullet < \dim Z_\bullet \le e_m$ and hence $\dim Z < e_m$
(since $Z \sub \partial Z_\bullet \cup Z_0$).
We claim that $|g_i|$ is bounded on each fiber $X_y$ with $y \ne Z$.

Fix $y \in Y \setminus Z$ and
consider  $x \in X_y$ with $y \in Y \mi Z$ and
set $d \coloneqq \norm{\pr_{>e_{m}}(x)}$ (so that $x \in X'_d$).
Inequality (\ref{eq.fiber-bound}) provides a bound on $|g_i(x)|$ for big $d$, and for small $d$, we will obtain a bound from (\ref{eq.claim1-d}). More precisely, set
\[
d_0 \coloneqq \dist((y,0),Z_\bullet)
\]
(which is strictly positive, by definition of $Z$). By (\ref{eq.fiber-bound}) it suffices to bound $|g_i(x)|$ for those $x$ satisfying $\vv(\pr_{> e_m}(x)) > \vv(d_0)$.
This implies
$\valdist((y,d),Z_\bullet) = \valdist((y,0),Z_\bullet)$, and hence we must have 
\[
\valdist(y, Z_d) = \valdist((y,d), Z_d \times \{d\})  \leq \valdist((y,d),Z_\bullet) = \vv(d_0).
\]
So for such a
$d$, we obtain
\[
\vv(g_i(x))\overset{(\ref{eq.claim1-d})}{\ge} -\valdist(y, Z_d) \ge -\vv(d_0).
\]

Thus $|g_i|$ is bounded on all of $X_y$,
which finishes the proof that Claim~1 implies the proposition.

\medskip
	
\textbf{Proof of Claim~1:}
Even though the case $m = 1$ (Versions \prep a, \prep c) does not need to be treated separately, we do note that for $m = 1$, Claim~1 follows directly
by applying Corollary~\ref{cor.jac} to $f$ (and using (\ref{eq.addit-cond}) in Version \prep c).
	
Fix $i \le e_m$ for the entire proof of the claim. (We can treat each $g_i$ separately.)

Set $Y' \coloneqq \pr_{\le e_m}(X') \setminus Z$.
For $y \in Y'$, $|g_i|$ is bounded on $X'_{y}$, so we can apply Lemma~\ref{lem.choose-max} to the restriction $g_i\rest X'_{y}$, using the language $\LTy$. Doing this for all $y \in Y'$ (and applying the Compactness Theorem) yields an \LT-definable function $\tau\colon Y' \fun X$
with $\tau(y) \in X'_{y}$ such that
\begin{equation}\label{eq.is-max}
  \vv(g_i(x)) \ge \vv(g_i(\tau(y))) \qquad\text{for all } x \in X'_{y} \text{ and } y \in Y'.
\end{equation}
We will prove that after a suitable enlargement of $Z$, we obtain
\begin{equation}\label{eq.claim-goal1}
  \vv(g_i(\tau(y))) \ge -\valdist(y, Z)
   \qquad\text{for every } y \in Y';
\end{equation}
together with (\ref{eq.is-max}), this implies (\ref{eq.claim1}).

In the remainder of the proof, $\zeta_\ell$ and $\sigma_\ell$ always refer to the point $x \coloneqq \tau(y)$.
Plugging (\ref{eq.def-gi}) (the definition of $g_i$) into (\ref{eq.claim-goal1}) yields a condition on $\partial_{i} f$:
\begin{equation}\label{eq.claim-goal2}
	\vv(\partial_i f(\tau(y))) \ge \vv(u_{\prep v}(\tau(y)))   -\valdist(y, Z) +  \sum_{\ell=2}^{m} \vv(\sigma_\ell).
\end{equation}
Consider the derivative of the function $h(y) \coloneqq f(\tau(y))$ with respect to the $i$th coordinate.
Using the notation
\begin{equation}\label{eq.tau}
	\tau(y) = (y, \tau_{e_m+1}(y), \dots, \tau_{e_1}(y)) ,
\end{equation}
we can write it as
\begin{equation}\label{eq.claim-sum}
	\partial_i h(y) = \partial_{i} f(\tau(y)) + \sum_{k = e_m+1}^{e_1} \partial_{k} f(\tau(y)) \cdot \partial_i \tau_k(y),
\end{equation}
so to obtain (\ref{eq.claim-goal2}), it suffices to prove that in (\ref{eq.claim-sum}), (i) the left hand side and (ii) all summands of the sum over $k$
have valuation at least that of the right hand side of (\ref{eq.claim-goal2}).

For (i), apply Corollay~\ref{cor.jac} to $h$ (extended trivially outside of $Y'$). This yields that, by enlarging $Z$, we can achieve
\begin{equation}\label{eq.estimate-a}
	\vv(\partial_i h(y)) \ge \vv(h(y)) -\valdist(y, Z).
\end{equation}
Since the sum in (\ref{eq.claim-goal2}) is at most $0$
(by definition of $\sigma_\ell$), it remains to check that $\vv(f(\tau(y))) \ge \vv(u_{\prep v}(\tau(y)))$;
this follows from the definition of $u_{\prep v}$, and,
in Version \prep c, (\ref{eq.addit-cond}).

For (ii), fix $k$ (with $e_m < k \le e_1$) and choose $m'$ such that $e_{m'+1} < k \le e_{m'}$; note that $m' < m$. Our goal is to prove
\begin{equation}\label{eq.claim-goal-ii}
\vv(\partial_{k} f(x)) + \vv(\partial_i \tau_k(y))
\ge \vv(u_{\prep v}(x)) - \valdist(y, Z) +  \sum_{\ell=2}^{m} \vv(\sigma_\ell)
\end{equation}
(where $x = \tau(y)$). Applying Corollay~\ref{cor.jac} to $\tau_k$ (again, extended trivially outside of $Y'$) yields, after further enlarging $Z$,
\begin{equation}\label{eq.ingredient-1}
\vv(\partial_i \tau_k(y))
\ge \vv(\tau_k(y)) - \valdist(y, Z)
\ge \vv(\pr_{>e_{m'+1}}(x)) - \valdist(y, Z).
\end{equation}

In the case $m' = 1$ of Version \prep b, (\ref{eq.claim-goal-ii}) now follows from these three items: (\ref{eq.addit-cond}) (which implies $\vv(\partial_{k}f(x)) \ge 0$),
$\vv(u_{\prep v}(x)) \le \vv(\pr_{>e_{2}}(x))$, and $\vv(\sigma_\ell) \le 0$.
Thus we may now suppose that either $m' \ge 2$ or that we are not in Version \prep b. Then
the assumption that $f$ is $e_{[1,m']}$-\prep v-sedated  implies
\begin{equation}\label{eq.ingredient-2}
	\vv(\partial_{k}f(x)) \ge \vv(u_{\prep v}(x)) - \vv(\zeta_{m'}) + \sum_{\ell=2}^{m'} \vv(\sigma_\ell),
\end{equation}
and (\ref{eq.claim-goal-ii}) follows by taking the sum
of (\ref{eq.ingredient-1}) and (\ref{eq.ingredient-2})
and then noting that $\vv(\pr_{>e_{m'+1}}(x)) - \vv(\zeta_{m'}) \ge \vv(\sigma_{m'+1})$ and $\vv(\sigma_\ell) \le 0$.

This finishes the proof of (ii), and hence of (\ref{eq.claim-goal2}), and
hence of Claim~1, and hence of Proposition~\ref{prop.sedate}.
\end{proof}

The notion of \prep c-sedation will be applied to the first derivatives of a function, to control its second derivatives. We introduce a corresponding notion.
(Note that similar kinds of bounds also appear in \cite{NV.lip}.)

\begin{defn}[\prep{c_2}-sedated functions]
		Suppose that $m$, $e_\ell$ and $X$ are given as in
		Assumption~\ref{ass.meX}.
		We call an \LT-definable function $f\colon X \fun \bmdl$ \emph{$e_{[1,m]}$-\prep{c_2}-sedated} if it is $C^2$, $\vv(\Jac_x f) \ge 0$ for every $x \in X$, and for $1 \le i \le e_m, 1 \le j \le e_1$, we have
	\begin{equation}\label{eq.second-sedated}
		\vv(\partial_{ij}f(x)) \ge  -\vv(\zeta_{m}(x)) + \sum_{\ell=2}^{m} \vv(\sigma_\ell(x)),
	\end{equation}
	where $\zeta_m$ and $\sigma_\ell$ are as in Notation~\ref{notn.zeta-xi-sigma}.
	We call an \LT-definable function $X \fun \bmdl^n$ $e_{\le m}$-\prep{c_2}-sedated if each of its coordinate functions is $e_{[1,m]}$-\prep{c_2}-sedated.
\end{defn}

\begin{cor}[\prep{c_2}-sedating functions]\label{cor.second-sedate}
	Let $m$, $e_\ell$, $X$, $Y$ be as above, and suppose that $f\colon X \fun \bmdl$ is an
	\LT-definable function which is $e_{[1,m']}$-\prep{c_2}-sedated for all $m' < m$.
	Suppose moreover that $\vv(\nabla f) \ge 0$
	(this follows anyway if $m \ge 2$).
	Then there exists an \LT-definable set $Z \subseteq Y$ of dimension less than $e_m$ such that
	the restriction of $f$ to $X \setminus \pr_{\le e_m}^{-1}(Z)$ is $e_{[1,m]}$-\prep{c_2}-sedated.
\end{cor}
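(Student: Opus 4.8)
The plan is to reduce the statement about second derivatives of $f$ to a statement about first derivatives of the partial derivatives $\partial_j f$, and then apply Proposition~\ref{prop.sedate} in Version \prep c to each of those $e_1$ functions simultaneously. More precisely, for $1 \le j \le e_1$ set $f_j \coloneqq \partial_j f\colon X \fun \bmdl$. Since $f$ is \LT-definable and (after a first harmless shrinking, which costs only a lower-dimensional subset and hence may be absorbed into the final $Z$) is $C^2$, each $f_j$ is an \LT-definable $C^1$ function. The hypothesis $\vv(\Jac_x f) \ge 0$ for all $x$ gives $\vv(f_j(x)) \ge 0$, which is exactly the additional condition (\ref{eq.addit-cond}) required in Version \prep c. Moreover, the hypothesis that $f$ is $e_{[1,m']}$-\prep{c_2}-sedated for all $m' < m$ says precisely, by (\ref{eq.second-sedated}) applied with the roles of $i$ and $j$, that for each such $m'$ and each $j$ the function $f_j$ satisfies
\[
\vv(\partial_i f_j(x)) \ge -\vv(\zeta_{m'}(x)) + \sum_{\ell=2}^{m'} \vv(\sigma_\ell(x)) = \vv(u_{\prep c}(x)) - \vv(\zeta_{m'}(x)) + \sum_{\ell=2}^{m'} \vv(\sigma_\ell(x))
\]
for $1 \le i \le e_{m'}$; that is, $f_j$ is $e_{[1,m']}$-\prep c-sedated for all $m' < m$.

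Next I would apply Proposition~\ref{prop.sedate} (Version \prep c) to each $f_j$ in turn; this yields, for each $j$, an \LT-definable set $Z_j \subseteq Y$ of dimension less than $e_m$ such that $f_j \rest (X \setminus \pr_{\le e_m}^{-1}(Z_j))$ is $e_{[1,m]}$-\prep c-sedated. Set $Z \coloneqq Z_0 \cup \bigcup_{j=1}^{e_1} Z_j$, where $Z_0$ is the (lower-dimensional, \LT-definable) set removed in the very first step to ensure $f$ is $C^2$; then $Z$ is \LT-definable of dimension less than $e_m$, and on $\hat X \coloneqq X \setminus \pr_{\le e_m}^{-1}(Z)$ every $f_j$ is $e_{[1,m]}$-\prep c-sedated, i.e.\ for $1 \le i \le e_m$, $1 \le j \le e_1$ and $x \in \hat X$,
\[
\vv(\partial_i \partial_j f(x)) = \vv(\partial_i f_j(x)) \ge -\vv(\zeta_m(x)) + \sum_{\ell=2}^{m} \vv(\sigma_\ell(x)),
\]
which is exactly (\ref{eq.second-sedated}) for $f \rest \hat X$.

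It remains to record the two remaining clauses in the definition of $e_{[1,m]}$-\prep{c_2}-sedation for $f \rest \hat X$: that $f \rest \hat X$ is $C^2$ (immediate, as $\hat X \subseteq X$ and we arranged $f$ is $C^2$ after removing $Z_0$), and that $\vv(\Jac_x f) \ge 0$ for $x \in \hat X$ (immediate from the same hypothesis on $X \supseteq \hat X$; note also Remark~\ref{rem.shrink-X}-type reasoning is not even needed here since this bound does not involve $\zeta_\ell$). The parenthetical remark that $\vv(\nabla f) \ge 0$ follows automatically when $m \ge 2$ is justified because for $m \ge 2$ the hypothesis that $f$ is $e_{[1,1]}$-\prep{c_2}-sedated already includes $\vv(\Jac_x f) \ge 0$. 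The only mildly delicate point is the bookkeeping of the first $C^2$-reduction and making sure it is \LT-definable and lower-dimensional, which is standard by \omin-minimality (derivatives exist almost everywhere); everything else is a direct translation through the definitions. $\qed$
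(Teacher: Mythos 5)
Your proposal is correct and is essentially the paper's own proof: the paper likewise ensures $f$ is $C^2$ by removing a lower-dimensional set when $m=1$ (noting $f$ is already $C^2$ when $m\ge 2$, which is why your $Z_0$ is harmlessly empty in that case) and then applies Proposition~\ref{prop.sedate} in Version \prep{c} to each derivative $\partial_j f$; you have merely written out the routine verifications of the hypotheses in more detail.
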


\begin{proof}
    If $m = 1$, we start by removing a lower-dimensional subset from $X = Y$ to ensure that $f$ is $C^2$. (If $m \ge 2$, $f$ is already $C^2$.)
   	Then we apply Proposition~\ref{prop.sedate} \prep c to each of the derivatives $\partial_j f$ ($1 \le j \le e_1$).
\end{proof}

We finish this subsection by proving that being sedated is preserved under certain kinds of transformations, which will be the building blocks of the rectilinearization maps mentioned in Subsection~\ref{sect.overview}.

\begin{lem}[Sedation and rectilinearization]\label{lem.recti-sedated}
Fix $\prep v \in \{\prep a, \prep b, \prep{c_2}\}$, and
let the following be given: 
\begin{itemize}
 \item integers $1 \le m' < m$
($2 \le m' < m$ in Version \prep b), 
 \item integers $e_1 > \dots > e_m > 0$, 
 \item \LT-definable sets $X, \hat X \subseteq \bmdl^{e_1}$,
 \item \LT-definable functions $f\colon X \fun \bmdl$
and  $\hat f\colon \hat X \fun \bmdl$.
\end{itemize}
Suppose that there exists an \LT-definable bijection
$\psi\colon \hat X \fun X$ such that
$\hat f = f \circ \psi$ and which
sends
\begin{align*}
\hat x =
&(x\U m,\hat x\U{m-1}, x\U{\star}) \in
\bmdl^{e_m} \times \bmdl^{e_{m-1} - e_m} \times \bmdl^{e_1 - e_{m-1}}
\qquad \text{to}\\
x =
&(x\U m, x\U{m-1}, x\U{\star}) = (x\U m, \hat x\U{m-1} + g(x\U{m}), x\U{\star}),
\end{align*}
where $g\colon \pr_{\le e_m}(X) \fun \bmdl^{e_{m-1} - e_m}$ is $e_{[m,m]}$-\prep{c_2}-sedated.
Then $f$ is $e_{[1,m']}$-\prep v-sedated iff $\hat f$ is $e_{[1,m']}$-\prep v-sedated.
\end{lem}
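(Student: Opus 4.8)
The plan is to compare the quantities $\zeta_\ell$, $\sigma_\ell$ and $u_{\prep v}$ entering Definition~\ref{defn.sedated} at a point $\hat x \in \hat X$ and at its image $x = \psi(\hat x) \in X$, and to show that the inequality~(\ref{eq.sedated}) at $\hat x$ is equivalent to the corresponding inequality at $x$. Since $\psi$ modifies only the coordinates with indices in $(e_m, e_{m-1}]$, and does so by adding the function $g(x\U m)$ which depends only on the first $e_m$ coordinates, one has immediately $\pr_{\le e_m}(x) = \pr_{\le e_m}(\hat x) =: y$, so $\pr_{\le e_m}(X) = \pr_{\le e_m}(\hat X)$ and hence $\zeta_m(x) = \zeta_m(\hat x)$, and similarly $\zeta_\ell(x) = \zeta_\ell(\hat x)$ for all $\ell \ge m$ with $e_\ell \le e_m$; but in fact we only ever use $\zeta_{m'}$ with $m' < m$, i.e.\ $\zeta_\ell$ for $e_\ell > e_m$. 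For those, note that $\pr_{\le e_\ell}$ with $e_\ell \le e_{m-1}$ is affected by $\psi$, while $\pr_{\le e_\ell}$ with $e_\ell \ge e_{m-1}$ or $e_\ell \le e_m$ is not. The first key point is therefore:

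First I would establish that the affine map on each slice $\{x\U m = y\}$ given by $\hat x\U{m-1} \mapsto \hat x\U{m-1} + g(y)$ is a translation, hence an isometry, and that the map $\psi$ restricted to each fiber over $\pr_{\le e_m}$ is, up to this translation in the block $(e_m,e_{m-1}]$, the identity. Consequently $\norm{\pr_{>e_\ell}(x)}$ and $\norm{\pr_{>e_\ell}(\hat x)}$ have the same valuation for every $\ell$: for $\ell \ge m-1$ this is because $\pr_{>e_\ell}$ ignores the modified block entirely; for $\ell < m-1$ (so $e_\ell \ge e_{m-1}$) likewise; and $\ell$ between these does not occur since the only indices in play are $e_m < e_{m-1} < \dots < e_1$. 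Using $\vv(\Jac g)\ge 0$ (part of $e_{[m,m]}$-\prep{c_2}-sedation) and a Mean Value argument as in Remark~\ref{rem.int-val}, one shows $\vv(g(y) - g(y')) \ge \vv(y-y')$, which gives $\valdist(\pr_{\le e_\ell}(x), \bmdl^{e_\ell}\setminus \pr_{\le e_\ell}(X)) = \valdist(\pr_{\le e_\ell}(\hat x), \bmdl^{e_\ell}\setminus\pr_{\le e_\ell}(\hat X))$ for each relevant $e_\ell$ — that is, $\zeta_\ell(x) = \zeta_\ell(\hat x)$ in valuation — and similarly preserves each $\sigma_\ell$ in valuation. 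Thus all the data on the right-hand side of~(\ref{eq.sedated}), for the exponent range $[1,m']$, have the same valuation at $x$ and $\hat x$; and $u_{\prep v}(x)$, $u_{\prep v}(\hat x)$ agree in valuation too, since $u_{\prep a}$ and $u_{\prep c}$ depend only on $f$ (resp.\ a constant) and $\psi$ intertwines $f$ with $\hat f$, while $u_{\prep b}$ additionally involves $\norm{\pr_{>e_2}}$, which is unchanged (note $e_2 \ge e_{m-1}$ since $m' < m$ forces $m \ge 2$, but more carefully one needs $e_2$ to lie outside the modified block; this holds because the modified block is $(e_m, e_{m-1}]$ and $e_2 > e_{m-1}$ unless $m = 2$, in which case $e_2 = e_m$ is also outside). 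For the \prep{c_2} case one argues identically with~(\ref{eq.second-sedated}) in place of~(\ref{eq.sedated}), noting that $\psi$ preserves the condition $\vv(\Jac f)\ge 0$ by the chain rule and $\vv(\Jac \psi)\ge 0$, $\vv(\Jac\psi^{-1})\ge 0$.

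The second key point is the comparison of the left-hand sides: I must relate $\partial_i \hat f(\hat x)$ to $\partial_i f(x)$ (and, in the \prep{c_2} case, second derivatives) for $1 \le i \le e_m$. Since $\hat f = f\circ\psi$ and the $i$th coordinate (with $i\le e_m$) is one of the unmodified ones \emph{except} that $g$ depends on it, the chain rule gives
\[
\partial_i \hat f(\hat x) = \partial_i f(x) + \sum_{k=e_m+1}^{e_{m-1}} \partial_k f(x)\cdot \partial_i g_{k}(y),
\]
where $\partial_i g_k(y)$ is a first derivative of $g$, which by $e_{[m,m]}$-\prep{c_2}-sedation of $g$ satisfies $\vv(\partial_i g_k(y)) \ge 0$ (indeed $e_{[m,m]}$-\prep{c_2}-sedation forces $\vv(\Jac g) \ge 0$ and, via~(\ref{eq.second-sedated}) with $m=1$-style indexing, a bound on second derivatives, but here I only need $\vv(\nabla g)\ge 0$). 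Hence each correction term has valuation $\ge \vv(\partial_k f(x))$ with $k$ in the block $(e_m, e_{m-1}]$, i.e.\ $e_{m'+1} < k \le e_{m'}$ for $m' = m-1$; and the assumed $e_{[1,m-1]}$-\prep v-sedation of $f$ (which I may invoke since $m' < m$ implies $m-1 \ge m'$, but more directly: the statement is an "iff" so I bound in whichever direction is needed and use the hypothesis on the side that is assumed sedated) bounds $\vv(\partial_k f(x))$ from below by exactly the right-hand side of~(\ref{eq.sedated}) for exponent index $m' = m-1 \ge$ the target range. Putting the pieces together: if $f$ is $e_{[1,m']}$-\prep v-sedated, then $\partial_i f(x)$ satisfies the required bound, the correction terms satisfy a bound that is at least as strong (they involve $\vv(\zeta_{m-1})$ rather than $\vv(\zeta_{m'})$ with $m' \le m-1$, hence a larger — i.e.\ at least as negative only if... here one uses $\zeta_{m-1} \le \zeta_{m'}$ in valuation for $m' \le m-1$, wait: more $\zeta$'s only help, exactly as in Remark~\ref{rem.shrink-X}), and so $\partial_i\hat f(\hat x)$ satisfies the bound, giving that $\hat f$ is $e_{[1,m']}$-\prep v-sedated; the converse is symmetric, using $\psi^{-1}$ (which has the same shape, with $g$ replaced by $-g\circ\pr_{\le e_m}$, still \prep{c_2}-sedated).

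\textbf{Main obstacle.} The routine part is the chain-rule bookkeeping; the genuinely delicate point is verifying that the index $k$ ranging over the modified block $(e_m, e_{m-1}]$ falls into the correct "level" $m'$ for which the sedation hypothesis on $f$ gives a bound \emph{no weaker} than~(\ref{eq.sedated}) requires at level $m'$ of the lemma's statement — in particular checking the interplay $\vv(\pr_{>e_{m-1}}(x)) - \vv(\zeta_{m-1}) \ge \vv(\sigma_m)$ and that adding the extra factor $\sigma_m$ (present in the target bound but a priori not controlled by the level-$(m-1)$ hypothesis) does no harm because $\vv(\sigma_\ell)\le 0$. This is exactly the same style of estimate as step (ii) in the proof of Proposition~\ref{prop.sedate}, so I would model the computation on that; once the valuations of $\zeta_\ell$ and $\sigma_\ell$ are seen to be $\psi$-invariant (the first key point above), everything else reduces to that already-executed calculation.
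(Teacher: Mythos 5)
Your overall strategy is the same as the paper's: show that $\psi$ preserves the valuations of $\zeta_\ell$, $\sigma_\ell$ and $u_{\prep v}$ (via the Mean Value Theorem argument of Remark~\ref{rem.int-val} applied to $g$, using $\vv(\Jac g)\ge 0$), then compare derivatives by the chain rule, absorbing the correction terms $\partial_k f\cdot\partial_i g_k$ with $k$ in the block $(e_m,e_{m-1}]$ into the sedation bound. That part is sound, modulo one bookkeeping slip: the hypothesis available to you is $e_{[1,m']}$-sedation of $f$ for the \emph{given} $m'$, not $e_{[1,m-1]}$-sedation; fortunately the correct hypothesis already covers the indices $k\le e_{m-1}\le e_{m'}$, so your bound goes through. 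Relatedly, the ``main obstacle'' you identify is not an obstacle here: the target inequality is~(\ref{eq.sedated}) at level $m'$, whose sum runs only up to $\sigma_{m'}$, so no $\sigma_m$ ever enters; you appear to be importing a difficulty from the proof of Proposition~\ref{prop.sedate} (passing from level $m-1$ to level $m$) into a lemma that keeps the level fixed and only changes coordinates.

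The genuine gap is the \prep{c_2} case, which you dismiss with ``one argues identically.'' It is not identical: differentiating $\hat f=f\circ\psi$ twice produces, besides $\partial_{ij}f$ and cross terms of the form $\partial_{j\ell}f\cdot\partial_i g_\ell$ and $\partial_{\ell\ell'}f\cdot\partial_i g_\ell\cdot\partial_j g_{\ell'}$ (which are handled by $\vv(\Jac g)\ge 0$ and the second-derivative bounds on $f$), the term $\sum_\ell \partial_\ell f\cdot\partial_{ij}g_\ell$ involving \emph{second} derivatives of $g$. You explicitly remark that you ``only need $\vv(\nabla g)\ge 0$,'' but this last term is precisely where the full $e_{[m,m]}$-\prep{c_2}-sedation of $g$ is indispensable: it gives $\vv(\partial_{ij}g_\ell)\ge -\vv(\zeta_m)$, and one must then check $-\vv(\zeta_m)\ge -\vv(\zeta_{m'})\ge -\vv(\zeta_{m'})+\sum_{\ell=2}^{m'}\vv(\sigma_\ell)$ (using $\zeta_{m'}\le\zeta_m$ and $\vv(\sigma_\ell)\le 0$). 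Without this step the \prep{c_2} half of the lemma is unproved, and it is the only place in the argument that explains why the hypothesis on $g$ is \prep{c_2}-sedation rather than merely $\vv(\Jac g)\ge 0$.
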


\begin{proof}
The lemma is symmetric with respect to swapping $\hat X$ and $X$; we will carry out various arguments only in one direction without further notice.

We start by verifying that for $\ell \le m - 1$, the valuations $\vv(\zeta_\ell)$ and $\vv(\sigma_\ell)$ from Notation~\ref{notn.zeta-xi-sigma} are preserved by $\psi$.
More precisely, we show that, for $\hat x = (x\U m,\hat x\U{m-1}, x\U{\star}) \in \hat X$
and $x = (x\U m, x\U{m-1}, x\U{\star}) = \psi(\hat x)$, we have
\begin{equation}
\label{eq.recti-zeta}
\valdist(\pr_{\le e_\ell}(\hat x), \bmdl^{e_\ell} \setminus \pr_{\le e_\ell}(\hat X))
=
\valdist(\pr_{\le e_\ell}(x), \bmdl^{e_\ell} \setminus \pr_{\le e_\ell}(X))
\end{equation}
for $1 \le \ell  \le m-1$. Since $\pr_{> e_{m-1}} \circ \psi = \pr_{> e_{m-1}}$, this then also implies 
\begin{equation}
\label{eq.recti-sigma}
\vv(\sigma_\ell(\hat x)) = \vv(\sigma_\ell(x)),
\end{equation}
where $\sigma_\ell(\hat x)$ is computed with respect to $\hat X$ and
$\sigma_\ell(x)$ is computed with respect to $X$.
 
To prove (\ref{eq.recti-zeta}), we assume $\ell = 1$;
for other $\ell$, the same proof applies, after replacing
$x$, $\hat x$, $X$, $\hat X$ by their projections to $\bmdl^{e_\ell}$.

Both sides of (\ref{eq.recti-zeta}) are no less than
$\mu := \valdist(x\U m, \bmdl^{e_m} \setminus \pr_{\le e_m}(X))$, so it suffices to verify that given an element $\hat y \in \bmdl^{e_1} \setminus\hat X$ satisfying $\vv(\hat y - \hat x) > \mu$, we can find an element $y \in \bmdl^{e_1} \setminus X$ satisfying $\vv(y - x) = \vv(\hat y - \hat x)$.

We write $\hat y = (y\U m,\hat y\U{m-1}, y\U{\star}) \in
\bmdl^{e_m} \times \bmdl^{e_{m-1} - e_m} \times \bmdl^{e_1 - e_{m-1}}$.
By definition of $\mu$, the function $g$ is defined on the entire ball $B := B_{> \mu}(x\U m)$. This means that, first of all, $y := (y\U m, \hat y\U{m-1} + g(y\U{m}), y\U{\star})$ is well-defined, and secondly, the 
Mean Value Theorem argument from Remark~\ref{rem.int-val} applies, yielding
\[
\vv(g(y\U m) - g(x\U m)) \ge \vv(y\U m - x\U m);
\]
now an easy computation yields $\vv(y - x) = \vv(\hat y - \hat x)$, as desired.

From (\ref{eq.recti-zeta}) and (\ref{eq.recti-sigma}), we obtain, for $x = \psi(\hat x)$:
\begin{equation*}
-\vv(\zeta_{m'}(x)) + \sum_{\ell=2}^{m'} \vv(\sigma_\ell(x)) =
-\vv(\zeta_{m'}(\hat x)) + \sum_{\ell=2}^{m'} \vv(\sigma_\ell(\hat x))
=:\lambda(x).
\end{equation*}

That the function $g$ is \prep{c_2}-sedated in particular means that $\vv(\Jac g)\ge 0$.
This yields the following equations concerning the partial derivatives of $\hat f(x\U m,\hat x\U{m-1}, x\U{\star}) = f(x\U m,  \hat x\U{m-1} + g(x\U{m}), x\U{\star})$:
\begin{equation}\label{eq.same-derivative}
	\begin{alignedat}{2}
		\min_{1 \le i \le e_{m-1}}\vv(\partial_{i}\hat f(\hat x)) &= \min_{1 \le i \le e_{m-1}}\vv(\partial_{i}f(x))
		\qquad&&\text{and}\\
		\vv(\partial_{i}\hat f(\hat x)) &= \vv(\partial_{i}f(x))
		\qquad&&\text{for } e_{m-1} < i \le e_1
		.
	\end{alignedat}
\end{equation}

Now we are ready to prove the claims of the lemma.
For $\prep v =  \prep a, \prep b$, $f$ is $e_{[1,m']}$-\prep v-sedated iff
\begin{equation}\label{eq.recti-ab}
	\min_{1 \le i \le e_{m'}}\vv(\partial_{i}f(x)) \ge \vv(u_{\prep v}(x)) + \lambda(x)
	\qquad \text{for every } x \in X,
\end{equation}
and similarly for $\hat f$.
The left hand sides of (\ref{eq.recti-ab}) are equal for $f$
and $\hat f$ by (\ref{eq.same-derivative}), and the right hand sides are equal since $\hat f(\hat x) = f(x)$ and, in Version \prep b (which implies $m \ge 3$), $\pr_{>e_{2}}(x) =\pr_{>e_{2}}(\hat x)$.

Finally, suppose that $f$ is $e_{[1,m']}$-\prep{c_2}-sedated, i.e.,
\begin{equation*}
	\vv(\Jac f) \ge 0 \qquad \text{and} \qquad
	\vv(\partial_{ij}f) \ge \lambda(x)
	\qquad \text{for } 1 \le i \le e_{m'}, 1 \le j \le e_{1}.
\end{equation*}
(To simplify notation, from now on, we omit the points at which the derivatives are taken.)
Using (\ref{eq.same-derivative}), we obtain $\vv(\Jac \hat f) \ge 0$,
and it remains to verify that $\vv(\partial_{ij}\hat f) \ge \lambda$.
Direct computation of this second derivative yields the following, where $g = (g_{e_m+1}, \dots, g_{e_{m-1}})$ and where we set $\partial_k g \coloneqq 0$ for $k > e_m$:
\begin{align*}
	\partial_{ij}(f \circ \psi)
	&= \partial_{ij} f
	+ \sum_{e_m <\ell \le e_{m-1}} \partial_{j\ell} f \cdot \partial_i g_{\ell}
	+ \sum_{e_m <\ell \le e_{m-1}} \partial_{i\ell} f \cdot \partial_j g_{\ell}\\
	&+ \sum_{e_m <\ell,\ell' \le e_{m-1}} \partial_{\ell\ell'} f \cdot \partial_i g_{\ell}\cdot \partial_j g_{\ell'}
	+ \sum_{e_m <\ell \le e_{m-1}}\partial_\ell f \cdot \partial_{ij}g_\ell
	.
\end{align*}
All the second derivatives of $f$ appearing on the right hand side have valuation at least $\lambda$
(note that $\ell, \ell' \le e_{m'}$). Together with
$\vv(\Jac g) \ge 0$, we get the desired bound for everything except the last sum.
In that one, we have $\vv(\partial_\ell f) \ge 0$
and $\vv(\partial_{ij}g_\ell) \ge -\vv(\zeta_m)$ (since $g$ is $e_{[m,m]}$-\prep{c_2}-sedated).
Now $-\vv(\zeta_m) \ge -\vv(\zeta_{m'}) \ge \lambda$ since $\vv(\sigma_\ell) \le 0$ for all $\ell$,
so also here, we get the desired bound.
\end{proof}

\section{The main proof}
\label{sect.proof}

This entire section constitutes the proof of Theorem~\ref{thm.main-na}.
We fix, once and for all, a closed, \LT-definable set
$X \subseteq \bmdl^n$.

\subsection{Some notation}
\label{sect.notn.std}

We fix some notation which will be useful at various places in the proof. Suppose that we have already fixed a
stratification $\mdl X$ of $X$ (in the sense of Definition~\ref{defn.strat}); in particular, we assume that each $X^i$ is closed.
We moreover assume that the strata, i.e., the definably connected components of the skeletons $\mathring X^i$, form a bradycell decomposition in the sense of Definition~\ref{defn.brady-dec}.
(Recall that definably connectedness always refers to the language $\LT$.)

\begin{notn}[Aligning and groups of coordinates]\label{notn.std}
Suppose that
$\mdl S = (S^\ell)_{0 \le \ell \le m}$
is a sequence of strata with
$S^\ell \subseteq \mathring X^{e_\ell}$ for some $e_0 \ge e_1 > e_2 > \dots > e_m$. These inequalities, together with $e_0 \le n$, imply $m \le n +1 $, so Proposition~\ref{prop.brady-dec}
provides an aligner $\aligner \in \CC_n$ such that
each transformed set $\aligner(S^\ell)$ is aligned in the sense of Definition~\ref{defn.aligned}.
In such a situation, i.e., when $\mdl S$ and an appropriate $\aligner$ are given, we will assume that the strata $S^\ell$ are already aligned by transforming our coordinate system using $\aligner$. (Why this assumption is harmless will be explained at the appropriate places.)
We will moreover use the following notation, where $0 \le \ell \le m$:
 \begin{itemize}
 	\item We write $\bar S^\ell \coloneqq \pr_{\le e_\ell}(S^\ell)$ for the base of $S^\ell$
 and $\rho^\ell \colon \bar S^\ell \fun \bmdl^{n - e_\ell}$ for the map whose graph is $S^\ell$.
 \item
 We introduce a notation for ``groups of coordinates'' of points $x = (x_1, \dots, x_n) \in \bmdl^n$:
\begin{align*}
x\U{m} &\coloneqq (x_{1}, x_{2}, \dots, x_{e_m}),\\
x\U{\ell} &\coloneqq (x_{e_{\ell+1}+1}, \dots, x_{e_{\ell}})
    \qquad \text{for } 0 \le \ell < m \text { and}\\
x\U\star &\coloneqq (x_{e_0+1}, \dots, x_{n}).
\end{align*}
In other words,
 \begin{equation*}
 x = (x\U m, x\U{m-1}, \dots, x\U0, x\U\star).
 \end{equation*}
(Note that $x\U0$ might be the empty tuple since possibly $e_0 = e_1$.)
We use a similar notation for points in $\bmdl^{e_\ell}$ and for the functions $\rho^\ell$ ($0 \le \ell \le m$):
 \begin{align*}
 x &= (x\U m, x\U{m-1}, \dots, x\U{\ell+1}, x\U\ell)
&&\text{for } x \in \bmdl^{e_\ell};\\
 \rho^\ell &= (\rho^\ell\U{\ell-1}, \dots, \rho^\ell\U0, \rho^\ell\U{\star}).
 \end{align*}
\end{itemize}
\end{notn}

Now suppose that we additionally have a val-chain $a^0, \dots, a^m$
with $a^\ell \in S^\ell$ and with distances $\lambda_1 > \dots > \lambda_{m+1}$ (and dimensions $e_0 \ge e_1 > \dots > e_m$).
There are natural balls $\bar B^\ell \subseteq \bar S^\ell$ associated with such a val-chain, though it requires an argument to see that the balls, as defined below, are really subsets of $\bar S^\ell$.

\begin{figure}
\begingroup
\def\piece(#1,#2)(#3)(#4)(#5,#6){
  plot[smooth,xshift=#1cm,yshift=#2cm] coordinates {\cright}
  --
  (\wid+#1,#2+\yf) .. controls (\wid+#3+\yf) and (\wid+#4+\yyf) .. (\wid+#5,#6+\yyf)
  --
  plot[smooth,xshift=#5cm,yshift=#6cm] coordinates {\cleft}
  --
  (#5,#6+\yya) .. controls (#4+\yya) and (#3+\ya) .. (#1,#2+\ya)
  -- cycle
}
\def\sety(#1/#2/#3/#4/#5/#6){
\def\ya{#1}\def\yb{#2}\def\yc{#3}\def\yd{#4}\def\ye{#5}\def\yf{#6}
}
\def\setyy(#1/#2/#3/#4/#5/#6){
\def\yya{#1}\def\yyb{#2}\def\yyc{#3}\def\yyd{#4}\def\yye{#5}\def\yyf{#6}
}

\tikzset{
Bset/.style={violet!40!white},
Btext/.style={violet!70!white}
}

\def\everything{

\def\crightlong{(0,\ya) (1,\yb) (2+2*\deform,\deform+\yc) (3+2*\deform,\deform+\yd) (4,\ye) (5,\yf)}
\def\cleftlong{(5,\yyf) (4,\yye) (3+2*\deform,\deform+\yyd) (2+2*\deform,\deform+\yyc) (1,\yyb) (0,\yya)}

\def\crightshort{(0,\ya) (1,\yb) (2+2*\deform,\deform+\yc) (3+2*\deform,\deform+\yd) (4,\ye) (4.3,\yf)}
\def\cleftshort{(4.3,\yyf) (4,\yye) (3+2*\deform,\deform+\yyd) (2+2*\deform,\deform+\yyc) (1,\yyb) (0,\yya)}

\def\fla{{\RI{\flat}}}

\LE{
\def\deform{.25}
\def\wid{5}

\let\cright=\crightlong
\let\cleft=\cleftlong
}

\RI{
\def\deform{0}
\def\wid{4.3}

\let\cright=\crightshort
\let\cleft=\cleftshort
}


\coordinate(a0pr) at (1.8+2*\deform+.3,-1.5+\deform+.15);
\coordinate(a2pr) at (4.3,-1.5);
\coordinate(a0) at (1.8+2*\deform+.3,.15+\deform+.3);
\coordinate(a1) at (2+2*\deform,.15+\deform);
\coordinate(a2) at (4.3,0.3);


\LE{
  \draw[ultra thick,white] (4.5,-1.5) -- +(.5,0);
}

\sety(0/0/0/0/0/0)
\setyy(0/0/0/0/0/0)
\draw
  [preaction={fill,S0setT}]
  \piece(0,-1.5)(0,-1.5)(2,-.5)(2,-.5);


\begin{scope}
  \let\cright=\crightshort
  \let\cleft=\cleftshort
  \def\wid{4.3}

  \clip
    \piece(0,-1.5)(0,-1.5)(2,-.5)(2,-.5);
  
  \foreach \ii in {-1.5,-1.4,...,2}
    \draw[very thin,green!50!black] (0,\ii) -- (7,\ii-2);
\end{scope}

\LE{
  \draw[preaction={fill,Bset}]
   \tangentplane(a0pr)(-.16-.2,-.08)(-.16+.2,-.08);
}


\draw[dashed]
  (2,-.5) -- (2,1)
  (2+\wid,-.5) -- (2+\wid,1);

\draw[dash3d]
  (0,-1.5) -- (0,-.2)
  (0+\wid,-1.5) -- (0+\wid,.3);

\draw[S1set] plot[smooth,yshift=-1.5cm]
  coordinates {\crightshort};

\fill[S2set] (a2pr) circle (.06);


\draw
  [preaction={draw,ultra thick,white}]
  [arrow,S0text] (5.55+3*\deform,-.7)
      -- node[swap] {$\!\rho^{0\fla}$}
    +(0,1.2);



\LE{
  \node[Btext] at (3.2,-1) {$\bar B^0$};
}

\node[green!50!blue!50!black]
  at (3.6+2*\deform,-.9+.5*\deform) {$Y^\fla$};
\LE{
\node[S0text,right] at (5.5,-1.5) {$\bar S^0 \sub \bmdl^{e_0}$};
}

\begin{scope}[xshift=-1.4cm,yshift=-2.2cm]
  \coordinate(a1prpr) at (2,0);
  \coordinate(a2prpr) at (4.3,0);

  \draw[dashed]
    (0,0) -- (1.4,.7)
    (4.3,0) -- (5.7,.7);

  \draw [preaction={draw,white,ultra thick}]
     [dashed] (a1prpr) -- (a1);

  \LE{
    \draw[ultra thick,Btext]
      (.7,-0.05) -- (4-.7,-0.05);
  }
  \draw[S1set] (0,0) -- (4.3,0);
  \fill[S2set] (a2prpr) circle (.06);

  \fill(a1prpr) circle (.06) node[below] {$\bar a^{1\fla}$};
  
  \LE{
    \node[below right,S1text] at (3.4,0) {$\bar S^1 \sub \bmdl^{e_1}$};
    \node[below,Btext] at (2.7,-0.05) {$\bar B^1$};
  }
  
\end{scope}

\sety(0/0/0/0/0/0)
\setyy(0/0/0/0/0/0)
\draw
  [preaction={fill,color=white,opacity=.4}]
  [preaction={fill,color=white!70!black!60!blue,opacity=.7}]
  \piece(1,.5)(1.4,.5)(1.8,.9)(2,1);
\RI{
  \draw[draw,ultra thick,white]
    (4.6,3.5) -- +(0,-4.4);
  \draw[arrow]
    (4.6,3.6) -- node[pos=.35] {$\phi_0$} +(0,-4.5);
}
\sety(0.4/0.1/0/0/0/0)
\setyy(0/0/0/0/0/0)
\draw
  [preaction={fill,color=white,opacity=.4}]
  [preaction={fill,color=white!40!black!70!blue,opacity=.7}]
  \piece(.5,.8)(.6,.8)(.7,.5)(1,.5);
\draw[dash3d]
  (a0pr) -- (a0);
\sety(-.2/0/.15/.25/0.3/.3)
\setyy(0.4/0.1/0/0/0/0)
\draw
  [preaction={fill,color=white,opacity=.4}]
  [preaction={fill,color=white!90!black!75!blue,opacity=.7}]
  \piece(0,0)(.1,.1)(.3,.8)(.5,.8);
\LE{
  \node[S0text] at (.5,.5) {$S^0$};
}

\draw[S1set] plot[smooth] coordinates {\crightshort};
\LE{
  \node[S1text,below] at (1,0) {$S^1$};
}

\fill(a0pr) circle (.06)
   node[anchor=south west] {$\bar a^{0\fla}$};


\fill(a0) circle (.06) node[anchor=west] {\LE{\vrule width0mm depth3.5mm}$\strut a^{0\fla}$};
\fill(a1) circle (.06) node[anchor=north west,inner sep=1pt] {$\,a^{1\fla}$};
\fill[S2set] (a2) circle (.06) node[below,S2text] {\LE{$X^0$}};

\LE{
  \draw[densely dotted] (a0) -- (a1);
  \draw[arrow] (2.2,1.2) node[above] {$\lambda_{1}$} .. controls (2.2,.8) .. (2.4,.6);
  \draw[densely dotted] (a0) -- (a2);
  \draw[arrow] (3.5,1.2) node[above] {$\lambda_{2}$} .. controls (3.5,.8) .. (3.4,.6);
}


\LE{
  \draw [arrow,S1text] (-.95,-2.15)
        -- +(1.2,.6);
  \node[right,S1text] at (-.15,-1.85) {$\rho^{1}\U0$};

  \draw
    [preaction={draw,ultra thick,white}]
    [arrow,S1text] (-1,-2.1)
      -- node[pos=.45,inner sep=0pt] {$\rho^{1}\!$}
    (.3,-.2);
}

} 
\begin{ctikzpicture}
\begin{scope}[yshift=-4.5cm] 
  \long\def\LE#1{}\long\def\RI#1{#1}
  \everything
  \draw[arrow,preaction={draw,ultra thick,white}]
    (-.8,2.1) -- node[swap] {$\phi_1 = \id_{\bar S^1}$} +(0,-4.2);
\end{scope}
\begin{scope}
  \long\def\LE#1{#1}\long\def\RI#1{}
  \everything
\end{scope}
\end{ctikzpicture}
\endgroup
\caption{%
An overview of some of the notation from Subsections~\ref{sect.notn.std} and \ref{sect.recti}
(for a plain val-chain $a^0, a^1$) and a bit of additional notation used later.}
\label{fig.notn}
\end{figure}

\begin{notn}\label{notn.chain-balls}
Given a val-chain $a^0, \dots, a^m$
with $a^\ell \in S^\ell$ and with distances $\lambda_1 > \dots > \lambda_{m+1}$, we set
\[
\bar B^\ell \coloneqq B_{> \lambda_{\ell + 1}}(\pr_{\le e_\ell}(a^0))
\subseteq \bmdl^{e_\ell}
 \qquad \text{for }
0 \le \ell \le m.
\]
\end{notn}

(If $\lambda_{m+1} = -\infty$, we set $\bar B^m = \bmdl^{e_m}$.)

Note that $\bar B^\ell$ also contains the projections
$\pr_{\le e_\ell}(a^1), \dots, \pr_{\le e_\ell}(a^\ell)$
and that it is the projection of 
the largest ball around $a^0$ which is disjoint from $X^{e_\ell - 1}$.

\begin{lem}\label{lem.val-chain-basics}
In the situation of Notations~\ref{notn.std} and \ref{notn.chain-balls}, the following hold
for $0 \le \ell \le m$.
\begin{enumerate}
\item
The ball $\bar B^\ell$ is contained in $\bar S^\ell$.
In particular, $\pr_{\le e_\ell}(a^0) \in \bar S^\ell$,
so the function $\rho^\ell$ is defined at the point $\pr_{\le e_\ell}(a^0)$.
\item
The function $\rho^\ell$ satisfies
\[
\vv(\rho^\ell(x^1) - \rho^\ell(x^2)) \ge \vv(x^1 - x^2)
\qquad
\text{for }
x^1, x^2 \in \bar B^\ell
.
\]
\item
For $\bbar a^0 \coloneqq \pr_{\le e_{\ell}}(a^0)$
and $a^{[\ell]} \coloneqq (\bbar a^0, \rho^{\ell}(\bbar a^0)) \in S^\ell$,
we have $\vv(a^0 - a^{[\ell]}) = \lambda_\ell$.
In particular, the sequences $a^0, a^{[1]}, a^2, \dots, a^m$ and $a^{[1]}, a^1, a^2, \dots, a^m$ are val-chains.
\end{enumerate}
\end{lem}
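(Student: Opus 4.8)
The plan is to prove the three parts of Lemma~\ref{lem.val-chain-basics} more or less in the order stated, using Lemma~\ref{lem.frontier} (and Remark~\ref{rem.int-val}) as the main tool for (1), and then deriving (2) and (3) as easy consequences of the bound $\vv(\Jac \rho^\ell) \ge 0$ that holds on the aligned base.

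\textbf{Part (1).}
Fix $\ell$. Since $a^0, \dots, a^m$ is a val-chain, the ball around $a^0$ of valuative radius $\lambda_{\ell+1}$ is, by construction of $\bar B^\ell$ and the definition of the distances $\lambda_i$ (Definition~\ref{defn.na}), contained in $\bmdl^n \setminus X^{e_\ell - 1}$: indeed $\lambda_{\ell+1} \ge \valdist(a^0, X^{e_\ell-1})$ for $\ell < m$ by (\ref{eq.nai}), and $\lambda_{m+1} = \valdist(a^0, X^{e_m-1})$ by definition. Hence, if we let $B$ be the open valuative ball in $\bmdl^n$ of radius $\lambda_{\ell+1}$ around $a^0$, then $B \cap \partial S^\ell = \emptyset$ (the frontier $\partial S^\ell$ is contained in $X^{e_\ell - 1}$, since $\cl(S^\ell)$ is a union of strata of dimension $\le e_\ell$, all but $S^\ell$ itself lying in $X^{e_\ell-1}$). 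Also $a^0 \in S^\ell$ when $\ell = 0$; for $\ell \ge 1$ one needs $B \cap S^\ell \ne \emptyset$, which holds because $a^\ell \in S^\ell$ and $\vv(a^0 - a^\ell) = \lambda_\ell > \lambda_{\ell+1}$ by Remark~\ref{rem.sub-val-chain}, so $a^\ell \in B$. Now apply Lemma~\ref{lem.frontier} to the aligned set $S^\ell$ and the ball $B$: it gives $\pr_{\le e_\ell}(B) \subseteq \bar S^\ell$. Since $\pr_{\le e_\ell}(B) = B_{>\lambda_{\ell+1}}(\pr_{\le e_\ell}(a^0)) = \bar B^\ell$, this is exactly the claim; and in particular $\pr_{\le e_\ell}(a^0) \in \bar B^\ell \subseteq \bar S^\ell$, so $\rho^\ell$ is defined there. (In the degenerate case $\lambda_{m+1} = -\infty$, Remark~\ref{rem.clopen-infty} gives $\bar S^m = \bmdl^{e_m}$, so there is nothing to prove.)

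\textbf{Part (2).}
By part (1), $\bar B^\ell$ is a valuative ball entirely contained in the base $\bar S^\ell$, on which $\rho^\ell$ is a $C^1$ function with $\vv(\Jac_x \rho^\ell) \ge 0$ for all $x$ (Definition~\ref{defn.aligned}, since $S^\ell$ is aligned). The Mean Value Theorem argument of Remark~\ref{rem.int-val}, applied on the segment connecting $x^1$ and $x^2$ (both in the ball $\bar B^\ell$, hence the whole segment lies in $\bar B^\ell \subseteq \bar S^\ell$), then immediately gives $\vv(\rho^\ell(x^1) - \rho^\ell(x^2)) \ge \vv(x^1 - x^2) + 0 = \vv(x^1 - x^2)$, as desired.

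\textbf{Part (3).}
Write $\bbar a^0 = \pr_{\le e_\ell}(a^0)$; by (1) this lies in $\bar S^\ell$, so $a^{[\ell]} := (\bbar a^0, \rho^\ell(\bbar a^0)) \in S^\ell$ is well-defined. We must show $\vv(a^0 - a^{[\ell]}) = \lambda_\ell$. The point $a^0 \in S^0$ has the form $(\pr_{\le e_0}(a^0), \rho^0(\pr_{\le e_0}(a^0)))$; both $a^0$ and $a^{[\ell]}$ have the same first $e_\ell$ coordinates, so $\vv(a^0 - a^{[\ell]}) = \vv(\pr_{>e_\ell}(a^0) - \pr_{>e_\ell}(a^{[\ell]}))$. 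On the other hand, $\pr_{\le e_\ell}(a^\ell) \in \bar B^\ell$ (it equals $\pr_{\le e_\ell}(a^0)$ up to valuation $> \lambda_{\ell+1}$ by Remark~\ref{rem.sub-val-chain}, hence lies in the ball), and using part (2) plus $\vv(a^0 - a^\ell) = \lambda_\ell > \lambda_{\ell+1}$ one compares $a^{[\ell]}$ with $a^\ell$: since $a^\ell = (\pr_{\le e_\ell}(a^\ell), \rho^\ell(\pr_{\le e_\ell}(a^\ell)))$ lies in $S^\ell$, part (2) gives $\vv(\rho^\ell(\pr_{\le e_\ell}(a^0)) - \rho^\ell(\pr_{\le e_\ell}(a^\ell))) \ge \vv(\pr_{\le e_\ell}(a^0) - \pr_{\le e_\ell}(a^\ell)) > \lambda_{\ell+1}$, and $\vv(\pr_{\le e_\ell}(a^0)-\pr_{\le e_\ell}(a^\ell)) \ge \lambda_\ell$ (again Remark~\ref{rem.sub-val-chain}), so $\vv(a^{[\ell]} - a^\ell) \ge \lambda_\ell$, while $\vv(a^0 - a^\ell) = \lambda_\ell$; by the ultrametric inequality $\vv(a^0 - a^{[\ell]}) = \lambda_\ell$ provided the two contributions do not conspire to cancel — which they cannot, since $\vv(a^{[\ell]}-a^\ell) \ge \lambda_\ell = \vv(a^0-a^\ell)$ would only fail to force equality if both were $> \lambda_\ell$, contradicting $\vv(a^0-a^\ell)=\lambda_\ell$. (More carefully: $\vv(a^0 - a^{[\ell]}) \ge \min\{\vv(a^0 - a^\ell), \vv(a^\ell - a^{[\ell]})\} = \lambda_\ell$, and if this were strict then $\vv(a^\ell - a^{[\ell]}) = \vv(a^0 - a^\ell) = \lambda_\ell$ while $\vv(a^0-a^{[\ell]}) > \lambda_\ell$; combined with the bound $\vv(a^0-a^{[\ell]})=\vv(\pr_{>e_\ell}(a^0-a^{[\ell]}))$ and a direct check using the graph structure one sees $\vv(a^0-a^{[\ell]}) \le \lambda_\ell$ as well, since $a^0$ and $a^{[\ell]}$ differ only in coordinates $> e_\ell$ whereas $a^0$ and $a^\ell$ already differ there by exactly $\lambda_\ell$.) Finally, that $a^0, a^{[1]}, a^2, \dots, a^m$ and $a^{[1]}, a^1, a^2, \dots, a^m$ are again val-chains follows from Remark~\ref{rem.sub-val-chain}: replacing $a^\ell$ by $a^{[\ell]}$ (for $\ell = 1$) changes nothing about the defining valuative equalities and inequalities, since $\vv(a^0 - a^{[\ell]}) = \lambda_\ell = \vv(a^0 - a^\ell)$ and both points lie in $\mathring X^{e_\ell}$.

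\textbf{Main obstacle.}
The genuinely delicate point is Part (1): verifying that the frontier $\partial S^\ell$ really does miss the ball $B$, i.e., correctly matching up "the largest ball around $a^0$ disjoint from $X^{e_\ell-1}$" with the valuative radius $\lambda_{\ell+1}$ coming from the val-chain data, and checking $B \cap S^\ell \ne \emptyset$ in the cases $\ell \ge 1$ (which requires pulling in Remark~\ref{rem.sub-val-chain}). Once (1) is in place, (2) is a one-line application of Remark~\ref{rem.int-val}, and (3) is bookkeeping with the ultrametric inequality and the graph structure; the only care needed there is the non-cancellation argument isolating $\vv(a^0 - a^{[\ell]}) = \lambda_\ell$ from the a priori weaker $\ge \lambda_\ell$.
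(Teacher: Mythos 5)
Your parts (1) and (2) follow the paper's proof essentially verbatim (Lemma~\ref{lem.frontier} for the inclusion $\bar B^\ell \subseteq \bar S^\ell$, then the Mean Value Theorem argument of Remark~\ref{rem.int-val}), and your lower bound $\vv(a^0 - a^{[\ell]}) \ge \lambda_\ell$ in part (3) is also the paper's argument. The gap is in your proof of the reverse inequality $\vv(a^0 - a^{[\ell]}) \le \lambda_\ell$, which you dismiss as ``bookkeeping'' but which is exactly the point where the val-chain axioms must be invoked.

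Your ``non-cancellation'' discussion is circular: from $\vv(a^0-a^\ell)=\lambda_\ell$ and $\vv(a^\ell-a^{[\ell]})\ge\lambda_\ell$ the ultrametric inequality only yields $\vv(a^0-a^{[\ell]})\ge\lambda_\ell$, and in the boundary case $\vv(a^\ell-a^{[\ell]})=\lambda_\ell$ the two differences genuinely can cancel. Your fallback claim that ``$a^0$ and $a^\ell$ already differ by exactly $\lambda_\ell$ in the coordinates $>e_\ell$'' is unjustified: $\vv(a^0-a^\ell)=\lambda_\ell$ is a minimum over all $n$ coordinates and may be attained only among the first $e_\ell$ of them. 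Concretely, drop the val-chain hypothesis for a moment and take $S^\ell$ to be the $x$-axis in $\bmdl^2$, $a^0=(0,\epsilon')$, $a^\ell=(\epsilon,0)$ with $\vv(\epsilon)=\lambda_\ell<\vv(\epsilon')$: then $\vv(a^0-a^\ell)=\lambda_\ell$ and $\vv(a^\ell-a^{[\ell]})=\lambda_\ell$, yet $a^{[\ell]}=(0,0)$ and $\vv(a^0-a^{[\ell]})=\vv(\epsilon')>\lambda_\ell$. So every fact you have established is consistent with the conclusion failing; no amount of graph-structure bookkeeping can close this. What is missing is precisely condition (\ref{eq.nae}): since $a^{[\ell]}\in S^\ell\subseteq X^{e_{\ell-1}-1}$ and $\lambda_\ell=\vv(a^0-a^\ell)=\valdist(a^0,X^{e_{\ell-1}-1})$ is the \emph{supremum} of $\vv(a^0-x)$ over $x\in X^{e_{\ell-1}-1}$, one gets $\vv(a^0-a^{[\ell]})\le\lambda_\ell$ in one line --- this is the paper's argument. (A minor further slip in the same spirit: in part (1) you cite (\ref{eq.nai}) for $\lambda_{\ell+1}\ge\valdist(a^0,X^{e_\ell-1})$, but for a plain val-chain this is (\ref{eq.nae}), which gives equality; (\ref{eq.nai}) only bounds $\valdist(a^0,X^{e_{\ell+1}-1})$.)
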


Note that in the last statement, we might have $a^{[1]} = a^0$, namely when $S^0 = S^1$.

\begin{proof}[Proof of Lemma~\ref{lem.val-chain-basics}]
(1)
We have $B := B_{>\lambda_{\ell + 1}}(a^0) \cap S^\ell \ne \emptyset$
but $B \cap \partial S^\ell = \emptyset$
(since $\partial S^\ell \subseteq X^{e_\ell - 1}$
and $\valdist(a^0, X^{e_\ell - 1}) = \lambda_{\ell + 1}$),
so Lemma~\ref{lem.frontier} implies
$\bar B^\ell = \pr_{\le e_\ell}(B) \subseteq \bar S^\ell$.

(2)
This is just the Mean Value Theorem argument from
Remark~\ref{rem.int-val}.

(3) The inequality $\vv(a^0 - a^{[\ell]}) \le \lambda_\ell$ follows from the
definition of val-chain, since $a^{[\ell]} \in X^{e_\ell}$. For the other inequality,
set $\bbar a^{\ell} \coloneqq  \pr_{\le e_{\ell}}(a^{\ell})$.
Then $\vv(a^0 - a^{\ell}) = \lambda_{\ell}$ implies $\vv(\bbar a^0 - \bbar a^{\ell}) \ge \lambda_{\ell}$, and then (2) yields $\vv(a^{[\ell]} - a^{\ell}) \ge \lambda_{\ell}$.
This together with $\vv(a^0 - a^{\ell}) = \lambda_{\ell}$ implies $\vv(a^0 - a^{[\ell]}) \ge \lambda_{\ell}$.

The ``in particular'' part is clear from the definition of val-chains. (Note that the second one is an augmented val-chain).
\end{proof}

\subsection{Rectilinearization}
\label{sect.recti}

In the setting of Notation~\ref{notn.std},
we will sometimes need to ``rectilinearize'' along the lower-dimensional strata: We will apply a map
that translates the coordinates $x\U{\ell}$ by
$\rho^{\ell+1}\U{\ell}$. (Note that the maps $\rho\U{j}^{\ell+1}$ for $j \le \ell - 1$ are not used for rectilinearization.) Here is our notation for this:

\begin{notn}[Rectilinearization]\label{notn.recti-1}
For $0 \le \ell \le m$, and suitable $x = (x\U m, \dots, x\U{\ell}) \in \bmdl^{e_\ell}$, we define
\begin{align*}
\phi_{\ell}(x) &\coloneqq x^\flat \qquad\text{where $x^\flat$ is given by}\\
x^\flat\U m &\coloneqq x\U m,\\
x^\flat\U{m-1} &\coloneqq x\U{m-1} - \rho^{m}\U{m-1}(x\U m),\\
x^\flat\U{m-2} &\coloneqq x\U{m-2} - \rho^{m-1}\U{m-2}(x\U m, x\U{m-1}),\\
&\qquad\vdots\\
x^\flat\U{\ell} &\coloneqq x\U{\ell} - \rho^{\ell+1}\U{\ell}(x\U m, \dots, x\U{\ell+1}).
\end{align*}
Here, ``suitable $x$'' means that all the involved maps $\rho^{j+1}\U{j}$ are defined, i.e., $\phi_{\ell}(x)$ is defined if $\pr_{\le e_j}(x) \in \bar S^j$ for $\ell < j \le m$.
\end{notn}

\begin{rem}\label{rem.recti-ind}
The definition of $\phi_\ell$ can also be written inductively:
\begin{alignat*}{2}
\phi_m(x) &= x
\qquad&&
\text{for } x \in \bmdl^{e_m}
\text{ and }
\\
\phi_{\ell}((\bar x, x\U{\ell}))
&= (\phi_{\ell + 1}(\bar x), x\U{\ell} - \rho^{\ell+1}\U{\ell}(\bar x))
\qquad&&
\text{for }
(\bar x, x\U{\ell}) \in \bmdl^{e_{\ell + 1}} \times \bmdl^{e_{\ell} - e_{\ell+1}}
,
0 \le \ell < m.
\end{alignat*}
\end{rem}

Note that if $e_0 = e_1$, then $\phi_0 = \phi_1$.

We fix some more notation:

\begin{notn}\label{notn.recti-2}
We set
\[
Y := \{x \in \bmdl^{e_0} : \pr_{\le e_\ell}(x) \in \bar S^\ell \text{ for } 0 \le \ell \le m\},
\]
which is a subset of the domain of $\phi_0$.
We write $Y^\flat \coloneqq \phi_0(Y)$ for the rectilinearization of $Y$ (note that $\phi_0$ induces a bijection $Y \fun Y^\flat$) and
\[
\rho^{\ell\flat} \coloneqq \rho^\ell \circ \phi_\ell^{-1}
\]
for the rectilinearization of $\rho^{\ell}$, where $0 \le \ell \le m$.
\end{notn}

Note that the domain of $\rho^{0\flat}$ is $Y^\flat$.

\begin{rem}\label{rem.recti-jac}
From $\vv(\Jac \rho^\ell) \ge 0$, one easily deduces
$\Jac \phi_\ell \in \GL_{e_\ell}(\valring)$ (at every point of the domain of $\phi_\ell$), using Remark~\ref{rem.recti-ind}.
(Intuitively, this follows because the expression of $\Jac \phi_\ell$ in terms of the partial derivatives of $\rho^{i}$, $\ell < i \leq m$, is a ``lower triangular matrix with identities on the diagonal''.)
\end{rem}

To transfer arguments between the rectilinearized and the unrectilinearized setting, we need the maps $\phi_\ell$
to be isometries with respect to the valuation. This is not true everywhere, but it is true on the balls
$\bar B^\ell = B_{> \lambda_{\ell + 1}}(\pr_{\le e_\ell}(a^0))$ introduced in Notation~\ref{notn.chain-balls}, which is what we really need. Here is the precise statement.

\begin{lem}\label{lem.recti-iso}
Suppose that $a^0, \dots, a^m$ is a val-chain with $a^\ell \in S^\ell$ and with distances $\lambda_\ell$.
For $0 \le \ell \le m$, $\phi_\ell$ is defined on $\bar B^\ell$ and the restriction $\phi_\ell \rest \bar B^\ell$
is a valuative isometry (i.e.,
$\vv(\phi_\ell(x^1) - \phi_\ell(x^2)) = \vv(x^1 - x^2)$)
with image $B_{> \lambda_{\ell + 1}}(\phi_\ell(\pr_{\le e_\ell}(a^0)))$.
\end{lem}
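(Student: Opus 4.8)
The plan is to prove Lemma~\ref{lem.recti-iso} by induction on $m - \ell$, i.e.\ downwards on $\ell$, using the inductive description of $\phi_\ell$ from Remark~\ref{rem.recti-ind}. The base case $\ell = m$ is trivial, since $\phi_m = \id$ and $\bar B^m = B_{>\lambda_{m+1}}(\pr_{\le e_m}(a^0))$.

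For the inductive step, write $\bar x := \pr_{\le e_{\ell+1}}(x)$, so that $\phi_\ell((\bar x, x\U\ell)) = (\phi_{\ell+1}(\bar x),\, x\U\ell - \rho^{\ell+1}\U\ell(\bar x))$. First I would check that $\phi_\ell$ is actually defined on $\bar B^\ell$: by Lemma~\ref{lem.val-chain-basics}(1), the ball $\bar B^j = \pr_{\le e_j}(\bar B^\ell)$ (using $\lambda_{j+1} \ge \lambda_{\ell+1}$ and that projection of a ball is a ball around the projected centre) is contained in $\bar S^j$ for all $\ell \le j \le m$, which is exactly the condition in Notation~\ref{notn.recti-1} for $\phi_\ell$ to be defined. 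Also $\pr_{\le e_{\ell+1}}(\bar B^\ell) = \bar B^{\ell+1}$, so by induction $\phi_{\ell+1}\rest\bar B^{\ell+1}$ is a valuative isometry onto $B_{>\lambda_{\ell+2}}(\phi_{\ell+1}(\pr_{\le e_{\ell+1}}(a^0)))$.

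Now take $x^1, x^2 \in \bar B^\ell$ with projections $\bar x^1, \bar x^2 \in \bar B^{\ell+1}$. For the first block of coordinates, $\vv(\phi_{\ell+1}(\bar x^1) - \phi_{\ell+1}(\bar x^2)) = \vv(\bar x^1 - \bar x^2)$ by induction. For the last block, $\vv\big((x^{1}\U\ell - \rho^{\ell+1}\U\ell(\bar x^1)) - (x^{2}\U\ell - \rho^{\ell+1}\U\ell(\bar x^2))\big) \ge \min\{\vv(x^{1}\U\ell - x^{2}\U\ell),\, \vv(\rho^{\ell+1}\U\ell(\bar x^1) - \rho^{\ell+1}\U\ell(\bar x^2))\}$, and by Lemma~\ref{lem.val-chain-basics}(2) applied on $\bar B^{\ell+1}$ we have $\vv(\rho^{\ell+1}\U\ell(\bar x^1) - \rho^{\ell+1}\U\ell(\bar x^2)) \ge \vv(\bar x^1 - \bar x^2) \ge \lambda_{\ell+1} \cdot 0 + \dots$ — more precisely it is $\ge \vv(x^1 - x^2)$. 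Combining the two blocks, $\vv(\phi_\ell(x^1) - \phi_\ell(x^2)) \ge \vv(x^1 - x^2)$. For the reverse inequality, apply the same argument to $\phi_\ell^{-1}$ (which has the same triangular shape, with $\rho^{\ell+1}\U\ell$ replaced by its composition with $\phi_{\ell+1}^{-1}$, still satisfying the needed Lipschitz bound on the relevant ball by Remark~\ref{rem.recti-jac} together with Lemma~\ref{lem.vvM}), or simply observe that equality must hold in the ultrametric estimate because the ``perturbation'' block has strictly larger valuation than $\vv(x^{1}\U\ell - x^{2}\U\ell)$ unless that term is not the minimum, in which case it is controlled by the first block via the isometry. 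The image statement then follows: $\phi_\ell$ is an injective valuative isometry on $\bar B^\ell$, a ball of radius $>\lambda_{\ell+1}$, so its image is the ball of the same radius around $\phi_\ell(\pr_{\le e_\ell}(a^0))$.

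The main obstacle is the bookkeeping of which Lipschitz estimates are available on which balls: one must be careful that Lemma~\ref{lem.val-chain-basics}(2) gives $\vv(\rho^{\ell+1}(x^1) - \rho^{\ell+1}(x^2)) \ge \vv(x^1-x^2)$ only on $\bar B^{\ell+1}$, and to verify that $\pr_{\le e_j}$ really does map $\bar B^\ell$ onto $\bar B^j$ (this uses that a valuative ball projects onto a valuative ball of the same radius, and that the centres are compatible because they are all projections of $a^0$). Once that is set up, the ultrametric computation closing the isometry is routine. I would also remark that the ``defined on $\bar B^\ell$'' part of the claim is really the content of Lemma~\ref{lem.val-chain-basics}(1), repackaged.
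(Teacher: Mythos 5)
Your argument follows the paper's proof essentially step for step: induction via the recursive description of $\phi_\ell$ in Remark~\ref{rem.recti-ind}, definedness on $\bar B^\ell$ from Lemma~\ref{lem.val-chain-basics}~(1), the isometry from Lemma~\ref{lem.val-chain-basics}~(2) via the ultrametric inequality, and the image via the inverse. The ultrametric case analysis you sketch for the equality (perturbation block dominated either by $\vv(x^1\U{\ell}-x^2\U{\ell})$ or by the first block) does close correctly, and the slip ``$\pr_{\le e_{\ell+1}}(\bar B^\ell)=\bar B^{\ell+1}$'' should be a ``$\subseteq$'' (the projected ball has radius $\lambda_{\ell+1}$, not $\lambda_{\ell+2}$), which is harmless since containment is all you use.

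The one step that does not hold as you justify it is the final image claim. It is \emph{not} true in general that an injective valuative isometry defined on a ball has image equal to the ball of the same radius around the image of the centre: if the residue field is infinite, one can permute the residue classes of $\valring$ by any injective non-surjective self-map of the residue field (lifting it additively on each class) and obtain an injective valuative isometry of $\valring$ into itself that misses entire sub-balls. Surjectivity must therefore be checked, and the way to do it is the one the paper indicates and that you already have in hand from your reverse-inequality argument: the explicit inverse $(\bar y, y\U{\ell}) \mapsto \bigl(\phi_{\ell+1}^{-1}(\bar y),\, y\U{\ell} + \rho^{\ell+1}\U{\ell}(\phi_{\ell+1}^{-1}(\bar y))\bigr)$ is defined on all of $B_{>\lambda_{\ell+1}}(\phi_\ell(\pr_{\le e_\ell}(a^0)))$ (its first-block input lies in $\phi_{\ell+1}(\bar B^{\ell+1})$ because $\lambda_{\ell+1}>\lambda_{\ell+2}$), and a short valuation computation shows it lands back in $\bar B^\ell$. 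With that substitution for the last sentence, the proof is complete and coincides with the paper's.
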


\begin{proof}
Use induction and Remark~\ref{rem.recti-ind}.
That $\phi_\ell \rest \bar B^\ell$ is defined follows
from Lemma~\ref{lem.val-chain-basics} (1), that it is an isometry follows from Lemma~\ref{lem.val-chain-basics} (2), and to obtain that the image is all of $B_{> \lambda_{\ell + 1}}(\phi_\ell(\pr_{\le e_\ell}(a^0)))$, consider its inverse (which is easy to specify explicitly).
\end{proof}

\subsection{Defining the stratification}

In this section, we construct a
stratification of the given set $X \sub \bmdl^n$.
(Afterwards, we will prove that this stratification has the desired properties.)
The stratification is obtained by constructing the
skeletons $\mathring X^{s}$ one after another, starting with $\mathring X^{\dim X}$.
More precisely, suppose that
$\mathring X^{s+1}, \dots, \mathring X^{\dim X}$ have already been constructed. We
obtain $\mathring X^{s}$ by starting with
$\mathring X^{s} \coloneqq X \setminus \bigcup_{i > s} \mathring X^{i}$ and by removing closed subsets of
dimension less than $s$ in four steps.

\medskip

\textbf{Step R1:}
We start by partitioning $\mathring X^{s}$ into bradycells (using Proposition~\ref{prop.brady-dec})
and remove all bradycells of dimension less than $s$.
Moreover, for each bradycell $S \subseteq X^{s}$ of dimension $s$, we remove its frontier $\partial S$ from $\mathring X^{s}$. This ensures that afterwards, each definably connected component of $\mathring X^{s}$ is a bradycell. (Recall that ``definably connected'' refers to the language $\LT$.)
Even though $\mathring X^{s}$ is not yet final, let us already call those connected components strata.

By removing an additional closed subset of lower dimension from $\mathring X^s$, we ensure that the ``border condition'' holds, i.e., that for any strata
$S \subseteq \mathring X^s$, $S' \subseteq \mathring X^{s'}$,
where $s' > s$, we have either $S \subseteq \cl(S')$ or
$S \cap \cl(S') = \emptyset$. (In the end, this will imply that $\cl(S')$ is a union of strata.)

Note that none of the properties achieved in this step can be destroyed by removing further closed, lower-dimensional subsets from $\mathring X^s$.

\medskip

\textbf{Step R2:}
Next, we choose a stratum $S \subseteq \mathring X^{s}$ (i.e., a bradycell of dimension $s$)
and an aligner $\aligner \in \CC_n$ of $S$ (see
Definition~\ref{defn.aligned}). For each of these
(finitely many) choices, we remove an \LT-definable subset from $\mathring X^{s}$ as follows.

As explained in Notation~\ref{notn.std}, we assume that $S$ itself is a aligned. This assumption does not cause definability issues of the sets we remove, since $\aligner$ is (by definition of $\CC_n$) \LT-definable.
Set $\bar S \coloneqq \pr_{\le s}(S)$ and denote by $\rho\colon \bar S \fun \bmdl^{n-s}$ the function whose graph is $S$.
Moreover, set $e_1 \coloneqq s$.
By Corollary~\ref{cor.second-sedate}, there is a subset $Z \subseteq \bar S$ of lower dimension
such that $\rho$ is $e_{[1,1]}$-\prep{c_2}-sedated on $\bar S \setminus Z$.
The preimage $S \cap \pr_{\le s}^{-1}(Z)$ is a subset of $S$ of dimension less than $s$;
we remove its closure $\cl(S \cap \pr_{\le s}^{-1}(Z))$ from $\mathring X^{s}$.

\medskip

\textbf{Step R3:}
The next shrinking of $\mathring X^s$ is similar, but instead of considering a single
stratum in $\mathring X^s$, we consider a whole sequence $\mdl S = (S^\ell)_{0 \le \ell \le m}$,
with $S^\ell \subseteq \mathring X^{e_\ell}$ for some $e_0 \ge e_1 > e_2 > \dots > e_m = s$, $m \ge 0$.
(In fact, Step R2 is a special case of Step R3, but for R3 to work, we will need that this special case has been carried out before.)
Similarly to Step R2, for any such sequence $\mdl S$ and any aligner $\aligner \in \CC_n$ of $\mdl S$,
we will obtain a subset $Z \subseteq \bar S^m$ of dimension less than $s$ (where we use Notation~\ref{notn.std}),
and for each $\mdl S$ and $\kappa$ as above, we remove the corresponding set $\cl(S^m \cap \pr_{\le s}^{-1}(Z))$
from $\mathring X^s$.

The goal of Step R3 is to ensure that certain functions on the set $Y^\flat \subseteq \bmdl^{e_0}$ from Notation~\ref{notn.recti-2} are $e_{[j,m]}$-\prep v-sedated. This will be achieved
using Proposition~\ref{prop.sedate} and Corollary~\ref{cor.second-sedate}, so we need to ensure that the functions are already $e_{[j,m']}$-\prep v-sedated for $m' < m$. We use Notation~\ref{notn.recti-2} and set
\begin{equation}\label{eq.def-delta}
\delta^{\flat} \coloneqq \rho^{0\flat} - \rho^{1\flat}\U{\star} \circ \pr_{\le e_1} \colon
Y^\flat \fun \bmdl^{n - e_0}.
\end{equation}
(If $e_0 = e_1$, then $\delta^{\flat} = \rho^{0\flat} - \rho^{1\flat}$.)
The precise goal of Step R3 is to ensure the following: \begin{equation}\label{eq.R3-goal}
\begin{alignedat}{2}
&\text{If $m = 0$ or $e_0 > e_1$: }&&
\rho^{0\flat}
\text{ is $e_{[0,m']}$-\prep{c_2}-sedated on $Y^\flat$ for } 0 \le m' \le m;
\\
&\text{If $m \ge 1$ and $e_0 > e_1$: }&&
\delta^{\flat}
\text{ is $e_{[0,m']}$-\prep b-sedated on $Y^\flat$ for } 1 \le m' \le m;
\\
&\text{If $m \ge 1$ and $e_0 = e_1$: }&&
\delta^{\flat}
\text{ is $e_{[1,m']}$-\prep a-sedated on $Y^\flat$ for } 1 \le m' \le m.
\end{alignedat}
\end{equation}
(Note that in Subsection~\ref{sect.sedate}, the numbering starts with $e_1$, whereas for \prep{c_2}- and \prep b-sedation, we now start with $e_0$.)

To obtain (\ref{eq.R3-goal})
for $m' < m$, nothing needs to be removed from $S^m$; instead, we deduce this inductively from the corresponding result obtained in the construction of
$\mathring X^{e_{m-1}}$ (using Lemma~\ref{lem.recti-sedated} and Step R2); then we can $e_{[j,m]}$-\prep v-sedate the functions using Proposition~\ref{prop.sedate} and Corollary~\ref{cor.second-sedate}. This is straightforward; here are the details.

\begin{proof}[Proof of (\ref{eq.R3-goal}) for $m' < m$.]
Fix $m' < m$. For any statement related to $\delta^{\flat}$, we shall implicitly assume $m' \ge 1$. We keep Notation~\ref{notn.recti-2} with respect to $\mdl S$, but we now additionally consider the shortened sequence $\hat {\mdl S} = (S^\ell)_{0 \le \ell \le m-1}$
and put a hat on various objects relative to $\hat {\mdl S}$ introduced in Notations~\ref{notn.recti-1} and \ref{notn.recti-2} and in (\ref{eq.def-delta}):
$\hat \phi_\ell$,
$\hat Y$, $\hat Y^\flat$, $\hat \rho^{\ell\flat}$,
$\hat \delta^{\flat}$.
Note that we have $Y \sub \hat Y$
and $\phi_\ell = \psi_\ell \circ \hat\phi_\ell$ (for $0 \le \ell < m$), where
$\psi_\ell = \phi_{m-1} \times \id_{\bmdl^{e_\ell - e_{m-1}}}$
is the map that rectilinearizes only with respect
to $\rho^m\U{m-1}$. In particular,
\[
\hat\rho^{0\flat}
= \rho^{0\flat} \circ \psi_0,
\qquad
\hat\rho^{1\flat}
= \rho^{1\flat} \circ \psi_1,
\qquad \text{and} \qquad
\hat\delta^{\flat}
= \delta^\flat \circ \psi_0.
\]
By Step~R3 for $\hat {\mdl S}$ (which has already been carried out when constructing $\mathring X^{e_{m-1}}$),
$\hat\rho^{0\flat}$ is $e_{[0,m']}$-\prep{c_2}-sedated
if $e_0 > e_1$
and
$\hat\delta^{\flat}$ is $e_{[1,m']}$-\prep a-sedated or $e_{[0,m']}$-\prep b-sedated (depending on whether $e_0 > e_1$).
The map $\psi_0$ is of the form required by Lemma~\ref{lem.recti-sedated}, since
$\rho^{m}\U{m-1}$ is $e_{[m,m]}$-\prep{c_2}-sedated by Step~R2,
so that lemma implies
(\ref{eq.R3-goal}) for $m' < m$.
\end{proof}

\begin{proof}[Obtaining (\ref{eq.R3-goal}) for $m' = m$.]
Suppose first that $m = 0$ or $e_0 > e_1$.
Using $\vv(\Jac_x \rho^0) \ge 0$ (for $x \in Y$) and
$\vv(\Jac_x \phi_0) = 0$ (by Remark~\ref{rem.recti-jac}), we obtain
$\vv(\Jac_{x^\flat} \rho^{0\flat}) \ge 0$, so we can apply
Corollary~\ref{cor.second-sedate} to $\rho^{0\flat}$
using $e_{[0,m]}$.
This yields a subset
$Z \subseteq \pr_{\le e_m}(Y^\flat) = \pr_{\le e_m}(Y) \subseteq \bar S^m$ of dimension less than $e_m = s$ such that
$\rho^{0\flat}$ is $e_{[0,m]}$-\prep{c_2}-sedated on
$Y^\flat \setminus Z'$, where $Z'$ is the preimage of $Z$ in
$\bmdl^{e_0}$ under the projection. We shrink $Y^\flat$
to $Y^\flat \setminus Z'$ by removing $\cl(S^m \cap \pr_{\le e_m}^{-1}(Z))$ from $S^m$.

In a similar way (but using Proposition~\ref{prop.sedate} \prep b), we ensure that $\delta^{\flat}$ is $e_{[0,m]}$-\prep{b}-sedated if $m \ge 1$.
For this, we have to check that
$\vv(\Jac_{x^\flat} \delta^{\flat}) \ge 0$;
this follows from the corresponding statements for $\rho^{0\flat}$ and $\rho^{1\flat}$.

Finally, if $m \ge 1$ and $e_0 = e_1$, then without checking any additional condition, we can apply Proposition~\ref{prop.sedate} \prep a to shrink $S^m$
in such a way that
$\delta^{\flat}$ becomes $e_{[1,m]}$-\prep{a}-sedated.
\end{proof}

\medskip

\textbf{Step R4:}
We keep the notation from Step R3 and remove one more set from $S^m$ (again, for each choice of $\mdl S$ and $\aligner$),
namely $S^m \cap \pr_{\le e_m}^{-1}(\partial(\pr_{\le e_m}(Y)))$.
This ensures that if we choose a sequence $(S^\ell)_\ell$ of strata after this step has been carried out and write $Y$ for the set corresponding to this new sequence, then
$\partial(\pr_{\le e_m}(Y)) \cap \bar S^m = \emptyset$ and hence,
since $S^m$ is connected, we have
either $\bar S^m \subseteq \pr_{\le e_m}(Y)$ or
$\bar S^m \cap \pr_{\le e_m}(Y) = \emptyset$.
(Later, only sequences for which the first of these cases occurs will be relevant.)

\medskip

This finishes the construction of $\mathring X^{s}$
and hence of the stratification of $X$. We will now prove that this stratification is indeed a valuative Lipschitz stratification.

\subsection{Relating the stratification to val-chains}

We fix a val-chain $a^0, \dots, a^m$ with $a^\ell \in S^\ell \subseteq \mathring X^{e_\ell}$, dimensions
$e_0 \ge e_1 > \dots > e_m$, and distances $\lambda_1 > \dots > \lambda_{m+1}$. We use Notations~\ref{notn.std},
\ref{notn.chain-balls}, \ref{notn.recti-1} and \ref{notn.recti-2}.
The main goal of this subsection is to prove Lemma~\ref{lem.key}, which can be considered as a bound on some kind of distance between the tangent spaces
$\bm T_{a^0}(\mathring X^{e_0})$ and $\bm T_{a^1}(\mathring X^{e_1})$.
The three different properties obtained in (\ref{eq.R3-goal}) will roughly correspond to the following three different kinds of val-chains (in this order): augmented val-chains with $S^0 = S^1$, plain val-chains, and augmented val-chains with $S^0 \ne S^1$.

\begin{notn}\label{notn.a-a0}
We set $a := a^0$ and $\bar a \coloneqq \pr_{\le e_0}(a)$.
By Lemma~\ref{lem.val-chain-basics} (1), we have $\pr_{\le e_\ell}(a) \in \bar S^\ell$
for $0 \le \ell \le m$, so
$\bar a \in Y$ and we can define $\bar a^{\flat} \coloneqq \phi_0(\bar a) \in Y^\flat$.
\end{notn}

\begin{rem}\label{rem.a-in-Y}
Since $\pr_{\le e_m}(a) \in \pr_{\le e_m}(Y) \cap \bar S^m$, this intersection is non-empty, so Step~R4 implies
$\bar S^m \subseteq \pr_{\le e_m}(Y)$ and hence $\bar S^m = \pr_{\le e_m}(Y)$.
\end{rem}

We apply Notation~\ref{notn.zeta-xi-sigma} to $\bar a^{\flat}$, relative to the set $Y^\flat$, starting with $e_0$ instead of $e_1$, and we allow ourselves to use that notation even if $e_0 = e_1$:
\begin{alignat}{2}
\zeta_\ell(\bar a^\flat) &= \dist(\pr_{\le e_\ell}(\bar a^\flat), \bmdl^{e_\ell} \setminus \pr_{\le e_\ell}(Y^\flat))
\qquad&&
\text{for }0 \le \ell \le m
\label{eq.recall-zeta}
\\
\sigma_\ell(\bar a^\flat) &= \max\{1, \norm{\pr_{>e_{\ell}}(\bar a^\flat)} \cdot \zeta_{\ell-1}(\bar a^\flat)^{-1}\}
\qquad&&
\text{for }1 \le \ell \le m.
\label{eq.recall-sigma}
\end{alignat}
(Concerning the case $e_0 = e_1$, we consider the norm of the empty tuple as being $0$ and its valuation as being $\infty$.)

\begin{lem}\label{lem.lambda}
We have
\begin{equation}\label{eq.xi-lambda-zeta}
\vv(\pr_{>e_{\ell+1}}(\bar a^\flat))
\overset{(1)}{\ge}
\lambda_{\ell+1}
\overset{(2)}{\ge}
\vv(\zeta_{\ell}(\bar a^\flat))
\end{equation}
for $0 \le \ell \le m - 1$ at (1) and
$0 \le \ell \le m$ at (2). In particular,
\begin{equation}\label{eq.hence-sigma}
\vv(\sigma_\ell(\bar a^\flat)) = 0
\qquad \text{for } 1 \le \ell \le m.
\end{equation}
\end{lem}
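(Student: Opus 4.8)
First I would reduce all three inequalities to statements about the unrectilinearized balls $\bar B^\ell$ using Lemma~\ref{lem.recti-iso}, which says that $\phi_\ell$ restricts to a valuative isometry $\bar B^\ell \to B_{>\lambda_{\ell+1}}(\phi_\ell(\pr_{\le e_\ell}(a^0)))$. In particular, the valuative radius of $\pr_{\le e_\ell}(Y^\flat)$ around $\bar a^\flat$ is controlled by the valuative radius of $\pr_{\le e_\ell}(Y)$ around $\pr_{\le e_\ell}(\bar a)$, and by Lemma~\ref{lem.val-chain-basics}~(1) together with Remark~\ref{rem.a-in-Y} we know $\bar B^\ell \subseteq \bar S^\ell = \pr_{\le e_m}(Y)$ when $\ell = m$, and more generally $\bar B^\ell \subseteq \bar S^\ell$. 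This is the mechanism that transports distance estimates between the two pictures.

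For inequality (2), $\lambda_{\ell+1} \ge \vv(\zeta_\ell(\bar a^\flat))$: by Notation~\ref{notn.chain-balls}, $\bar B^\ell = B_{>\lambda_{\ell+1}}(\pr_{\le e_\ell}(a^0))$, and by the note following that notation $\bar B^\ell$ is the projection of the largest ball around $a^0$ disjoint from $X^{e_\ell-1}$, so $\bar B^\ell \subseteq \bar S^\ell$. Under $\phi_\ell$ this ball maps isometrically into $\pr_{\le e_\ell}(Y^\flat)$ (using that $\bar B^\ell \subseteq \bar S^\ell = \pr_{\le e_\ell}(Y)$ — for $\ell < m$ this needs the intermediate inclusions $\bar B^j \subseteq \bar S^j$, which hold by Lemma~\ref{lem.val-chain-basics}~(1)), so $\bar a^\flat$ has an entire ball of valuative radius $\lambda_{\ell+1}$ inside $\pr_{\le e_\ell}(Y^\flat)$; hence $\valdist(\bar a^\flat, \bmdl^{e_\ell} \setminus \pr_{\le e_\ell}(Y^\flat)) \le \lambda_{\ell+1}$, which is exactly $\vv(\zeta_\ell(\bar a^\flat)) \le \lambda_{\ell+1}$.

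For inequality (1), $\vv(\pr_{>e_{\ell+1}}(\bar a^\flat)) \ge \lambda_{\ell+1}$: the coordinates $\pr_{>e_{\ell+1}}(\bar a^\flat)$ are, by the definition of $\phi_0$ in Notation~\ref{notn.recti-1}, the blocks $x^\flat\U j = x\U j - \rho^{j+1}\U j(\ldots)$ for $\ell+1 \le j$; I would apply Lemma~\ref{lem.val-chain-basics}~(3), which gives $\vv(a^0 - a^{[j+1]}) = \lambda_{j+1}$ where $a^{[j+1]} = (\pr_{\le e_{j+1}}(a^0), \rho^{j+1}(\pr_{\le e_{j+1}}(a^0)))$. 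Comparing the $\U j$-blocks of $a^0$ and $a^{[j+1]}$ shows $\vv(a^0\U j - \rho^{j+1}\U j(\pr_{\le e_{j+1}}(a^0))) \ge \lambda_{j+1} \ge \lambda_{\ell+1}$ (using $\lambda_1 > \cdots > \lambda_{m+1}$ from Remark~\ref{rem.sub-val-chain}), and taking the minimum over $j \ge \ell+1$ yields (1). Finally, (\ref{eq.hence-sigma}) is immediate: $\vv(\sigma_\ell(\bar a^\flat)) \ge 0$ always, and from (\ref{eq.xi-lambda-zeta}) we get $\vv(\pr_{>e_\ell}(\bar a^\flat)) \ge \lambda_\ell \ge \vv(\zeta_{\ell-1}(\bar a^\flat))$, so $\norm{\pr_{>e_\ell}(\bar a^\flat)} \cdot \zeta_{\ell-1}(\bar a^\flat)^{-1}$ has valuation $\ge 0$ and thus $\sigma_\ell(\bar a^\flat) = \max\{1, \ldots\}$ has valuation exactly $0$.

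The main obstacle I anticipate is the careful bookkeeping in (1): matching the coordinate blocks $x^\flat\U j$ produced by the iterated definition of $\phi_0$ with the points $a^{[j+1]}$ from Lemma~\ref{lem.val-chain-basics}~(3), and making sure the rectilinearization subtracts exactly $\rho^{j+1}\U j$ evaluated at $\pr_{\le e_{j+1}}(\bar a)$ and not at some shifted argument — i.e., checking that the isometry property of $\phi_{j+1}$ on $\bar B^{j+1}$ (Lemma~\ref{lem.recti-iso}) lets us evaluate these maps at the ``correct'' points up to the relevant valuation. Everything else is a direct application of the two basic lemmas about val-chains and rectilinearization.
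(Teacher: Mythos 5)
Your overall route for (1) and for the $\sigma_\ell$ conclusion is the same as the paper's (compare blocks of $\bar a^\flat$ with the points $a^{[j+1]}$ from Lemma~\ref{lem.val-chain-basics}~(3), then combine the two inequalities of (\ref{eq.xi-lambda-zeta}) to pin down $\vv(\sigma_\ell)$), and your anticipated ``obstacle'' there is not one: by Notation~\ref{notn.recti-1}, $a^\flat\U{j} = a\U{j} - \rho^{j+1}\U{j}(\pr_{\le e_{j+1}}(a))$ is literally a block of $a - a^{[j+1]}$, so Lemma~\ref{lem.val-chain-basics}~(3) applies verbatim. Two small slips: the blocks making up $\pr_{>e_{\ell+1}}(\bar a^\flat)$ are $a^\flat\U{j}$ for $0 \le j \le \ell$, not $j \ge \ell+1$ (your own inequality $\lambda_{j+1}\ge\lambda_{\ell+1}$ only holds in that range); and the ``trivial'' half of the $\sigma_\ell$ computation is $\vv(\sigma_\ell)\le 0$ (because $\sigma_\ell\ge 1$), while the half you have to work for is $\vv(\sigma_\ell)\ge 0$ — you stated these the wrong way round, though the conclusion stands.

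The genuine gap is in (2), where you assert $\bar S^\ell = \pr_{\le e_\ell}(Y)$. This identity is only available for $\ell = m$ (that is precisely Remark~\ref{rem.a-in-Y}, which rests on Step~R4 applied to the full sequence $S^0,\dots,S^m$); for $\ell < m$ only the inclusion $\pr_{\le e_\ell}(Y)\subseteq\bar S^\ell$ is automatic, and the reverse inclusion can fail, since a point of $\bar S^\ell$ need not project into $\bar S^j$ for $j>\ell$. What one actually needs, and what the paper proves, is the inclusion $\bar B^\ell \subseteq \pr_{\le e_\ell}(Y)$, and membership in $\pr_{\le e_\ell}(Y)$ imposes two kinds of conditions: the ``downward'' ones ($\pr_{\le e_j}(\cdot)\in\bar S^j$ for $j>\ell$), which your intermediate inclusions $\pr_{\le e_j}(\bar B^\ell)\subseteq\bar B^j\subseteq\bar S^j$ from Lemma~\ref{lem.val-chain-basics}~(1) do handle, and the ``upward'' liftability to the higher-dimensional strata $S^0,\dots,S^{\ell-1}$, which they do not. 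The paper closes this by writing $\pr_{\le e_\ell}(Y) = \pr_{\le e_\ell}(Y')\cap Y_{\ell+1}\cap\dots\cap Y_m$ with $Y'$ defined using only $S^0,\dots,S^\ell$, and then invoking Remark~\ref{rem.a-in-Y} (i.e.\ Step~R4) for the \emph{truncated} val-chain $a^0,\dots,a^\ell$ to get $\bar S^\ell\subseteq\pr_{\le e_\ell}(Y')$. Without this step your argument that $\phi_\ell(\bar B^\ell)$ sits inside $\pr_{\le e_\ell}(Y^\flat)$ is unsupported.
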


\begin{proof}
The ``in particular'' part follows directly from (\ref{eq.xi-lambda-zeta}) and (\ref{eq.recall-sigma}).

(1)
We have $\pr_{>e_{\ell+1}}(\bar a^\flat)
= (a^\flat\U{\ell}, \dots, a^\flat\U0)$, so it suffices to
check that $\vv(a^\flat\U{j}) \ge \lambda_{j+1}
\mathop{}(\ge \lambda_{\ell+1})$
for $0 \le j \le \ell$. This follows from Lemma~\ref{lem.val-chain-basics} (3); indeed,
$a^\flat\U{j} = a\U{j} - \rho\U{j}^{j+1}(\pr_{\le e_{j+1}}(a)))$ is just one of the coordinates of
$a - a^{[j+1]}$, where the notation $a^{[j+1]}$ is the
one from the Lemma~\ref{lem.val-chain-basics}.

(2) It is enough to check that we have an inclusion
\begin{equation}\label{eq.check-Bflat}
B_{> \lambda_{\ell+1}}(\pr_{\le e_{\ell}}(\bar a^\flat))) \subseteq \pr_{\le e_{\ell}}(Y^\flat)
= \phi_\ell(\pr_{\le e_\ell}(Y)).
\end{equation}
By Lemma~\ref{lem.recti-iso}, we have
$B_{> \lambda_{\ell+1}}(\pr_{\le e_{\ell}}(\bar a^\flat)))
= \phi_\ell(\bar B^\ell)$ (where $\bar B^\ell$ was defined as
$B_{> \lambda_{\ell+1}}(\pr_{\le e_{\ell}}(\bar a)))$;
see Notation~\ref{notn.chain-balls}), so
(\ref{eq.check-Bflat}) is equivalent to
\begin{equation}\label{eq.check-B}
\bar B^\ell \subseteq \pr_{\le e_{\ell}}(Y)
.
\end{equation}
The definition of $Y$ yields
\begin{equation}
\pr_{\le e_{\ell}}(Y) = \pr_{\le e_\ell}(Y') \cap Y_{\ell+1} \cap \dots \cap Y_m,
\end{equation}
where
\begin{equation}
Y' = \{x \in \bmdl^{e_0} : \pr_{\le e_j}(x) \in \bar S^j \text{ for } 0 \le j \le \ell\}.
\end{equation}
and where $Y_j$ is the preimage of $\bar S^j$ under the projection $\bmdl^{e_\ell} \fun \bmdl^{e_j}$ (for $\ell + 1 \le j \le m$).
By Lemma~\ref{lem.val-chain-basics}~(1), for $j \ge \ell + 1$ we have
$\pr_{\le e_j}(\bar B^\ell) \subseteq \bar B^j \subseteq \bar S^j$ and hence
$\bar B^\ell \subseteq Y_j$.
By Remark~\ref{rem.a-in-Y} applied to the val-chain $a^0, \dots, a^\ell$, we have $\bar S^\ell  \subseteq \pr_{\le e_\ell}(Y')$. Together
with $\bar B^\ell \subseteq \bar S^\ell$
this implies (\ref{eq.check-B}).
\end{proof}

Suppose now that $m \ge 1$. We keep Notation~\ref{notn.a-a0} and additionally set $b \coloneqq a^1$, $\bar b \coloneqq \pr_{\le e_1}(b)$
and $\bar b^\flat \coloneqq \phi_1(\bar b)$. (This is well-defined by the same argument as for $\bar a^\flat$, applied to the val-chain $a^1, \dots a^m$).
Recall that in Notation~\ref{notn.recti-2}, we introduced the rectilinearized maps 
$\rho^{\ell\flat} \coloneqq \rho^\ell \circ \phi_\ell^{-1}$.
The following is a key intermediate result.

\begin{lem}[Bounding the difference of derivatives]\label{lem.key}
Suppose that $m \ge 1$. Then for $1 \le i \le e_m$, we have
\[
\vv(\partial_i \rho^{0\flat}(\bar a^{\flat}) - \partial_i \rho^{1\flat}\U{\star}(\bar b^{\flat})) \ge \lambda_1 - \lambda_{m+1}.
\]
\end{lem}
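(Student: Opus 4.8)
The plan is to split the difference at the intermediate point $a^{[1]}$ furnished by Lemma~\ref{lem.val-chain-basics}~(3). Since $\phi_0$ acts lower-triangularly, so that $\pr_{\le e_1}\circ \phi_0 = \phi_1\circ \pr_{\le e_1}$ (Remark~\ref{rem.recti-ind}), and since $\delta^{\flat} = \rho^{0\flat} - \rho^{1\flat}\U{\star}\circ \pr_{\le e_1}$ (which reads $\rho^{0\flat} - \rho^{1\flat}$ when $e_0 = e_1$, noting $\rho^{1\flat}\U{\star} = \rho^{1\flat}$ there), we have, for $1 \le i \le e_m$,
\[
\partial_i \rho^{0\flat}(\bar a^{\flat}) - \partial_i \rho^{1\flat}\U{\star}(\bar b^{\flat})
= \partial_i \delta^{\flat}(\bar a^{\flat}) + \Bigl(\partial_i \rho^{1\flat}\U{\star}(\pr_{\le e_1}(\bar a^{\flat})) - \partial_i \rho^{1\flat}\U{\star}(\bar b^{\flat})\Bigr) .
\]
By the ultrametric inequality it is enough to bound each of the two summands by $\lambda_1 - \lambda_{m+1}$ (with the usual reading of the right-hand side when $\lambda_{m+1} = -\infty$).

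For the first summand, recall that (\ref{eq.R3-goal}), established in the course of constructing the stratification, tells us that $\delta^{\flat}$ is $e_{[0,m]}$-\prep{b}-sedated on $Y^{\flat}$ when $e_0 > e_1$ and $e_{[1,m]}$-\prep{a}-sedated on $Y^{\flat}$ when $e_0 = e_1$ (and if $S^0 = S^1$ then $\delta^{\flat} \equiv 0$ and there is nothing to do). Since $\bar a^{\flat} \in Y^{\flat}$, the defining sedation inequality applies at $\bar a^{\flat}$; by Lemma~\ref{lem.lambda} all the correction factors $\vv(\sigma_\ell(\bar a^{\flat}))$ vanish and $-\vv(\zeta_m(\bar a^{\flat})) \ge -\lambda_{m+1}$, so the estimate collapses to $\vv(u_{\prep v}(\bar a^{\flat})) \ge \lambda_1$. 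Lemma~\ref{lem.val-chain-basics}~(3) identifies $\delta^{\flat}(\bar a^{\flat}) = \pr_{>e_0}(a^0 - a^{[1]})$, whence $\vv(\delta^{\flat}(\bar a^{\flat})) \ge \lambda_1$; and in the \prep{b}-case the other term $\norm{\pr_{>e_1}(\bar a^{\flat})}$ inside $u_{\prep b}$ has valuation $\ge \lambda_1$ by Lemma~\ref{lem.lambda}~(1). This bounds the first summand.

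For the second summand I would view it as the variation of the $C^1$ map $\partial_i \rho^{1\flat}\U{\star}$ between $\pr_{\le e_1}(\bar a^{\flat}) = \phi_1(\pr_{\le e_1}(a^0))$ and $\bar b^{\flat} = \phi_1(\pr_{\le e_1}(a^1))$. By Lemma~\ref{lem.recti-iso} both points lie in $\phi_1(\bar B^1)$, and since $\vv(a^0 - a^1) = \lambda_1$ they are at valuative distance $\ge \lambda_1$ of each other, hence both lie in $B' \coloneqq B_{\ge \lambda_1}(\bar b^{\flat}) \subseteq \phi_1(\bar B^1)$, a ball sitting inside the domain of $\rho^{1\flat}$. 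By the Mean Value Theorem estimate of Remark~\ref{rem.int-val} applied to $\partial_i \rho^{1\flat}\U{\star}$ on $B'$, it now suffices to show $\vv(\partial_{ij}\rho^{1\flat}(\xi)) \ge -\lambda_{m+1}$ for all $\xi \in B'$ and $1 \le j \le e_1$. This is where the construction re-enters: by (\ref{eq.R3-goal}) applied to the sub-val-chain $a^1, a^2, \dots, a^m$ (or Step~R2, when $m = 1$) --- whose base is $\bar S^1$, whose rectilinearization of $\rho^1$ is exactly $\phi_1$, and whose last distance is $\lambda_{m+1}$ --- the function $\rho^{1\flat}$ is $C^2$ and \prep{c_2}-sedated with respect to that sub-chain, giving $\vv(\partial_{ij}\rho^{1\flat}(\xi)) \ge -\vv(\zeta^{\mathrm{sub}}(\xi)) + \sum_\ell \vv(\sigma^{\mathrm{sub}}_\ell(\xi))$. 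Along $B'$ the right-hand side is constant, equal to its value at $\bar b^{\flat}$: the radius $\ge \lambda_1$ of $B'$ is strictly larger than $\lambda_2$, hence strictly larger than every distance $\vv(\zeta^{\mathrm{sub}}_\ell)$ and every valuation $\vv(\pr_{>e}(\bar b^{\flat}))$ entering the definitions of $\zeta^{\mathrm{sub}}$ and $\sigma^{\mathrm{sub}}$ (which are all $\le \lambda_2$, resp.\ controlled by Lemma~\ref{lem.lambda} for the sub-chain), so perturbing $\bar b^{\flat}$ inside $B'$ alters none of these valuations. Finally, that value at $\bar b^{\flat}$ is $\ge -\lambda_{m+1}$, again by Lemma~\ref{lem.lambda} for the sub-chain. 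Feeding this into the Mean Value estimate shows the second summand has valuation $\ge \lambda_1 - \lambda_{m+1}$, as required.

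The hard part is the second summand. On the one hand there is bookkeeping to get right: that the sub-val-chain $a^1, \dots, a^m$ was processed in the construction with the same aligner in $\CC_n$ that has been fixed for $S^0, \dots, S^m$, that its rectilinearizing map is indeed $\phi_1$, and that the $e_0$-indexed \prep{c_2}-sedation appearing in Step~R3 matches the $e_1$-indexed sedation of Subsection~\ref{sect.sedate}. On the other hand --- and this is the genuinely delicate point --- one has to promote the pointwise control of $\zeta^{\mathrm{sub}}$ and $\sigma^{\mathrm{sub}}$ at $\bar b^{\flat}$ coming from Lemma~\ref{lem.lambda} to uniform control over the whole ball $B'$; this ``ultrametric constancy'' works exactly because the radius of $B'$ dominates $\lambda_2$. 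Everything else reduces to routine ultrametric and Mean-Value-Theorem manipulations.
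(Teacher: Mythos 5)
Your proposal is correct and follows essentially the same route as the paper: the paper also splits the difference at the intermediate point $a^{[1]}$ (reducing to the special cases $b = a^{[1]}$ and $S^0 = S^1$ of the lemma), bounds the first piece via the sedation of $\delta^{\flat}$ from (\ref{eq.R3-goal}) together with Lemma~\ref{lem.lambda}, and bounds the second piece by the Mean Value Theorem argument of Remark~\ref{rem.int-val} using the \prep{c_2}-sedation of $\rho^{1\flat}$ on the ball of radius $\lambda_1$. The only cosmetic difference is how the sedation bound is made uniform over that ball: you argue ultrametric constancy of the $\zeta$'s and $\sigma$'s, while the paper re-applies Lemma~\ref{lem.lambda} to the val-chain started at an arbitrary point of the ball; these amount to the same observation.
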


\begin{proof}
Set $\bar c \coloneqq \pr_{\le e_1}(a)$,
$c \coloneqq (\bar c, \rho^1(\bar c))$ and
$\bar c^\flat \coloneqq \phi_1(\bar c)$. Note that
$c = a^{[1]}$ in the notation of Lemma~\ref{lem.val-chain-basics}, so $c, a^2, \dots a^m$
is a val-chain and hence well-definedness of $\bar c^\flat$ follows as for $\bar a^\flat$ and $\bar b^\flat$.

To prove the lemma, we ``use $c$ as an intermediate step'', i.e., it suffices to prove
\begin{align}
\vv(\partial_i \rho^{0\flat}(\bar a^{\flat}) - \partial_i \rho^{1\flat}\U{\star}(\bar c^{\flat})) &\ge \lambda_1 - \lambda_{m+1}
\qquad \text{and}
\label{eq.ac-flat}
\\
\vv(\partial_i \rho^{1\flat}\U{\star}(\bar c^{\flat}) - \partial_i \rho^{1\flat}\U{\star}(\bar b^{\flat})) &\ge \lambda_1 - \lambda_{m+1}.
\label{eq.cb-flat}
\end{align}
Since $a, c, a^2, \dots, a^m$ and
$c, b, a^2, \dots, a^m$ are val-chains (by Lemma~\ref{lem.val-chain-basics}), these two inequalities follow from two special cases of the lemma itself: (\ref{eq.ac-flat}) is just the special case $b = a^{[1]}$,
and (\ref{eq.cb-flat}) follows from the special case where
$S^0 = S^1$. (The special case yields (\ref{eq.cb-flat}) with $\rho^{1\flat}\U{\star}$ replaced by $\rho^{1\flat}$.)
Thus we will now prove the lemma in these two cases.

\medskip

Case $b = a^{[1]}$: In this case, $\pr_{\le e_1}(\bar a) = \bar b$ and hence also $\pr_{\le e_1}(\bar a^\flat) = \bar b^\flat$.
Recall the definition of $\delta^{\flat}$ from Step~R3; we have
\[
\delta^{\flat}(x^\flat) = \rho^{0\flat}(x^\flat)
- \rho^{1\flat}\U{\star}(\pr_{\le e_1}(x^\flat))
\qquad \text{for } x^\flat \in Y^\flat
\]
and hence
\[
\partial_i \rho^{0\flat}(\bar a^{\flat}) - \partial_i \rho^{1\flat}\U{\star}(\bar b^{\flat}) = \partial_i\delta^{\flat}(\bar a^{\flat}).
\]
We now distinguish two sub-cases. If $e_0 > e_1$, then
since $\delta^{\flat}$ is $e_{[0,m]}$-\prep b-sedated on $Y^\flat$ (by (\ref{eq.R3-goal})), we get (for $1 \le i \le e_m$)
\begin{align*}
\vv(\partial_{i}\delta^{\flat}(\bar a^\flat))
 &\overset{(\ref{eq.sedated})}{\ge}
 \min\{\vv(\delta^{\flat}(\bar a^\flat)), \vv(\pr_{>e_{1}}(\bar a^\flat))\} - \vv(\zeta_{m}(\bar a^\flat)) + \sum_{\ell=1}^{m} \vv(\sigma_\ell(\bar a^\flat))
  \\
 &\overset{(\ref{eq.xi-lambda-zeta}), (\ref{eq.hence-sigma})}{\ge}
 \min\{\vv(\delta^\flat(\bar a^\flat)), \lambda_1\} - \lambda_{m+1}.
\end{align*}
If, on the other hand, $e_0 = e_1$, then
$\delta^{\flat}$ is $e_{[1,m]}$-\prep a-sedated on $Y^\flat$ and we get
\begin{align*}
\vv(\partial_{i}\delta^{\flat}(\bar a^\flat))
 &\overset{(\ref{eq.sedated})}{\ge}
 \vv(\delta^{\flat}(\bar a^\flat)) - \vv(\zeta_{m}(\bar a^\flat)) + \sum_{\ell=2}^{m} \vv(\sigma_\ell(\bar a^\flat))
  \\
 &\overset{(\ref{eq.xi-lambda-zeta}), (\ref{eq.hence-sigma})}{\ge}
\vv(\delta^\flat(\bar a^\flat)) - \lambda_{m+1}.
\end{align*}

In both cases, $\delta^\flat(\bar a^\flat)  =
\rho^0(\bar a) - \rho^1\U{\star}(\bar b) =
(a - b)\U{\star}$,
so the valuation of this is at least $\lambda_1$
(since $a = a^0$ and $b = a^1$)
and we get $\vv(\partial_i \delta^\flat(\bar a^\flat)) \ge \lambda_1 - \lambda_{m+1}$, as desired.

\medskip

Case $S_0 = S_1$: In that case, we have $\rho^{1\flat}\U{\star} = \rho^{0\flat}$, so the claim of the lemma is
\begin{equation}\label{eq.S0=S1}
\vv(\partial_i \rho^{0\flat}(\bar a^{\flat}) - \partial_i \rho^{0\flat}(\bar b^{\flat})) \ge \lambda_1 - \lambda_{m+1};
\end{equation}
we will prove this using the Mean Value Theorem argument from Remark~\ref{rem.int-val}.

Set $B := B_{\ge \lambda_1}(\bar a)$. By Lemma~\ref{lem.recti-iso}, $B^\flat := \phi_0(B)$ is also a ball
(note that $\phi_0 = \phi_1$ and that $ \bar B^0 \sub B \subseteq \bar B^1$) and, since $B$ contains $\bar a$ and $\bar b$ and $\phi_0$ is a valuative isometry on $B$, we have $\vv(\bar a^{\flat} - \bar b^\flat) = \lambda_1$. Thus for Remark~\ref{rem.int-val}
to yield (\ref{eq.S0=S1}), it remains to verify that on the entire ball $B^\flat$, we have
\[
\vv(\Jac \partial_i \rho^{0\flat}) \ge -\lambda_{m+1}
.
\]
Given any $\bar c^\flat \in B^\flat$, let
$\bar c$ be its preimage in $B$ and
$c := (\bar c, \rho^0(\bar c)) \in S^0$.
Applying (\ref{eq.R3-goal}) to the strata $S^1, \dots, S^m$ yields
that $\rho^{0\flat} = \rho^{1\flat}$ is $e_{[1,m]}$-\prep{c_2}-sedated on $Y^\flat$. (Note that the set $Y^\flat$ corresponding to $S^1, \dots, S^m$ is the same as the one corresponding to $S^0, \dots, S^m$.)
Together with Lemma~\ref{lem.lambda}, this yields
\begin{align*}
\vv(\Jac\partial_{i}\rho^{0\flat}(\bar c^\flat))
&\overset{(\ref{eq.second-sedated})}{\ge}
- \vv(\zeta_{m}(\bar c^\flat)) + \sum_{\ell=2}^{m} \vv(\sigma_\ell(\bar c^\flat))
\\
&\overset{(\ref{eq.xi-lambda-zeta}), (\ref{eq.hence-sigma})}{\ge}
- \lambda_{m+1}.
\end{align*}
which is what we had to prove.
\end{proof}

\subsection{Proving that we have a valuative Lipschitz stratification}

We will use the characterization of valuative Lipschiz Stratifications given by Proposition~\ref{prop.flag-lip}. Thus suppose that $a^0, \dots, a^m$ is a val-chain with $a^\ell \in S^\ell \subseteq \mathring X^{e_\ell}$, with dimensions $e_0 \ge e_1 > \dots > e_m$, and with distances $\lambda_1 > \dots > \lambda_{m+1}$.
We need to find vector spaces
\begin{equation}\label{eq.flags-inc}
V_{k,m} \subseteq V_{k,m-1} \subseteq \dots \subseteq V_{k,k+1} \subseteq V_{k,k} = \bm T_{a^k}S^k
\quad \text{for }0 \le k \le m
\end{equation}
with $\dim V_{k,\ell} = e_\ell$ satisfying
\begin{equation}\label{eq.flags-dist}
\Delta(V_{k,\ell}, V_{k+1,\ell}) \ge \lambda_{k+1} - \lambda_{\ell+1}  \quad \text{for } 0 \le k < \ell \le m ,
\end{equation}
(where $\Delta(W_1, W_2)$ is the valuative metric on the Grassmannian; see Definition~\ref{defn.Delta}).
The strategy is as follows.
Given any val-chain as above and any aligner $\aligner \in \CC_n$ of $(S^\ell)_{0 \le \ell \le m}$,
we will define an $e_m$-dimensional space denoted by $V_{0,m}$ depending only on the val-chain and on $\aligner$.
Let $V_{k,\ell}$ be the space obtained by applying the same definition to the sub-val-chain $a^k, a^{k+1}, \dots, a^\ell$ (and the same aligner $\aligner$).
Once the spaces are defined, we will prove:
\begin{eqnarray}
&V_{0,0} = \bm T_{a^0}S^0\label{eq.single-tang}&\text{(in the case $m = 0$)};\\
&V_{0,m} \subseteq V_{0,m-1}\label{eq.single-inc}&\text{if }m \ge 1; \\
&\Delta(V_{0,m}, V_{1,m}) \ge \lambda_1 - \lambda_{m+1}&\text{if }m \ge 1\label{eq.single-dist}.
\end{eqnarray}
By applying these results to various sub-val-chains of $a^0, \dots, a^m$ one then obtains (\ref{eq.flags-inc}) and (\ref{eq.flags-dist}), i.e., we then are done with the proof of the theorem.

\medskip

We start by defining $V_{0, m}$.
As usual, we use Notation~\ref{notn.std},
\ref{notn.chain-balls}, \ref{notn.recti-1} and \ref{notn.recti-2}.
In particular, we assume that the coordinate system has been transformed using $\kappa$. This is harmless, since such a transformation preserves the notion of val-chains on the one hand, and the properties we are about to prove on the other hand.

\begin{notn}
For $0 \le \ell \le m$ and suitable $x = (\bar x, x') \in \bmdl^{e_\ell} \times \bmdl^{n - e_\ell}$, we define a variant of the rectilinearization maps, where ``all coordinates of $\bmdl^{n - e_\ell}$ are rectilinearized along $S^\ell$'':
\begin{equation}\label{eq.phi-tilde}
\tilde{\phi}_\ell(x) \coloneqq (\phi_\ell(\bar x), x' - \rho^\ell(\bar x)).
\end{equation}
(Note that if $S^0 = S^1$, then $\tilde{\phi}_0 = \tilde{\phi}_1$.)
We moreover set
\begin{align*}
a &\coloneqq a^0,\\
W &\coloneqq \bmdl^{e_m} \times \{0\}^{n - e_m}
\qquad\text{and}\\
V_{0,m} &\coloneqq
(\Jac_a\tilde\phi_0)^{-1}(W)
.
\end{align*}
\end{notn}

That $\tilde\phi_0$ is defined at $a$ follows from
Remark~\ref{rem.a-in-Y}.
As required, we have $\dim V_{0,m} = e_m$,
so to finish the proof of the theorem, it remains to prove
(\ref{eq.single-tang}),
(\ref{eq.single-inc}) and
(\ref{eq.single-dist}).

\begin{proof}[Proof of (\ref{eq.single-tang})]
In the case $m = 0$, $\tilde\phi_0^{-1}$ sends $\bar S^0 \times \{0\}^{n - e_0}$ to $S^0$, and
we have $W = \bmdl^{e_0} \times \{0\}^{n-e_0}$. Thus
$(\Jac_a\tilde\phi_0)^{-1}(W)$ is the tangent space to $S^0$ at $a$, as required.
\end{proof}

\begin{proof}[Proof of (\ref{eq.single-inc})]
Suppose that $m \ge 1$. We have
\begin{equation}
V_{0,m-1} =(\Jac_a \hat{\tilde\phi}_0)^{-1}(\hat W)
\end{equation}
where
\begin{eqnarray}
&\hat W = \bmdl^{e_{m-1}} \times \{0\}^{n - e_{m-1}} \quad\text{and}\\
&\hat{\tilde\phi}_0\colon  (x\U m,x\U{m-1},x\U{m-2} \dots, x\U 0, x\U\star) \efun (x\U m,x\U{m-1}, x^\flat\U{m-2},\dots, x^\flat\U 0, x\U\star - \rho^0(\pr_{\le e_0}(x))).
\end{eqnarray}
An easy computation shows that $(\Jac_a\hat{\tilde\phi}_0)^{-1}(\hat W) = (\Jac_a\tilde\phi_0)^{-1}(\hat W)$; indeed, we have $\tilde\phi_0 = \psi \circ \hat{\tilde\phi}_0$, where
$\psi = \phi_{m - 1} \times \id_{\bmdl^{n - e_{m-1}}}$,
and $(\Jac_x \psi)^{-1}(\hat W) = \hat W$ for any $x$.

Together with $\hat W \supseteq W$, this implies
$V_{0,m-1} \supseteq V_{0,m}$, as required.
\end{proof}

\begin{proof}[Proof of (\ref{eq.single-dist})]
We have
$V_{1,m} =
(\Jac_b\tilde\phi_1)^{-1}(W)$ where
$b \coloneqq a^1$ (and $\tilde{\phi}_1$ has been defined in (\ref{eq.phi-tilde})).
To obtain $\Delta(V_{0,m}, V_{1,m}) \ge \lambda_1 - \lambda_{m+1}$, it suffices to prove that
\begin{equation}\label{eq.a-b}
\vv\left( (\Jac_{a}\tilde\phi_0)^{-1}\rest W - (\Jac_{b}\tilde\phi_1)^{-1}\rest W\right) \ge \lambda_1 - \lambda_{m+1}
\end{equation}
(by Lemma~\ref{lem.Delta}).

From the definition of $\tilde{\phi}_0$, we get
\begin{eqnarray}
&\Jac_a\tilde\phi_0 = \left(
\begin{array}{cc}
\Jac_{\bar a}\phi_0 & 0 \\
-\Jac_{\bar a}\rho^0 & 1
\end{array}
\right)
\quad\text{and hence}\label{eq.not-inv}\\
&
(\Jac_a\tilde\phi_0)^{-1} = \left(
\begin{array}{cc}
(\Jac_{\bar a}\phi_0)^{-1} & 0 \\
(\Jac_{\bar a}\rho^0) \circ (\Jac_{\bar a}\phi_0)^{-1} & 1
\end{array}
\right)
= \left(
\begin{array}{cc}
(\Jac_{\bar a}\phi_0)^{-1} & 0 \\
\Jac_{\bar a^\flat}(\rho^0 \circ \phi_0^{-1}) & 1
\end{array}
\right),
\label{eq.inv-star}
\end{eqnarray}
where $\bar a^\flat = \phi_0(\bar a)$.
If we moreover set $\bbar a  = \pr_{\le e_1}(\bar a)$ and $\bbar a^\flat =\phi_1(\bbar a)$, then
we have
$\phi_0(\bar a) = \left(\phi_1(\bbar a), \bar a\U0 - \rho^{1}\U0(\bbar a)\right)$,
and exactly the same computation as in (\ref{eq.not-inv}) and (\ref{eq.inv-star}) yields
\begin{equation}
(\Jac_{\bar a}\phi_0)^{-1} = \left(
\begin{array}{cc}
(\Jac_{\bbar a}\phi_1)^{-1} & 0 \\
\Jac_{\bbar a^\flat}(\rho^1\U0 \circ \phi_1^{-1}) & 1
\end{array}
\right).
\label{eq.inv-0}
\end{equation}

Combining (\ref{eq.inv-star}) with (\ref{eq.inv-0}) yields
\begin{equation}\label{eq.mat-a}
(\Jac_a\tilde\phi_0)^{-1}
 =
\left(
\begin{array}{ccc}
(\Jac_{\bbar a}\phi_1)^{-1} & 0                          & 0\\
\Jac_{\bbar a^\flat}(\rho^{1}\U0 \circ \phi_1^{-1}) & 1 & 0 \\[.4ex]
\cline{1-2}
\multicolumn{2}{c|}{\vrule height 2.5ex width0pt\Jac_{\bar a^\flat}(\rho^{0} \circ \phi_0^{-1})} & 1
\end{array}
\right)
 =
\left(
\begin{array}{ccc}
(\Jac_{\bbar a}\phi_1)^{-1}
  & 0                          & 0\\
\Jac_{\bbar a^\flat}\rho^{1\flat}\U0  & 1 & 0 \\[.4ex]
\cline{1-2}
\multicolumn{2}{c|}{\vrule height 2.5ex width0pt\Jac_{\bar a^\flat}\rho^{0\flat}} & 1
\end{array}
\right),
\end{equation}
where the coordinates are grouped according to
$\bmdl^{e_1} \times \bmdl^{e_0 - e_1} \times \bmdl^{n - e_0}$.

We also do the computation from (\ref{eq.not-inv}) and (\ref{eq.inv-star}) for
$\tilde{\phi}_1(b)
= (\phi_1(\bar b), (b\U0, b\U{\star}) - \rho^1(\bar b))$, where $\bar b = \pr_{\le e_1}(b)$ and $\bar b^\flat = \phi_1(\bar b)$, and obtain (with the same grouping of coordinates as before)
\begin{equation}\label{eq.mat-b}
(\Jac_b\tilde\phi_1)^{-1} = \left(
\begin{array}{ccc}
(\Jac_{\bar b}\phi_1)^{-1} & 0                          & 0\\
\Jac_{\bar b^\flat}(\rho^{1}\U0 \circ \phi_1^{-1}) & 1 & 0 \\[.4ex]
\cline{1-2}
\vrule height 2.5ex width0pt\Jac_{\bar b^\flat}(\rho^{1}\U\star \circ \phi_1^{-1}) & \multicolumn{1}{c|}{0} & 1
\end{array}
\right)
 =
\left(
\begin{array}{ccc}
(\Jac_{\bar b}\phi_1)^{-1}
& 0                          & 0\\
\Jac_{\bar b^\flat}\rho^{1\flat}\U0  & 1 & 0 \\[.4ex]
\cline{1-2}
\vrule height 2.5ex width0pt\Jac_{\bar b^\flat}\rho^{1\flat}\U\star & \multicolumn{1}{c|}{0} & 1
\end{array}
\right).
\end{equation}
To prove (\ref{eq.a-b}), we have to prove the corresponding statements for the three sub-matrices where
(\ref{eq.mat-a}) and (\ref{eq.mat-b}) differ.

For the lower most, this is exactly the statement of Lemma~\ref{lem.key}. 
For the middle sub-matrix, the result is obtained by applying Lemma~\ref{lem.key} to the augmented val-chain
$a^{[1]}, a^1, a^2, \dots, a^m$, where
$a^{[1]} = (\bbar a, \rho^1(\bbar a)) \in S^1$
and $\vv(a^{[1]} -  a^1) \ge \lambda_1$ by Lemma~\ref{lem.val-chain-basics} (3).

Finally, for the upper-most sub-matrix, we
use an inductive argument.
If $m = 1$, then $\phi_1$ is the identity, so suppose now $m \ge 2$.
Set $\bbbar a \coloneqq \pr_{\le e_2}(a)$,
$a^{[2]} \coloneqq (\bbbar a, \rho^2(\bbbar a)) \in S^2$ and similarly
$\bbbar b \coloneqq \pr_{\le e_2}(b)$,
$b^{[2]} \coloneqq (\bbbar b, \rho^2(\bbbar b)) \in S^2$.
Using Lemma~\ref{lem.val-chain-basics}, we obtain that
$a^{[2]}, b^{[2]}, a^3, \dots, a^m$ is a val-chain with
$\vv(a^{[2]} - b^{[2]}) \ge \lambda_1$.
By induction, we may assume that (\ref{eq.a-b})
holds for this shorter val-chain, i.e.,
\begin{equation}\label{eq.a-b-ind}
\vv\left( (\Jac_{a^{[2]}}\tilde\phi_2)^{-1}\rest W - (\Jac_{b^{[2]}}\tilde\phi_2)^{-1}\rest W\right) \ge \lambda_1 - \lambda_{m+1}.
\end{equation}
This implies the desired inequality
\begin{equation}\label{eq.a-b-want}
\vv\left( (\Jac_{\bbar a}\phi_1)^{-1}\rest W - (\Jac_{\bar b}\phi_1)^{-1}\rest W\right) \ge \lambda_1 - \lambda_{m+1},
\end{equation}
using that $\phi_1$ is obtained from $\tilde\phi_2$ by omitting some coordinates and that the derivatives of these functions only depend on the first $e_2$ coordinates.
More precisely, a computation as in (\ref{eq.inv-star}) and
(\ref{eq.inv-0}) (applied to $\tilde\phi_2$ and $\phi_1$) yields that
$(\Jac_{\bbar x}\phi_1)^{-1}$ is a sub-matrix of
$(\Jac_x\tilde\phi_2)^{-1}$ (for suitable $x \in \bmdl^n$ and $\bbar x = \pr_{\le e_1}(x)$)
and that this sub-matrix only depends on $\pr_{\le e_2}(x)$.
\end{proof}

This finishes the proof of Theorem~\ref{thm.main-na}, and hence also of Theorem~\ref{main}.

\end{document}